\newtheorem{theorem}{Theorem}[section]
\newtheorem{proposition}{Proposition}[section]
\newtheorem{corollary}{Corollary}[section]
\newtheorem{lemma}{Lemma}[section]
\theoremstyle{definition}
\newtheorem{definition}{Definition}[section]
\newtheorem{remark}{Remark}[section]
\numberwithin{equation}{section}
\begin{document}


\title
[Three-stage \(p\)-class towers]
{Criteria for three-stage towers of \(p\)-class fields}

\author{Daniel C. Mayer}
\address{Naglergasse 53\\8010 Graz\\Austria}
\email{algebraic.number.theory@algebra.at}
\urladdr{http://www.algebra.at}

\thanks{Research supported by the Austrian Science Fund (FWF): P 26008-N25}

\subjclass[2000]{Primary 11R37, 11R29, 11R11, 11R16; secondary 20D15}
\keywords{Hilbert \(p\)-class field tower, \(p\)-class group, \(p\)-principalization types,
quadratic fields, unramified cyclic cubic field extensions; \(p\)-class tower group, relation rank, metabelianization, coclass graphs}

\date{November 24, 2016}

\begin{abstract}
Let \(p\) be a prime
and \(K\) be a number field
with non-trivial \(p\)-class group \(\mathrm{Cl}_p{K}\).
A crucial step in identifying the Galois group \(G=\mathrm{G}_p^\infty{K}\)
of the maximal unramified pro-\(p\) extension of \(K\)
is to determine its two-stage approximation \(\mathfrak{M}=\mathrm{G}_p^2{K}\),
that is the second derived quotient \(\mathfrak{M}\simeq G/G^{\prime\prime}\).
The family \(\tau_1{K}\) of abelian type invariants of the \(p\)-class groups \(\mathrm{Cl}_p{L}\)
of all unramified cyclic extensions \(L/K\) of degree \(p\)
is called the \textit{index-\(p\) abelianization data} (IPAD) of \(K\).
It is able to specify a finite batch of contestants
for the second \(p\)-class group \(\mathfrak{M}\) of \(K\).
In this paper we introduce two different kinds of \textit{generalized} IPADs
for obtaining more sophisticated results.
The \textit{multi-layered} IPAD \((\tau_1{K},\tau_2{K})\)
includes data on unramified abelian extensions \(L/K\) of degree \(p^2\)
and enables sharper bounds for the order of \(\mathfrak{M}\)
in the case \(\mathrm{Cl}_p{K}\simeq (p,p,p)\),
where current implementations of the \(p\)-group generation algorithm
fail to produce explicit contestants for \(\mathfrak{M}\),
due to memory limitations.
The \textit{iterated} IPAD of \textit{second order} \(\tau^{(2)}{K}\)
contains information on non-abelian unramified extensions \(L/K\) of degree \(p^2\), or even \(p^3\),
and admits the identification of the \(p\)-class tower group \(G\) 
for various infinite series of quadratic fields \(K=\mathbb{Q}(\sqrt{d})\)
with \(\mathrm{Cl}_p{K}\simeq (p,p)\)
possessing a \(p\)-class field tower of exact length \(\ell_p{K}=3\)
as a striking novelty.
\end{abstract}

\maketitle



\section{Introduction}
\label{s:Intro}
\noindent
In a previous article
\cite{Ma7},
we provided a systematic and rigorous introduction of the concepts of
abelian type invariants and iterated IPADs of higher order.
These ideas were communicated together with impressive numerical applications
at the \(29\)th Journ\'ees Arithm\'etiques in Debrecen, July \(2015\)
\cite{Ma7b}.
The purpose and the organization of the present article,
which considerably extends the computational and theoretical results in
\cite{Ma7,Ma7b},
is as follows.

\textit{Index-\(p\) abelianization data} (IPADs)
are explained in \S\
\ref{s:IPAD}.
Our Main Theorem on three-stage towers of \(3\)-class fields
is communicated in \S\
\ref{s:Principalization}.
Basic definitions concerning the \textit{Artin transfer pattern}
\cite{Ma7,Ma8,Ma9}
are recalled in \S\
\ref{s:ArtinPattern}.
Then we generally put \(p=3\) and consider \(3\)-class tower groups
(\S\ \ref{s:QF}).
In \S\
\ref{s:AllIPADs},
we first restate a \textit{summary of all possible} IPADs of a number field \(K\)
with \(3\)-class group \(\mathrm{Cl}_3{K}\) of type \((3,3)\)
\cite[Thm. 3.1--3.2, pp. 290--291]{Ma7}
in a more succinct and elegant form
avoiding infinitely many exceptions,
and emphasizing the role of two distinguished components,
called the \textit{polarization} and \textit{co-polarization},
which are crucial for proving the finiteness of the batch of \textit{contestants}
for the second \(3\)-class group \(\mathfrak{M}=\mathrm{G}_3^2{K}\).
Up to now, this is the unique situation
where all IPADs can be given in a complete form,
except for the simple case of a number field \(K\)
with \(2\)-class group \(\mathrm{Cl}_2{K}\) of type \((2,2)\)
\cite[\S\ 9, pp. 501--503]{Ma1}.
We characterize all relevant finite \(3\)-groups
by IPADs of first and second order in \S\S\
\ref{ss:Coclass1},
\ref{ss:TreeQ},
\ref{ss:TreeU},
\ref{ss:SporadicCoclass2}.
These groups constitute the \textit{candidates} for \(3\)-class tower groups \(\mathrm{G}_3^\infty{K}\)
of quadratic fields \(K=\mathbb{Q}(\sqrt{d})\)
with \(3\)-class group \(\mathrm{Cl}_3{K}\) of type \((3,3)\).
In \S\
\ref{ss:RQFa},
results for the dominant scenario with \(3\)-principalization \(\varkappa_1{K}\) of type \(\mathrm{a}\) are given.
In \S\S\
\ref{ss:NFE6E14},
\ref{ss:NFE8E9},
we provide evidence of unexpected phenomena revealed by \textit{real} quadratic fields \(K\)
with types \(\varkappa_1{K}\)
in Scholz and Taussky's section \(\mathrm{E}\)
\cite[p. 36]{SoTa}.
Their \(3\)-class tower can be of length \(2\le\ell_3{K}\le 3\)
and a sharp decision is possible by means of \textit{iterated} IPADs \textit{of second order}.
We point out that \textit{imaginary} quadratic fields with type \(\mathrm{E}\)
must always have a tower of exact length \(\ell_3{K}=3\)
\cite{Ma8,BuMa}.
In \S\S\
\ref{ss:RQFH4Spor},
\ref{ss:IQFH4Spor},
resp. \S\S\
\ref{ss:RQFG19Spor},
\ref{ss:IQFG19Spor},
results for quadratic fields \(K\)
with \(3\)-principalization type \(\mathrm{H}.4\), \(\varkappa_1{K}\sim(4111)\),
resp. \(\mathrm{G}.19\), \(\varkappa_1{K}\sim(2143)\), are proved.

In the last section \S\
\ref{s:IQF3x3x3}
on \textit{multi-layered} IPADs,
it is our endeavour to point out that
the rate of growth of successive derived quotients \(\mathrm{G}_p^n{K}\simeq G/G^{(n)}\), \(n\ge 2\),
of the \(p\)-class tower group \(G=\mathrm{G}_p^\infty{K}\) is still far from being known
for imaginary quadratic fields \(K\) with \(p\)-class rank \(\varrho\ge 3\),
where the criterion of Koch and Venkov
\cite{KoVe}
ensures an infinite \(p\)-class tower with \(\ell_p{K}=\infty\).




\section{Index-\(p\) abelianization data}
\label{s:IPAD}
\noindent
Let \(p\) be a prime number.
According to the Artin reciprocity law of class field theory
\cite{Ar1},
the unramified cyclic extensions \(L/K\) of relative degree \(p\)
of a number field \(K\) with non-trivial \(p\)-class group \(\mathrm{Cl}_p{K}\)
are in a bijective correspondence to the subgroups of index \(p\) in \(\mathrm{Cl}_p{K}\).
Their number is given by \(\frac{p^\varrho-1}{p-1}\)
if \(\varrho\) denotes the \(p\)-class rank of \(K\)
\cite[Thm. 3.1]{Ma5}.
The reason for this fact is that
the Galois group \(\mathrm{G}_p^1{K}:=\mathrm{Gal}(\mathrm{F}_p^1{K}/K)\)
of the maximal unramified \textit{abelian} \(p\)-extension \(\mathrm{F}_p^1{K}/K\),
which is called the first Hilbert \(p\)-class field of \(K\),
is isomorphic to the \(p\)-class group \(\mathrm{Cl}_p{K}\).
The fields \(L\) are contained in \(\mathrm{F}_p^1{K}\)
and each group \(\mathrm{Gal}(\mathrm{F}_p^1{K}/L)\)
is of index \(p\) in \(\mathrm{G}_p^1{K}\simeq\mathrm{Cl}_p{K}\).

It was also Artin's idea
\cite{Ar2}
to leave the abelian setting of class field theory
and to consider the second Hilbert \(p\)-class field
\(\mathrm{F}_p^2{K}=\mathrm{F}_p^1(\mathrm{F}_p^1{K})\),
that is the maximal unramified \textit{metabelian} \(p\)-extension of \(K\),
and its Galois group \(\mathfrak{M}:=\mathrm{G}_p^2{K}:=\mathrm{Gal}(\mathrm{F}_p^2{K}/K)\),
the so-called second \(p\)-class group of \(K\)
\cite{Ma1},
\cite[p. 41]{SoTa},
for proving the \textit{principal ideal theorem}
that \(\mathrm{Cl}_p{K}\) becomes trivial when it is extended to \(\mathrm{Cl}_p(\mathrm{F}_p^1{K})\)
\cite{Fw}.
Since \(K\le L\le\mathrm{F}_p^1{K}\le\mathrm{F}_p^1{L}\le\mathrm{F}_p^2{K}\) is
a non-decreasing tower of normal extensions
for any assigned unramified abelian \(p\)-extension \(L/K\),
the \(p\)-class group of \(L\),
\(\mathrm{Cl}_p{L}\simeq\mathrm{Gal}(\mathrm{F}_p^1{L}/L)
\simeq\mathrm{Gal}(\mathrm{F}_p^2{K}/L)\bigm/\mathrm{Gal}(\mathrm{F}_p^2{K}/\mathrm{F}_p^1{L})\),
is isomorphic to the \textit{abelianization} \(H/H^\prime\)
of the subgroup \(H:=\mathrm{Gal}(\mathrm{F}_p^2{K}/L)\) of the
second \(p\)-class group \(\mathrm{Gal}(\mathrm{F}_p^2{K}/K)\)
which corresponds to \(L\)
and whose commutator subgroup is given by \(H^\prime=\mathrm{Gal}(\mathrm{F}_p^2{K}/\mathrm{F}_p^1{L})\).

In particular, the structure of the \(p\)-class groups \(\mathrm{Cl}_p{L}\) of all
unramified cyclic extensions \(L/K\) of relative degree \(p\)
can be interpreted as the \textit{abelian type invariants} of all \textit{abelianizations} \(H/H^\prime\)
of subgroups \(H=\mathrm{Gal}(\mathrm{F}_p^2{K}/L)\) of \textit{index} \(p\)
in the second \(p\)-class group \(\mathrm{Gal}(\mathrm{F}_p^2{K}/K)\),
which has been dubbed the \textit{index-\(p\) abelianization data}, briefly IPAD, \(\tau_1{K}\) of \(K\)
by Boston, Bush, and Hajir
\cite{BBH}.
This kind of information would have been incomputable and thus useless about twenty years ago.
However, with the availability of computational algebra systems like PARI/GP
\cite{PARI}
and MAGMA
\cite{BCP,BCFS,MAGMA}
it became possible to compute the class groups \(\mathrm{Cl}_p{L}\),
collect their structures in the IPAD \(\tau_1{K}\),
reinterpret them as abelian quotient invariants of subgroups \(H\) of \(\mathrm{G}_p^2{K}\),
and to use this information for characterizing a batch of finitely many \(p\)-groups,
occasionally even a unique \(p\)-group,
as contestants for the second \(p\)-class group \(\mathfrak{M}=\mathrm{G}_p^2{K}\) of \(K\),
which in turn is a \textit{two-stage approximation} of the (potentially infinite)
pro-\(p\) group \(G:=\mathrm{G}_p^\infty{K}:=\mathrm{Gal}(\mathrm{F}_p^\infty{K}/K)\)
of the maximal unramified pro-\(p\) extension \(\mathrm{F}_p^\infty{K}\) of \(K\),
that is its \textit{Hilbert \(p\)-class tower}.

As we proved in the main theorem of
\cite[Thm. 5.4]{Ma9},
the IPAD is usually unable to permit a decision about the \textit{length} \(\ell:=\ell_p{K}\)
of the \(p\)-class tower of \(K\)
when non-metabelian candidates for \(\mathrm{G}_p^\infty{K}\) exist.
For solving such problems, \textit{iterated} IPADs \(\tau^{(2)}{K}\) of \textit{second order} are required.



\section{The \(p\)-principalization type}
\label{s:Principalization}
\noindent
Until very recently, the length \(\ell\) of the \(p\)-class tower
\[K<\mathrm{F}_p^1{K}<\mathrm{F}_p^2{K}<\ldots\mathrm{F}_p^\ell{K}=\mathrm{F}_p^{\ell+1}{K}=\ldots=\mathrm{F}_p^\infty{K}\]
over a quadratic field \(K=\mathbb{Q}(\sqrt{d})\) with \(p\)-class rank \(\varrho=2\),
that is, with \(p\)-class group \(\mathrm{Cl}_p{K}\) of type \((p^u,p^v)\), \(u\ge v\ge 1\),
was an open problem.
Apart from the proven impossibility of an abelian tower with \(\ell=1\)
\cite[Thm. 4.1.(1)]{Ma1},
it was unknown which values \(\ell\ge 2\) can occur and
whether \(\ell=\infty\) is possible or not.
In contrast, it is known that \(\ell=1\) for any number field \(K\) with \(p\)-class rank \(\varrho=1\),
i.e., with non-trivial cyclic \(p\)-class group \(\mathrm{Cl}_p{K}\),
and that \(\ell=\infty\) for an imaginary quadratic field with \(p\)-class rank \(\varrho\ge 3\),
when \(p\) is odd
\cite{KoVe}.

The finite batch of contestants for \(\mathfrak{M}=\mathrm{G}_p^2{K}\), specified by the IPAD \(\tau_1{K}\),
can be narrowed down further if the \(p\)-principalization type of \(K\) is known.
That is the family \(\varkappa_1{K}\) of all kernels \(\ker{T_{K,L}}\) of \(p\)-class transfers \(T_{K,L}:\mathrm{Cl}_p{K}\to\mathrm{Cl}_p{L}\)
from \(K\) to unramified cyclic superfields \(L\) of degree \(p\) over \(K\).
In view of the open problem for the length of the \(p\)-class tower,
there arose the question whether each possible \(p\)-principalization type \(\varkappa_1{K}\)
of a quadratic field \(K\) with \(\mathrm{Cl}_p{K}\) of type \((p,p)\)
is associated with a fixed value of the tower length \(\ell_p{K}\).

For \(p=3\) and \(\mathrm{Cl}_3{K}\) of type \((3,3)\),
there exist \(23\) distinct \(3\)-principalization types
\cite[Tbl. 6--7]{Ma2},
designated by \(\mathrm{X.n}\),
where \(\mathrm{X}\) denotes a letter in
\(\lbrace\mathrm{A,D,E,F,G,H,a,b,c,d}\rbrace\)
and \(\mathrm{n}\) denotes a certain integer in
\(\lbrace 1,\ldots,25\rbrace\), more explicitly:
\begin{center}
\begin{tabular}{rrrrrrrrrrrrrr}
\(\mathrm{A}.1\), & \(\mathrm{D}.5\), & \(\mathrm{D}.10\), & \(\mathrm{E}.6\), & \(\mathrm{E}.8\), & \(\mathrm{E}.9\), & \(\mathrm{E}.14\), & \(\mathrm{F}.7\), & \(\mathrm{F}.11\), & \(\mathrm{F}.12\), & \(\mathrm{F}.13\), & \(\mathrm{G}.16\), & \(\mathrm{G}.19\), & \(\mathrm{H}.4\), \\
\(\mathrm{a}.1\), & \(\mathrm{a}.2\), & \(\mathrm{a}.3\), & \(\mathrm{b}.10\), & \(\mathrm{c}.18\), & \(\mathrm{c}.21\), & \(\mathrm{d}.19\), & \(\mathrm{d}.23\), & \(\mathrm{d}.25\). \\
\end{tabular}
\end{center}


In this article,
we establish the last but one step for the proof of the following
solution to the open problem for \(p=3\) and quadratic fields \(K\) with \(\mathrm{Cl}_3{K}\simeq(3,3)\).

\begin{theorem}
\label{thm:MainTheorem}
(Main theorem on the length of the \(3\)-class tower for \(3\)-class rank two)
\begin{enumerate}
\item
For each of the \(13\) types of \(3\)-principalization \(\mathrm{X.n}\) with upper case letter \(\mathrm{X}\ne\mathrm{A}\),
there exists an imaginary quadratic field \(K=\mathbb{Q}(\sqrt{d})\), \(d<0\), of that type such that \(2\le\ell_3{K}\le 3\).
\item
For each of the \(22\) types of \(3\)-principalization \(\mathrm{X.n}\ne\mathrm{A.1}\),
there exists a real quadratic field \(K=\mathbb{Q}(\sqrt{d})\), \(d>0\), of that type such that \(2\le\ell_3{K}\le 3\).
\end{enumerate}
\end{theorem}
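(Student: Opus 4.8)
The plan is to establish existence statements of the required form by an essentially computational strategy, combining the theoretical machinery on IPADs (both iterated of second order and multi-layered) developed in the later sections with an explicit search over tables of discriminants. First I would dispose of the ``easy'' half: for each of the \(23\) principalization types \(\mathrm{X.n}\), one already knows from the literature (Scholz--Taussky, and the author's earlier work \cite{Ma1,Ma2}) that a suitable quadratic field \(K\) of that type exists, and that the second \(3\)-class group \(\mathfrak{M}=\mathrm{G}_3^2{K}\) lies in a short, explicitly known list of metabelian \(3\)-groups (the relevant vertices of the coclass trees treated in \S\S\ \ref{ss:Coclass1}--\ref{ss:SporadicCoclass2}). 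For the bulk of the types, \(\mathfrak{M}\) is already one of those groups with \(\ell_3{K}=2\), so \(2\le\ell_3{K}\le 3\) holds trivially; this covers all types except those in section \(\mathrm{E}\) and a few sporadic ones, and in particular the upper-bound \(\ell_3{K}\le 3\) must only be argued carefully for the genuinely non-metabelian candidates.

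The real content is the non-trivial cases: the type-\(\mathrm{E}\) fields (\(\mathrm{E}.6\), \(\mathrm{E}.8\), \(\mathrm{E}.9\), \(\mathrm{E}.14\)) and the sporadic types \(\mathrm{H}.4\) and \(\mathrm{G}.19\), where metabelian and non-metabelian candidates for \(\mathrm{G}_3^\infty{K}\) coexist and the IPAD \(\tau_1{K}\) alone cannot decide the length. Here I would invoke the main theorem of \cite{Ma9} to the effect that \(\tau_1\) is blind to \(\ell\), and then bring in the \emph{iterated} IPAD of second order \(\tau^{(2)}{K}\) as the sharper invariant. The key step is to show, group-theoretically, that among the finitely many candidate \(3\)-class tower groups \(G\) compatible with a given \(\tau_1{K}\), the metabelian ones (giving \(\ell_3{K}=2\)) and the three-stage ones with \(G/G'''\) equal to one of the admissible \(\mathrm{G}_3^3\)-groups (giving \(\ell_3{K}=3\)) are separated by \(\tau^{(2)}\); then, for each type, exhibit at least one imaginary and (where the statement asks for it) at least one real quadratic discriminant \(d\) whose field \(K=\mathbb{Q}(\sqrt{d})\) realizes one of these candidates, so that in every case \(\ell_3{K}\in\{2,3\}\). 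The upper bound \(\ell_3{K}\le 3\) for the selected fields follows because no candidate \(G\) with \(G/G'''\) in the admissible list admits a further non-trivial derived quotient — i.e., \(G'''=1\) or, more precisely, \(G''''=G'''\) forces \(G'''=1\) by a nilpotent-group argument (the Burnside basis / Golod--Shafarevich style relation-rank estimates recalled in \S\ \ref{s:Principalization}).

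The main obstacle I anticipate is twofold. On the theoretical side, one must verify that for each of the delicate types the list of candidate tower groups really is exhausted by metabelian groups plus three-stage groups whose \(\tau^{(2)}\) is pairwise distinguishable — this requires a careful traversal of the descendant trees under the \(p\)-group generation algorithm and a proof that no candidate of derived length \(\ge 4\) survives the \(\tau_1\)-constraint, which is exactly the kind of relation-rank argument that is technical but standard. On the computational side, the bottleneck is producing, for each of the \(22\) (resp.\ \(13\)) types, an explicit discriminant together with a verified computation of \(\tau^{(2)}{K}\) (and, for type \(\mathrm{E}\), checking both possible outcomes \(\ell=2\) and \(\ell=3\) actually occur so that the weak bound \(2\le\ell\le 3\) is the best uniform assertion); this rests on MAGMA/PARI class-group computations in number fields of degree up to \(9\) and on matching the resulting Artin pattern against the group-theoretic database of \S\ \ref{s:ArtinPattern}. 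The remaining ``last step'' alluded to in the introduction — pinning down \(\ell\) exactly for the type-\(\mathrm{E}\) fields — is deliberately left outside this theorem, so here I would only need the interval bound, which makes the argument robust even where \(\tau^{(2)}\) narrows but does not uniquely determine \(G\).
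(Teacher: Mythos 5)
Your overall strategy for the types actually treated in this paper (\(\mathrm{E}\), \(\mathrm{H}.4\), \(\mathrm{G}.19\), \(\mathrm{a}\)) — narrow the candidates for \(\mathrm{G}_3^\infty{K}\) by traversing descendant trees, separate metabelian from three-stage candidates by the iterated IPAD of second order \(\tau^{(2)}{K}\), and then exhibit explicit discriminants realizing each candidate — is indeed how the paper proceeds. But your division into an ``easy half'' and a hard half is inverted, and this is a genuine gap. You assert that for all types outside section \(\mathrm{E}\) and ``a few sporadic ones'' the second \(3\)-class group \(\mathfrak{M}=\mathrm{G}_3^2{K}\) is already the full tower group, so that \(\ell_3{K}=2\) holds trivially. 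In fact only types \(\mathrm{a}\) and \(\mathrm{D}\) have \(\ell_3{K}=2\) (Theorem \ref{thm:IPODa} and Table \ref{tbl:CapitulationTypes}); for \emph{all} of \(\mathrm{F}\), \(\mathrm{G}.16\), \(\mathrm{G}.19\), \(\mathrm{H}\), \(\mathrm{b}\), \(\mathrm{c}\), \(\mathrm{d}\) the tower has length at least \(3\), the cover of \(\mathfrak{M}\) contains non-metabelian groups of a priori unbounded derived length, and the entire difficulty of the theorem is producing at least one field of each such type with \(\ell_3{K}\le 3\). Your plan provides no argument for \(\mathrm{F}\), \(\mathrm{G}.16\), \(\mathrm{b}\), \(\mathrm{c}\), \(\mathrm{d}\); the paper itself does not prove these cases here but outsources them to \cite{MaNm} and \cite{Ma10a,Ma10}, which is why the theorem is announced as ``the last but one step'' with the attribution table standing in for a proof.

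A second, smaller problem is your upper-bound argument. The observation that \(G''''=G'''\) forces \(G'''=1\) is vacuous for finite \(p\)-groups (the derived series of a nontrivial finite nilpotent group always strictly decreases), so it cannot bound the derived length by \(3\). The paper's actual mechanism is different: the Shafarevich relation-rank bound \(d_2{G}\le\varrho+r\) (\(=3\) for real, \(=2\) for imaginary quadratic fields) excludes candidates of too large relation rank, and the termination criterion of \cite[Thm.~5.1]{Ma9} applied to the explicitly computed descendant trees shows that the cover \(\mathrm{cov}(\mathfrak{M})\) consists of finitely many groups of derived length exactly \(2\) or \(3\) — e.g.\ \(\mathrm{cov}(\mathfrak{M})=\lbrace\mathfrak{M},G\rbrace\) in the type-\(\mathrm{E}\) cases. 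You would need to make this cover computation, not a derived-series triviality, the backbone of the upper bound.
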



\begin{remark}
\label{rmk:ExceptionalType}
Type \(\mathrm{A.1}\) must be excluded for quadratic base fields \(K\), according to
\cite[Cor. 4.2]{Ma1}.
It occurs, however, with \(\ell=2\) for cyclic cubic fields with two primes dividing the conductor
\cite{Ta1}.
\end{remark}


Concerning the steps for the proof, we provide information in the form of Table
\ref{tbl:CapitulationTypes}.
An asterisk indicates the present paper.
The last step has been completed in collaboration with M. F. Newman but has not been published yet
\cite{MaNm}.
Only the types \(\mathrm{G}.16\) and \(\mathrm{G}.19\) must be distinguished by their integer identifier,
otherwise the types denoted by the same letter behave completely similar.
Additionally, we give the smallest logarithmic order \(\mathrm{lo}(G):=\log_3\lvert G\rvert\).



\renewcommand{\arraystretch}{1.2}

\begin{table}[ht]
\caption{Steps of the proof with references}
\label{tbl:CapitulationTypes}
\begin{center}
\begin{tabular}{|c|cccccc|cccc|cc|}
\hline
 Type               & \(\mathrm{D}\) & \(\mathrm{E}\) & \(\mathrm{F}\) & \(\mathrm{G}.16\) & \(\mathrm{G}.19\) & \(\mathrm{H}\) & \(\mathrm{a}\) & \(\mathrm{b}\) & \(\mathrm{c}\) & \(\mathrm{d}\) && Base Fields \\
\hline
 \(\ell_3{K}\)      & \(2\)          & \(3\)          & \(\ge 3\)      & \(\ge 3\)         & \(\ge 3\)         & \(\ge 3\)      &                &                &                &                & \multirow{3}{*}{\(\Biggr\rbrace\)} & imaginary \\
 \(\mathrm{lo}(G)\) & \(5\)          & \(\ge 8\)      & \(\ge 20\)     & \(\ge 11\)        & \(\ge 11\)        & \(\ge 8\)      &                &                &                &                && quadratic \\
 Ref.               & \cite{SoTa}    & \cite{BuMa}    & \cite{MaNm}    & \cite{MaNm}       & \cite{MaNm}       & \(\ast\)       &                &                &                &                && fields \\
\hline
 \(\ell_3{K}\)      & \(2\)          & \(2\) or \(3\) & \(\ge 3\)      & \(\ge 3\)         & \(\ge 3\)         & \(\ge 3\)      & \(2\)          & \(\ge 3\)      & \(3\)          & \(\ge 3\)      & \multirow{3}{*}{\(\Biggr\rbrace\)} & real \\
 \(\mathrm{lo}(G)\) & \(5\)          & \(\ge 7\)      & \(\ge 10\)     & \(\ge 9\)         & \(\ge 7\)         & \(\ge 7\)      & \(\ge 4\)      & \(\ge 10\)     & \(\ge 7\)      & \(\ge 10\)     && quadratic \\
 Ref.               & \cite{Ma14}    & \(\ast\)       & \cite{MaNm}    & \cite{MaNm}       & \(\ast\)          & \(\ast\)       & \(\ast\)       & \cite{MaNm}    & \cite{Ma10a,Ma10} & \cite{MaNm} && fields \\
\hline
\end{tabular}
\end{center}
\end{table}



\begin{remark}
\label{rmk:MainTheorem}
None of the types sets in with a length \(\ell\ge 4\).
Type \(\mathrm{D}\) behaves completely rigid with \(\ell=2\), fixed class \(3\), and coclass \(2\).
Type \(\mathrm{a}\) is also confined to \(\ell=2\) but admits unbounded nilpotency class with fixed coclass \(1\).
For type \(\mathrm{E}\), we have \(\ell=3\) with unbounded class and coclass for imaginary fields,
and the unique exact dichotomy \(\ell\in\lbrace 2,3\rbrace\) for real fields.
For type \(\mathrm{c}\), the length \(\ell=3\) is fixed with unbounded class and coclass for real fields.
The most extensive flexibility is revealed by fields of the types \(\mathrm{F,G,H}\) and \(\mathrm{b,d}\),
where any finite unbounded length \(\ell\ge 3\) can occur with variable class and coclass.
We expect that an actually infinite tower with \(\ell=\infty\) is impossible for \(\mathrm{Cl}_3{K}\simeq(3,3)\).
\end{remark}



\section{The Artin transfer pattern}
\label{s:ArtinPattern}

Let \(p\) be a prime number
and \(G\) be a pro-\(p\) group with finite abelianization \(G/G^\prime\),
more precisely, assume that the commutator subgroup \(G^\prime\)
is of index \((G:G^\prime)=p^v\) with an integer exponent \(v\ge 0\).

\begin{definition}
\label{dfn:Layers}
For each integer \(0\le n\le v\), let
\(\mathrm{Lyr}_n{G}:=\lbrace G^\prime\le H\le G\mid (G:H)=p^n\rbrace\)
be the \textit{\(n\)th layer} of normal subgroups of \(G\) containing \(G^\prime\).
\end{definition}


\begin{definition}
\label{dfn:TTTandTKT}
For any intermediate group \(G^\prime\le H\le G\), we denote by
\(T_{G,H}:\,G\to H/H^\prime\)
the \textit{Artin transfer} homomorphism from \(G\) to \(H/H^\prime\)
\cite[Dfn. 3.1]{Ma9},
and by
\(\tilde{T}_{G,H}:\,G/G^\prime\to H/H^\prime\)
the induced transfer.
\begin{enumerate}
\item
Let
\(\tau(G):=\lbrack\tau_0{G};\ldots;\tau_v{G}\rbrack\)
be the \textit{multi-layered transfer target type} (TTT) of \(G\), where
\(\tau_n{G}:=(H/H^\prime)_{H\in\mathrm{Lyr}_n{G}}\) for each \(0\le n\le v\).
\item
Let
\(\varkappa(G):=\lbrack\varkappa_0{G};\ldots;\varkappa_v{G}\rbrack\)
be the \textit{multi-layered transfer kernel type} (TKT) of \(G\), where
\(\varkappa_n{G}:=(\ker\tilde{T}_{G,H})_{H\in\mathrm{Lyr}_n{G}}\) for each \(0\le n\le v\).
\end{enumerate}
\end{definition}


\begin{definition}
\label{dfn:ArtinPattern}
The pair
\(\mathrm{AP}(G):=\left(\tau(G),\varkappa(G)\right)\)
is called the (restricted) \textit{Artin pattern} of \(G\).
\end{definition}


\begin{definition}
\label{dfn:IPADandIPOD}
The first order approximation
\(\tau^{(1)}{G}:=\lbrack\tau_0{G};\tau_1{G}\rbrack\)
of the TTT, resp.
\(\varkappa^{(1)}{G}:=\lbrack\varkappa_0{G};\varkappa_1{G}\rbrack\)
of the TKT,
is called the \textit{index-\(p\) abelianization data} (IPAD),
resp. \textit{index-\(p\) obstruction data} (IPOD), of \(G\).
\end{definition}


\begin{definition}
\label{dfn:IteratedIPAD}
\(\tau^{(2)}{G}:=\lbrack\tau_0{G};(\tau^{(1)}{H})_{H\in\mathrm{Lyr}_1{G}}\rbrack\)
is called \textit{iterated} IPAD \textit{of} \(2^{\text{nd}}\) \textit{order} of \(G\).
\end{definition}



\begin{remark}
\label{rmk:ArtinPattern}
For the \textit{complete} Artin pattern \(\mathrm{AP}_c(G)\) see
\cite[Dfn. 5.3]{Ma9}.
\begin{enumerate}
\item
Since the \(0\)th layer (top layer), \(\mathrm{Lyr}_0{G}=\lbrace G\rbrace\),
consists of the group \(G\) alone,
and \(T_{G,G}:\,G\to G/G^\prime\) is the natural projection onto the commutator quotient
with kernel \(\ker(T_{G,G})=G^\prime\),
we usually omit the trivial top layer \(\varkappa_0{G}\)
and identify the IPOD \(\varkappa^{(1)}{G}\) with the first layer \(\varkappa_1{G}\) of the TKT.
\item
In the case of an elementary abelianization of rank two, \((G:G^\prime)=p^2\),
we also identify the TKT \(\varkappa(G)\) with its first layer \(\varkappa_1{G}\),
since the \(2\)nd layer (bottom layer), \(\mathrm{Lyr}_2{G}=\lbrace G^\prime\rbrace\),
consists of the commutator subgroup \(G^\prime\) alone,
and the kernel of \(T_{G,G^\prime}:\,G\to G^\prime/G^{\prime\prime}\)
is always \textit{total}, that is \(\ker(T_{G,G^\prime})=G\),
according to the \textit{principal ideal theorem}
\cite{Fw}.
\end{enumerate}
\end{remark}



\section{All possible IPADs of 3-groups of type \((3,3)\)}
\label{s:AllIPADs}
\noindent
Since the abelian type invariants of certain IPAD components of an assigned \(3\)-group \(G\)
depend on the parity of the nilpotency class \(c\) or coclass \(r\),
a more economic notation,
which avoids the tedious distinction of the cases odd or even,
is provided by the following definition
\cite[\S\ 3]{Ma3}.

\begin{definition}
\label{dfn:NearlyHomocyclic}
For an integer \(n\ge 2\),
the \textit{nearly homocyclic abelian \(3\)-group} \(\mathrm{A}(3,n)\) of order \(3^n\)
is defined by its type invariants \((q+r,q)\hat{=}(3^{q+r},3^q)\),
where the quotient \(q\ge 1\) and the remainder \(0\le r<2\)
are determined uniquely by the Euclidean division \(n=2q+r\).
Two degenerate cases are included by putting
\(\mathrm{A}(3,1):=(1)\hat{=}(3)\) the cyclic group \(C_3\) of order \(3\), and
\(\mathrm{A}(3,0):=(0)\hat{=}1\) the trivial group of order \(1\).
\end{definition}



In the following theorem and in the whole remainder of the article,
we use the \textit{identifiers} of finite \(3\)-groups up to order \(3^8\)
as they are defined in the SmallGroups Library
\cite{BEO1,BEO2}.
They are of the shape \(\langle\ order,\ counter\ \rangle\),
where the counter is motivated by the way
how the output of descendant computations is arranged
in the \(p\)-group generation algorithm
by Newman
\cite{Nm}
and O'Brien
\cite{Ob}.

\begin{theorem}
\label{thm:AllIPADs}
(Complete classification of all IPADs with \(\tau_0\simeq (3,3)\)
\cite{Ma3})
\noindent
Let \(G\) be a pro-\(3\) group
with abelianization \(G/G^\prime\) of type \((3,3)\)
and metabelianization \(\mathfrak{M}=G/G^{\prime\prime}\)
of nilpotency class \(c=\mathrm{cl}(\mathfrak{M})\ge 2\),
defect \(0\le k=k(\mathfrak{M})\le 1\),
and coclass \(r=\mathrm{cc}(\mathfrak{M})\ge 1\).
Assume that \(\mathfrak{M}\) does not belong to
the finitely many exceptions in the list below.
Then the IPAD \(\tau^{(1)}{G}=\lbrack\tau_0{G};\tau_1{G}\rbrack\) of \(G\)
in terms of nearly homocyclic abelian \(3\)-groups
is given by
\begin{equation}
\label{eqn:PolCoPol}
\begin{aligned}
\tau_0{G} &= (1^2);\\
\tau_1{G} &= \left(\overbrace{\mathrm{A}(3,c-k)}^{\text{polarization}},
                   \overbrace{\mathrm{A}(3,r+1)}^{\text{co-polarization}},T_3,T_4\right),
\end{aligned}
\end{equation}
\noindent
where the polarized first component of \(\tau_1{G}\) depends on the class \(c\) and defect \(k\),
the co-polarized second component increases with the coclass \(r\),
and the third and fourth component are completely stable for \(r\ge 3\)
but depend on the coclass tree of \(\mathfrak{M}\) for \(1\le r\le 2\) in the following manner
\begin{equation}
\label{eqn:Component3And4}
(T_3,T_4)=
\begin{cases}
(\mathrm{A}(3,r+1)^2)   & \text{ if } r=2,\ \mathfrak{M}\in\mathcal{T}^2\langle 243,8\rangle \text{ or } r=1,\\
(1^3,\mathrm{A}(3,r+1)) & \text{ if } r=2,\ \mathfrak{M}\in\mathcal{T}^2\langle 243,6\rangle,\\
((1^3)^2)               & \text{ if } r=2,\ \mathfrak{M}\in\mathcal{T}^2\langle 243,3\rangle \text{ or } r\ge 3.
\end{cases}
\end{equation}
\noindent
Anomalies of finitely many, precisely \(13\), exceptional groups are summarized in the following list.
\begin{equation}
\label{eqn:Anomalies}
\begin{aligned}
 \tau_1{G} = \left((1)^4\right)          & \text{ for } \mathfrak{M} \simeq \langle 9,2\rangle,\ c=1,\ r=1, \\
 \tau_1{G} = \left(1^2,(2)^3\right)      & \text{ for } \mathfrak{M} \simeq \langle 27,4\rangle,\ c=2,\ r=1, \\
 \tau_1{G} = \left(1^3,(1^2)^3\right)    & \text{ for } \mathfrak{M} \simeq \langle 81,7\rangle,\ c=3,\ r=1, \\
 \tau_1{G} = \left((1^3)^3,21\right)     & \text{ for } \mathfrak{M} \simeq \langle 243,4\rangle,\ c=3,\ r=2, \\
 \tau_1{G} = \left(1^3,(21)^3\right)     & \text{ for } \mathfrak{M} \simeq \langle 243,5\rangle,\ c=3,\ r=2, \\
 \tau_1{G} = \left((1^3)^2,(21)^2\right) & \text{ for } \mathfrak{M} \simeq \langle 243,7\rangle,\ c=3,\ r=2, \\
 \tau_1{G} = \left((21)^4\right)         & \text{ for } \mathfrak{M} \simeq \langle 243,9\rangle,\ c=3,\ r=2, \\
 \tau_1{G} = \left((1^3)^3,21\right)     & \text{ for } \mathfrak{M} \simeq \langle 729,44\ldots 47\rangle,\ c=4,\ r=2, \\
 \tau_1{G} = \left((21)^4\right)         & \text{ for } \mathfrak{M} \simeq \langle 729,56\ldots 57\rangle,\ c=4,\ r=2. \\
\end{aligned}
\end{equation}
\end{theorem}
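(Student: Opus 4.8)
The plan is to reduce the computation of the IPAD to a purely group-theoretic analysis of the metabelian group $\mathfrak{M}=G/G^{\prime\prime}$, exploiting that every IPAD component $H/H^\prime$ with $H\in\mathrm{Lyr}_1{G}$ depends only on $\mathfrak{M}$, since $H^\prime$ contains $G^{\prime\prime}$. First I would recall from \cite{Ma3} the structure theory of metabelian $3$-groups $\mathfrak{M}$ with $\mathfrak{M}/\mathfrak{M}^\prime\simeq(3,3)$: such groups are classified by coclass trees, and the four maximal subgroups $H_1,\dots,H_4$ of index $3$ split into two kinds. One distinguished maximal subgroup — the one containing the "polarization" direction, which corresponds to the generator whose commutator action governs the lower central series — has abelianization of order growing with the class $c$, namely $H_1/H_1^\prime\simeq\mathrm{A}(3,c-k)$ where the defect $k\in\{0,1\}$ measures the deviation of $\gamma_c(\mathfrak{M})$ from being cyclic. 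The second distinguished subgroup ("co-polarization") has abelianization $\mathrm{A}(3,r+1)$ governed by the coclass; and the remaining two subgroups $H_3,H_4$ have abelianizations that, for large coclass $r\ge 3$, stabilize because the relevant sections of $\mathfrak{M}$ become "generic."

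The key steps, in order, would be: (1) set up explicit power-commutator presentations for the metabelian $3$-groups on each of the relevant coclass-$r$ trees, in particular the trees $\mathcal{T}^2\langle 243,3\rangle$, $\mathcal{T}^2\langle 243,6\rangle$, $\mathcal{T}^2\langle 243,8\rangle$ and the coclass-$1$ tree, parametrized by $c$ and $k$; (2) for each maximal subgroup $H_i$, compute $H_i^\prime=[H_i,H_i]$ inside $\mathfrak{M}$ and then the abelian invariants of $H_i/H_i^\prime$ by tracking how the generators and their commutators reduce modulo $H_i^\prime$ — this is where the nearly homocyclic groups $\mathrm{A}(3,n)$ emerge from the Euclidean-division pattern in the exponents of the lower central series; (3) verify that for $r\ge 3$ the third and fourth components are insensitive to which tree one is on, by showing the $3$-rank and the exponents of $H_3/H_3^\prime$, $H_4/H_4^\prime$ are forced once $r\ge 3$; (4) for $r=1$ and $r=2$, carry out the finer case distinction by the tree identifier, since for small coclass the presentations are too rigid for the generic argument and the components $T_3,T_4$ genuinely depend on the branch; and (5) separately enumerate the sporadic small groups — orders $9,27,81,243,729$ — where the class is so small that the Euclidean-division formula has not yet "kicked in," and record their IPADs by direct computation (these are the $13$ listed anomalies).

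The main obstacle I expect is step (4) together with the bookkeeping in step (2): precisely pinning down the defect $k$ and showing that the first component is $\mathrm{A}(3,c-k)$ rather than $\mathrm{A}(3,c)$ or $\mathrm{A}(3,c-1)$ requires a careful analysis of the commutator $[H_1,H_1]$ as a subgroup of $\gamma_2(\mathfrak{M})$, and whether the "top" of the lower central series of $\mathfrak{M}$ contributes a cyclic or bicyclic quotient. This is delicate because $k$ depends on the specific defining relations of $\mathfrak{M}$ within its tree, not merely on $c$ and $r$, and an off-by-one error here propagates into every subsequent application. A secondary difficulty is making the exceptional list \emph{exactly} right: one must check that no further small group slips through, which amounts to confirming that the generic formulas \eqref{eqn:PolCoPol}--\eqref{eqn:Component3And4} hold for \emph{all} groups of class $c\ge 4$ on the $r=1$ tree and class $c\ge 5$ on the $r=2$ trees, with only the enumerated exceptions below that threshold. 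I would close the argument by cross-checking the resulting IPADs against the independent computations in \cite{Ma7} for the groups of smallest order, ensuring the classification is both complete and free of omissions.
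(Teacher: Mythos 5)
Your plan is mathematically sound, but it takes a genuinely different route from the paper. The paper's own proof is essentially a two-step reduction: it observes that formulas \eqref{eqn:PolCoPol}--\eqref{eqn:Component3And4} merely restate the first-layer claims of the earlier theorems \cite[Thm.\ 3.1--3.2, pp.\ 290--291]{Ma7} (which in turn rest on the parametrized presentations from \cite{Ma3} and Nebelung \cite{Ne}), and then its only new content is the bookkeeping of which small groups are \emph{genuine} exceptions --- thirteen of them --- versus which groups singled out in \cite{Ma7} (namely \(\langle 27,3\rangle\), \(\langle 81,8\ldots 10\rangle\), \(\langle 243,n\rangle\) for \(n\in\lbrace 3,6,8\rbrace\), and \(\langle 729,n\rangle\) for \(n\in\lbrace 34,\ldots,39\rbrace\)) actually conform to the general pattern and so need not be listed. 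What you propose is, in effect, a reconstruction of the proof of the \emph{cited} theorems from power-commutator presentations; that buys self-containedness and makes the origin of the nearly homocyclic invariants \(\mathrm{A}(3,n)\) transparent, at the cost of redoing a substantial computation the paper deliberately delegates. One point in your step (5) needs correction: the ten coclass-\(2\) anomalies of orders \(3^5\) and \(3^6\) are exceptional not because their class falls below a threshold at which the Euclidean-division formula \lq\lq kicks in\rq\rq, but because they are \emph{sporadic} vertices lying outside every coclass-\(2\) tree (see Figure \ref{fig:SporCc2}), so the case distinction in \eqref{eqn:Component3And4}, which is by tree membership, is simply inapplicable to them; a purely class-based cutoff would wrongly sweep in the conforming tree vertices \(\langle 243,3/6/8\rangle\) and \(\langle 729,34\ldots 39\rangle\) of the same orders. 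Your direct enumeration of small groups would still produce the correct list of thirteen, but the organizing principle you state for \emph{why} the list closes there is not the right one.
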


The \textit{polarization} and the \textit{co-polarization} we had in our mind
when we spoke about a \textit{bi-polarization} in
\cite[Dfn. 3.2, p. 430]{Ma4}.
Meanwhile, we have provided yet another proof for the existence of
\textit{stable} and \textit{polarized} IPAD components with the aid of a \textit{natural partial order}
on the Artin transfer patterns distributed over a descendant tree
\cite[Thm. 6.1--6.2]{Ma9}.

\begin{proof}
Equations
(\ref{eqn:PolCoPol})
and
(\ref{eqn:Component3And4})
are a succinct form of information which summarizes
all statements about the first TTT layer \(\tau_1{G}\)
in the formulas (19), (20) and (22) of
\cite[Thm. 3.2, p. 291]{Ma7}
omitting the claims on the second TTT layer \(\tau_2{G}\).
Here we do not need the restrictions arising from lower bounds for the nilpotency class
\(c=\mathrm{cl}(\mathfrak{M})\)
in the cited theorem,
since the remaining cases for small values of \(c\) can be taken from
\cite[Thm. 3.1, p. 290]{Ma7},
with the exception of the following \(13\) anomalies in formula
(\ref{eqn:Anomalies}):

The \textit{abelian} group \(\langle 9,2\rangle\simeq (3,3)\),
the \textit{extra special} group \(\langle 27,4\rangle\),
and the group \(\langle 81,7\rangle\simeq\mathrm{Syl}_3(A_9)\)
do not fit into the general rules for \(3\)-groups of coclass \(1\).
These three groups appear in the top region of the tree diagram in the Figures
\ref{fig:AbsFreqTyp33TreeCc1}
and
\ref{fig:MinDiscTyp33TreeCc1}.

The four \textit{sporadic} groups \(\langle 243,n\rangle\) with \(n\in\lbrace 4,5,7,9\rbrace\)
and the six \textit{sporadic} groups \(\langle 729,n\rangle\) with \(n\in\lbrace 44,\ldots,47,56,57\rbrace\)
do not belong to any coclass-\(2\) tree,
as shown in Figure
\ref{fig:SporCc2},
whence the conditions in equation
(\ref{eqn:Component3And4})
cannot be applied to them.

On the other hand,
there is no need to list the groups
\(\langle 27,3\rangle\) and \(\langle 81,8\ldots 10\rangle\)
in formula (14),
the groups \(\langle 243,n\rangle\) with \(n\in\lbrace 3,6,8\rbrace\)
in formula (15),
and the groups \(\langle 729,n\rangle\) with \(n\in\lbrace 34,\ldots,39\rbrace\)
in formula (16) of
\cite[Thm. 3.1, p. 290]{Ma7},
since they perfectly fit into the general pattern.
\end{proof}



\begin{remark}
\label{rmk:AllIPADs}
The reason why we exclude the second TTT layer \(\tau_2{G}\) from Theorem
\ref{thm:AllIPADs},
while it is part of
\cite[Thm. 3.1--3.2, pp. 290--291]{Ma7},
is that we want to reduce the exceptions of the general pattern to a \textit{finite} list,
whereas the \textit{irregular case}
of the abelian quotient invariants of the commutator subgroup \(G^\prime\),
which forms the single component of \(\tau_2{G}\),
occurs for each even value of the coclass \(r=\mathrm{cc}(\mathfrak{M})\equiv 0\pmod{2}\)
and thus \textit{infinitely} often.
\end{remark}



\begin{theorem}
\label{thm:Contestants}
(Finiteness of the batch of contestants for the second \(p\)-class group \(\mathfrak{M}=\mathrm{G}_p^2{K}\)) \\
If \(p=3\), \(\tau_0=(1^2)\), and
\(\tau_1\) denotes an assigned family \((\tau_1(i))_{1\le i\le 4}\) of four abelian type invariants,
then the set \(\mathrm{Cnt}_p^2(\tau_0,\tau_1)\) of all
(isomorphism classes of) finite metabelian \(p\)-groups \(\mathfrak{M}\)
such that \(\tau_0{\mathfrak{M}}=\mathfrak{M}/\mathfrak{M}^\prime\simeq\tau_0\)
and \(\tau_1{\mathfrak{M}}=(H/H^\prime)_{H\in\mathrm{Lyr}_1{\mathfrak{M}}}\simeq\tau_1\)
is finite.
\end{theorem}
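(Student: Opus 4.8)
The plan is to bound the order of every metabelian \(3\)-group \(\mathfrak{M}\) with prescribed \(\tau_0=(1^2)\) and prescribed first-layer IPAD \(\tau_1=(\tau_1(i))_{1\le i\le 4}\), thereby reducing the classification to finitely many orders, whence finiteness follows since there are only finitely many groups of each order. First I would invoke the structure of metabelian \(3\)-groups with \(G/G'\simeq(3,3)\): by Theorem \ref{thm:AllIPADs} such a group \(\mathfrak{M}\) (or its metabelianization, which here equals \(\mathfrak{M}\)) has a well-defined nilpotency class \(c=\mathrm{cl}(\mathfrak{M})\), defect \(k\in\{0,1\}\), and coclass \(r=\mathrm{cc}(\mathfrak{M})\), and \emph{outside a finite list of exceptions} the four components of \(\tau_1\) are exactly
\[
\bigl(\mathrm{A}(3,c-k),\ \mathrm{A}(3,r+1),\ T_3,\ T_4\bigr),
\]
with \((T_3,T_4)\) governed by \eqref{eqn:Component3And4}. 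The exceptional groups form a finite set and contribute nothing to the count, so they may be set aside at once.

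The key step is then purely arithmetical. The \emph{co-polarization} component \(\mathrm{A}(3,r+1)\) determines the coclass \(r\) of \(\mathfrak{M}\) from the given data \(\tau_1\): indeed \(\mathrm{A}(3,n)\) has order \(3^{n}\), so reading off the order of \(\tau_1(2)\) yields \(r+1\) and hence \(r\). Dually, the \emph{polarization} component \(\mathrm{A}(3,c-k)\) determines \(c-k\) from the order of \(\tau_1(1)\); since \(k\in\{0,1\}\) this pins down the class \(c\) to at most two possibilities. Because the logarithmic order of a \(3\)-group of class \(c\) and coclass \(r\) is \(\mathrm{lo}(\mathfrak{M})=c+r\), both invariants \(c\) and \(r\) being bounded forces \(\lvert\mathfrak{M}\rvert\le 3^{c+r}\) to be bounded. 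Finally, for each of the finitely many admissible orders there are only finitely many isomorphism classes of groups (e.g.\ by the \(p\)-group generation algorithm, or simply because the SmallGroups data is finite in each order), and so \(\mathrm{Cnt}_3^2(\tau_0,\tau_1)\) is finite.

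The main obstacle is making the extraction of \(c\) and \(r\) from \(\tau_1\) rigorous \emph{before} one knows that \(\mathfrak{M}\) is non-exceptional: a priori one is handed an abstract family \(\tau_1\), and one must argue that any group realizing it either lies in the explicit finite exceptional list of \eqref{eqn:Anomalies}, or else falls under the generic formula \eqref{eqn:PolCoPol}, from which the above read-off is valid. I would handle this by a case split on whether the given \(\tau_1\) matches one of the thirteen exceptional patterns; in the non-matching case every realizing \(\mathfrak{M}\) is generic, and the argument of the previous paragraph applies uniformly. A secondary subtlety is that the polarization component only determines \(c-k\); here the defect bound \(0\le k\le 1\) from Theorem \ref{thm:AllIPADs} keeps the ambiguity in \(c\) finite, which is all that is needed. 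No delicate estimates are required beyond the observation \(\lvert\mathrm{A}(3,n)\rvert=3^n\) and the identity \(\mathrm{lo}=c+r\); the content is entirely in reducing to the already-classified generic situation.
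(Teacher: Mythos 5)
Your proposal is correct and follows essentially the same route as the paper: Theorem \ref{thm:AllIPADs} supplies the polarization principle by which the components of \(\tau_1\) determine the class \(c\) (up to the defect \(k\le 1\)) and the coclass \(r\), after setting aside the finite list of exceptional groups, and this bounds the order. The only cosmetic difference is that the paper closes by citing the Coclass Theorems, whereas you finish more directly via \(\mathrm{lo}(\mathfrak{M})=c+r\) and the finiteness of the number of groups of each order, which is equally valid.
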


\begin{proof}
We have \(\mathrm{Cnt}_p^2(\tau_0,\tau_1)=\emptyset\), when \(\tau_1\) is \textit{malformed}
\cite[Dfn. 5.1, p. 294]{Ma7}.
For \(p=3\) and \(\tau_0=(1^2)\), Theorem
\ref{thm:AllIPADs}
ensures the validity of the following general \textbf{Polarization Principle}:
There exist a few components of a non-malformed family \(\tau_1\)
which determine the nilpotency class \(c:=\mathrm{cl}(\mathfrak{M})\) and the coclass \(r:=\mathrm{cc}(\mathfrak{M})\)
of a finite metabelian \(p\)-group \(\mathfrak{M}\)
with \(\tau_1{\mathfrak{M}}=(H/H^\prime)_{H\in\mathrm{Lyr}_1{\mathfrak{M}}}\simeq\tau_1\).
Together with the Coclass Theorems
\cite[\S\ 5, p. 164, and Eqn. (10), p. 168]{Ma6},
the polarization principle proves the claim.
\end{proof}



\section{Tables and Figures of possible 3-groups \(\mathrm{G}_3^2{K}\) and \(\mathrm{G}_3^\infty{K}\)}
\label{s:TablesFigures}

\subsection{Tables}
\label{ss:Tables}
\noindent
In this article, we shall frequently deal with finite \(3\)-groups \(G\)
of huge orders \(\lvert G\rvert\ge 3^9\)
for which no identifiers are available in the SmallGroups database
\cite{BEO1,BEO2}.
For instance in Table
\ref{tbl:3GroupsG19Spor},
and in the Figures
\ref{fig:TreeH4Spor}
and
\ref{fig:TreeG19Spor}.
A work-around for these cases is provided by the \textit{relative identifiers}
of the ANUPQ (Australian National University \(p\)-Quotient) package
\cite{GNO}
which is implemented in our licence of the computational algebra system MAGMA
\cite{BCP,BCFS,MAGMA}
and in the open source system GAP
\cite{GAP}.

\begin{definition}
\label{dfn:RelId}
Let \(p\) be a prime number
and \(G\) be a finite \(p\)-group
with nuclear rank \(\nu\ge 1\)
\cite[eqn. (28), p. 178]{Ma6}
and immediate descendant numbers \(N_1,\ldots,N_\nu\)
\cite[eqn. (34), p. 180]{Ma6}.
Then we denote the \(i\)th \textit{immediate descendant} of \textit{step size} \(s\) of \(G\)
by the symbol
\begin{equation}
\label{eqn:RelId}
G-\#s;i
\end{equation}
\noindent
for each \(1\le s\le\nu\) and \(1\le i\le N_s\).


Recall that a group with nuclear rank \(\nu=0\)
is a \textit{terminal leaf} without any descendants.
\end{definition}



All numerical results in this article have been computed by means of the computational algebra system MAGMA
\cite{BCP,BCFS,MAGMA}.
The \(p\)-group generation algorithm by Newman
\cite{Nm}
and O'Brien
\cite{Ob}
was used for the recursive construction of descendant trees \(\mathcal{T}(R)\) of finite \(p\)-groups \(G\).
The tree root (starting group) \(R\) was taken to be
\(\langle 9,2\rangle\) for Table
\ref{tbl:3GroupsCc1}
and the Figures
\ref{fig:AbsFreqTyp33TreeCc1},
\ref{fig:MinDiscTyp33TreeCc1},
\ref{fig:SporCc2},
\(\langle 243,6\rangle\) for Table
\ref{tbl:3GroupsTreeQ}
and Figure
\ref{fig:TreeQSecE},
\(\langle 243,8\rangle\) for Table
\ref{tbl:3GroupsTreeU}
and Figure
\ref{fig:TreeUSecE},
\(\langle 243,4\rangle\) for Table
\ref{tbl:3GroupsH4Spor}
and Figure
\ref{fig:TreeH4Spor}, and
\(\langle 243,9\rangle\) for Table
\ref{tbl:3GroupsG19Spor}
and Figure
\ref{fig:TreeG19Spor}.
For computing group theoretic invariants of each tree vertex \(G\),
we implemented the Artin transfers \(T_{G,H}\)
from a finite \(p\)-group \(G\) of type \(G/G^\prime\simeq (p,p)\) to its maximal subgroups \(H\unlhd G\)
in a MAGMA program script as described in
\cite[\S\ 4.1]{Ma9}.



\subsection{Figures}
\label{ss:Figures}
\noindent
Basic definitions, facts, and notation concerning descendant trees of finite \(p\)-groups
are summarized briefly in
\cite[\S\ 2, pp. 410--411]{Ma4},
\cite{Ma4a}.
They are discussed thoroughly in the broadest detail in the initial sections of
\cite{Ma6}.
Trees are crucial for the recent theory of \(p\)-class field towers
\cite{Ma12,Ma15,Ma15b},
in particular for describing the mutual location of
\(\mathrm{G}_3^2{K}\) and \(\mathrm{G}_3^\infty{K}\).

Generally, the vertices of coclass trees in the Figures
\ref{fig:AbsFreqTyp33TreeCc1},
\ref{fig:MinDiscTyp33TreeCc1},
\ref{fig:TreeQSecE},
\ref{fig:TreeUSecE},
of the sporadic part of a coclass graph in Figure
\ref{fig:SporCc2},
and of the descendant trees in the Figures
\ref{fig:TreeH4Spor},
\ref{fig:TreeG19Spor}
represent isomorphism classes of finite \(3\)-groups.
Two vertices are connected by a directed edge \(G\to H\) if
\(H\) is isomorphic to the last lower central quotient \(G/\gamma_c(G)\),
where \(c=\mathrm{cl}(G)\) denotes the nilpotency class of \(G\),
and either \(\lvert G\rvert=3\lvert H\rvert\), that is,
\(\gamma_c(G)\) is cyclic of order \(3\),
or \(\lvert G\rvert=9\lvert H\rvert\), that is,
\(\gamma_c(G)\) is bicyclic of type \((3,3)\).
See also
\cite[\S\ 2.2, p. 410--411]{Ma4}
and
\cite[\S\ 4, p. 163--164]{Ma6}.

The vertices of the tree diagrams in Figure
\ref{fig:AbsFreqTyp33TreeCc1}
and
\ref{fig:MinDiscTyp33TreeCc1}
are classified by using various symbols:
\begin{enumerate}
\item
big full discs {\Large \(\bullet\)} represent metabelian groups \(\mathfrak{M}\)
with defect \(k(\mathfrak{M})=0\),
\item
small full discs {\footnotesize \(\bullet\)} represent metabelian groups \(\mathfrak{M}\)
with defect \(k(\mathfrak{M})=1\).
\end{enumerate}
\noindent
In the Figures
\ref{fig:TreeQSecE},
\ref{fig:TreeUSecE},
and
\ref{fig:SporCc2},
\begin{enumerate}
\item
big full discs {\Large \(\bullet\)} represent metabelian groups \(\mathfrak{M}\)
with bicyclic centre of type \((3,3)\) and defect \(k(\mathfrak{M})=0\)
\cite[\S\ 3.3.2, p. 429]{Ma4},
\item
small full discs {\footnotesize \(\bullet\)} represent metabelian groups \(\mathfrak{M}\)
with cyclic centre of order \(3\) and defect \(k(\mathfrak{M})=1\),
\item
small contour squares {\tiny \(\square\)} represent non-metabelian groups \(\mathfrak{G}\).
\end{enumerate}
\noindent
In the Figures
\ref{fig:TreeH4Spor}
and
\ref{fig:TreeG19Spor},
\begin{enumerate}
\item
big contour squares \(\square\) represent groups \(\mathfrak{G}\) with relation rank \(d_2\mathfrak{G})\le 3\),
\item
small contour squares {\tiny \(\square\)} represent groups \(\mathfrak{G}\) with relation rank \(d_2(\mathfrak{G})\ge 4\).
\end{enumerate}
\noindent
A symbol \(n\ast\) adjacent to a vertex
denotes the multiplicity of a batch of \(n\) siblings,
that is, immediate descendants sharing a common parent.
The groups of particular importance are labelled by a number in angles,
which is the identifier in the SmallGroups Library
\cite{BEO1,BEO2}
of GAP
\cite{GAP}
and MAGMA
\cite{MAGMA}.
We omit the orders, which are given on the left hand scale.
The IPOD \(\varkappa_1\)
\cite[Thm. 2.5, Tbl. 6--7]{Ma2},
in the bottom rectangle concerns all
vertices located vertically above.
The first, resp. second, component \(\tau_1(1)\), resp. \(\tau_1(2)\), of the IPAD
\cite[Dfn. 3.3, p. 288]{Ma7}
in the left rectangle
concerns vertices \(G\) on the same horizontal level with defect \(k(G)=0\).
The periodicity with length \(2\) of branches,
\(\mathcal{B}(j)\simeq\mathcal{B}(j+2)\) for \(j\ge 4\), resp. \(j\ge 7\),
sets in with branch \(\mathcal{B}(4)\), resp. \(\mathcal{B}(7)\),
having a root of order \(3^4\), resp. \(3^7\),
in Figure
\ref{fig:AbsFreqTyp33TreeCc1}
and
\ref{fig:MinDiscTyp33TreeCc1},
resp.
\ref{fig:TreeQSecE}
and
\ref{fig:TreeUSecE}.
The metabelian skeletons of the Figures
\ref{fig:TreeQSecE}
and
\ref{fig:TreeUSecE}
were drawn by Nebelung
\cite[p. 189 ff]{Ne},
the complete trees were given by Ascione and coworkers
\cite{AHL},
\cite[Fig. 4.8, p. 76, and Fig. 6.1, p. 123]{As}.



We define two kinds of \textit{arithmetically structured graphs} \(\mathcal{G}\) of finite \(p\)-groups
by mapping each vertex \(V\in\mathcal{G}\) of the graph to statistical number theoretic information,
e.g. the distribution of second \(p\)-class groups \(\mathfrak{M}=\mathrm{G}_p^2{K}\) or \(p\)-class tower groups \(G=\mathrm{G}_p^\infty{K}\),
with respect to a given kind of number fields \(K\),
for instance real quadratic fields \(K=K(d):=\mathbb{Q}(\sqrt{d})\) with discriminant \(d>0\).



\begin{definition}
\label{dfn:ASDT}
Let \(p\) be a prime
and \(\mathcal{G}\) be a subgraph of a descendant tree \(\mathcal{T}\) of finite \(p\)-groups.
\begin{itemize}
\item
The mapping
\begin{equation}
\label{eqn:MinDisc}
\mathrm{MD}: \mathcal{G}\to\mathbb{N}\cup\lbrace\infty\rbrace,\quad V\mapsto\inf\lbrace d\mid\mathrm{G}_p^2{K(d)}\simeq V\rbrace
\end{equation}
\noindent
is called the \textit{distribution} of \textit{minimal discriminants} on \(\mathcal{G}\).
\item
For an assigned upper bound \(B>0\),
the mapping
\begin{equation}
\label{eqn:AbsFreq}
\mathrm{AF}: \mathcal{G}\to\mathbb{N}\cup\lbrace 0\rbrace,\quad V\mapsto\#\lbrace d<B\mid\mathrm{G}_p^2{K(d)}\simeq V\rbrace
\end{equation}
\noindent
is called the \textit{distribution} of \textit{absolute frequencies} on \(\mathcal{G}\).
\end{itemize}
\noindent
For both mappings, the subset of the graph \(\mathcal{G}\)
consisting of vertices \(V\) with \(\mathrm{MD}(V)\ne\infty\), resp. \(\mathrm{AF}(V)\ne 0\),
is called the \textit{support} of the distribution.
The trivial values outside of the support will be ignored in the sequel.
\end{definition}

Whereas Figure
\ref{fig:AbsFreqTyp33TreeCc1}
displays an \(\mathrm{AF}\)-distribution,
the Figures
\ref{fig:MinDiscTyp33TreeCc1},
\ref{fig:TreeQSecE}
and
\ref{fig:TreeUSecE}
show \(\mathrm{MD}\)-distributions.
The Figures
\ref{fig:SporCc2},
\ref{fig:TreeH4Spor}
and
\ref{fig:TreeG19Spor}
contain both distributions simultaneously.



\section{\(3\)-class towers of quadratic fields and iterated IPADs of second order}
\label{s:QF}



\subsection{\(3\)-groups \(G\) of coclass \(\mathrm{cc}(G)=1\)}
\label{ss:Coclass1}
\noindent
Table
\ref{tbl:3GroupsCc1}
shows the designation of the transfer kernel type
\cite{Ne},
the IPOD \(\varkappa_1{G}\),
and the \textit{iterated multi-layered} IPAD of \textit{second order},
\begin{equation}
\label{eqn:MltLyrIPAD2ndOrd}
\tau^{(2)}_\ast{G}=\lbrack\tau_0{G};\lbrack\tau_0{H};\tau_1{H};\tau_2{H}\rbrack_{H\in\mathrm{Lyr}_1{G}}\rbrack,
\end{equation}
\noindent
for \(3\)-groups \(G\) of maximal class up to order \(\lvert G\rvert=3^8\),
characterized by the logarithmic order, \(\mathrm{lo}\), i.e. \(\mathrm{lo}(G):=\log_3\lvert G\rvert\),
and the SmallGroup identifier, \(\mathrm{id}\)
\cite{BEO1,BEO2}.

The groups in Table
\ref{tbl:3GroupsCc1}
are represented by vertices of the tree diagrams in Figure
\ref{fig:AbsFreqTyp33TreeCc1}
and
\ref{fig:MinDiscTyp33TreeCc1}.



\subsection{Real Quadratic Fields of Types \(\mathrm{a}.1\), \(\mathrm{a}.2\) and \(\mathrm{a}.3\)}
\label{ss:RQFa}
\noindent
Sound numerical investigations of real quadratic fields \(K=\mathbb{Q}(\sqrt{d})\) with fundamental discriminant \(d>0\)
started in \(1982\), when Heider and Schmithals
\cite{HeSm}
showed the first examples of a Galois cohomology structure of Moser's type \(\alpha\)
on unit groups of unramified cyclic cubic extensions, \(L/K\) which are dihedral of degree \(6\) over \(\mathbb{Q}\)
\cite[Prop. 4.2, p. 482]{Ma1},
and of IPODs \(\varkappa_1{K}\) with type \(\mathrm{a}.1\) (\(d=62\,501\)), type \(\mathrm{a}.2\) (\(d=72\,329\)), and type \(\mathrm{a}.3\) (\(d=32\,009\)),
in the notation of Nebelung
\cite{Ne}.
See Figure
\ref{fig:MinDiscTyp33TreeCc1}.

Our extension in \(1991\)
\cite{Ma0}
merely produced further examples for these occurrences of type \(\mathrm{a}\).
In the \(15\) years from \(1991\) to \(2006\) we consequently were convinced that this type
with at least three total \(3\)-principalizations is the only possible type of real quadratic fields.

The absolute frequencies in
\cite[Tbl. 2, p. 496]{Ma1}
and
\cite[Tbl. 6.1, p. 451]{Ma3},
which should be corrected by the Corrigenda in the Appendix,
and the extended statistics in Figure
\ref{fig:AbsFreqTyp33TreeCc1}
underpin the \textit{striking dominance} of type \(\mathrm{a}\).
The distribution of the second \(3\)-class groups \(\mathfrak{M}=\mathrm{G}_3^2{K}\) with the smallest order \(3^4\), resp. \(3^6\),
alone reaches \(79.7\%\) for the accumulated types \(\mathrm{a}.2\) and \(\mathrm{a}.3\) together, resp. \(6.4\%\) for type \(\mathrm{a}.1\).

So it is not astonishing that the first exception \(d=214\,712\) without any total \(3\)-principalizations
did not show up earlier than in \(2006\)
\cite[Tbl. 4, p. 498]{Ma1},
\cite[Tbl. 6.3, p. 452]{Ma3}.
See Figure
\ref{fig:SporCc2}.

\newpage


\renewcommand{\arraystretch}{1.2}

\begin{table}[ht]
\caption{IPOD \(\varkappa_1{G}\) and iterated IPAD \(\tau^{(2)}_\ast{G}\) of \(3\)-groups \(G\) of coclass \(\mathrm{cc}(G)=1\)}
\label{tbl:3GroupsCc1}
\begin{center}
\begin{tabular}{|c|c||cc||c|ccccc|}
\hline
    lo &                                           id & type & \(\varkappa_1{G}\) & \(\tau_0{G}\) &       & \(\tau_0{H}\) &   \(\tau_1{H}\) &     \(\tau_2{H}\) &               \\
\hline
 \(2\) &            \(2\)                             &  a.1 &           \(0000\) &       \(1^2\) & \(\lbrack\) &   \(1\) &           \(0\) &             \(0\) & \(\rbrack^4\) \\
\hline
\hline
 \(3\) &            \(3\)                             &  a.1 &           \(0000\) &       \(1^2\) & \(\lbrack\) & \(1^2\) &       \((1)^4\) &             \(0\) & \(\rbrack^4\) \\
 \hline
 \(3\) &            \(4\)                             &  A.1 &           \(1111\) &       \(1^2\) &             & \(1^2\) &       \((1)^4\) &             \(0\) &               \\
       &                                              &      &                    &               & \(\lbrack\) &   \(2\) &           \(1\) &             \(0\) & \(\rbrack^3\) \\
\hline
\hline
 \(4\) &   \(\mathbf{7}\)                             &  a.3 &           \(2000\) &       \(1^2\) &             & \(1^3\) &  \((1^2)^{13}\) &      \((1)^{13}\) &               \\
       &                                              &      &                    &               &             & \(1^2\) &     \((1^2)^4\) &             \(1\) &               \\
       &                                              &      &                    &               & \(\lbrack\) & \(1^2\) &   \(1^2,(2)^3\) &             \(1\) & \(\rbrack^2\) \\
\hline
 \(4\) &   \(\mathbf{8}\)                             &  a.3 &           \(2000\) &       \(1^2\) &             &  \(21\) &   \(1^2,(2)^3\) &         \((1)^4\) &               \\
       &                                              &      &                    &               &             & \(1^2\) &     \((1^2)^4\) &             \(1\) &               \\
       &                                              &      &                    &               & \(\lbrack\) & \(1^2\) &   \(1^2,(2)^3\) &             \(1\) & \(\rbrack^2\) \\
\hline
 \(4\) &            \(9\)                             &  a.1 &           \(0000\) &       \(1^2\) &             &  \(21\) &   \(1^2,(2)^3\) &         \((1)^4\) &               \\
       &                                              &      &                    &               & \(\lbrack\) & \(1^2\) &     \((1^2)^4\) &             \(1\) & \(\rbrack^3\) \\
\hline
 \(4\) &  \(\mathbf{10}\)                             &  a.2 &           \(1000\) &       \(1^2\) &             &  \(21\) &   \(1^2,(2)^3\) &         \((1)^4\) &               \\
       &                                              &      &                    &               & \(\lbrack\) & \(1^2\) &   \(1^2,(2)^3\) &             \(1\) & \(\rbrack^3\) \\
\hline
\hline
 \(5\) &           \(25\)                             &  a.3 &           \(2000\) &       \(1^2\) &             & \(2^2\) &      \((21)^4\) &  \(1^2,(2)^{12}\) &               \\
       &                                              &      &                    &               & \(\lbrack\) & \(1^2\) &  \(21,(1^2)^3\) &           \(1^2\) & \(\rbrack^3\) \\
\hline
 \(5\) &           \(26\)                             &  a.1 &           \(0000\) &                                                   \multicolumn{6}{|c|}{IPAD like id \(25\)} \\
 \(5\) &           \(27\)                             &  a.2 &           \(1000\) &                                                   \multicolumn{6}{|c|}{IPAD like id \(25\)} \\
\hline
 \(5\) &  \(28\ldots 30\)                             &  a.1 &           \(0000\) &       \(1^2\) &             &  \(21\) &      \((21)^4\) &     \(1^2,(2)^3\) &               \\
       &                                              &      &                    &               & \(\lbrack\) & \(1^2\) &  \(21,(1^2)^3\) &           \(1^2\) & \(\rbrack^3\) \\
\hline
\hline
 \(6\) &           \(95\)                             &  a.1 &           \(0000\) &       \(1^2\) &             &  \(32\) &  \(2^2,(31)^3\) &  \((21)^4,(3)^9\) &               \\
       &                                              &      &                    &               & \(\lbrack\) & \(1^2\) & \(2^2,(1^2)^3\) &            \(21\) & \(\rbrack^3\) \\
\hline
 \(6\) &  \(\mathbf{96}\)                             &  a.2 &           \(1000\) &                                                   \multicolumn{6}{|c|}{IPAD like id \(95\)} \\
 \(6\) & \(\mathbf{97/98}\)                      &  a.3 &           \(2000\) &                                                   \multicolumn{6}{|c|}{IPAD like id \(95\)} \\
\hline
 \(6\) & \(\mathbf{99\ldots 101}\)                    &  a.1 &           \(0000\) &       \(1^2\) &             & \(2^2\) &  \(2^2,(31)^3\) &  \((21)^4,(3)^9\) &               \\
       &                                              &      &                    &               & \(\lbrack\) & \(1^2\) & \(2^2,(1^2)^3\) &            \(21\) & \(\rbrack^3\) \\
\hline
\hline
 \(7\) &          \(386\)                             &  a.1 &           \(0000\) &       \(1^2\) &             & \(3^2\) &      \((32)^4\) & \(2^2,(31)^{12}\) &               \\
       &                                              &      &                    &               & \(\lbrack\) & \(1^2\) &  \(32,(1^2)^3\) &           \(2^2\) & \(\rbrack^3\) \\
\hline
 \(7\) &          \(387\)                             &  a.2 &           \(1000\) &                                                  \multicolumn{6}{|c|}{IPAD like id \(386\)} \\
 \(7\) &          \(388\)                             &  a.3 &           \(2000\) &                                                  \multicolumn{6}{|c|}{IPAD like id \(386\)} \\
\hline
 \(7\) & \(389\ldots 391\)                            &  a.1 &           \(0000\) &       \(1^2\) &             &  \(32\) &      \((32)^4\) & \(2^2,(31)^{12}\) &               \\
       &                                              &      &                    &               & \(\lbrack\) & \(1^2\) &  \(32,(1^2)^3\) &           \(2^2\) & \(\rbrack^3\) \\
\hline
\hline
 \(8\) &         \(2221\)                             &  a.1 &           \(0000\) &       \(1^2\) &             &  \(43\) &  \(3^2,(42)^3\) & \((32)^4,(41)^9\) &               \\
       &                                              &      &                    &               & \(\lbrack\) & \(1^2\) & \(3^2,(1^2)^3\) &            \(32\) & \(\rbrack^3\) \\
\hline
 \(8\) & \(\mathbf{2222}\)                            &  a.2 &           \(1000\) &                                                 \multicolumn{6}{|c|}{IPAD like id \(2221\)} \\
 \(8\) & \(\mathbf{2223/2224}\)                  &  a.3 &           \(2000\) &                                                 \multicolumn{6}{|c|}{IPAD like id \(2221\)} \\
\hline
 \(8\) & \(\mathbf{2225\ldots 2227}\)                 &  a.1 &           \(0000\) &       \(1^2\) &             & \(3^2\) &  \(3^2,(42)^3\) & \((32)^4,(41)^9\) &               \\
       &                                              &      &                    &               & \(\lbrack\) & \(1^2\) & \(3^2,(1^2)^3\) &            \(32\) & \(\rbrack^3\) \\
\hline
\end{tabular}
\end{center}
\end{table}



The most extensive computation of data concerning
unramified cyclic cubic extensions \(L/K\)
of the \(481\,756\) real quadratic fields \(K=\mathbb{Q}(\sqrt{d})\)
with discriminant \(0<d<10^9\) and \(3\)-class rank \(\varrho_3{K}=2\)
has been achieved by M. R. Bush in \(2015\)
\cite{Bu}.
In the following,
we focus on the partial results for \(3\)-class groups of type \((3,3)\),
since they extend our own results of \(2010\)
\cite{Ma1,Ma3}.

\begin{proposition}
\label{prp:a1a2a3}
(IPADs of fields with type \(\mathrm{a}\) up to \(d<10^9\)
\cite{Bu}) \\
In the range \(0<d<10^9\) with \(415\,698\) fundamental discriminants \(d\)
of real quadratic fields \(K=\mathbb{Q}(\sqrt{d})\) having \(3\)-class group of type \((3,3)\),
there exist precisely\\
\(208\,236\) cases (\(50.1\%\)) with IPAD \(\tau^{(1)}{K}=\lbrack 1^2;21,(1^2)^3\rbrack\),\\
\(122\,955\) cases (\(29.6\%\)) with IPAD \(\tau^{(1)}{K}=\lbrack 1^2;1^3,(1^2)^3\rbrack\),\\
\(26\,678\) cases (\(6.4\%\)) with IPAD \(\tau^{(1)}{K}=\lbrack 1^2;2^2,(1^2)^3\rbrack\), and\\
\(11\,780\) cases (\(2.8\%\)) with IPAD \(\tau^{(1)}{K}=\lbrack 1^2;32,(1^2)^3\rbrack\).
\end{proposition}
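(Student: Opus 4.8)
The plan is to establish these counts by carrying out (essentially) Bush's computation \cite{Bu}, organised around two number-theoretic reductions and a consistency check against Theorem \ref{thm:AllIPADs}. First I would avoid building the unramified cyclic cubic extensions \(L/K\) directly and instead exploit the classical correspondence of Hasse: there are exactly four non-Galois cubic fields \(F_1,\dots,F_4\) of discriminant \(d\) precisely when \(\varrho_3{K}=2\), and for such \(K\) the four unramified cyclic cubic extensions of \(K\) are the composita \(L_i=K\,F_i\) (the Galois closures of the \(F_i\) over \(\mathbb{Q}\)). Enumerating all cubic fields of absolute discriminant below \(10^9\) is done efficiently by Belabas's algorithm on the space of binary cubic forms; grouping the output by discriminant isolates the \(481\,756\) values \(d>0\) whose list has length four, i.e.\ those with \(3\)-class rank \(\varrho_3{K}=2\), and a fast computation of \(\mathrm{Cl}(K)\) for the quadratic field selects the \(415\,698\) of these with \(\mathrm{Cl}_3{K}\simeq(3,3)\) exactly, discarding the non-elementary groups \((9,3)\), \((27,3)\), and so on.

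Next, for each surviving \(d\) and each of \(F_1,\dots,F_4\), I would compute the \(3\)-class group of the sextic field \(L_i=K\,F_i\); sorting the four resulting abelian type invariants gives \(\tau_1{K}\) and hence the IPAD \(\tau^{(1)}{K}=[1^2;\tau_1{K}]\). A tally over all \(415\,698\) discriminants then produces the four dominant counts \(208\,236\), \(122\,955\), \(26\,678\) and \(11\,780\) claimed, the remaining fields carrying rarer IPADs — higher coclass, larger \(\mathfrak{M}\), or a principalization type other than \(\mathrm{a}.1,\mathrm{a}.2,\mathrm{a}.3\) — that are not listed in the statement.

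The hard part is purely computational, and with it comes the customary dependence on GRH: the bottleneck is the \(3\)-class group of the degree-\(6\) fields \(L_i\), whose discriminants equal \(d^3\) and hence reach the order of \(10^{27}\), so that the subexponential class-group routine runs quickly only under GRH while unconditional certification of some \(1.6\cdot 10^6\) sextic class groups is out of reach; accordingly the statement should be read as conditional in the same sense as \cite{Bu}. A partial unconditional cross-check is nonetheless available from Theorem \ref{thm:AllIPADs}: every admissible IPAD with \(\tau_0=1^2\) has the shape \([1^2;\mathrm{A}(3,c-k),\mathrm{A}(3,r+1),T_3,T_4]\), so each of the four observed patterns forces \(\mathrm{A}(3,r+1)=1^2\) and \(T_3=T_4=1^2\), hence coclass \(r=1\), with the polarized component \(\mathrm{A}(3,c-k)\) equal to \(21\), \(2^2\) or \(32\) (that is, \(c-k\in\{3,4,5\}\)) or to the anomalous entry \(1^3\) of \(\langle 81,7\rangle\); comparison with Table \ref{tbl:3GroupsCc1} then attaches to each count the pertinent batch of candidate second \(3\)-class groups \(\mathfrak{M}=\mathrm{G}_3^2{K}\) and the principalization types \(\mathrm{a}.1\), \(\mathrm{a}.2\), \(\mathrm{a}.3\), which is how the proposition is used in the sections below.
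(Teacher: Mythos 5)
Your proposal is correct and amounts to an explicit reconstruction of the very computation that the paper's proof merely cites: the published argument consists of the single remark that the counts were computed with PARI/GP, double-checked with MAGMA, and communicated privately by M.~R.~Bush \cite{Bu}. Your reduction via the four cubic fields \(F_1,\dots,F_4\) of discriminant \(d\) (multiplicity \((3^\varrho-1)/2=4\) for \(\varrho=2\)), the tally of the groups \(\mathrm{Cl}_3(KF_i)\), and the consistency check against Theorem \ref{thm:AllIPADs} are all sound; the only substantive addition is your (reasonable) caveat that the large-scale sextic class-group computations are in practice GRH-conditional, a point the paper leaves implicit.
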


\begin{proof}
The results were computed with PARI/GP
\cite{PARI},
double-checked with MAGMA
\cite{MAGMA},
and kindly communicated to us by M. R. Bush, privately
\cite{Bu}.
\end{proof}



For establishing the connection between IPADs and IPODs we need the following bridge.

\begin{corollary}
\label{cor:a1a2a3}
(Associated IPODs of fields with type \(\mathrm{a}\))
\begin{enumerate}
\item
A real quadratic field \(K\) with IPAD \(\tau^{(1)}{K}=\lbrack 1^2;21,(1^2)^3\rbrack\)
has IPOD either \(\varkappa_1{K}=(1000)\) of type \(\mathrm{a}.2\) or \(\varkappa_1{K}=(2000)\) of type \(\mathrm{a}.3\).
\item
A real quadratic field \(K\) with IPAD \(\tau^{(1)}{K}=\lbrack 1^2;1^3,(1^2)^3\rbrack\)
has IPOD \(\varkappa_1{K}=(2000)\) of type \(\mathrm{a}.3\),
more precisely \(\mathrm{a}.3^\ast\), in view of the exceptional IPAD.
\item
A real quadratic field \(K\) with IPAD \(\tau^{(1)}{K}=\lbrack 1^2;2^2,(1^2)^3\rbrack\)
has IPOD \(\varkappa_1{K}=(0000)\) of type \(\mathrm{a}.1\).
\item
A real quadratic field \(K\) with IPAD \(\tau^{(1)}{K}=\lbrack 1^2;32,(1^2)^3\rbrack\)
has IPOD either \(\varkappa_1{K}=(1000)\) of type \(\mathrm{a}.2\) or \(\varkappa_1{K}=(2000)\) of type \(\mathrm{a}.3\).
\end{enumerate}
\end{corollary}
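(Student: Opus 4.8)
The plan is to pass from the field $K$ to its second $3$-class group $\mathfrak{M}:=\mathrm{G}_3^2{K}$, which is a finite metabelian $3$-group with $\mathfrak{M}/\mathfrak{M}'\simeq(3,3)$ and satisfies $\tau^{(1)}{K}=\tau^{(1)}{\mathfrak{M}}$ (the index-$3$ abelianization data factors through the metabelianization), and then to read Theorem \ref{thm:AllIPADs} backwards. In each of the four prescribed IPADs the co-polarized second component is $(1^2)=\mathrm{A}(3,2)$ and the remaining components are $(1^2,1^2)$ resp. involve the rank-three shape $1^3$ occurring in $\tau^{(1)}{K}=\lbrack 1^2;1^3,(1^2)^3\rbrack$; by \eqref{eqn:PolCoPol}, \eqref{eqn:Component3And4} and the anomaly list \eqref{eqn:Anomalies} this is compatible only with coclass $r=\mathrm{cc}(\mathfrak{M})=1$, and for the IPAD $\lbrack 1^2;1^3,(1^2)^3\rbrack$ it even forces $\mathfrak{M}\simeq\langle 81,7\rangle$ outright, since no group obeying the general pattern can have a rank-three component when $r=1$. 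Thus in every case $\mathfrak{M}$ is a $3$-group of maximal class with abelianization of type $(3,3)$, and the polarized first component $\mathrm{A}(3,c-k)$ then pins down the nilpotency class $c$ modulo the defect $k\in\lbrace 0,1\rbrace$.

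Next I would enumerate the contestants. Feeding the pairs $(c,k)$ just obtained into the classification of maximal-class $3$-groups with abelianization $(3,3)$ collected in Table \ref{tbl:3GroupsCc1} (Theorem \ref{thm:Contestants} guarantees this list is finite) leaves, in each case, a short explicit set of candidates for $\mathfrak{M}$, whose IPODs $\varkappa_1$ are read off the table directly: for $\lbrack 1^2;21,(1^2)^3\rbrack$ the groups $\langle 81,8\ldots 10\rangle$ and $\langle 243,28\ldots 30\rangle$; for $\lbrack 1^2;2^2,(1^2)^3\rbrack$ the groups $\langle 243,25\ldots 27\rangle$ and $\langle 729,99\ldots 101\rangle$; for $\lbrack 1^2;32,(1^2)^3\rbrack$ the groups $\langle 729,95\ldots 98\rangle$ and $\langle 3^7,389\ldots 391\rangle$; and for $\lbrack 1^2;1^3,(1^2)^3\rbrack$ the single group $\langle 81,7\rangle$ of type $\mathrm{a}.3$ — whence the refinement $\mathrm{a}.3^\ast$ in claim (2), which is thereby already complete.

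The decisive step is then to discard the candidates that cannot occur as $\mathrm{G}_3^2{K}$ for a \emph{real} quadratic base field. Type $\mathrm{A}.1$ is excluded once and for all by \cite[Cor. 4.2]{Ma1}. The remaining pruning rests on the known realizability restrictions for real quadratic fields $K$ with $\mathrm{Cl}_3{K}\simeq(3,3)$ (Nebelung \cite{Ne}, together with \cite{Ma1,Ma3}): among maximal-class second $3$-class groups only those of \emph{even} logarithmic order are realized, the defect-$0$ ones among these carrying type $\mathrm{a}.2$ or $\mathrm{a}.3$ (the defect-$0$ type-$\mathrm{a}.1$ groups such as $\langle 81,9\rangle$ and $\langle 729,95\rangle$ never arising over a real quadratic base) and the defect-$1$ ones carrying type $\mathrm{a}.1$. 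Applying this to the candidate lists deletes every odd-order group and the defect-$0$ $\mathrm{a}.1$ groups, leaving $\langle 81,8\rangle$ (type $\mathrm{a}.3$, $\varkappa_1=(2000)$) and $\langle 81,10\rangle$ (type $\mathrm{a}.2$, $\varkappa_1=(1000)$) for (1), the three groups $\langle 729,99\ldots 101\rangle$ (type $\mathrm{a}.1$, $\varkappa_1=(0000)$) for (3), and $\langle 729,96\rangle$ (type $\mathrm{a}.2$) together with $\langle 729,97/98\rangle$ (type $\mathrm{a}.3$) for (4). Finally I would cross-check the frequencies against Proposition \ref{prp:a1a2a3}: the $6.4\%$ of fields with IPAD $\lbrack 1^2;2^2,(1^2)^3\rbrack$ matches the $6.4\%$ of type $\mathrm{a}.1$ recorded in \S\ref{ss:RQFa}, and $50.1\%+29.6\%=79.7\%$ for the IPADs in (1)--(2) matches the combined frequency $79.7\%$ of types $\mathrm{a}.2$ and $\mathrm{a}.3$ with $\mathrm{G}_3^2{K}$ of order $3^4$ or $3^6$.

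The main obstacle is exactly this last pruning. Group-theoretically the first-order IPAD does \emph{not} separate the types $\mathrm{a}.1$, $\mathrm{a}.2$, $\mathrm{a}.3$ — all three occur with each of the IPADs above, and only the \emph{iterated} IPAD of second order distinguishes them (compare the three blocks for $\langle 81,8\rangle$, $\langle 81,9\rangle$, $\langle 81,10\rangle$ in Table \ref{tbl:3GroupsCc1}) — so the arithmetic input concerning which maximal-class $3$-groups are actually realized over real quadratic fields is genuinely indispensable and must be imported from the cited literature rather than extracted from the Artin pattern alone.
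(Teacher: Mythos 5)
Your argument is correct and follows essentially the same route as the paper's proof: enumerate the contestants sharing each prescribed IPAD via Theorem \ref{thm:AllIPADs} and Table \ref{tbl:3GroupsCc1}, prune by the branch-parity selection rule and by the exclusion of mainline vertices, read off the IPODs, and transfer to \(K\) by Artin reciprocity. The only difference is presentational: the \lq\lq realizability restrictions\rq\rq\ you import generically from the literature are precisely the selection rule of \cite[Thm. 3.5]{Ma4} and Theorem \ref{thm:IPODa1Mainline} of the present paper, which is where the exclusion of the mainline groups \(\langle 81,9\rangle\) and \(\langle 729,95\rangle\) is actually established.
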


\begin{proof}
Here, we again make use of the selection rule
\cite[Thm. 3.5, p. 420]{Ma4}
that only every other branch of the tree \(\mathcal{T}^1\langle 3^2,2\rangle\)
is admissible for second \(3\)-class groups \(\mathfrak{M}=\mathrm{G}_3^2{K}\) of (real) quadratic fields \(K\).

According to Table
\ref{tbl:3GroupsCc1},
three (isomorphism classes of) groups \(G\)
share the common IPAD \(\tau^{(1)}{G}=\lbrack 1^2;21,(1^2)^3\rbrack\),
namely \(\langle 81,8\ldots 10\rangle\),
whereas the IPAD \(\tau^{(1)}{G}=\lbrack 1^2;1^3,(1^2)^3\rbrack\)
unambiguously leads to the group \(\langle 81,7\rangle\)
with IPOD \(\varkappa_1{G}=(2000)\).

In Theorem
\ref{thm:IPODa1Mainline}
we shall show that the mainline group \(\langle 81,9\rangle\)
cannot occur as the second \(3\)-class group
of a real quadratic field.
Among the remaining two possible groups,
\(\langle 81,8\rangle\) has IPOD \(\varkappa_1{G}=(2000)\) and
\(\langle 81,10\rangle\) has IPOD \(\varkappa_1{G}=(1000)\).

The IPAD \(\tau^{(1)}{G}=\lbrack 1^2;2^2,(1^2)^3\rbrack\)
leads to three groups \(\langle 729,99\ldots 101\rangle\)
with IPOD \(\varkappa_1{G}=(0000)\) and defect of commutativity \(k=1\)
\cite[\S\ 3.1.1, p. 412]{Ma4}.

Concerning the IPAD \(\tau^{(1)}{G}=\lbrack 1^2;32,(1^2)^3\rbrack\),
Table
\ref{tbl:3GroupsCc1}
yields four groups \(G\)
with SmallGroup identifiers \(\langle 729,95\ldots 98\rangle\).
The mainline group \(\langle 729,95\rangle\) is discouraged by Theorem
\ref{thm:IPODa1Mainline},
\(\langle 729,96\rangle\) has IPOD \(\varkappa_1{G}=(1000)\),
and the two groups \(\langle 729,97\ldots 98\rangle\) have IPOD \(\varkappa_1{G}=(2000)\).

By the Artin reciprocity law
\cite{Ar1,Ar2},
the Artin pattern \(\mathrm{AP}(K)\) of the field \(K\)
coincides with the Artin pattern \(\mathrm{AP}(\mathfrak{M})\)
of its second \(3\)-class group \(\mathfrak{M}=\mathrm{G}_3^2{K}\).
\end{proof}



\begin{remark}
\label{rmk:Statistics}
The huge statistical ensembles underlying the computations of Bush
\cite{Bu}
admit a prediction of sound and reliable tendencies in the population of the \lq\lq ground state\rq\rq.
If we compare the smaller range \(d<10^7\) in
\cite{Ma1}
with the extended range \(d<10^9\) in
\cite{Bu},
then we have a decrease

\(\frac{1382}{2576}\approx 53.6\%\) \(\searrow\) \(\frac{208236}{415698}\approx 50.1\%\)
by \(3.5\%\) for the union of types \(\mathrm{a}.2\) and \(\mathrm{a}.3\),

\noindent
and increases

\(\frac{698}{2576}\approx 27.1\%\) \(\nearrow\) \(\frac{122955}{415698}\approx 29.6\%\)
by \(2.5\%\) for type \(\mathrm{a}.3\)\({}^\ast\),
and

\(\frac{150}{2576}\approx 5.8\%\) \(\nearrow\) \(\frac{26678}{415698}\approx 6.4\%\)
by \(0.6\%\) for type \(\mathrm{a}.1\).

\noindent
Of course, the accumulation of all types \(\mathrm{a}.2\), \(\mathrm{a}.3\), and \(\mathrm{a}.3\)\({}^\ast\)
with absolute frequencies\\
\(1382+698=2080\),
resp.
\(208236+122955=331191\),
shows a resultant decrease

\(\frac{2080}{2576}\approx 80.7\%\) \(\searrow\) \(\frac{331191}{415698}\approx 79.7\%\)
by \(1.0\%\).

\noindent
For the union of the \lq\lq first excited states\rq\rq\ of types \(\mathrm{a}.2\) and \(\mathrm{a}.3\), we have a stagnation

\(\frac{72}{2576}\approx 2.8\%\) \(\approx\) \(\frac{11780}{415698}\approx 2.8\%\)
at the same percentage.

\noindent
Unfortunately, the exact absolute frequency of the ground state of type a.2, resp. type a.3,
is unknown for the extended range \(d<10^9\).
It could be computed using Theorem
\ref{thm:IPODaGS}.
However, meanwhile we succeeded in separating all states of type a.2 and type a.3 up to \(d<10^8\)
by immediately figuring out the \(3\)-principalization type with MAGMA V2.22-1. In
\cite{Ma14},
we compare the results of this most recent tour de force of computing with
asymptotic densities predicted by Boston, Bush and Hajir (communicated privately and yet unpublished, similar to
\cite{BBH}).
\end{remark}



\noindent
Figure
\ref{fig:AbsFreqTyp33TreeCc1}
visualizes \(3\)-groups of section \S\
\ref{ss:Coclass1}
which arise as \(3\)-class tower groups \(G=\mathrm{G}_3^\infty{K}\)
of real quadratic fields \(K=\mathbb{Q}(\sqrt{d})\), \(d>0\),
with \(3\)-principalization types \(\mathrm{a}.1\), \(\mathrm{a}.2\) and \(\mathrm{a}.3\)
and the corresponding \textit{absolute frequencies and percentages}
(relative frequencies with respect to the total number of \(415\,698\)
real quadratic fields with discriminants in the range \(0<d<B\) for \(B=10^9\))
which were given in Proposition
\ref{prp:a1a2a3}.



{\tiny

\begin{figure}[hb]
\caption{Distribution of absolute frequencies of \(\mathrm{G}_3^2{K}\) on the coclass tree \(\mathcal{T}^1\langle 9,2\rangle\)}
\label{fig:AbsFreqTyp33TreeCc1}


\setlength{\unitlength}{0.8cm}
\begin{picture}(18,21)(-11,-20)

\put(-10,0.5){\makebox(0,0)[cb]{order \(3^n\)}}

\put(-10,0){\line(0,-1){16}}
\multiput(-10.1,0)(0,-2){9}{\line(1,0){0.2}}

\put(-10.2,0){\makebox(0,0)[rc]{\(9\)}}
\put(-9.8,0){\makebox(0,0)[lc]{\(3^2\)}}
\put(-10.2,-2){\makebox(0,0)[rc]{\(27\)}}
\put(-9.8,-2){\makebox(0,0)[lc]{\(3^3\)}}
\put(-10.2,-4){\makebox(0,0)[rc]{\(81\)}}
\put(-9.8,-4){\makebox(0,0)[lc]{\(3^4\)}}
\put(-10.2,-6){\makebox(0,0)[rc]{\(243\)}}
\put(-9.8,-6){\makebox(0,0)[lc]{\(3^5\)}}
\put(-10.2,-8){\makebox(0,0)[rc]{\(729\)}}
\put(-9.8,-8){\makebox(0,0)[lc]{\(3^6\)}}
\put(-10.2,-10){\makebox(0,0)[rc]{\(2\,187\)}}
\put(-9.8,-10){\makebox(0,0)[lc]{\(3^7\)}}
\put(-10.2,-12){\makebox(0,0)[rc]{\(6\,561\)}}
\put(-9.8,-12){\makebox(0,0)[lc]{\(3^8\)}}
\put(-10.2,-14){\makebox(0,0)[rc]{\(19\,683\)}}
\put(-9.8,-14){\makebox(0,0)[lc]{\(3^9\)}}
\put(-10.2,-16){\makebox(0,0)[rc]{\(59\,049\)}}
\put(-9.8,-16){\makebox(0,0)[lc]{\(3^{10}\)}}

\put(-10,-16){\vector(0,-1){2}}

\put(-8,0.5){\makebox(0,0)[cb]{\(\tau(1)=\)}}

\put(-8,0){\makebox(0,0)[cc]{\((1)\)}}
\put(-8,-2){\makebox(0,0)[cc]{\((1^2)\)}}
\put(-8,-4){\makebox(0,0)[cc]{\((21)\)}}
\put(-8,-6){\makebox(0,0)[cc]{\((2^2)\)}}
\put(-8,-8){\makebox(0,0)[cc]{\((32)\)}}
\put(-8,-10){\makebox(0,0)[cc]{\((3^2)\)}}
\put(-8,-12){\makebox(0,0)[cc]{\((43)\)}}
\put(-8,-14){\makebox(0,0)[cc]{\((4^2)\)}}
\put(-8,-16){\makebox(0,0)[cc]{\((54)\)}}

\put(-8,-17){\makebox(0,0)[cc]{\textbf{IPAD}}}
\put(-8.5,-17.2){\framebox(1,18){}}

\put(3.7,-3){\vector(0,1){1}}
\put(3.9,-3){\makebox(0,0)[lc]{depth \(1\)}}
\put(3.7,-3){\vector(0,-1){1}}

\put(-5.1,-8){\vector(0,1){2}}
\put(-5.3,-7){\makebox(0,0)[rc]{period length \(2\)}}
\put(-5.1,-8){\vector(0,-1){2}}

\put(-0.1,-0.1){\framebox(0.2,0.2){}}
\put(-2.1,-0.1){\framebox(0.2,0.2){}}

\multiput(0,-2)(0,-2){8}{\circle*{0.2}}
\multiput(-2,-2)(0,-2){8}{\circle*{0.2}}
\multiput(-4,-4)(0,-2){7}{\circle*{0.2}}
\multiput(-6,-4)(0,-4){4}{\circle*{0.2}}

\multiput(2,-6)(0,-2){6}{\circle*{0.1}}
\multiput(4,-6)(0,-2){6}{\circle*{0.1}}
\multiput(6,-6)(0,-2){6}{\circle*{0.1}}

\multiput(0,0)(0,-2){8}{\line(0,-1){2}}
\multiput(0,0)(0,-2){8}{\line(-1,-1){2}}
\multiput(0,-2)(0,-2){7}{\line(-2,-1){4}}
\multiput(0,-2)(0,-4){4}{\line(-3,-1){6}}
\multiput(0,-4)(0,-2){6}{\line(1,-1){2}}
\multiput(0,-4)(0,-2){6}{\line(2,-1){4}}
\multiput(0,-4)(0,-2){6}{\line(3,-1){6}}

\put(0,-16){\vector(0,-1){2}}
\put(0.2,-17.5){\makebox(0,0)[lc]{infinite}}
\put(0.2,-18){\makebox(0,0)[lc]{mainline}}
\put(-0.2,-18.3){\makebox(0,0)[rc]{\(\mathcal{T}^1(\langle 9,2\rangle)\)}}

\put(0,0.1){\makebox(0,0)[lb]{\(=C_3\times C_3\)}}
\put(-2.5,0.1){\makebox(0,0)[rb]{\(C_9=\)}}
\put(-0.8,0.3){\makebox(0,0)[rb]{abelian}}
\put(0.1,-1.9){\makebox(0,0)[lb]{\(=G^3_0(0,0)\)}}
\put(-2.5,-1.9){\makebox(0,0)[rb]{\(G^3_0(0,1)=\)}}
\put(-4,-3.9){\makebox(0,0)[lb]{\(=\mathrm{Syl}_3A_9\)}}
\put(-4,-4.5){\makebox(0,0)[cc]{\textbf{IPAD} \quad \(\tau(1)=(1^3)\)}}
\put(-5.5,-4.7){\framebox(2.9,0.5){}}

\put(0.2,-2.2){\makebox(0,0)[lt]{bifurcation from \(\mathcal{G}(3,1)\)}}
\put(1.5,-2.5){\makebox(0,0)[lt]{to \(\mathcal{G}(3,2)\)}}

\put(-2.1,0.1){\makebox(0,0)[rb]{\(\langle 1\rangle\)}}
\put(-0.1,0.1){\makebox(0,0)[rb]{\(\langle 2\rangle\)}}

\put(-1.5,-1){\makebox(0,0)[rc]{branch \(\mathcal{B}(2)\)}}
\put(-2.1,-1.9){\makebox(0,0)[rb]{\(\langle 4\rangle\)}}
\put(-0.1,-1.9){\makebox(0,0)[rb]{\(\langle 3\rangle\)}}

\put(-4.5,-3){\makebox(0,0)[cc]{\(\mathcal{B}(3)\)}}
\put(-6.1,-3.9){\makebox(0,0)[rb]{\(\langle 8\rangle\)}}
\put(-4.1,-3.9){\makebox(0,0)[rb]{\(\langle 7\rangle\)}}
\put(-2.1,-3.9){\makebox(0,0)[rb]{\(\langle 10\rangle\)}}
\put(-0.1,-3.9){\makebox(0,0)[rb]{\(\langle 9\rangle\)}}

\put(-4.1,-5.9){\makebox(0,0)[rb]{\(\langle 25\rangle\)}}
\put(-2.1,-5.9){\makebox(0,0)[rb]{\(\langle 27\rangle\)}}
\put(-0.1,-5.9){\makebox(0,0)[rb]{\(\langle 26\rangle\)}}

\put(4.5,-5){\makebox(0,0)[cc]{\(\mathcal{B}(4)\)}}
\put(2.1,-5.9){\makebox(0,0)[lb]{\(\langle 28\rangle\)}}
\put(4.1,-5.9){\makebox(0,0)[lb]{\(\langle 30\rangle\)}}
\put(6.1,-5.9){\makebox(0,0)[lb]{\(\langle 29\rangle\)}}

\put(-6.1,-7.9){\makebox(0,0)[rb]{\(\langle 98\rangle\)}}
\put(-4.1,-7.9){\makebox(0,0)[rb]{\(\langle 97\rangle\)}}
\put(-2.1,-7.9){\makebox(0,0)[rb]{\(\langle 96\rangle\)}}
\put(-0.1,-7.9){\makebox(0,0)[rb]{\(\langle 95\rangle\)}}

\put(4.5,-7){\makebox(0,0)[cc]{\(\mathcal{B}(5)\)}}
\put(2.1,-7.9){\makebox(0,0)[lb]{\(\langle 100\rangle\)}}
\put(4.1,-7.9){\makebox(0,0)[lb]{\(\langle 99\rangle\)}}
\put(6.1,-7.9){\makebox(0,0)[lb]{\(\langle 101\rangle\)}}

\put(-4.1,-9.9){\makebox(0,0)[rb]{\(\langle 388\rangle\)}}
\put(-2.1,-9.9){\makebox(0,0)[rb]{\(\langle 387\rangle\)}}
\put(-0.1,-9.9){\makebox(0,0)[rb]{\(\langle 386\rangle\)}}

\put(4.5,-9){\makebox(0,0)[cc]{\(\mathcal{B}(6)\)}}
\put(2.1,-9.9){\makebox(0,0)[lb]{\(\langle 390\rangle\)}}
\put(4.1,-9.9){\makebox(0,0)[lb]{\(\langle 389\rangle\)}}
\put(6.1,-9.9){\makebox(0,0)[lb]{\(\langle 391\rangle\)}}

\put(-5.8,-11.9){\makebox(0,0)[rb]{\(\langle 2224\rangle\)}}
\put(-4,-11.9){\makebox(0,0)[rb]{\(\langle 2223\rangle\)}}
\put(-2.1,-11.9){\makebox(0,0)[rb]{\(\langle 2222\rangle\)}}
\put(-0.1,-11.9){\makebox(0,0)[rb]{\(\langle 2221\rangle\)}}

\put(4.5,-11){\makebox(0,0)[cc]{\(\mathcal{B}(7)\)}}
\put(2.1,-11.9){\makebox(0,0)[lb]{\(\langle 2226\rangle\)}}
\put(4,-11.9){\makebox(0,0)[lb]{\(\langle 2225\rangle\)}}
\put(5.8,-11.9){\makebox(0,0)[lb]{\(\langle 2227\rangle\)}}

\put(4.5,-13){\makebox(0,0)[cc]{\(\mathcal{B}(8)\)}}
\put(4.5,-15){\makebox(0,0)[cc]{\(\mathcal{B}(9)\)}}

\put(0.1,-16.2){\makebox(0,0)[ct]{\(G^n_0(0,0)\)}}
\put(-2,-16.2){\makebox(0,0)[ct]{\(G^n_0(0,1)\)}}
\put(-4,-16.2){\makebox(0,0)[ct]{\(G^n_0(1,0)\)}}
\put(-6,-16.2){\makebox(0,0)[ct]{\(G^n_0(-1,0)\)}}
\put(2,-16.2){\makebox(0,0)[ct]{\(G^n_1(0,-1)\)}}
\put(4,-16.2){\makebox(0,0)[ct]{\(G^n_1(0,0)\)}}
\put(6,-16.2){\makebox(0,0)[ct]{\(G^n_1(0,1)\)}}

\put(-3,-17.7){\makebox(0,0)[ct]{with abelian maximal subgroup}}
\put(4,-17.7){\makebox(0,0)[ct]{without abelian maximal subgroup}}

\put(2.5,0){\makebox(0,0)[cc]{\textbf{IPOD}}}
\put(3.5,0){\makebox(0,0)[cc]{a.1}}
\put(2.5,-0.5){\makebox(0,0)[cc]{\(\varkappa=\)}}
\put(3.5,-0.5){\makebox(0,0)[cc]{\((0000)\)}}
\put(1.8,-0.7){\framebox(2.6,1){}}
\put(-6,-2){\makebox(0,0)[cc]{\textbf{IPOD}}}
\put(-5,-2){\makebox(0,0)[cc]{A.1}}
\put(-6,-2.5){\makebox(0,0)[cc]{\(\varkappa=\)}}
\put(-5,-2.5){\makebox(0,0)[cc]{\((1111)\)}}
\put(-6.7,-2.7){\framebox(2.6,1){}}

\put(-8,-19){\makebox(0,0)[cc]{\textbf{IPOD}}}
\put(0,-19){\makebox(0,0)[cc]{a.1}}
\put(-2,-19){\makebox(0,0)[cc]{a.2}}
\put(-4,-19){\makebox(0,0)[cc]{a.3}}
\put(-6,-19){\makebox(0,0)[cc]{a.3}}
\put(2,-19){\makebox(0,0)[cc]{a.1}}
\put(4,-19){\makebox(0,0)[cc]{a.1}}
\put(6,-19){\makebox(0,0)[cc]{a.1}}
\put(-8,-19.5){\makebox(0,0)[cc]{\(\varkappa=\)}}
\put(0,-19.5){\makebox(0,0)[cc]{\((0000)\)}}
\put(-2,-19.5){\makebox(0,0)[cc]{\((1000)\)}}
\put(-4,-19.5){\makebox(0,0)[cc]{\((2000)\)}}
\put(-6,-19.5){\makebox(0,0)[cc]{\((2000)\)}}
\put(2,-19.5){\makebox(0,0)[cc]{\((0000)\)}}
\put(4,-19.5){\makebox(0,0)[cc]{\((0000)\)}}
\put(6,-19.5){\makebox(0,0)[cc]{\((0000)\)}}
\put(-8.7,-19.7){\framebox(15.4,1){}}


\put(-4,-4){\oval(1.5,2)}
\multiput(-4,-4)(0,-4){4}{\oval(5.6,1.5)}
\multiput(4,-8)(0,-4){3}{\oval(5.9,1.5)}
\put(-6,-5.1){\makebox(0,0)[cc]{\underbar{\textbf{331\,191(79.7\%)}}}}
\put(-3.5,-5.2){\makebox(0,0)[cc]{\underbar{\textbf{122\,955(29.6\%)}}}}
\put(-6,-9.1){\makebox(0,0)[cc]{\underbar{\textbf{11\,780(2.8\%)}}}}
\put(6,-9.1){\makebox(0,0)[cc]{\underbar{\textbf{26\,678(6.4\%)}}}}
\put(-6,-13.1){\makebox(0,0)[cc]{\underbar{\textbf{391}}}}
\put(6,-13.1){\makebox(0,0)[cc]{\underbar{\textbf{921}}}}
\put(-6,-17.1){\makebox(0,0)[cc]{\underbar{\textbf{12}}}}
\put(6,-17.1){\makebox(0,0)[cc]{\underbar{\textbf{25}}}}

\end{picture}

\end{figure}
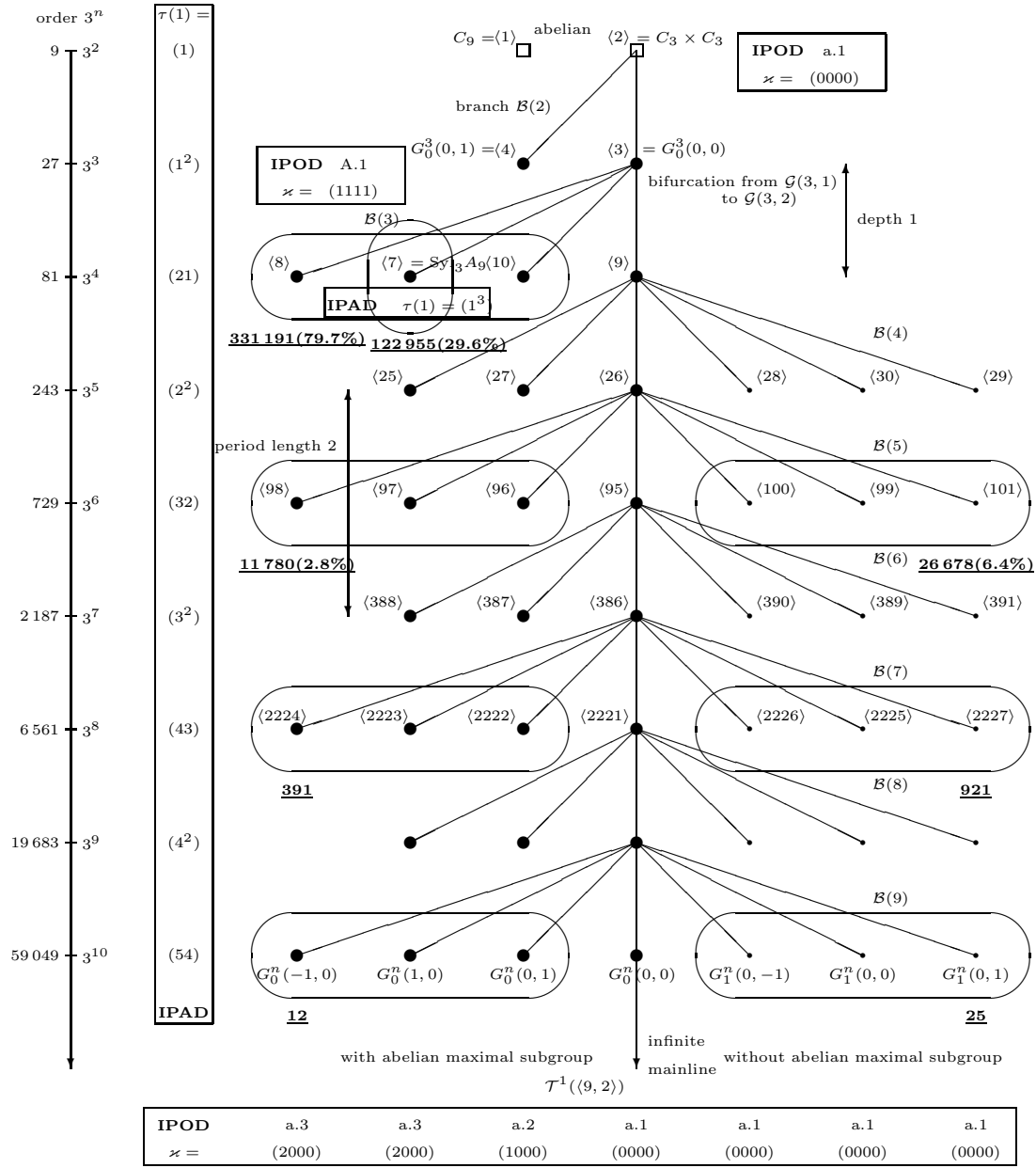

}



\noindent
Figure
\ref{fig:MinDiscTyp33TreeCc1}
visualizes \(3\)-groups of section \S\
\ref{ss:Coclass1}
which arise as \(3\)-class tower groups \(G=\mathrm{G}_3^\infty{K}\)
of real quadratic fields \(K=\mathbb{Q}(\sqrt{d})\), \(d>0\),
with \(3\)-principalization types \(\mathrm{a}.1\), \(\mathrm{a}.2\) and \(\mathrm{a}.3\)
and the corresponding \textit{minimal discriminants} in the sense of Definition
\ref{dfn:ASDT}.



{\tiny

\begin{figure}[hb]
\caption{Distribution of minimal discriminants for \(\mathrm{G}_3^2{K}\) on the coclass tree \(\mathcal{T}^1\langle 9,2\rangle\)}
\label{fig:MinDiscTyp33TreeCc1}

\input{MinDiscTyp33TreeCc1}

\end{figure}

}



As mentioned in
\cite{Ma7},
we have the following criterion for distinguishing subtypes of type \(\mathrm{a}\):

\begin{theorem}
\label{thm:IPODaGS}
(The \lq\lq ground state\rq\rq\ of type \(\mathrm{a}\)
\cite[\S\ 3.2.5, pp. 423--424]{Ma4}) \\
The second \(3\)-class groups \(\mathfrak{M}=\mathrm{G}_3^2{K}\) with the smallest order \(3^4\)
possessing type \(\mathrm{a}.2\) or \(\mathrm{a}.3\) can be separated
by means of the iterated IPAD of second order
\(\tau^{(2)}{\mathfrak{M}}=\lbrack\tau_0{\mathfrak{M}};\lbrack\tau_0{H};\tau_1{H}\rbrack_{H\in\mathrm{Lyr}_1{\mathfrak{M}}}\rbrack\).
\end{theorem}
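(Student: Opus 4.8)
The plan is to reduce the assertion to a finite group-theoretic check and then to exhibit the distinguishing invariant. Since types $\mathrm{a}.2$ and $\mathrm{a}.3$ occur only for $3$-groups of coclass $1$, a first step is to list, using the classification of $3$-groups of maximal class of order $81$ (equivalently, the relevant part of the SmallGroups library together with Theorem \ref{thm:AllIPADs}), all metabelian $3$-groups $\mathfrak{M}$ of order $3^4$ with $\mathfrak{M}/\mathfrak{M}^\prime\simeq(3,3)$ and transfer kernel type $\varkappa_1\mathfrak{M}$ of type $\mathrm{a}.2$ or $\mathrm{a}.3$: these are precisely $\langle 81,10\rangle$, of type $\mathrm{a}.2$ with $\varkappa_1=(1000)$, together with $\langle 81,8\rangle$ and $\langle 81,7\rangle\simeq\mathrm{Syl}_3(A_9)$, both of type $\mathrm{a}.3$ with $\varkappa_1=(2000)$. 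The group $\langle 81,7\rangle$ carries the exceptional first-order IPAD $\tau^{(1)}=\lbrack 1^2;1^3,(1^2)^3\rbrack$ appearing in the anomaly list of Theorem \ref{thm:AllIPADs}, which already differs from the common first-order IPAD $\lbrack 1^2;21,(1^2)^3\rbrack$ shared by $\langle 81,8\rangle$, $\langle 81,9\rangle$ and $\langle 81,10\rangle$; hence $\langle 81,7\rangle$ is separated from the type $\mathrm{a}.2$ group already at first order, and the entire content of the theorem reduces to the single inequality $\tau^{(2)}(\langle 81,8\rangle)\ne\tau^{(2)}(\langle 81,10\rangle)$.

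Next I would compute $\tau^{(2)}\mathfrak{M}=\lbrack\tau_0\mathfrak{M};\lbrack\tau_0 H;\tau_1 H\rbrack_{H\in\mathrm{Lyr}_1\mathfrak{M}}\rbrack$ for both groups from power-commutator presentations, running over the four maximal subgroups $H$ of order $27$ and reading off, for each $H$, the abelianization $\tau_0 H=H/H^\prime$ together with the family $\tau_1 H=(U/U^\prime)_{U\in\mathrm{Lyr}_1 H}$ of abelian type invariants of the four maximal subgroups of $H$. The common first-order IPAD already shows that each of $\langle 81,8\rangle$ and $\langle 81,10\rangle$ has exactly one maximal subgroup $H_0$ with $H_0/H_0^\prime\simeq(9,3)$, and, since $\lvert H_0\rvert=27=\lvert H_0/H_0^\prime\rvert$, this $H_0$ is abelian, $\simeq C_9\times C_3$, contributing $\lbrack 21;1^2,(2)^3\rbrack$ to $\tau^{(2)}$; this contribution agrees for the two groups and hence cannot separate them. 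The remaining three maximal subgroups of each group have abelianization $(3,3)$ and are non-abelian of order $27$, so each is isomorphic either to $\langle 27,3\rangle$, with $\tau_1 H=(1^2)^4$, or to the extra special group $\langle 27,4\rangle$, with $\tau_1 H=(1^2,(2)^3)$; the separation must therefore lie among these three subgroups.

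Finally I would verify the decisive count: in $\langle 81,8\rangle$ exactly one of the three non-abelian maximal subgroups is isomorphic to $\langle 27,3\rangle$ and the other two to $\langle 27,4\rangle$, whereas in $\langle 81,10\rangle$ all three are isomorphic to $\langle 27,4\rangle$. Consequently the family $\lbrack\tau_0 H;\tau_1 H\rbrack_{H\in\mathrm{Lyr}_1\mathfrak{M}}$ contains the component $\lbrack 1^2;(1^2)^4\rbrack$ once when $\mathfrak{M}=\langle 81,8\rangle$ and not at all when $\mathfrak{M}=\langle 81,10\rangle$, so that $\tau^{(2)}(\langle 81,8\rangle)\ne\tau^{(2)}(\langle 81,10\rangle)$; these are precisely the $\tau^{(2)}$-values (together with that of the mainline group $\langle 81,9\rangle$) recorded in the rows with $\mathrm{lo}=4$ of Table \ref{tbl:3GroupsCc1}. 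I expect the only real difficulty to be careful bookkeeping, namely matching each non-abelian maximal subgroup of $\langle 81,8\rangle$ and $\langle 81,10\rangle$ with its correct isomorphism type $\langle 27,3\rangle$ or $\langle 27,4\rangle$ and computing the second-layer abelian quotients without mislabelling subgroups; this is finite but error-prone, and is most safely performed with the Artin-transfer implementation described in \S\ \ref{ss:Tables}.
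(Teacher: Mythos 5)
Your proposal is correct and follows essentially the same route as the paper, which simply reads off and compares the second-order data $\tau_1{H}$ for the rows with $\mathrm{lo}=4$ and $\mathrm{id}\in\lbrace 7,\ldots,10\rbrace$ of Table \ref{tbl:3GroupsCc1}; your explicit identification of the non-abelian maximal subgroups as $\langle 27,3\rangle$ versus $\langle 27,4\rangle$ (one copy of $\langle 27,3\rangle$ in $\langle 81,8\rangle$, none in $\langle 81,10\rangle$) is exactly the content of that comparison, worked out in detail and consistent with the table.
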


\begin{proof}
This is essentially
\cite[Thm. 6.1, p. 296]{Ma7}
but can also be seen directly by comparing the column \(\tau_1{H}\) with the IPAD
for the rows with \(\mathrm{lo}=4\) and \(\mathrm{id}\in\lbrace 7,\ldots,10\rbrace\) in Table
\ref{tbl:3GroupsCc1}.
Here the column \(\tau_2{H}\), containing the second layer of the IPAD, does not permit a distinction.
\end{proof}



Unfortunately, we also must state a negative result:

\begin{theorem}
\label{thm:IPODaES}
(\lq\lq Excited states\rq\rq\ of type \(\mathrm{a}\)
\cite[\S\ 3.2.5, pp. 423--424]{Ma4}) \\
Even the multi-layered IPAD
\(\tau^{(2)}_\ast{\mathfrak{M}}=\lbrack\tau_0{\mathfrak{M}};\lbrack\tau_0{H};\tau_1{H};\tau_2{H}\rbrack_{H\in\mathrm{Lyr}_1{\mathfrak{M}}}\rbrack\)
of second order is unable to separate the second \(3\)-class groups \(\mathfrak{M}=\mathrm{G}_3^2{K}\)
with order \(3^6\) and type \(\mathrm{a}.2\) or \(\mathrm{a}.3\).
It is also unable to distinguish between the three candidates for \(\mathfrak{M}\) of type \(\mathrm{a}.1\),
and between the two candidates for \(\mathfrak{M}\) of type \(\mathrm{a}.3\), both for orders \(\lvert\mathfrak{M}\rvert\ge 3^6\).
\end{theorem}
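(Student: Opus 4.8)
The plan is to reduce the whole statement to a direct comparison of finitely many rows of Table~\ref{tbl:3GroupsCc1}, in the spirit of the proof of Theorem~\ref{thm:IPODaGS}, and then to propagate the resulting coincidences along the coclass tree \(\mathcal{T}^1\langle 9,2\rangle\) by its periodicity of length \(2\). First I would pin down the contestants: by Corollary~\ref{cor:a1a2a3} and its proof, the second \(3\)-class groups of order \(3^6\) and type \(\mathrm{a}\) are, up to isomorphism, the group \(\langle 729,96\rangle\) of type \(\mathrm{a}.2\), the two groups \(\langle 729,97\rangle\) and \(\langle 729,98\rangle\) of type \(\mathrm{a}.3\) --- all three sharing the IPAD \(\lbrack 1^2;32,(1^2)^3\rbrack\) --- and the three groups \(\langle 729,99\rangle,\langle 729,100\rangle,\langle 729,101\rangle\) of type \(\mathrm{a}.1\), sharing the IPAD \(\lbrack 1^2;2^2,(1^2)^3\rbrack\) (the mainline vertex \(\langle 729,95\rangle\) being discarded).

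The first step is to read off the rows with \(\mathrm{lo}=6\): the entries for \(\mathrm{id}\in\lbrace 96,97,98\rbrace\) are each recorded as \lq\lq IPAD like id \(95\)\rq\rq, hence carry one and the same block \(\lbrack\tau_0{H};\tau_1{H};\tau_2{H}\rbrack_{H\in\mathrm{Lyr}_1}\), so \(\tau^{(2)}_\ast\) takes a single value on these three groups and therefore separates neither type \(\mathrm{a}.2\) from type \(\mathrm{a}.3\) nor \(\langle 729,97\rangle\) from \(\langle 729,98\rangle\). The same inspection shows that \(\mathrm{id}\in\lbrace 99,100,101\rbrace\) appear under one common data block, so \(\tau^{(2)}_\ast\) is constant on the three type-\(\mathrm{a}.1\) contestants. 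Here it is essential to notice that the additional second-layer column \(\tau_2{H}\) present in the \emph{multi}-layered version contributes only the stable value displayed in the table and hence yields no further discrimination --- just as the column \(\tau_2{H}\) already failed to separate the ground-state groups \(\langle 81,7\ldots 10\rangle\) in the proof of Theorem~\ref{thm:IPODaGS}.

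The second step is to pass from order \(3^6\) to all admissible orders \(\lvert\mathfrak{M}\rvert\ge 3^6\). For this I would invoke the periodicity \(\mathcal{B}(j)\simeq\mathcal{B}(j+2)\) for \(j\ge 4\) of the branches of \(\mathcal{T}^1\langle 9,2\rangle\), together with the selection rule \cite[Thm. 3.5, p. 420]{Ma4} that only every other branch hosts second \(3\)-class groups of quadratic fields; on the admissible branches there are, at each order, precisely two contestants of type \(\mathrm{a}.3\) and three of type \(\mathrm{a}.1\). The rows with \(\mathrm{lo}=7\) and \(\mathrm{lo}=8\) of Table~\ref{tbl:3GroupsCc1} already exhibit the very same pattern (\(\mathrm{id}\in\lbrace 387,388\rbrace\) again \lq\lq like id \(386\)\rq\rq\ while \(\mathrm{id}\in\lbrace 389,390,391\rbrace\) form a common block; likewise \(\lbrace 2222,2223,2224\rbrace\) are \lq\lq like id \(2221\)\rq\rq\ while \(\lbrace 2225,2226,2227\rbrace\) form a common block), and these two orders already exhaust the two residue classes modulo the period, so the coincidences persist at every larger order.

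The step I expect to be the real obstacle is the justification that the period-\(2\) repetition of the tree forces a period-\(2\) repetition of \(\tau^{(2)}_\ast\): since this invariant reaches two levels down the tree, one must check that the periodic isomorphism of branches respects not only the vertices themselves but also the transfer target types of their maximal subgroups up through the second layer. This is supplied by the monotonicity of the Artin pattern under the natural partial order on descendant trees \cite[Thm. 6.1--6.2]{Ma9}, equivalently by the stabilization already encoded in Theorem~\ref{thm:AllIPADs}; once it is granted, the finitely many comparisons carried out for the three consecutive orders \(3^6,3^7,3^8\) in Table~\ref{tbl:3GroupsCc1} complete the argument, and everything else is routine bookkeeping of table entries.
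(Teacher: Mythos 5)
Your proposal is correct and follows essentially the same route as the paper: the paper's proof likewise reduces the claim to comparing the columns \(\tau_1{H}\) and \(\tau_2{H}\) in Table~\ref{tbl:3GroupsCc1} for \(\mathrm{lo}\in\lbrace 6,8\rbrace\), \(\mathrm{id}\in\lbrace 95,\ldots,101\rbrace\) resp. \(\lbrace 2221,\ldots,2227\rbrace\), and invokes the selection rule that only every other branch of \(\mathcal{T}^1\langle 3^2,2\rangle\) is admissible. Your additional paragraph justifying the propagation of the coincidences to all orders \(\ge 3^6\) via branch periodicity is a point the paper leaves implicit, so it is a welcome refinement rather than a divergence.
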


\begin{proof}
This is a consequence of comparing both columns \(\tau_1{H}\) and \(\tau_2{H}\)
for the rows with \(\mathrm{lo}\in\lbrace 6,8\rbrace\) and \(\mathrm{id}\in\lbrace 95,\ldots,101\rbrace\),
resp. \(\mathrm{id}\in\lbrace 2221,\ldots,2227\rbrace\) in Table
\ref{tbl:3GroupsCc1}.
According to the selection rule
\cite[Thm. 3.5, p. 420]{Ma4},
only every other branch of the tree \(\mathcal{T}^1\langle 3^2,2\rangle\)
is admissible for second \(3\)-class groups \(\mathfrak{M}=\mathrm{G}_3^2{K}\) of (real) quadratic fields \(K\).
\end{proof}



\begin{theorem}
\label{thm:IPODa}
(Two-stage \(3\)-class towers of type \(\mathrm{a}\))
\noindent
For each (real) quadratic field \(K\)
with second \(3\)-class group \(\mathfrak{M}=\mathrm{G}_3^2{K}\) of maximal class
the \(3\)-class tower has exact length \(\ell_3{K}=2\).
\end{theorem}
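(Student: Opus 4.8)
The plan is to establish the two bounds $2\le\ell_3{K}$ and $\ell_3{K}\le 2$ separately. The lower bound is immediate and does not use the hypothesis on $\mathfrak{M}$: since $\mathrm{Cl}_3{K}\simeq(3,3)$ is non-cyclic, an abelian tower with $\ell=1$ is impossible by \cite[Thm. 4.1.(1)]{Ma1}, as recalled in \S\ref{s:Principalization}. It therefore suffices to prove $\ell_3{K}\le 2$, i.e.\ that the pro-$3$ group $G:=\mathrm{G}_3^\infty{K}$ is metabelian, equivalently $G''=1$, equivalently $G\simeq\mathfrak{M}=G/G''$.

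I would reduce this to a purely group-theoretic assertion: a pro-$3$ group $G$ with $G/G'\simeq(3,3)$, with relation rank bounded by the value admissible for the $3$-class tower group of a quadratic base field (so $d_2(G)=2$ when $K$ is imaginary and $d_2(G)\le 3$ when $K$ is real, cf.\ the square symbols in Figures \ref{fig:TreeH4Spor}--\ref{fig:TreeG19Spor}), and with $\mathfrak{M}=G/G''$ of maximal class, must satisfy $G''=1$. First, since $\mathfrak{M}$ is metabelian of maximal class with $\mathfrak{M}/\mathfrak{M}'\simeq(3,3)$, it is a vertex of the coclass tree $\mathcal{T}^1\langle 9,2\rangle$ of Figures \ref{fig:AbsFreqTyp33TreeCc1} and \ref{fig:MinDiscTyp33TreeCc1}, or one of the exceptional top groups $\langle 9,2\rangle$, $\langle 27,4\rangle$, $\langle 81,7\rangle$ of Theorem \ref{thm:AllIPADs}; all of these are metabelian, so the only way to have $G''\ne 1$ is for $G$ to be a proper non-metabelian \emph{cover} of such an $\mathfrak{M}$. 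Assuming $G''\ne 1$, a finite quotient of $G$ of least order with non-trivial second derived group and the same metabelianization is an immediate descendant of $\mathfrak{M}$ of coclass $1$ or $2$; for each fixed $\mathfrak{M}$ there are only finitely many of these, and they are produced explicitly by the $p$-group generation algorithm in MAGMA, as in \S\ref{ss:Tables}. The contradiction comes from verifying that every such non-metabelian descendant has relation rank $d_2\ge 4$, which violates the bound $d_2(G)\le 3$. To make this a finite computation covering the whole infinite tree, I would invoke the periodicity of $\mathcal{T}^1\langle 9,2\rangle$, namely $\mathcal{B}(j)\simeq\mathcal{B}(j+2)$ for $j\ge 4$, which transports the relation-rank data along the branches, so that only the top region (including the exceptional groups) and the first few branches need an ad hoc check; the mainline vertices may be discarded from the outset, since by Theorem \ref{thm:IPODa1Mainline} they cannot occur as the second $3$-class group of a quadratic field, and the selection rule \cite[Thm. 3.5, p. 420]{Ma4} further halves the branches that must be inspected. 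Hence $G''=1$ and $\ell_3{K}=2$.

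The step I expect to be the main obstacle is precisely this last one: showing, uniformly along the infinite coclass tree, that no non-metabelian cover survives the relation-rank constraint. Two points require care. First, the periodicity of the \emph{metabelian} coclass graph $\mathcal{G}(3,1)$ must be transported to the relevant finite portion of $\mathcal{G}(3,2)$ (and of the coclass-$1$ descendants one level below), since the dangerous covers lie one coclass level above the skeleton; one must check that the relation rank, or at least a lower bound for it, is genuinely a periodic invariant there. Second, the real-quadratic case is the narrow one: the available bound is only $d_2(G)\le 3$ rather than the sharp $d_2(G)=2$ of the imaginary case, so the margin against the covers, which carry $d_2\ge 4$, is minimal and must be confirmed with particular attention near the bifurcation from $\mathcal{G}(3,1)$ to $\mathcal{G}(3,2)$ and for the anomalous top groups $\langle 27,4\rangle$ and $\langle 81,7\rangle$, which do not obey the generic coclass pattern of Theorem \ref{thm:AllIPADs}.
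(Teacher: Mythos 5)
Your reduction to the group-theoretic statement \lq\lq no non-metabelian pro-\(3\) group \(G\) with \(G/G^{\prime\prime}\simeq\mathfrak{M}\) is admissible\rq\rq\ is the right first move, but the route you choose to close it --- the Shafarevich bound \(d_2(G)\le 3\) against a computational verification that every non-metabelian cover has \(d_2\ge 4\) --- has a genuine gap exactly where you anticipate it. The branch periodicity \(\mathcal{B}(j)\simeq\mathcal{B}(j+2)\) is a statement about the coclass tree \(\mathcal{T}^1\langle 9,2\rangle\) itself; it does not transport relation-rank data for the candidate covers, which live one coclass level higher, and no such periodicity theorem is available in this paper or its references. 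So the infinite family of verifications is never reduced to a finite computation, and the argument does not close. There is also a circularity hazard: you discard the mainline vertices by appealing to Theorem \ref{thm:IPODa1Mainline}, but the paper's proof of that theorem invokes Theorem \ref{thm:IPODa} (to identify \(\mathrm{G}_3^\infty{K}\) with \(\mathrm{G}_3^2{K}\)), so you cannot use it here without rearranging the logic.

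The paper's own proof avoids all of this with a uniform structural argument that needs no arithmetic input and no case-by-case computation: a maximal-class \(\mathfrak{M}\) has IPOD of type \(\mathrm{a}\), hence at least three total transfer kernels \(\varkappa_1=(\ast 000)\); any non-metabelian \(3\)-group with abelianization \((3,3)\) has coclass \(\ge 2\) and is a descendant of one of the five groups \(\langle 243,n\rangle\), \(n\in\lbrace 3,4,6,8,9\rbrace\), whose IPODs have at most two total kernels; total kernels cannot increase under descent \cite[Thm. 5.2]{Ma9}; and \(G\) and \(G/G^{\prime\prime}\) share the same Artin pattern \cite[Thm. 5.4]{Ma9}. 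Hence \(\mathrm{cov}(\mathfrak{M})=\lbrace\mathfrak{M}\rbrace\) outright --- a strictly stronger conclusion than the relation-rank obstruction you aim for, valid for arbitrary base fields. If you want to salvage your approach, replace the periodicity step by this kernel-counting argument; the Shafarevich bound is the right tool for excluding the mainline (Theorem \ref{thm:IPODa1Mainline}), not for bounding the tower length here.
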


\begin{proof}
Let \(G\) be a \(3\)-group of maximal class.
Then \(G\) is metabelian by
\cite[Thm. 3.7, proof, p. 421]{Ma4}
or directly by
\cite[Thm. 6, p. 26]{Bl}.
Suppose that \(H\) is a non-metabelian \(3\)-group of derived length \(\mathrm{dl}(H)\ge 3\)
such that \(H/H^{\prime\prime}\simeq G\).
According to
\cite[Thm. 5.4]{Ma9},
the Artin patterns \(\mathrm{AP}(H)\) and \(\mathrm{AP}(G)\) coincide,
in particular, both groups share a common IPOD \(\varkappa_1{H}=\varkappa_1{G}\),
which contains at least three total kernels, indicated by zeros, \(\varkappa_1=(\ast000)\)
\cite{Ma2}.
However, this is a contradiction already,
since any non-metabelian \(3\)-group,
which necessarily must be of coclass at least \(2\),
is descendant of one of the five groups
\(\langle 243,n\rangle\) with \(n\in\lbrace 3,4,6,8,9\rbrace\)
whose IPODs possess at most two total kernels,
and a descendant cannot have an IPOD with more total kernels than its parent, by
\cite[Thm. 5.2]{Ma9}.
Consequently, the cover \(\mathrm{cov}(G)\) of \(G\) in the sense of
\cite[Dfn. 5.1]{Ma10}
consists of the single element \(G\).

Finally, we apply this result to class field theory:
Since \(\mathfrak{M}=\mathrm{G}_3^2{K}\) is assumed to be of coclass \(\mathrm{cc}(\mathfrak{M})=1\),
we obtain \(G=\mathrm{G}_3^\infty{K}\in\mathrm{cov}(\mathfrak{M})=\lbrace \mathfrak{M}\rbrace\)
and the length of the \(3\)-class tower is given by \(\ell_3{K}=\mathrm{dl}(G)=\mathrm{dl}(\mathfrak{M})=2\).
\end{proof}

\begin{remark}
\label{rmk:IPODa}
To the very best of our knowledge, Theorem
\ref{thm:IPODa}
does not appear in the literature,
although we are convinced that it is well known to experts,
since it can also be proved purely group theoretically with the aid of a theorem by Blackburn
\cite[Thm. 4, p. 26]{Bl}.
Here we prefer to give a new proof which uses the structure of descendant trees.
\end{remark}



\begin{theorem}
\label{thm:IPODa1Mainline}
(The forbidden mainline of coclass \(1\))
\noindent
The mainline vertices of the coclass-\(1\) tree
cannot occur as second \(3\)-class groups \(\mathfrak{M}=\mathrm{G}_3^2{K}\) of (real) quadratic fields \(K\) (of type \(\mathrm{a}.1\)).
\end{theorem}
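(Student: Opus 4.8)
The plan is to reduce the assertion, via Theorem \ref{thm:IPODa}, to a statement about relation ranks, and then to exploit that a mainline vertex of the coclass-\(1\) tree carries more defining relations than any \(3\)-class tower group of a quadratic field can afford. First I would record that a mainline vertex \(M\) of \(\mathcal{T}^1\langle 9,2\rangle\) is a \(3\)-group of maximal class, hence metabelian by \cite[Thm.~6, p.~26]{Bl}, exactly as in the proof of Theorem \ref{thm:IPODa}. Consequently, if \(M\simeq\mathrm{G}_3^2{K}\) for a quadratic field \(K\) with \(\mathrm{Cl}_3{K}\simeq(3,3)\), then Theorem \ref{thm:IPODa} forces \(\ell_3{K}=2\), whence \(M\simeq\mathrm{G}_3^\infty{K}\) is the entire \(3\)-class tower group of \(K\). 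It therefore suffices to show that no quadratic field with \(3\)-class group of type \((3,3)\) admits a mainline vertex of \(\mathcal{T}^1\langle 9,2\rangle\) as its \(3\)-class tower group.

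For the latter I would use two facts about the generator rank \(d_1\) and the relation rank \(d_2\). Arithmetically, \(d_1(\mathrm{G}_3^\infty{K})=\varrho_3{K}=2\), and Shafarevich's bound on the relation rank of a class tower group gives \(d_2(\mathrm{G}_3^\infty{K})\le d_1+1=3\) --- for imaginary \(K\) this is the estimate behind Koch and Venkov \cite{KoVe}, and for real \(K\) it is the analogous bound, using that \(\zeta_3\notin K\) and that the unit rank of \(K\) equals \(1\). Group-theoretically, every non-abelian mainline vertex \(M\) of \(\mathcal{T}^1\langle 9,2\rangle\) has Schur multiplier of \(3\)-rank \(2\), so that \(d_2(M)=d_1(M)+2=4\); this is read off from Blackburn's structure theory of maximal-class \(3\)-groups, and it is exactly the property Nebelung \cite{Ne} exploits to discard the mainline among the candidates for a second \(3\)-class group. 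Since \(4\le 3\) is impossible, \(M\) cannot occur, which proves the theorem; the abelian root \(\langle 9,2\rangle\simeq(3,3)\) is excluded separately, as \(\mathrm{G}_3^2{K}\) is never abelian by \cite[Thm.~4.1.(1)]{Ma1}.

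The step I expect to be the bottleneck is making both relation-rank inputs watertight: on the number-theoretic side, securing the bound \(d_2\le 3\) for \emph{real} quadratic fields with the correct accounting of the archimedean and unit contributions; on the group-theoretic side, checking uniformly along the \emph{whole} mainline that the Schur multiplier has \(3\)-rank exactly \(2\) (and not \(1\), which would give \(d_2=3\) and destroy the contradiction). Should the real-quadratic relation-rank bound prove awkward, a fallback is the Galois action: since \(\mathrm{F}_3^\infty{K}/\mathbb{Q}\) is Galois, the nontrivial automorphism of \(K/\mathbb{Q}\) induces on \(\mathrm{G}_3^\infty{K}\) an automorphism of order dividing \(2\) modulo inner automorphisms which acts by inversion on the abelianization, and a mainline vertex of order \(\ge 3^4\) admits no such involution, because its essentially unique outer automorphism of order prime to \(3\) acts on \(M/M'\) as a reflection rather than as \(-\mathrm{id}\). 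I would nevertheless prefer the relation-rank argument, since it is uniform in the order of \(M\) and needs no separate treatment of the small vertices.
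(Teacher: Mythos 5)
Your proposal is correct and follows essentially the same route as the paper: reduce to \(G=\mathrm{G}_3^\infty{K}\simeq\mathfrak{M}\) via Theorem \ref{thm:IPODa}, invoke the Shafarevich bound \(d_2\le\varrho+r=3\) for a real quadratic field (unit rank \(1\), \(\zeta_3\notin K\)), and contradict this with the relation rank \(4\) of every mainline vertex. The only cosmetic difference is in how that last fact is certified -- the paper computes the \(p\)-multiplicator rank of \(\langle 27,3\rangle\) and \(\langle 81,9\rangle\) with MAGMA and propagates it along the mainline by branch periodicity, whereas you appeal to Blackburn's and Nebelung's structure theory, which amounts to the same statement \(d_2(M)=d_1(M)+\mathrm{rank}_3 M(G)=4\).
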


\begin{proof}
Since periodicity sets in with branch \(\mathcal{B}(4)\) in the Figures
\ref{fig:AbsFreqTyp33TreeCc1}
and
\ref{fig:MinDiscTyp33TreeCc1},
and MAGMA shows that the groups \(\langle 3^3,3\rangle\) and \(\langle 3^4,9\rangle\) have \(p\)-multiplicator rank \(4\),
all mainline vertices \(V\) must have \(p\)-multiplicator rank \(\mu(V)=4\)
and thus relation rank \(d_2{V}=\mu(V)=4\). However,
a real quadratic field \(K\) has torsion free Dirichlet unit rank \(r=1\)
and certainly does not contain the (complex) primitive third roots of unity.
According to the corrected version
\cite[Thm. 5.1]{Ma10}
of the Shafarevich theorem
\cite{Sh},
the relation rank \(d_2{G}\) of the \(3\)-tower group \(G=\mathrm{G}_3^\infty{K}\),
which coincides with the second \(3\)-class group \(\mathfrak{M}=\mathrm{G}_3^2{K}\) by Theorem
\ref{thm:IPODa},
is bounded by \(2=\varrho\le d_2{G}\le\varrho+r=2+1=3\),
where \(\varrho=2\) denotes the \(3\)-class rank of \(K\).
\end{proof}



\subsection{\(3\)-groups \(G\) of coclass \(\mathrm{cc}(G)=2\) arising from \(\langle 3^5,6\rangle\)}
\label{ss:TreeQ}
\noindent
Table
\ref{tbl:3GroupsTreeQ}
shows the designation of the transfer kernel type
\cite{SoTa,Ne},
the IPOD \(\varkappa_1{G}\),
and the iterated multi-layered IPAD of \(2\)nd order,
\[\tau^{(2)}_\ast{G}=\lbrack\tau_0{G};\lbrack\tau_0{H};\tau_1{H};\tau_2{H}\rbrack_{H\in\mathrm{Lyr}_1{G}}\rbrack,\]
for \(3\)-groups \(G\) on the coclass tree \(\mathcal{T}^2\langle 3^5,6\rangle\) up to order \(\lvert G\rvert=3^8\),
characterized by the logarithmic order, \(\mathrm{lo}\),
and the SmallGroup identifier, \(\mathrm{id}\),
\cite{BEO1,BEO2}.
To enable a brief reference for relative identifiers we put \(Q:=\langle 3^6,49\rangle\),
since this group was called the non-CF group \(Q\) by Ascione
\cite{AHL,As}.

The groups in Table
\ref{tbl:3GroupsTreeQ}
are represented by vertices of the tree diagram in Figure
\ref{fig:TreeQSecE}.



\renewcommand{\arraystretch}{1.2}

\begin{table}[hb]
\caption{IPOD \(\varkappa_1{G}\) and iterated IPAD \(\tau^{(2)}_\ast{G}\) of \(3\)-groups \(G\) on \(\mathcal{T}^2\langle 3^5,6\rangle\)}
\label{tbl:3GroupsTreeQ}
\begin{center}
\begin{tabular}{|c|c||cc||c|ccccc|}
\hline
    lo &                        id          & type & \(\varkappa_1{G}\) & \(\tau_0{G}\) &       & \(\tau_0{H}\) &            \(\tau_1{H}\) &            \(\tau_2{H}\) &               \\
\hline
 \(5\) &                     \(6\)          & c.18 &           \(0122\) &       \(1^2\) &             & \(1^3\) &      \((1^3)^4,(1^2)^9\) &           \((1^2)^{13}\) &               \\
       &                                    &      &                    &               & \(\lbrack\) &  \(21\) &           \(1^3,(21)^3\) &              \((1^2)^4\) & \(\rbrack^3\) \\
\hline
\hline
 \(6\) &                    \(48\)          & H.4  &           \(2122\) &       \(1^2\) &             & \(2^2\) &             \((21^2)^4\) &        \(1^3,(21)^{12}\) &               \\
       &                                    &      &                    &               &             & \(1^3\) & \(21^2,(1^3)^3,(1^2)^9\) &   \(1^3,(21)^3,(1^2)^9\) &               \\
       &                                    &      &                    &               & \(\lbrack\) &  \(21\) &          \(21^2,(21)^3\) &           \(1^3,(21)^3\) & \(\rbrack^2\) \\
\hline
 \(6\) &                  \(Q=49\)          & c.18 &           \(0122\) &                                                                   \multicolumn{6}{|c|}{IPAD like id \(48\)} \\
 \(6\) &                    \(50\)          & E.14 &           \(3122\) &                                                                   \multicolumn{6}{|c|}{IPAD like id \(48\)} \\
 \(6\) &                    \(51\)          & E.6  &           \(1122\) &                                                                   \multicolumn{6}{|c|}{IPAD like id \(48\)} \\
\hline
\hline
 \(7\) &                   \(284\)          & c.18 &           \(0122\) &       \(1^2\) &             & \(2^2\) &             \((21^2)^4\) &      \(21^2,(2^2)^{12}\) &               \\
       &                                    &      &                    &               &             & \(1^3\) &      \((21^2)^4,(1^2)^9\)&  \(21^2,(1^3)^3,(21)^9\) &               \\
       &                                    &      &                    &               & \(\lbrack\) &  \(21\) &          \(21^2,(21)^3\) &          \(21^2,(21)^3\) & \(\rbrack^2\) \\
\hline
 \(7\) &                   \(285\)          & c.18 &           \(0122\) &       \(1^2\) &             &  \(32\) &        \(2^21,(31^2)^3\) &      \((21^2)^4,(31)^9\) &               \\
       &                                    &      &                    &               &             & \(1^3\) & \(2^21,(\mathbf{1}^3)^3,(1^2)^9\) & \(21^2,(2^2)^3,(1^2)^9\) &      \\
       &                                    &      &                    &               & \(\lbrack\) &  \(21\) & \(2^21,(\mathbf{2}1)^3\) &         \(21^2,(2^2)^3\) & \(\rbrack^2\) \\
\hline
 \(7\) &          \(286/287\)          & H.4  &           \(2122\) &                                                                  \multicolumn{6}{|c|}{IPAD like id \(285\)} \\
 \(7\) &          \(\mathbf{288}\)          & E.6  &           \(1122\) &                                                                  \multicolumn{6}{|c|}{IPAD like id \(285\)} \\
 \(7\) & \(\mathbf{289/290}\)          & E.14 &           \(3122\) &                                                                  \multicolumn{6}{|c|}{IPAD like id \(285\)} \\
\hline
 \(7\) &                   \(291\)          & c.18 &           \(0122\) &       \(1^2\) &             & \(2^2\) &             \((21^2)^4\) &  \(21^2,(2^2)^3,(31)^9\) &               \\
       &                                    &      &                    &               &             & \(1^3\) &  \(21^2,(1^3)^3,(1^2)^9\)&  \(21^2,(12)^3,(1^2)^9\) &               \\
       &                                    &      &                    &               &             &  \(21\) &          \(21^2,(31)^3\) &          \(21^2,(21)^3\) &               \\
       &                                    &      &                    &               &             &  \(21\) &          \(21^2,(21)^3\) &          \(21^2,(21)^3\) &               \\
\hline
\hline
 \(8\) &                   \(613\)          & c.18 &           \(0122\) &       \(1^2\) &             &  \(32\) &        \(2^21,(31^2)^3\) &      \((2^21)^4,(32)^9\) &               \\
       &                                    &      &                    &               &             & \(1^3\) & \(2^21,(\mathbf{2}1^2)^3,(1^2)^9\) & \(2^21,(1^3)^3,(2^2)^3,(21)^6\) & \\
       &                                    &      &                    &               & \(\lbrack\) &  \(21\) & \(2^21,(\mathbf{3}1)^3\) &         \(2^21,(2^2)^3\) & \(\rbrack^2\) \\
\hline
 \(8\) &          \(614/615\)          & H.4  &           \(2122\) &                                                                  \multicolumn{6}{|c|}{IPAD like id \(613\)} \\
 \(8\) &          \(\mathbf{616}\)          & E.6  &           \(1122\) &                                                                  \multicolumn{6}{|c|}{IPAD like id \(613\)} \\
 \(8\) & \(\mathbf{617/618}\)          & E.14 &           \(3122\) &                                                                  \multicolumn{6}{|c|}{IPAD like id \(613\)} \\
\hline
\end{tabular}
\end{center}
\end{table}



\begin{theorem} 
\label{thm:E6E14GS}
(Smallest possible \(3\)-tower groups \(G=\mathrm{G}_3^\infty{K}\) of type \(\mathrm{E}.6\) or \(\mathrm{E}.14\)
\cite{Ma7}) \\
Let \(G\) be a finite \(3\)-group
with IPAD of first order \(\tau^{(1)}{G}=\lbrack\tau_0{G};\tau_1{G}\rbrack\),
where \(\tau_0{G}=1^2\) and \(\tau_1{G}=(32,1^3,(21)^2)\) is given in ordered form.

If the IPOD of \(G\) is of type \(\mathrm{E}.6\), \(\varkappa_1{G}=(1122)\),
resp. \(\mathrm{E}.14\), \(\varkappa_1{G}=(3122)\sim (4122)\),
then the IPAD of second order \(\tau^{(2)}{G}=\lbrack\tau_0{G};(\tau_0{H_i};\tau_1{H_i})_{1\le i\le 4}\rbrack\),
where the maximal subgroups of index \(3\) in \(G\) are denoted by \(H_1,\ldots,H_4\),
determines the isomorphism type of \(G\) in the following way:
\begin{enumerate}
\item
\(\tau^{(1)}{H_2}=\lbrack 1^3;2^21,(\mathbf{1}^3)^3,(1^2)^9\rbrack\) if and only if
\(\tau^{(1)}{H_i}=\lbrack 21;2^21,(\mathbf{2}1)^3\rbrack\) for \(i\in\lbrace 3,4\rbrace\) \\
if and only if \(G\simeq\langle 3^7,\mathbf{288}\rangle\),
resp. \(G\simeq\langle 3^7,\mathbf{289}\rangle\) or \(G\simeq\langle 3^7,\mathbf{290}\rangle\),
\item
\(\tau^{(1)}{H_2}=\lbrack 1^3;2^21,(\mathbf{2}1^2)^3,(1^2)^9\rbrack\) if and only if
\(\tau^{(1)}{H_i}=\lbrack 21;2^21,(\mathbf{3}1)^3\rbrack\) for \(i\in\lbrace 3,4\rbrace\) \\
if and only if \(G\simeq\langle 3^8,\mathbf{616}\rangle\),
resp. \(G\simeq\langle 3^8,\mathbf{617}\rangle\) or \(G\simeq\langle 3^8,\mathbf{618}\rangle\),
\end{enumerate}
\noindent
whereas the component \(\tau^{(1)}{H_1}=\lbrack 32;2^21,(31^2)^3\rbrack\) is fixed
and does not admit a distinction.
\end{theorem}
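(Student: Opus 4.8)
The plan is to combine the complete IPAD classification of Theorem~\ref{thm:AllIPADs} with the explicit second-order data recorded in Table~\ref{tbl:3GroupsTreeQ} (and visualized in Figure~\ref{fig:TreeQSecE}). Since the Artin pattern, hence the first-order IPAD, of $G$ coincides with that of its metabelianization $\mathfrak{M}=G/G''$, I would first pin $\mathfrak{M}$ down to a finite list by invoking the \textbf{Polarization Principle} underlying Theorems~\ref{thm:AllIPADs} and~\ref{thm:Contestants}. The polarized first component $\mathrm{A}(3,c-k)=32=\mathrm{A}(3,5)$ forces $c-k=5$, and the entry $1^3$ among the components of $\tau_1{G}$ identifies, through~(\ref{eqn:Component3And4}), the coclass-$2$ tree $\mathcal{T}^2\langle 243,6\rangle$; hence $\mathrm{cc}(\mathfrak{M})=2$, and since $n\mapsto\mathrm{A}(3,n)$ is injective, a vertex of class $c\ge 7$ on that tree would have polarized component $\mathrm{A}(3,c-k)$ with $c-k\ge 6$, which is excluded. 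Thus $\mathrm{cl}(\mathfrak{M})\in\{5,6\}$: class $5$ with defect $0$ at logarithmic order $7$, or class $6$ with defect $1$ at logarithmic order $8$. Table~\ref{tbl:3GroupsTreeQ} then shows that the vertices of $\mathcal{T}^2\langle 243,6\rangle$ carrying this first-order IPAD all sit on the branches $\mathcal{B}(7)$ and $\mathcal{B}(8)$, and imposing in addition an IPOD of type E.6, $\varkappa_1{G}=(1122)$, resp. E.14, $\varkappa_1{G}=(3122)\sim(4122)$, leaves exactly the six metabelian groups $\langle 3^7,288\rangle,\langle 3^7,289\rangle,\langle 3^7,290\rangle$ of order $3^7$ and $\langle 3^8,616\rangle,\langle 3^8,617\rangle,\langle 3^8,618\rangle$ of order $3^8$, the first of each triple being of type E.6 and the other two of type E.14.

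Next I would read the iterated IPADs of second order of these six groups straight off Table~\ref{tbl:3GroupsTreeQ} (all entries there were produced with MAGMA via the $p$-group generation algorithm). For all six groups the maximal subgroup $H_1$ with $\tau_0{H_1}=32$ satisfies $\tau^{(1)}{H_1}=\lbrack 32;2^21,(31^2)^3\rbrack$ and therefore contributes nothing. The separation rests on the maximal subgroup $H_2$ with $\tau_0{H_2}=1^3$: its first layer is $\tau_1{H_2}=2^21,(\mathbf{1}^3)^3,(1^2)^9$ on $\mathcal{B}(7)$, but $\tau_1{H_2}=2^21,(\mathbf{2}1^2)^3,(1^2)^9$ on $\mathcal{B}(8)$; equivalently, the two remaining maximal subgroups $H_3,H_4$ with $\tau_0=21$ have $\tau_1{H_i}=2^21,(\mathbf{2}1)^3$ in the first case and $\tau_1{H_i}=2^21,(\mathbf{3}1)^3$ in the second. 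These two possibilities are mutually exclusive, which yields the asserted chains of equivalences; and since the IPOD type already distinguishes E.6 from E.14 within each branch, the second-order IPAD $\tau^{(2)}{G}$ pins down the isomorphism class of $G$ — up to the twin pairs $\{\langle 3^7,289\rangle,\langle 3^7,290\rangle\}$ and $\{\langle 3^8,617\rangle,\langle 3^8,618\rangle\}$, which the stated equivalences do not separate.

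The step that demands genuine care — and which I expect to be the main obstacle — is completeness of the six-element list, i.e. ruling out any further group $G$ (of logarithmic order $\ge 9$, or non-metabelian of derived length $3$) matching the first-order IPAD $\lbrack 1^2;32,1^3,(21)^2\rbrack$, an IPOD of type E.6 or E.14, and one of the two second-order IPAD values above. Finiteness of the batch of contestants for $\mathfrak{M}$ is exactly Theorem~\ref{thm:Contestants}, and the polarization bound confines $\mathfrak{M}$ to $\mathcal{B}(7)\cup\mathcal{B}(8)$, so there is nothing of logarithmic order $\ge 9$. For a non-metabelian cover $G$ with $G/G''$ one of the six groups, the equality $\mathrm{AP}(G)=\mathrm{AP}(G/G'')$ pertains only to the first-order data, whereas the index-$9$ abelianizations $J/J'$ that enter $\tau^{(2)}{G}$ can only enlarge once $G''\ne 1$; hence the second-order IPAD of such a cover strictly exceeds both displayed values in at least one component, a fact one confirms directly from the descendant computation of the tree rooted at $\langle 243,6\rangle$ underlying Figure~\ref{fig:TreeQSecE}. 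Consequently the hypotheses of cases (1) and (2) are met only by the six metabelian groups, and the theorem follows; everything else is bookkeeping with Table~\ref{tbl:3GroupsTreeQ} and is already contained in \cite{Ma7}.
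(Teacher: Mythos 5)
Your reduction of the metabelianization $\mathfrak{M}=G/G^{\prime\prime}$ to the coclass tree $\mathcal{T}^2\langle 243,6\rangle$ via the polarization principle is sound, but the proposal goes wrong at precisely the point you yourself flag as critical. The groups $\langle 3^8,616\rangle$, $\langle 3^8,617\rangle$, $\langle 3^8,618\rangle$ are \emph{not} metabelian groups of class $6$ and defect $1$: they are non-metabelian of derived length $3$, arising from the bifurcation at $\langle 3^6,49\rangle$ by a step of size $2$, and their metabelianizations are $\langle 3^7,288\rangle$, $\langle 3^7,289\rangle$, $\langle 3^7,290\rangle$ respectively (see the proof of Theorem~\ref{thm:NFE6E14}, where $\mathrm{cov}(\mathfrak{M})=\lbrace\mathfrak{M},G\rbrace$ is computed). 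The metabelian vertices of order $3^8$ on this tree that do have polarization $32=\mathrm{A}(3,5)$, namely the depth-two vertices of defect $1$, all carry IPOD $\mathrm{H}.4$ or $\mathrm{c}.18$, never $\mathrm{E}.6$ or $\mathrm{E}.14$; so the metabelian contestants are only the three groups of order $3^7$. This makes your closing completeness argument self-refuting: you argue that for a non-metabelian cover the index-$9$ abelianizations ``can only enlarge'', so that the hypotheses of cases (1) and (2) are met only by the six allegedly metabelian groups --- but the three groups of case (2) \emph{are} such non-metabelian covers, and the enlarged components $(\mathbf{2}1^2)^3$ and $(\mathbf{3}1)^3$ are exactly the data case (2) asks for. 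Run as written, your argument would delete case (2) altogether.

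What completeness actually requires is (i) the determination of the cover of each $\mathfrak{M}\in\lbrace\langle 3^7,288\rangle,\langle 3^7,289\rangle,\langle 3^7,290\rangle\rbrace$, and (ii) a termination criterion guaranteeing that no descendant of $\langle 3^8,616/617/618\rangle$ of order $\ge 3^9$ or derived length $\ge 4$ reproduces either of the two displayed second-order IPADs. This is exactly what the paper's one-line proof delegates to the MAGMA computation behind Table~\ref{tbl:3GroupsTreeQ} together with the citation of \cite[Thm.\ 5.1]{Ma9}; it cannot be replaced by the monotonicity heuristic you invoke, which in any case points in the wrong direction for the conclusion you draw from it. The table-lookup half of your argument --- that the two second-order IPAD values are mutually exclusive and, within each, the IPOD separates $\mathrm{E}.6$ from $\mathrm{E}.14$ up to the unresolved twin pairs --- agrees with the paper; the gap lies entirely in the misidentification of the six groups and in the exclusion of further contestants.
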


\begin{proof}
This is essentially
\cite[Thm. 6.2, pp. 297--298]{Ma7}.
It is also an immediate consequence of Table
\ref{tbl:3GroupsTreeQ},
which has been computed with MAGMA
\cite{MAGMA}.
As a termination criterion we can now use the more precise
\cite[Thm. 5.1]{Ma9}
instead of
\cite[Cor. 3.0.1, p. 771]{BuMa}.
\end{proof}



\noindent
Figure
\ref{fig:TreeQSecE}
visualizes \(3\)-groups which arise as second \(3\)-class groups \(\mathfrak{M}=\mathrm{G}_3^2{K}\)
of real quadratic fields \(K=\mathbb{Q}(\sqrt{d})\), \(d>0\),
with \(3\)-principalization types \(\mathrm{E}.6\) and \(\mathrm{E}.14\)
in section \S\
\ref{ss:TreeQ}
and the corresponding minimal discriminants.



{\tiny

\begin{figure}[ht]
\caption{Distribution of minimal discriminants for \(\mathrm{G}_3^2{K}\) on the coclass tree \(\mathcal{T}^2\langle 243,6\rangle\)}
\label{fig:TreeQSecE}


\setlength{\unitlength}{0.8cm}
\begin{picture}(17,22.5)(-9,-21.5)

\put(-8,0.5){\makebox(0,0)[cb]{order \(3^n\)}}
\put(-8,0){\line(0,-1){20}}
\multiput(-8.1,0)(0,-2){11}{\line(1,0){0.2}}
\put(-8.2,0){\makebox(0,0)[rc]{\(243\)}}
\put(-7.8,0){\makebox(0,0)[lc]{\(3^5\)}}
\put(-8.2,-2){\makebox(0,0)[rc]{\(729\)}}
\put(-7.8,-2){\makebox(0,0)[lc]{\(3^6\)}}
\put(-8.2,-4){\makebox(0,0)[rc]{\(2\,187\)}}
\put(-7.8,-4){\makebox(0,0)[lc]{\(3^7\)}}
\put(-8.2,-6){\makebox(0,0)[rc]{\(6\,561\)}}
\put(-7.8,-6){\makebox(0,0)[lc]{\(3^8\)}}
\put(-8.2,-8){\makebox(0,0)[rc]{\(19\,683\)}}
\put(-7.8,-8){\makebox(0,0)[lc]{\(3^9\)}}
\put(-8.2,-10){\makebox(0,0)[rc]{\(59\,049\)}}
\put(-7.8,-10){\makebox(0,0)[lc]{\(3^{10}\)}}
\put(-8.2,-12){\makebox(0,0)[rc]{\(177\,147\)}}
\put(-7.8,-12){\makebox(0,0)[lc]{\(3^{11}\)}}
\put(-8.2,-14){\makebox(0,0)[rc]{\(531\,441\)}}
\put(-7.8,-14){\makebox(0,0)[lc]{\(3^{12}\)}}
\put(-8.2,-16){\makebox(0,0)[rc]{\(1\,594\,323\)}}
\put(-7.8,-16){\makebox(0,0)[lc]{\(3^{13}\)}}
\put(-8.2,-18){\makebox(0,0)[rc]{\(4\,782\,969\)}}
\put(-7.8,-18){\makebox(0,0)[lc]{\(3^{14}\)}}
\put(-8.2,-20){\makebox(0,0)[rc]{\(14\,348\,907\)}}
\put(-7.8,-20){\makebox(0,0)[lc]{\(3^{15}\)}}
\put(-8,-20){\vector(0,-1){2}}

\put(-6,0.5){\makebox(0,0)[cb]{\(\tau_1(1)=\)}}
\put(-6,0){\makebox(0,0)[cc]{\((21)\)}}
\put(-6,-2){\makebox(0,0)[cc]{\((2^2)\)}}
\put(-6,-4){\makebox(0,0)[cc]{\((32)\)}}
\put(-6,-6){\makebox(0,0)[cc]{\((3^2)\)}}
\put(-6,-8){\makebox(0,0)[cc]{\((43)\)}}
\put(-6,-10){\makebox(0,0)[cc]{\((4^2)\)}}
\put(-6,-12){\makebox(0,0)[cc]{\((54)\)}}
\put(-6,-14){\makebox(0,0)[cc]{\((5^2)\)}}
\put(-6,-16){\makebox(0,0)[cc]{\((65)\)}}
\put(-6,-18){\makebox(0,0)[cc]{\((6^2)\)}}
\put(-6,-20){\makebox(0,0)[cc]{\((76)\)}}
\put(-6,-21){\makebox(0,0)[cc]{\textbf{IPAD}}}
\put(-6.5,-21.2){\framebox(1,22){}}

\put(7.6,-7){\vector(0,1){3}}
\put(7.8,-7){\makebox(0,0)[lc]{depth \(3\)}}
\put(7.6,-7){\vector(0,-1){3}}

\put(-3.1,-8){\vector(0,1){2}}
\put(-3.3,-8){\makebox(0,0)[rc]{period length \(2\)}}
\put(-3.1,-8){\vector(0,-1){2}}

\put(0.7,-2){\makebox(0,0)[lc]{bifurcation from}}
\put(0.7,-2.3){\makebox(0,0)[lc]{\(\mathcal{G}(3,2)\) to \(\mathcal{G}(3,3)\)}}

\multiput(0,0)(0,-2){10}{\circle*{0.2}}
\multiput(0,0)(0,-2){9}{\line(0,-1){2}}
\multiput(-1,-2)(0,-2){10}{\circle*{0.2}}
\multiput(-2,-2)(0,-2){10}{\circle*{0.2}}
\multiput(1.95,-4.05)(0,-2){9}{\framebox(0.1,0.1){}}
\multiput(3,-2)(0,-2){10}{\circle*{0.2}}
\multiput(0,0)(0,-2){10}{\line(-1,-2){1}}
\multiput(0,0)(0,-2){10}{\line(-1,-1){2}}
\multiput(0,-2)(0,-2){9}{\line(1,-1){2}}
\multiput(0,0)(0,-2){10}{\line(3,-2){3}}
\multiput(-3.05,-4.05)(-1,0){2}{\framebox(0.1,0.1){}}
\multiput(3.95,-4.05)(0,-2){9}{\framebox(0.1,0.1){}}
\multiput(5,-4)(0,-2){9}{\circle*{0.1}}
\multiput(6,-4)(0,-2){9}{\circle*{0.1}}
\multiput(-1,-2)(-1,0){2}{\line(-1,-1){2}}
\multiput(3,-2)(0,-2){9}{\line(1,-2){1}}
\multiput(3,-2)(0,-2){9}{\line(1,-1){2}}
\multiput(3,-2)(0,-2){9}{\line(3,-2){3}}
\multiput(6.95,-6.05)(0,-2){8}{\framebox(0.1,0.1){}}
\multiput(6,-4)(0,-2){8}{\line(1,-2){1}}

\put(2,-0.5){\makebox(0,0)[lc]{branch}}
\put(2,-0.8){\makebox(0,0)[lc]{\(\mathcal{B}(5)\)}}
\put(2,-2.8){\makebox(0,0)[lc]{\(\mathcal{B}(6)\)}}
\put(2,-4.8){\makebox(0,0)[lc]{\(\mathcal{B}(7)\)}}
\put(2,-6.8){\makebox(0,0)[lc]{\(\mathcal{B}(8)\)}}
\put(2,-8.8){\makebox(0,0)[lc]{\(\mathcal{B}(9)\)}}
\put(2,-10.8){\makebox(0,0)[lc]{\(\mathcal{B}(10)\)}}
\put(2,-12.8){\makebox(0,0)[lc]{\(\mathcal{B}(11)\)}}
\put(2,-14.8){\makebox(0,0)[lc]{\(\mathcal{B}(12)\)}}
\put(2,-16.8){\makebox(0,0)[lc]{\(\mathcal{B}(13)\)}}
\put(2,-18.8){\makebox(0,0)[lc]{\(\mathcal{B}(14)\)}}

\put(-0.1,0.3){\makebox(0,0)[rc]{\(\langle 6\rangle\)}}

\put(-2.1,-1.8){\makebox(0,0)[rc]{\(\langle 50\rangle\)}}
\put(-1.1,-1.8){\makebox(0,0)[rc]{\(\langle 51\rangle\)}}
\put(0.1,-1.8){\makebox(0,0)[lc]{\(\langle 49\rangle\)}}
\put(3.1,-1.8){\makebox(0,0)[lc]{\(\langle 48\rangle\)}}

\put(-4.1,-3.5){\makebox(0,0)[cc]{\(\langle 292\rangle\)}}
\put(-3.1,-3.5){\makebox(0,0)[cc]{\(\langle 293\rangle\)}}
\put(-2.1,-3.3){\makebox(0,0)[cc]{\(\langle 289\rangle\)}}
\put(-2.1,-3.5){\makebox(0,0)[cc]{\(\langle 290\rangle\)}}
\put(-1.1,-3.5){\makebox(0,0)[cc]{\(\langle 288\rangle\)}}
\put(0.1,-3.5){\makebox(0,0)[lc]{\(\langle 285\rangle\)}}
\put(2.2,-3.3){\makebox(0,0)[cc]{\(\langle 284\rangle\)}}
\put(2.2,-3.5){\makebox(0,0)[cc]{\(\langle 291\rangle\)}}
\put(3.2,-3.3){\makebox(0,0)[cc]{\(\langle 286\rangle\)}}
\put(3.2,-3.5){\makebox(0,0)[cc]{\(\langle 287\rangle\)}}
\put(4.2,-3.3){\makebox(0,0)[cc]{\(\langle 276\rangle\)}}
\put(4.2,-3.5){\makebox(0,0)[cc]{\(\langle 283\rangle\)}}
\put(5.2,-3.1){\makebox(0,0)[cc]{\(\langle 280\rangle\)}}
\put(5.2,-3.3){\makebox(0,0)[cc]{\(\langle 281\rangle\)}}
\put(5.2,-3.5){\makebox(0,0)[cc]{\(\langle 282\rangle\)}}
\put(6.2,-3.1){\makebox(0,0)[cc]{\(\langle 277\rangle\)}}
\put(6.2,-3.3){\makebox(0,0)[cc]{\(\langle 278\rangle\)}}
\put(6.2,-3.5){\makebox(0,0)[cc]{\(\langle 279\rangle\)}}

\put(-2.4,-5.8){\makebox(0,0)[cc]{\(\langle 2027\rangle\)}}
\put(-1.4,-5.8){\makebox(0,0)[cc]{\(\langle 2026\rangle\)}}
\put(0.1,-5.8){\makebox(0,0)[lc]{\(\langle 2024\rangle\)}}
\put(2.2,-5.8){\makebox(0,0)[cc]{\(\langle 2028\rangle\)}}
\put(3.2,-5.8){\makebox(0,0)[cc]{\(\langle 2025\rangle\)}}
\put(4.2,-5.6){\makebox(0,0)[cc]{\(\langle 2033\rangle\)}}
\put(4.2,-5.8){\makebox(0,0)[cc]{\(\langle 2038\rangle\)}}
\put(5.2,-4.9){\makebox(0,0)[cc]{\(\langle 2031\rangle\)}}
\put(5.2,-5.1){\makebox(0,0)[cc]{\(\langle 2032\rangle\)}}
\put(5.2,-5.3){\makebox(0,0)[cc]{\(\langle 2036\rangle\)}}
\put(5.2,-5.5){\makebox(0,0)[cc]{\(\langle 2037\rangle\)}}
\put(6.2,-4.9){\makebox(0,0)[cc]{\(\langle 2029\rangle\)}}
\put(6.2,-5.1){\makebox(0,0)[cc]{\(\langle 2030\rangle\)}}
\put(6.2,-5.3){\makebox(0,0)[cc]{\(\langle 2034\rangle\)}}
\put(6.2,-5.5){\makebox(0,0)[cc]{\(\langle 2035\rangle\)}}
\put(7.2,-4.9){\makebox(0,0)[cc]{\(\langle 2015\rangle\)}}
\put(7.2,-5.2){\makebox(0,0)[cc]{\(\cdots\)}}
\put(7.2,-5.5){\makebox(0,0)[cc]{\(\langle 2023\rangle\)}}

\put(2.1,-3.8){\makebox(0,0)[lc]{\(*2\)}}
\multiput(-2.1,-3.8)(0,-4){5}{\makebox(0,0)[rc]{\(2*\)}}
\multiput(3.1,-3.8)(0,-4){5}{\makebox(0,0)[lc]{\(*2\)}}
\put(4.1,-3.8){\makebox(0,0)[lc]{\(*2\)}}
\multiput(5.1,-5.8)(0,-4){4}{\makebox(0,0)[lc]{\(*2\)}}
\multiput(5.5,-9.3)(0,-4){3}{\makebox(0,0)[lc]{\(\#2\)}}
\multiput(6.1,-5.8)(0,-4){4}{\makebox(0,0)[lc]{\(*2\)}}
\multiput(5.1,-3.8)(0,-4){5}{\makebox(0,0)[lc]{\(*3\)}}
\multiput(6.1,-3.8)(0,-4){5}{\makebox(0,0)[lc]{\(*3\)}}
\multiput(7.1,-5.8)(0,-2){4}{\makebox(0,0)[lc]{\(*3\)}}

\put(-3,-21){\makebox(0,0)[cc]{\textbf{IPOD}}}
\put(-2,-21){\makebox(0,0)[cc]{E.14}}
\put(-1,-21){\makebox(0,0)[cc]{E.6}}
\put(0,-21){\makebox(0,0)[cc]{c.18}}
\put(2,-21){\makebox(0,0)[cc]{c.18}}
\put(3.1,-21){\makebox(0,0)[cc]{H.4}}
\put(4,-21){\makebox(0,0)[cc]{H.4}}
\put(5,-21){\makebox(0,0)[cc]{H.4}}
\put(6,-21){\makebox(0,0)[cc]{H.4}}
\put(7,-21){\makebox(0,0)[cc]{H.4}}
\put(-3,-21.5){\makebox(0,0)[cc]{\(\varkappa_1=\)}}
\put(-2,-21.5){\makebox(0,0)[cc]{\((3122)\)}}
\put(-1,-21.5){\makebox(0,0)[cc]{\((1122)\)}}
\put(0,-21.5){\makebox(0,0)[cc]{\((0122)\)}}
\put(2,-21.5){\makebox(0,0)[cc]{\((0122)\)}}
\put(3.1,-21.5){\makebox(0,0)[cc]{\((2122)\)}}
\put(4,-21.5){\makebox(0,0)[cc]{\((2122)\)}}
\put(5,-21.5){\makebox(0,0)[cc]{\((2122)\)}}
\put(6,-21.5){\makebox(0,0)[cc]{\((2122)\)}}
\put(7,-21.5){\makebox(0,0)[cc]{\((2122)\)}}
\put(-3.8,-21.7){\framebox(11.6,1){}}

\put(0,-18){\vector(0,-1){2}}
\put(0.2,-19.4){\makebox(0,0)[lc]{infinite}}
\put(0.2,-19.9){\makebox(0,0)[lc]{mainline}}
\put(1.8,-20.4){\makebox(0,0)[rc]{\(\mathcal{T}^2(\langle 243,6\rangle)\)}}


\multiput(0,-2)(0,-4){3}{\oval(1,1)}
\put(0.1,-1.3){\makebox(0,0)[lc]{\underbar{\textbf{+534\,824}}}}
\put(0.1,-5.3){\makebox(0,0)[lc]{\underbar{\textbf{+13\,714\,789}}}}
\put(0.1,-9.3){\makebox(0,0)[lc]{\underbar{\textbf{+174\,458\,681}}}}

\multiput(-1,-4)(0,-4){2}{\oval(1,1)}
\put(-1,-4.8){\makebox(0,0)[lc]{\underbar{\textbf{+5\,264\,069}}}}
\put(-1,-8.8){\makebox(0,0)[lc]{\underbar{\textbf{+75\,393\,861}}}}
\put(-1,-12.8){\makebox(0,0)[lc]{\underbar{\textbf{}}}}
\put(-1,-16.8){\makebox(0,0)[lc]{\underbar{\textbf{}}}}
\multiput(-2,-4)(0,-4){3}{\oval(1,1)}
\put(-2,-4.8){\makebox(0,0)[rc]{\underbar{\textbf{+3\,918\,837}}}}
\put(-2,-8.8){\makebox(0,0)[rc]{\underbar{\textbf{+70\,539\,596}}}}
\put(-2,-12.8){\makebox(0,0)[rc]{\underbar{\textbf{+336\,698\,284}}}}
\put(-2,-16.8){\makebox(0,0)[rc]{\underbar{\textbf{}}}}
\multiput(6,-6)(0,-4){2}{\oval(1,1)}
\put(6,-6.8){\makebox(0,0)[rc]{\underbar{\textbf{+1\,162\,949}}}}
\put(6,-10.8){\makebox(0,0)[rc]{\underbar{\textbf{+126\,691\,957}}}}
\put(6,-14.8){\makebox(0,0)[rc]{\underbar{\textbf{}}}}
\put(6,-18.8){\makebox(0,0)[rc]{\underbar{\textbf{}}}}

\end{picture}

\end{figure}
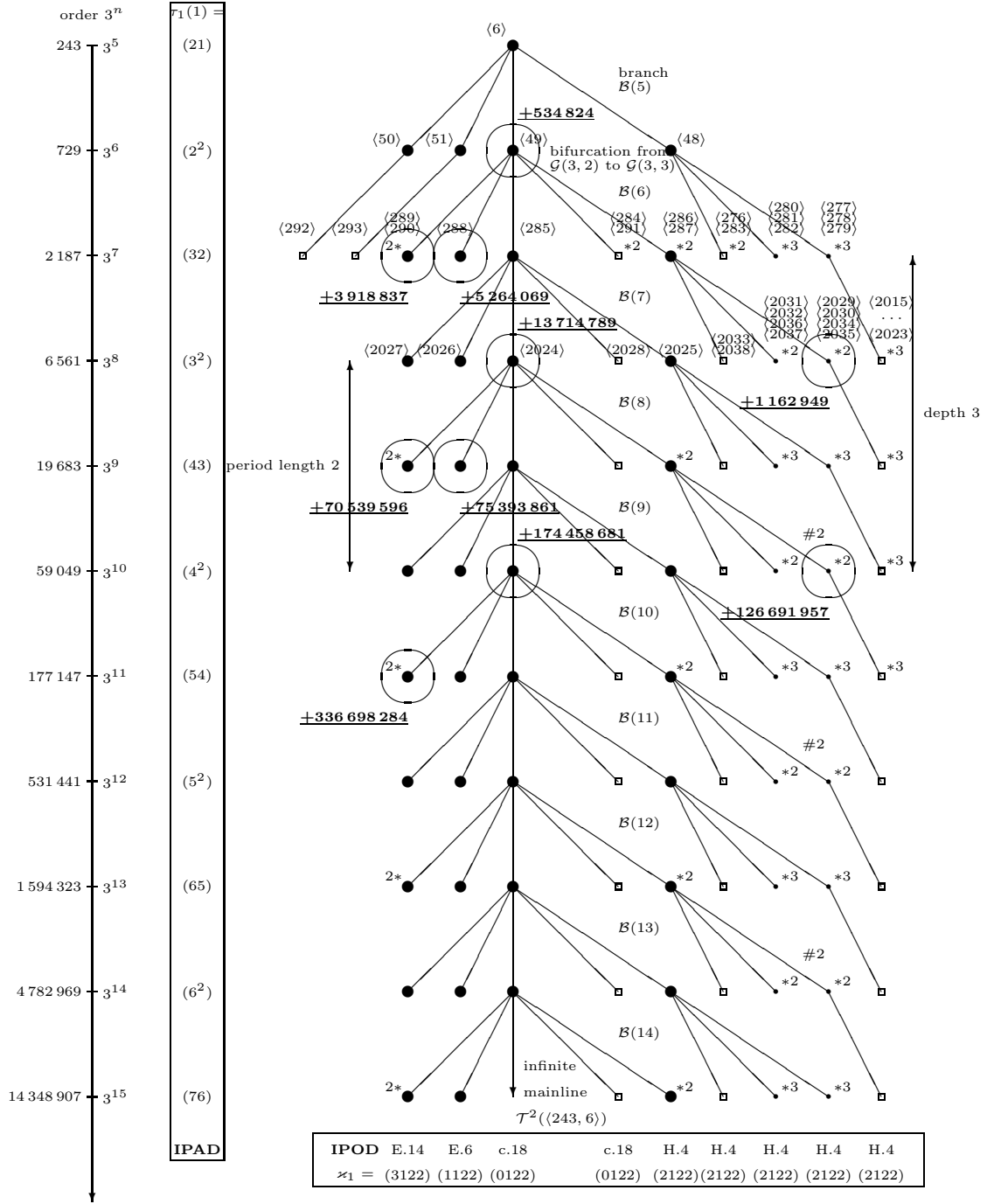

}



\subsection{Parametrized IPADs of second order for the coclass tree \(\mathcal{T}^2{\langle 3^5,6\rangle}\)}
\label{ss:IPAD2TreeQ}
\noindent
Let \(G\in\mathcal{T}^2{\langle 3^5,6\rangle}\) be a descendant of coclass \(\mathrm{cc}(G)=2\) of the root \(\langle 3^5,6\rangle\).
Denote by \(c:=\mathrm{cl}(G)\) the nilpotency class of \(G\),
by \(t:=\mathrm{dl}(G)-2\) the indicator of a three-stage group, and
by \(k:=k(G)\), resp \(k:=k(\pi G)\), the defect of commutativity of \(G\) itself if \(t=0\),
and of the metabelian parent \(\pi{G}\) if \(t=1\).

\begin{theorem}
\label{thm:IPAD2TreeQ}
In dependence on the parameters \(c\), \(t\) and \(k\),
the IPAD of second order of \(G\) has the form
\begin{equation}
\label{eqn:IPAD2TreeQ}
\begin{aligned}
\tau^{(2)}{G} = \lbrack\ 1^2;\ (\ \mathrm{A}(3,c-k-t);\ & \mathrm{A}(3,c-1-k-t)\times C_3,\ (\mathrm{B}(3,c-1-k-t)\times C_3)^3\ ), \\
                                                     (\ 21;\ & \mathrm{A}(3,c-1-k-t)\times C_3,\ (21)^3\ ), \\
                                                    (\ 1^3;\ & \mathrm{A}(3,c-1-k-t)\times C_3,\ (1^3)^3,\ (1^2)^9\ ), \\
                                                     (\ 21;\ & \mathrm{A}(3,c-1-k-t)\times C_3,\ (21)^3\ )\ \rbrack,
\end{aligned}
\end{equation}
where a \textbf{variant} of the \textbf{nearly homocyclic} abelian \(3\)-group of order \(n\ge 2\) in Definition
\ref{dfn:NearlyHomocyclic},
which can also be defined by \quad \(\mathrm{A}(3,0):=1\), \quad \(\mathrm{A}(3,1):=C_3\), \quad and 
\begin{equation}
\label{eqn:NrlHom}
\mathrm{A}(3,n):=
\begin{cases}
C_{3^{m+1}}\times C_{3^m} & \text{ if } n=2m+1 \text{ odd}, \\
C_{3^m}\times C_{3^m} & \text{ if } n=2m \text{ even},
\end{cases}
\end{equation}
is given by \quad \(\mathrm{B}(3,2):=C_{3^2}\) \quad and
\begin{equation}
\label{eqn:VarNrlHom}
\mathrm{B}(3,n):=
\begin{cases}
C_{3^{m+1}}\times C_{3^m} & \text{ if } n=2m+1 \text{ odd}, \\
C_{3^{m+2}}\times C_{3^m} & \text{ if } n=2m+2 \text{ even}.
\end{cases}
\end{equation}
\end{theorem}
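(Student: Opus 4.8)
The plan is to prove formula~\eqref{eqn:IPAD2TreeQ} by reducing the infinitely many vertices of $\mathcal{T}^2\langle 3^5,6\rangle$ to a finite list via the periodicity of branches, verifying the formula on that list by explicit computation, and then transporting it along the periodicity with the help of the monotonicity of Artin transfer patterns. First I would recall, from Nebelung's metabelian skeleton \cite{Ne} and Ascione's complete trees \cite{AHL,As} displayed in Figure~\ref{fig:TreeQSecE}, the architecture of $\mathcal{T}^2\langle 3^5,6\rangle$: a single infinite mainline of metabelian vertices $M_n$ of order $3^n$ (nilpotency class $c=n-2$, coclass $2$), branches $\mathcal{B}(n)$ of depth at most $3$ rooted at $M_n$, and the branch isomorphisms $\mathcal{B}(j)\simeq\mathcal{B}(j+2)$ for $j\ge 7$. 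Under such an isomorphism $\lvert G\rvert$ is multiplied by $3^2$, the class $c$ increases by $2$, whereas the shape invariants $t=\mathrm{dl}(G)-2\in\{0,1\}$ and $k\in\{0,1\}$ are preserved. Hence it suffices to establish~\eqref{eqn:IPAD2TreeQ} for all vertices lying in the pre-periodic branches $\mathcal{B}(5),\mathcal{B}(6)$ and in one full period $\mathcal{B}(7),\mathcal{B}(8)$, and then to propagate.

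For these base cases the iterated multi-layered IPAD $\tau^{(2)}_\ast{G}$ has been computed with MAGMA \cite{MAGMA}; the vertices of order up to $3^8$ are recorded in Table~\ref{tbl:3GroupsTreeQ}, and the finitely many remaining ones — up to order $3^{11}$, because of the depth bound $3$ for the roots $M_7,M_8$ — require only a short additional run. Discarding the bottom-layer entries $\tau_2{H_i}$ from $\tau^{(2)}_\ast{G}$ yields $\tau^{(2)}{G}$, and a term-by-term comparison with~\eqref{eqn:IPAD2TreeQ}, after substituting the parameters $c=\mathrm{cl}(G)$, $t$, and $k$ (the defect of $G$ itself if $t=0$, of $\pi{G}$ if $t=1$) read off from the vertex, confirms the formula on the finite list. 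This is where the factor $C_3$ appended to the first component of each $\tau_1{H_i}$, and — the genuinely delicate point — the dichotomy between the nearly homocyclic group $\mathrm{A}(3,c-1-k-t)$ of~\eqref{eqn:NrlHom} and its variant $\mathrm{B}(3,c-1-k-t)$ of~\eqref{eqn:VarNrlHom} in the three repeated components of $\tau_1{H_1}$ get pinned down; note that the two coincide when $c-1-k-t$ is odd and differ (one cyclic factor one box taller, one box shorter) when it is even, which is precisely the discrepancy separating a metabelian vertex ($t=0$) from a three-stage vertex ($t=1$) of the same class.

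To propagate from $\mathcal{B}(n-2)$ to $\mathcal{B}(n)$ for $n\ge 9$, let $G^-\in\mathcal{B}(n-2)$ be the vertex corresponding to $G\in\mathcal{B}(n)$ under the branch isomorphism. Since $\tau^{(2)}{G}$ is assembled from the first-order IPADs $\tau^{(1)}{H_i}$ of the four maximal subgroups $H_1,\dots,H_4$ of $G$, it suffices to control how each $\tau^{(1)}{H_i}$ changes when $H_i^-$ is replaced by $H_i$. The abelianizations of $H_1,H_2,H_3,H_4$ are of types $\mathrm{A}(3,c-k-t)$, $(3,3,3)$, $(9,3)$, $(9,3)$; for $H_2,H_3,H_4$ the way these subgroups are built inside $G$ shows directly that their first-order IPADs undergo the shift $c\mapsto c+2$ compatibly with~\eqref{eqn:IPAD2TreeQ}, while for the polarized subgroup $H_1$ — which does not have an elementary abelianization, so Theorem~\ref{thm:AllIPADs} does not apply verbatim — one locates $H_1$ on its own periodic descendant tree and invokes the stability-and-polarization results of \cite[Thm.~6.1--6.2]{Ma9}: the stable components of an Artin transfer pattern persist along descendant edges, and the polarized component grows linearly with the nilpotency class. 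This excludes the emergence at large class of an irregular component invisible on the finite list, the kind of phenomenon that forced the infinitely many exceptions of $\tau_2$ in Theorem~\ref{thm:AllIPADs} (cf. Remark~\ref{rmk:AllIPADs}).

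I expect the main obstacle to be exactly this last step for the subgroup $H_1$: establishing, uniformly in $c$ and correctly with respect to the parity of $c-1-k-t$, that it is the variant $\mathrm{B}(3,\cdot)$ rather than the plain nearly homocyclic $\mathrm{A}(3,\cdot)$ that occurs in $\tau_1{H_1}$, which amounts to tracking the power-commutator structure of $G$ two levels down its subgroup lattice and checking that the ``one box taller, one box shorter'' deformation of the nearly homocyclic type is preserved by the period-$2$ shift. Everything else reduces either to the bookkeeping already carried out in Table~\ref{tbl:3GroupsTreeQ} or to the general monotonicity of Artin patterns from \cite{Ma9}.
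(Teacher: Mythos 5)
The paper states Theorem \ref{thm:IPAD2TreeQ} without any proof, so there is no authorial argument to compare yours against; it can only be judged on its own terms. Your overall strategy -- verify \eqref{eqn:IPAD2TreeQ} on the finitely many vertices of the pre-periodic branches and one full period by explicit computation, then transport it along the branch isomorphisms \(\mathcal{B}(j)\simeq\mathcal{B}(j+2)\) -- is the natural one and mirrors how the author handles neighbouring results (Theorems \ref{thm:E6E14GS} and \ref{thm:IPODa1Mainline}). But the plan founders at its very first step: the asserted \lq\lq term-by-term comparison with Table \ref{tbl:3GroupsTreeQ} confirms the formula on the finite list\rq\rq\ is not true, and an honest comparison refutes the formula as printed. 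The vertices \(\langle 3^7,284\rangle\) and \(\langle 3^7,291\rangle\) are siblings under \(\langle 3^6,49\rangle\) with the same order, the same transfer kernel type \(\mathrm{c}.18\), the same nilpotency class \(c=5\), the same derived length, and the same (metabelianized) parent -- hence the same triple \((c,t,k)\) however one reads the definitions -- yet Table \ref{tbl:3GroupsTreeQ} records different iterated IPADs for them: the \(1^3\)-component is \((21^2)^4,(1^2)^9\) for \(\langle 284\rangle\) but \(21^2,(1^3)^3,(1^2)^9\) for \(\langle 291\rangle\), and one \(21\)-component of \(\langle 291\rangle\) carries \((31)^3\) where \eqref{eqn:IPAD2TreeQ} prescribes \((21)^3\). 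So \(\tau^{(2)}{G}\) is not a function of \((c,t,k)\) on this tree. The same defect appears where it matters most: since the polarized component of \(\langle 3^8,616\rangle\) is \(32=\mathrm{A}(3,5)\), its parameters force \(c-k-t=5\), the same value as for \(\langle 3^7,288\rangle\), so the formula assigns these two groups identical second-order IPADs -- whereas Theorem \ref{thm:E6E14GS} distinguishes precisely these two groups by their second-order IPADs, which is the entire point of \S\ \ref{ss:NFE6E14}. The literal constants \((1^3)^3\) and \((21)^3\) in \eqref{eqn:IPAD2TreeQ} in fact grow with the class (to \((21^2)^3\) and \((31)^3\) at order \(3^8\)), so the statement must be repaired before any proof can succeed; your verification step, carried out as described, would expose this rather than confirm it.

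Independently, the propagation step is under-powered. The periodicity \(\mathcal{B}(j)\simeq\mathcal{B}(j+2)\) is an isomorphism of graphs whose vertices are the groups \(G\); the data you must transport are the first-order IPADs of the maximal subgroups \(H_i<G\), which are not vertices of \(\mathcal{T}^2\langle 3^5,6\rangle\), so the stability and polarization theorems of \cite[Thm. 6.1--6.2]{Ma9} do not apply to them directly. You flag this difficulty for \(H_1\) but declare the shift \lq\lq direct\rq\rq\ for \(H_2,H_3,H_4\) -- yet it is exactly in \(\tau_1{H_2},\ldots,\tau_1{H_4}\) that the unstable entries (\((21)^3\) versus \((31)^3\), \((1^3)^3\) versus \((21^2)^3\)) live, and these are the entries that separate two-stage from three-stage vertices in Theorem \ref{thm:E6E14GS}. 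A workable substitute is the route the author actually takes in the proof of Theorem \ref{thm:NFE6E14}: compute the abelianizations of the subgroups of the \(H_i\) from the parametrized presentations of the metabelian \(3\)-groups of coclass \(2\) and their one-step non-metabelian extensions (\cite{Ne}, \cite[Thm. 8.8]{Ma3}), for which the dependence on \(c\), \(k\) and the derived length can be tracked symbolically, rather than appealing to tree periodicity.
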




\subsection{Number fields with IPOD of type \(\mathrm{E}.6\) or \(\mathrm{E}.14\)}
\label{ss:NFE6E14}
\noindent
Let \(K\) be a number field
with \(3\)-class group \(\mathrm{Cl}_3{K}\simeq C_3\times C_3\)
and first layer \(\mathrm{Lyr}_1{K}=\lbrace L_1,\ldots,L_4\rbrace\) of unramified abelian extensions.

\begin{theorem}
\label{thm:NFE6E14}
(Criteria for \(\ell_3{K}\in\lbrace 2,3\rbrace\).)
Let the IPOD of \(K\) be
of type \(\mathrm{E}.6\), \(\varkappa_1{K}\sim (1313)\),
resp. \(\mathrm{E}.14\), \(\varkappa_1{K}\sim (2313)\).
If \(\tau_1{K}\sim\left(\mathrm{A}(3,c),\ 21,\ 1^3,\ 21\right)\) with \(c\ge 4\), then
\begin{itemize}
\item
\(\ell_3{K}=2\) \(\iff\) \(\tau_1{L_3}\sim\left(\mathrm{A}(3,c-1)\times C_3,\ \mathbf{(1^3)^3},\ (1^2)^9\right)\) \\
\phantom{\(\ell_3{K}=2\)} \(\iff\) \(\tau_1{L_j}\sim\left(\mathrm{A}(3,c-1)\times C_3,\ \mathbf{(21)^3}\right)\) for \(j\in\lbrace 2,4\rbrace\),
\item
\(\ell_3{K}=3\) \(\iff\) \(\tau_1{L_3}\sim\left(\mathrm{A}(3,c-1)\times C_3,\ \mathbf{(21^2)^3},\ (1^2)^9\right)\) \\
\phantom{\(\ell_3{K}=3\)} \(\iff\) \(\tau_1{L_j}\sim\left(\mathrm{A}(3,c-1)\times C_3,\ \mathbf{(31)^3}\right)\) for \(j\in\lbrace 2,4\rbrace\).
\end{itemize}
\end{theorem}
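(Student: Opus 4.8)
The plan is to reduce Theorem~\ref{thm:NFE6E14} to the purely group-theoretic classification already established in Theorem~\ref{thm:E6E14GS} via the Artin reciprocity law, exactly as in the proof of Corollary~\ref{cor:a1a2a3}. First I would invoke the transfer-pattern principle: by \cite{Ar1,Ar2} (and the discussion in \S\ref{s:IPAD}), the IPAD of $K$, as well as the IPADs $\tau_1{L_j}$ of all four unramified abelian extensions $L_j/K$ of degree $3$, coincide with the corresponding abelian type invariants of the second $3$-class group $\mathfrak{M}=\mathrm{G}_3^2{K}$ and of its four maximal subgroups $H_j$; moreover the iterated IPAD of second order $\tau^{(2)}{K}$ equals $\tau^{(2)}{\mathfrak{M}}$. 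So the hypothesis $\tau_1{K}\sim(\mathrm{A}(3,c),21,1^3,21)$ with IPOD of type $\mathrm{E}.6$ or $\mathrm{E}.14$ places $\mathfrak{M}$ among the metabelian candidates sitting on the coclass tree $\mathcal{T}^2\langle 3^5,6\rangle$ — more precisely, by Theorem~\ref{thm:Contestants} and the Polarization Principle, the polarized first component $\mathrm{A}(3,c)$ fixes the nilpotency class, and the tree diagram of Figure~\ref{fig:TreeQSecE} together with Table~\ref{tbl:3GroupsTreeQ} shows $\mathfrak{M}$ must be one of the vertices $\langle 3^n,\mathrm{id}\rangle$ of type $\mathrm{E}.6$ or $\mathrm{E}.14$ there (for $c=4$ the groups $\langle 3^7,288\rangle$ resp. $\langle 3^7,289/290\rangle$, for $c=5$ the groups $\langle 3^8,616\rangle$ resp. $\langle 3^8,617/618\rangle$, and so on along the periodic branches).

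Next I would argue the dichotomy $\ell_3{K}\in\{2,3\}$ for these types. The length is never $1$ by \cite[Thm.~4.1.(1)]{Ma1}, so $\ell_3{K}\ge 2$. For the upper bound $\ell_3{K}\le 3$, observe that the $3$-class tower group $G=\mathrm{G}_3^\infty{K}$ lies in the cover $\mathrm{cov}(\mathfrak{M})$ in the sense of \cite[Dfn.~5.1]{Ma10}; using the termination criterion \cite[Thm.~5.1]{Ma9} (as invoked in the proof of Theorem~\ref{thm:E6E14GS}) together with the structure of $\mathcal{T}^2\langle 3^5,6\rangle$, the only non-metabelian descendants of the relevant metabelian vertices that can be $3$-class tower groups are the square vertices $\mathfrak{G}$ in Figure~\ref{fig:TreeQSecE} of derived length $3$, which are terminal; hence $\mathrm{dl}(G)\le 3$ and $\ell_3{K}=\mathrm{dl}(G)\le 3$. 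This splits the possibilities into exactly two: either $G=\mathfrak{M}$ is metabelian ($\ell_3{K}=2$) or $G$ is one of the three-stage descendants ($\ell_3{K}=3$).

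The heart of the proof is then the equivalences, and here is where Theorem~\ref{thm:E6E14GS} does the real work. The second-order data $\tau_1{L_3}$ and $\tau_1{L_j}$, $j\in\{2,4\}$, translate into $\tau^{(1)}{H_2}$ and $\tau^{(1)}{H_i}$, $i\in\{3,4\}$, in the notation there (the labelling convention, from Table~\ref{tbl:3GroupsTreeQ}, pairs the component $\tau_0=1^3$ with the index $2$ and the two components $\tau_0=21$ with indices $3,4$, while $\tau_0=32$ at index $1$ is stable). Then Theorem~\ref{thm:E6E14GS}(1)--(2), read with the parametrized form \eqref{eqn:IPAD2TreeQ} of Theorem~\ref{thm:IPAD2TreeQ} to cover \emph{all} classes $c\ge 4$ rather than just $c\in\{4,5\}$, says precisely: the bold marker $(\mathbf{1}^3)^3$ in $\tau_1{L_3}$ (equivalently $(\mathbf{2}1)^3$ in $\tau_1{L_j}$) singles out the metabelian vertex of class $c$, whereas the marker $(\mathbf{2}1^2)^3$, equivalently $(\mathbf{3}1)^3$, singles out its non-metabelian (three-stage) descendant. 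Combining with the length dichotomy of the previous paragraph yields the two stated bi-implications; the fixed component $\tau_1{L_1}\sim(\mathrm{A}(3,c-1)\times C_3,(31^2)^3)$ is noted not to contribute, as in Theorem~\ref{thm:E6E14GS}.

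The main obstacle I anticipate is not any single deep step but the bookkeeping needed to promote the finite tables into a statement uniform in $c$: one must check that the branch periodicity of $\mathcal{T}^2\langle 3^5,6\rangle$ (period length $2$, setting in at $\mathcal{B}(7)$, as indicated in Figure~\ref{fig:TreeQSecE}) genuinely propagates the two-valued behaviour of the distinguishing component $(T_3)$ of $\tau_1{L_3}$ up every branch, so that the parametrized formula \eqref{eqn:IPAD2TreeQ} is valid with $t\in\{0,1\}$ recording exactly the metabelian/non-metabelian alternative. Once that periodicity is in hand — which is the content of Theorem~\ref{thm:IPAD2TreeQ} — the rest is a direct transcription through Artin reciprocity, and one should also record that the criterion is \emph{decidable in practice}: the invariants $\tau_1{L_j}$ are computable class groups of the degree-$9$ fields $L_j$, so the dichotomy can be resolved for any concrete $K$, which is what makes the examples in Theorem~\ref{thm:MainTheorem}(2) for type $\mathrm{E}$ accessible.
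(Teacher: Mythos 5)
Your proposal is correct and follows essentially the same route as the paper: identify the candidates for $\mathfrak{M}=\mathrm{G}_3^2{K}$ on the tree $\mathcal{T}^2\langle 3^5,6\rangle$ via the Artin pattern, observe that the cover $\mathrm{cov}(\mathfrak{M})=\lbrace\mathfrak{M},G\rbrace$ has exactly two elements (metabelian vs. three-stage), and separate the two by the second-order IPAD data of Table~\ref{tbl:3GroupsTreeQ} as packaged in Theorem~\ref{thm:E6E14GS}. The only difference is cosmetic: the paper conducts the argument exemplarily for $c=5$ and leaves the uniformity in $c$ implicit, whereas you make the appeal to the parametrized formula of Theorem~\ref{thm:IPAD2TreeQ} and to branch periodicity explicit.
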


\begin{proof}
Exemplarily, we conduct the proof for \(c=5\),
which is the most important situation for our computational applications. \\
Searching for the Artin pattern \(\mathrm{AP}_1=(\tau_1,\varkappa_1)\) with
\(\tau_1\sim\left(32,21,1^3,21\right)\) and
\(\varkappa_1\sim (1313)\), resp. \((2313)\),
in the descendant tree \(\mathcal{T}(R)\) with abelian root \(R:=\langle 3^2,2\rangle\simeq C_3\times C_3\),
unambiguously leads to the unique metabelian descendant with path
\(R\leftarrow\langle 3^3,3\rangle\leftarrow\langle 3^5,6\rangle\leftarrow\langle 3^6,49\rangle\leftarrow\langle 3^7,288\rangle=:\mathfrak{M}\)
for type \(\mathrm{E}.6\),
resp. two descendants \(\langle 3^7,289/290\rangle\) for type \(\mathrm{E}.14\).
The bifurcation at the vertex \(\langle 3^6,49\rangle\) with nuclear rank two
leads to a unique non-metabelian descendant with path
\(R\leftarrow\langle 3^3,3\rangle\leftarrow\langle 3^5,6\rangle\leftarrow\langle 3^6,49\rangle\leftarrow\langle 3^8,616\rangle=:G\)
for type \(\mathrm{E}.6\),
resp. two descendants \(\langle 3^8,617/618\rangle\) for type \(\mathrm{E}.14\).
The cover of \(\mathfrak{M}=\mathrm{G}_3^2{K}\) is non-trivial but very simple,
since it contains two elements \(\mathrm{cov}(\mathfrak{M})=\lbrace\mathfrak{M},G\rbrace\) only.
The decision whether \(\ell_3{K}=2\) and \(\mathrm{G}_3^3{K}=\mathfrak{M}\)
or \(\ell_3{K}=3\) and \(\mathrm{G}_3^3{K}=G\) requires the iterated IPADs of second order
\(\tau^{(2)}\) of \(\mathfrak{M}\) and \(G\),
which are listed in Table
\ref{tbl:3GroupsTreeQ}.
The general form \(\mathrm{A}(3,c-1)\times C_3\) of the component of \(\tau^{(2)}\) which corresponds to the commutator subgroup
\(\mathfrak{M}^\prime\simeq G^\prime/G^{\prime\prime}\) is a consequence of
\cite[Thm. 8.8, p.461]{Ma3},
since in terms of the nilpotency class \(c\) and coclass \(r=2\) of \(\mathfrak{M}\) we have \(m-2=c-1\) and \(e-2=r-1\).
\end{proof}


\noindent
The proof of Theorem
\ref{thm:NFE6E14},
immediately justifies the following conclusions for \(c\le 5\).

\begin{corollary}
\label{cor:NFE6E14}
Under the assumptions of Theorem
\ref{thm:NFE6E14},
the second and third \(3\)-class groups of \(K\) are given by their SmallGroups identifier
\cite{BEO1,BEO2},
if \(c\le 5\). Independently of \(\ell_3{K}\), \\
if \(c=4\), then \(\mathrm{G}_3^2{K}\simeq\langle 3^6,51\rangle\) for type \(\mathrm{E}.6\), resp. \(\langle 3^6,50\rangle\) for type \(\mathrm{E}.14\), and \\
if \(c=5\), then \(\mathrm{G}_3^2{K}\simeq\langle 3^7,288\rangle\) for type \(\mathrm{E}.6\), resp. \(\langle 3^7,289/290\rangle\) for type \(\mathrm{E}.14\). \\
In the case of a \(3\)-class tower \(\mathrm{F}_3^\infty{K}\) of length \(\ell_3{K}=3\), \\
if \(c=4\), then \(\mathrm{G}_3^3{K}\simeq\langle 3^7,293\rangle\) for type \(\mathrm{E}.6\), resp. \(\langle 3^7,292\rangle\) for type \(\mathrm{E}.14\), and \\
if \(c=5\), then \(\mathrm{G}_3^3{K}\simeq\langle 3^8,616\rangle\) for type \(\mathrm{E}.6\), resp. \(\langle 3^8,617/618\rangle\) for type \(\mathrm{E}.14\). \\
\end{corollary}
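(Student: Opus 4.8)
The plan is to run exactly the descendant-tree search that underlies the proof of Theorem~\ref{thm:NFE6E14}, but to keep track of the SmallGroups identifiers \cite{BEO1,BEO2} and to carry it out for \(c=4\) in addition to \(c=5\); the hypothesis \(c\ge 4\) of Theorem~\ref{thm:NFE6E14} leaves only \(c\in\lbrace 4,5\rbrace\) to consider. By the Artin reciprocity law \cite{Ar1,Ar2}, the Artin pattern \(\mathrm{AP}(K)\) equals \(\mathrm{AP}(\mathrm{G}_3^2{K})\), so \(\mathfrak{M}=\mathrm{G}_3^2{K}\) is forced to be the metabelian vertex of the descendant tree \(\mathcal{T}(\langle 3^2,2\rangle)\) which realizes the prescribed \(\tau_1\) and \(\varkappa_1\). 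For \(c=5\) this vertex was already pinned down in the proof of Theorem~\ref{thm:NFE6E14}: it is \(\langle 3^7,288\rangle\) for type \(\mathrm{E}.6\), resp.\ \(\langle 3^7,289\rangle\) or \(\langle 3^7,290\rangle\) for type \(\mathrm{E}.14\), and the non-metabelian element of its cover is \(\langle 3^8,616\rangle\), resp.\ \(\langle 3^8,617\rangle\) or \(\langle 3^8,618\rangle\). The assertions for \(c=5\) are thus merely a re-reading of that proof together with Table~\ref{tbl:3GroupsTreeQ}.

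Next I would repeat the search with \(\tau_1\sim(\mathrm{A}(3,4),21,1^3,21)=(2^2,21,1^3,21)\), that is, the case \(c=4\). Consulting Table~\ref{tbl:3GroupsTreeQ} and Figure~\ref{fig:TreeQSecE}, the unique metabelian descendant of \(\langle 3^2,2\rangle\) carrying this Artin pattern lies on the coclass tree \(\mathcal{T}^2\langle 3^5,6\rangle\) at order \(3^6\): it is \(\langle 3^6,51\rangle\) for type \(\mathrm{E}.6\) and \(\langle 3^6,50\rangle\) for type \(\mathrm{E}.14\). Since \(\mathrm{AP}(\mathrm{G}_3^2{K})=\mathrm{AP}(K)\), this already fixes the stated second \(3\)-class group, irrespective of \(\ell_3{K}\). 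The bifurcation to coclass \(3\) attaches to each of these metabelian vertices a non-metabelian descendant, namely \(\langle 3^7,293\rangle\) resp.\ \(\langle 3^7,292\rangle\), which is visible in Figure~\ref{fig:TreeQSecE}; invoking the termination criterion \cite[Thm. 5.1]{Ma9} one shows that these descendants are terminal, so that \(\mathrm{cov}(\mathfrak{M})=\lbrace\mathfrak{M},G\rbrace\) with \(G\) the displayed non-metabelian group. When \(\ell_3{K}=3\), Theorem~\ref{thm:NFE6E14} forces \(\mathrm{G}_3^\infty{K}=\mathrm{G}_3^3{K}\in\mathrm{cov}(\mathfrak{M})\setminus\lbrace\mathfrak{M}\rbrace=\lbrace G\rbrace\), which is exactly the assertion about the third \(3\)-class groups.

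The step I expect to be the main obstacle is the verification that the cover genuinely collapses to the two-element set \(\lbrace\mathfrak{M},G\rbrace\) in the \(c=4\) case, i.e.\ the exclusion of a deeper non-metabelian vertex on the descendant trees below \(\langle 3^5,6\rangle\) or \(\langle 3^6,49\rangle\) as a candidate for \(\mathrm{G}_3^\infty{K}\). For this I would combine \cite[Thm. 5.1]{Ma9} with the relation-rank bound of the corrected Shafarevich theorem \cite[Thm. 5.1]{Ma10} --- a real quadratic field has torsion-free unit rank \(1\) and no primitive cube roots of unity, so \(d_2{\mathrm{G}_3^\infty{K}}\le 3\) --- together with a MAGMA \cite{MAGMA} descendant computation pushed far enough for the periodicity of the relevant branches of \(\mathcal{T}^2\langle 3^5,6\rangle\) to guarantee completeness. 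Once the finiteness and shape of the cover are secured, matching the residual invariants against the rows of Table~\ref{tbl:3GroupsTreeQ} with \(\mathrm{lo}\in\lbrace 6,7,8\rbrace\) is routine.
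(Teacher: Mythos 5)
Your proposal is correct and follows essentially the same route as the paper, which justifies the corollary simply by pointing back to the proof of Theorem \ref{thm:NFE6E14}: one repeats the descendant-tree search for the prescribed Artin pattern with \(c=4\) (landing on \(\langle 3^6,51\rangle\), resp. \(\langle 3^6,50\rangle\), with non-metabelian cover element \(\langle 3^7,293\rangle\), resp. \(\langle 3^7,292\rangle\)) and reads the \(c=5\) case off the theorem's proof and Table \ref{tbl:3GroupsTreeQ}. Your explicit attention to verifying that the cover really is the two-element set \(\lbrace\mathfrak{M},G\rbrace\) in the \(c=4\) case is a point the paper leaves implicit, but your proposed combination of the termination criterion and the Shafarevich relation-rank bound is exactly how the paper handles the analogous issue elsewhere.
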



The range \(0<d<10^7\) of fundamental discriminants \(d\)
of real quadratic fields \(K=\mathbb{Q}(\sqrt{d})\) of type \(\mathrm{E}\),
which underlies Theorem
\ref{thm:RQFE6E14GS}
in this section, resp.
\ref{thm:RQFE8E9GS}
in the next section,
is just sufficient to prove that each of the possible groups \(G\) in Theorem
\ref{thm:E6E14GS},
resp.
\ref{thm:E8E9GS},
is actually realized by the \(3\)-tower group \(\mathrm{G}_3^\infty{K}\) of some field \(K\).

\begin{proposition}
\label{prp:RQFE6E14GS}
(Fields \(\mathbb{Q}(\sqrt{d})\) with IPOD of type \(\mathrm{E}.6\) or \(\mathrm{E}.14\) for \(0<d<10^7\)
\cite{Ma1},
\cite{Ma3}.)
\noindent
In the range \(0<d<10^7\) of fundamental discriminants \(d\)
of real quadratic fields \(K=\mathbb{Q}(\sqrt{d})\),
there exist precisely \(\mathbf{3}\), resp. \(\mathbf{4}\), cases
with \(3\)-principalization type \(\mathrm{E}.6\), \(\varkappa_1{K}\sim (1313)\),
resp. \(\mathrm{E}.14\), \(\varkappa_1{K}\sim (2313)\).
\end{proposition}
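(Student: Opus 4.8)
The statement is of a computational nature, and the plan is to recover it from a complete enumeration of the relevant real quadratic fields, in the spirit of the investigations carried out in \cite{Ma1,Ma3}.

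First I would determine all fundamental discriminants $d$ with $0<d<10^7$ for which the $3$-class group $\mathrm{Cl}_3{K}$ of $K=\mathbb{Q}(\sqrt{d})$ is of type $(3,3)$. By genus theory the $3$-class rank satisfies $\varrho_3{K}=2$ only if $d$ is divisible by at least three prime discriminants, which already cuts down the search substantially; for each surviving $d$ one computes $\mathrm{Cl}_3{K}$ and retains exactly those fields with $\mathrm{Cl}_3{K}\simeq C_3\times C_3$, discarding the cases where the $3$-class group is cyclic or a larger abelian $3$-group.

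Second, for each such field I would build the first layer $\mathrm{Lyr}_1{K}=\{L_1,\dots,L_4\}$ of unramified cyclic cubic extensions inside the Hilbert $3$-class field $\mathrm{F}_3^1{K}$, compute the $3$-class groups $\mathrm{Cl}_3{L_i}$ (which yields the IPAD $\tau_1{K}$ as a by-product), and evaluate the Artin transfers $T_{K,L_i}\colon\mathrm{Cl}_3{K}\to\mathrm{Cl}_3{L_i}$ to read off the principalization kernels $\ker T_{K,L_i}$. Comparing the resulting family $\varkappa_1{K}$ with the list of the $23$ admissible types of \cite[Tbl.~6--7]{Ma2}, I would isolate the discriminants whose transfer kernel type is equivalent to $(1313)$, i.e.\ $\mathrm{E}.6$, respectively to $(2313)$, i.e.\ $\mathrm{E}.14$, and tally the two families. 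The counts come out as $3$ and $4$, matching the data tabulated in \cite{Ma1,Ma3}.

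The main obstacle is not conceptual but is a matter of computational reliability and completeness: the decisive and most expensive step is the correctness of the $3$-class group computations for the non-Galois sextic fields $L_i$ (dihedral of degree $6$ over $\mathbb{Q}$) over the full range, and the guarantee that no discriminant of type $\mathrm{E}$ has been overlooked. This is controlled by carrying out the class group computations unconditionally (or GRH-conditionally, followed by an unconditional verification for the few candidates surviving the filter) and by reproducing the whole search independently with MAGMA \cite{MAGMA} and PARI/GP \cite{PARI}; the agreement of the two runs, together with the earlier published tables, establishes the stated numbers.
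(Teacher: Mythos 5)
Your overall strategy coincides with the paper's: the proposition is purely computational, and the paper's proof is likewise just a complete enumeration (carried out in 2010 with a PARI/GP implementation of the principalization algorithm of \cite{Ma3}, cross-checked and refined with MAGMA to split the accumulated count \(7\) into \(3+4\)), together with a citation of the resulting tables. So there is no methodological divergence to speak of.

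There is, however, one concrete step in your plan that would fail: the claim that \(\varrho_3{K}=2\) \emph{only if} \(d\) is divisible by at least three prime discriminants is false, and using it as a pre-filter destroys exactly the completeness guarantee you correctly identify as the crux of the argument. Genus theory controls the \(2\)-rank of \(\mathrm{Cl}{K}\) via the number of prime discriminants dividing \(d\); it says nothing about the \(3\)-rank. Indeed the very first real quadratic field with \(\mathrm{Cl}_3{K}\simeq(3,3)\), namely \(d=32\,009\) (a prime discriminant, appearing in \S\ \ref{ss:RQFa} as Heider--Schmithals' example of type \(\mathrm{a}.3\)), would be discarded by your filter. The correct procedure is to compute \(\mathrm{Cl}_3{K}\) (or at least its \(3\)-rank) for \emph{every} fundamental discriminant \(0<d<10^7\) with no arithmetic pre-selection beyond fundamentality; once that is done, the rest of your outline (construction of the four unramified cyclic cubic extensions, evaluation of the transfer kernels, matching against the type list of \cite{Ma2}, and tallying) is exactly what the cited computation does.
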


\begin{proof}
The results of
\cite[Tbl. 6.5, p. 452]{Ma3},
where the entry in the last column freq. should be \(28\) instead of \(29\) in the first row
and \(4\) instead of \(3\) in the fourth row,
were computed in \(2010\) by means of the free number theoretic computer algebra system PARI/GP
\cite{PARI}
using an implementation of our own principalization algorithm in a PARI script,
as described in detail in
\cite[\S\ 5, pp. 446--450]{Ma3}.
The accumulated frequency \(7\)
for the second and third row
was recently split into \(3\) and \(4\)
with the aid of the computational algebra system MAGMA
\cite{MAGMA}.
See also
\cite[Tbl. 4, p. 498]{Ma1}.
\end{proof}

\begin{remark}
\label{rmk:RQFE6E14GS}
The minimal discriminant \(d=5\,264\,069\) of real quadratic fields \(K=\mathbb{Q}(\sqrt{d})\) of type \(\mathrm{E}.6\),
resp. \(d=3\,918\,837\) of type \(\mathrm{E}.14\),
is indicated in boldface font adjacent to an oval surrounding the vertex, resp. batch of two vertices,
which represents the associated second \(3\)-class group \(G_3^2{K}\),
on the branch \(\mathcal{B}(6)\) of the coclass tree \(\mathcal{T}^2\langle 243,6\rangle\)
in Figure
\ref{fig:TreeQSecE}.
\end{remark}



\begin{theorem} 
\label{thm:RQFE6E14GS}
(\(3\)-Class towers \(\mathrm{F}_3^\infty{\mathbb{Q}(\sqrt{d})}\) with IPOD of type \(\mathrm{E}.6\) or \(\mathrm{E}.14\) for \(0<d<10^7\))
\noindent
Among the \(3\) real quadratic fields \(K=\mathbb{Q}(\sqrt{d})\) with IPOD of type \(\mathrm{E}.6\) in Proposition
\ref{prp:RQFE6E14GS},
\begin{itemize}
\item
the \(\mathbf{2}\) fields (\(\mathbf{67}\%\)) with discriminants
\[d\in\lbrace 5\,264\,069,\ 6\,946\,573\rbrace\]
have the unique \(3\)-class tower group 
\(G\simeq\langle 3^8,\mathbf{616}\rangle\)
and \(3\)-tower length \(\ell_3{K}=\mathbf{3}\),
\item
the \textbf{single} field (\(\mathbf{33}\%\)) with discriminant
\[d=7\,153\,097\]
has the unique \(3\)-class tower group 
\(G\simeq\langle 3^7,\mathbf{288}\rangle\)
and \(3\)-tower length \(\ell_3{K}=\mathbf{2}\).
\end{itemize}
\noindent
Among the \(4\) real quadratic fields \(K=\mathbb{Q}(\sqrt{d})\) with IPOD of type \(\mathrm{E}.14\) in Proposition
\ref{prp:RQFE6E14GS},
\begin{itemize}
\item
the \(\mathbf{3}\) fields (\(\mathbf{75}\%\)) with discriminants
\[d\in\lbrace 3\,918\,837,\ 8\,897\,192,\ 9\,991\,432\rbrace\]
have \(3\)-class tower group
\(G\simeq\langle 3^7,\mathbf{289}\rangle\)
or \(G\simeq\langle 3^7,\mathbf{290}\rangle\)
and \(3\)-tower length \(\ell_3{K}=\mathbf{2}\),
\item
the \textbf{single} field (\(\mathbf{25}\%\)) with discriminant
\[d=9\,433\,849\]
has \(3\)-class tower group 
\(G\simeq\langle 3^8,\mathbf{617}\rangle\)
or \(G\simeq\langle 3^8,\mathbf{618}\rangle\)
and \(3\)-tower length \(\ell_3{K}=\mathbf{3}\).
\end{itemize}
\noindent
\end{theorem}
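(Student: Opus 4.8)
The plan is to deduce the theorem from the group-theoretic dichotomy of Theorem \ref{thm:NFE6E14} (together with Corollary \ref{cor:NFE6E14} for the SmallGroups identifiers) applied to the numerical census furnished by Proposition \ref{prp:RQFE6E14GS}; the only genuinely new ingredient is the computation of iterated IPADs of second order for the seven fields in question.

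First I would, for each of the \(3+4\) fundamental discriminants \(d\) supplied by Proposition \ref{prp:RQFE6E14GS}, fix the four unramified cyclic cubic extensions \(L_1,\ldots,L_4\) of \(K=\mathbb{Q}(\sqrt{d})\) and compute the first-order IPAD \(\tau^{(1)}{K}=\lbrack\tau_0{K};\tau_1{K}\rbrack\) with MAGMA. In every case one obtains \(\tau_1{K}\sim\left(32,\ 21,\ 1^3,\ 21\right)\), so the polarized component is \(\mathrm{A}(3,5)\), the second \(3\)-class group \(\mathfrak{M}=\mathrm{G}_3^2{K}\) has nilpotency class \(c=5\), and all seven fields lie on branch \(\mathcal{B}(6)\) of the coclass tree \(\mathcal{T}^2\langle 3^5,6\rangle\) in accordance with Remark \ref{rmk:RQFE6E14GS}. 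By Corollary \ref{cor:NFE6E14} with \(c=5\) this already pins down \(\mathfrak{M}\simeq\langle 3^7,288\rangle\) for type \(\mathrm{E}.6\) and \(\mathfrak{M}\simeq\langle 3^7,289\rangle\) or \(\langle 3^7,290\rangle\) for type \(\mathrm{E}.14\); moreover, by the proof of Theorem \ref{thm:NFE6E14}, the cover \(\mathrm{cov}(\mathfrak{M})=\lbrace\mathfrak{M},G\rbrace\) has only the two members \(\mathfrak{M}\) and \(G\simeq\langle 3^8,616\rangle\), resp. \(\langle 3^8,617/618\rangle\), whence \(\ell_3{K}\in\lbrace 2,3\rbrace\).

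The decisive step is to separate these two cases by the iterated IPAD of second order. Concretely I would compute the components \(\tau_1{L_3}\) and \(\tau_1{L_j}\) for \(j\in\lbrace 2,4\rbrace\) — that is, the \(3\)-class groups of the unramified cyclic cubic extensions of the \(L_j\), which are number fields of absolute degree \(18\) — and feed them into Theorem \ref{thm:NFE6E14}. If \(\tau_1{L_3}\sim\left(\mathrm{A}(3,4)\times C_3,\ (1^3)^3,\ (1^2)^9\right)\), equivalently \(\tau_1{L_j}\sim\left(\mathrm{A}(3,4)\times C_3,\ (21)^3\right)\) for \(j\in\lbrace 2,4\rbrace\), then \(\ell_3{K}=2\) and \(\mathrm{G}_3^\infty{K}=\mathfrak{M}\); if instead the distinguishing component is \((21^2)^3\), resp. \((31)^3\), then \(\ell_3{K}=3\) and \(\mathrm{G}_3^\infty{K}=G\). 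Reading off the outcome for each discriminant yields exactly the stated partition: \(d\in\lbrace 5\,264\,069,\ 6\,946\,573\rbrace\) and \(d=9\,433\,849\) fall into the length-\(3\) case with tower group \(\langle 3^8,616\rangle\), resp. \(\langle 3^8,617/618\rangle\), while \(d=7\,153\,097\) and \(d\in\lbrace 3\,918\,837,\ 8\,897\,192,\ 9\,991\,432\rbrace\) fall into the length-\(2\) case with tower group \(\langle 3^7,288\rangle\), resp. \(\langle 3^7,289/290\rangle\). As a by-product both candidate groups of Theorem \ref{thm:E6E14GS} (and their \(\mathrm{E}.14\) analogues) are realized by actual \(3\)-tower groups, so the range \(0<d<10^7\) indeed suffices.

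I expect the main obstacle to be the reliability and cost of the degree-\(18\) class group computations needed for \(\tau^{(2)}\): they must be carried out carefully enough — certified, or at least confirmed independently under GRH by PARI/GP and MAGMA — that the dichotomy of Theorem \ref{thm:NFE6E14} can be applied without ambiguity. Once these data are secured, no further group theory is required, since the termination of the tower at stage \(2\) or \(3\) is already guaranteed by the two-element cover established in the proof of Theorem \ref{thm:NFE6E14}.
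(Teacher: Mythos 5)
Your proposal is correct and follows essentially the same route as the paper: establish the first-order IPAD $\lbrack 1^2;32,1^3,(21)^2\rbrack$ (hence $c=5$), compute the second-order IPAD components $\tau_1{L_j}$ via degree-$18$ class groups in MAGMA, and apply the dichotomy of Theorem \ref{thm:NFE6E14}/Theorem \ref{thm:E6E14GS} to sort the seven discriminants into the length-$2$ and length-$3$ cases. The paper's proof invokes Theorem \ref{thm:E6E14GS} directly rather than its field-theoretic reformulation, but both rest on the same tree analysis and Table \ref{tbl:3GroupsTreeQ}, so the difference is purely presentational.
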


\begin{proof}
Since all these real quadratic fields \(K=\mathbb{Q}(\sqrt{d})\) have
\(3\)-capitulation type \(\varkappa_1{K}=(1122)\) or \((3122)\) and \(1^{\mathrm{st}}\) IPAD  
\(\tau^{(1)}{K}=\lbrack 1^2;\mathbf{32},1^3,(21)^2\rbrack\),
and the \(4\) fields with \(d\in\lbrace 3\,918\,837,\ 7\,153\,097,\)
\(\ 8\,897\,192,\ 9\,991\,432\rbrace\)
have \(2^{\mathrm{nd}}\) IPAD
\[\tau_1{L_1}=(2^21,(31^2)^3),\ \tau_1{L_2}=(2^21,\mathbf{(1^3)^3},(1^2)^9),\ \tau_1{L_3}=(2^21,\mathbf{(21)^3}),\ \tau_1{L_4}=(2^21,\mathbf{(21)^3}),\]
whereas the \(3\) fields with \(d\in\lbrace 5\,264\,069,\ 6\,946\,573,\ 9\,433\,849\rbrace\)
have \(2^{\mathrm{nd}}\) IPAD
\[\tau_1{L_1}=(2^21,(31^2)^3),\ \tau_1{L_2}=(2^21,\mathbf{(21^2)^3},(1^2)^9),\ \tau_1{L_3}=(2^21,\mathbf{(31)^3}),\ \tau_1{L_4}=(2^21,\mathbf{(31)^3}),\]
the claim is a consequence of Theorem
\ref{thm:E6E14GS}.
\end{proof}

\begin{remark}
\label{rmk:9433849}
The computation of the \(3\)-principalization type \(\mathrm{E}.14\) of the field with \(d=9\,433\,849\)
resisted all attempts with MAGMA versions up to V2.21-7.
Due to essential improvements in the change from relative to absolute number fields,
made by the staff of the Computational Algebra Group at the University of Sydney,
it actually became feasible to figure it out with V2.21-8
\cite{MAGMA} for UNIX/LINUX machines
or V2.22-3 for any operating system.
\end{remark}



\subsection{\(3\)-groups \(G\) of coclass \(\mathrm{cc}(G)=2\) arising from \(\langle 3^5,8\rangle\)}
\label{ss:TreeU}
\noindent
Table
\ref{tbl:3GroupsTreeU}
shows the designation of the transfer kernel type,
the IPOD \(\varkappa_1{G}\),
and the iterated multi-layered IPAD of second order,
\[\tau^{(2)}_\ast{G}=\lbrack\tau_0{G};\lbrack\tau_0{H};\tau_1{H};\tau_2{H}\rbrack_{H\in\mathrm{Lyr}_1{G}}\rbrack,\]
for \(3\)-groups \(G\) on the coclass tree \(\mathcal{T}^2\langle 3^5,8\rangle\) up to order \(\lvert G\rvert=3^8\),
characterized by the logarithmic order, \(\mathrm{lo}\),
and the SmallGroup identifier, \(\mathrm{id}\)
\cite{BEO1,BEO2}.
To enable a brief reference for relative identifiers we put \(U:=\langle 3^6,54\rangle\),
since this group was called the non-CF group \(U\) by Ascione
\cite{AHL,As}.

The groups in Table
\ref{tbl:3GroupsTreeU}
are represented by vertices of the tree diagram in Figure
\ref{fig:TreeUSecE}.



\renewcommand{\arraystretch}{1.2}

\begin{table}[ht]
\caption{IPOD \(\varkappa_1{G}\) and iterated IPAD \(\tau^{(2)}_\ast{G}\) of \(3\)-groups \(G\) on \(\mathcal{T}^2\langle 3^5,8\rangle\)}
\label{tbl:3GroupsTreeU}
\begin{center}
\begin{tabular}{|c|c||cc||c|ccccc|}
\hline
    lo &                        id          & type & \(\varkappa_1{G}\) & \(\tau_0{G}\) &       & \(\tau_0{H}\) &            \(\tau_1{H}\) &            \(\tau_2{H}\) &               \\
\hline
 \(5\) &                     \(8\)          & c.21 &           \(2034\) &       \(1^2\) & \(\lbrack\) &  \(21\) &           \(1^3,(21)^3\) &              \((1^2)^4\) & \(\rbrack^4\) \\
\hline
\hline
 \(6\) &                    \(52\)          & G.16 &           \(2134\) &       \(1^2\) &             & \(2^2\) &             \((21^2)^4\) &        \(1^3,(21)^{12}\) &               \\
       &                                    &      &                    &               & \(\lbrack\) &  \(21\) &          \(21^2,(21)^3\) &           \(1^3,(21)^3\) & \(\rbrack^3\) \\
\hline
 \(6\) &                    \(53\)          & E.9  &           \(2434\) &                                                                   \multicolumn{6}{|c|}{IPAD like id \(52\)} \\
 \(6\) &                  \(U=54\)          & c.21 &           \(2034\) &                                                                   \multicolumn{6}{|c|}{IPAD like id \(52\)} \\
 \(6\) &                    \(55\)          & E.8  &           \(2234\) &                                                                   \multicolumn{6}{|c|}{IPAD like id \(52\)} \\
\hline
\hline
 \(7\) &          \(301/305\)          & G.16 &           \(2134\) &       \(1^2\) &             &  \(32\) &        \(2^21,(31^2)^3\) &      \((21^2)^4,(31)^9\) &               \\
       &                                    &      &                    &               & \(\lbrack\) &  \(21\) & \(2^21,(\mathbf{2}1)^3\) &         \(21^2,(2^2)^3\) & \(\rbrack^3\) \\
\hline
 \(7\) & \(\mathbf{302/306}\)          & E.9  &           \(2334\) &                                                                  \multicolumn{6}{|c|}{IPAD like id \(301\)} \\
 \(7\) &                   \(303\)          & c.21 &           \(2034\) &                                                                  \multicolumn{6}{|c|}{IPAD like id \(301\)} \\
 \(7\) &          \(\mathbf{304}\)          & E.8  &           \(2234\) &                                                                  \multicolumn{6}{|c|}{IPAD like id \(301\)} \\
\hline
 \(7\) &                   \(307\)          & c.21 &           \(2034\) &       \(1^2\) &             & \(2^2\) &             \((21^2)^4\) &      \(21^2,(2^2)^{12}\) &               \\
       &                                    &      &                    &               &             &  \(21\) &          \(21^2,(31)^3\) &          \(21^2,(21)^3\) &               \\
       &                                    &      &                    &               & \(\lbrack\) &  \(21\) &          \(21^2,(21)^3\) &          \(21^2,(21)^3\) & \(\rbrack^2\) \\
\hline
 \(7\) &                   \(308\)          & c.21 &           \(2034\) &       \(1^2\) &             & \(2^2\) &             \((21^2)^4\) &  \(21^2,(2^2)^3,(31)^9\) &               \\
       &                                    &      &                    &               &             &  \(21\) &          \(21^2,(31)^3\) &          \(21^2,(21)^3\) &               \\
       &                                    &      &                    &               & \(\lbrack\) &  \(21\) &          \(21^2,(21)^3\) &          \(21^2,(21)^3\) & \(\rbrack^2\) \\
\hline
\hline
 \(8\) &          \(619/623\)          & G.16 &           \(2134\) &       \(1^2\) &             &  \(32\) &        \(2^21,(31^2)^3\) &      \((2^21)^4,(32)^9\) &               \\
       &                                    &      &                    &               & \(\lbrack\) &  \(21\) & \(2^21,(\mathbf{3}1)^3\) &         \(2^21,(2^2)^3\) & \(\rbrack^3\) \\
\hline
 \(8\) & \(\mathbf{620/624}\)          & E.9  &           \(2334\) &                                                                  \multicolumn{6}{|c|}{IPAD like id \(619\)} \\
 \(8\) &                   \(621\)          & c.21 &           \(2034\) &                                                                  \multicolumn{6}{|c|}{IPAD like id \(619\)} \\
 \(8\) &          \(\mathbf{622}\)          & E.8  &           \(2234\) &                                                                  \multicolumn{6}{|c|}{IPAD like id \(619\)} \\
\hline
\end{tabular}
\end{center}
\end{table}



\begin{theorem} 
\label{thm:E8E9GS}
(Smallest possible \(3\)-tower groups \(G=\mathrm{G}_3^\infty{K}\) of type \(\mathrm{E}.8\) or \(\mathrm{E}.9\)
\cite{Ma7})

\noindent
Let \(G\) be a finite \(3\)-group
with IPAD of first order \(\tau^{(1)}{G}=\lbrack\tau_0{G};\tau_1{G}\rbrack\),
where \(\tau_0{G}=1^2\) and \(\tau_1{G}=(21,32,(21)^2)\) is given in ordered form.

If the IPOD of \(G\) is of type \(\mathrm{E}.8\), \(\varkappa_1{G}=(2234)\),
resp. \(\mathrm{E}.9\), \(\varkappa_1{G}=(2334)\sim (2434)\),
then the IPAD of second order \(\tau^{(2)}{G}=\lbrack\tau_0{G};(\tau_0{H_i};\tau_1{H_i})_{1\le i\le 4}\rbrack\),
where the maximal subgroups of index \(3\) in \(G\) are denoted by \(H_1,\ldots,H_4\),
determines the isomorphism type of \(G\) in the following way:
\begin{enumerate}
\item
\(\tau^{(1)}{H_i}=\lbrack 21;2^21,(\mathbf{2}1)^3\rbrack\) for \(i\in\lbrace 1,3,4\rbrace\)\\
if and only if \(G\simeq\langle 3^7,\mathbf{304}\rangle\),
resp. \(G\simeq\langle 3^7,\mathbf{302}\rangle\) or \(G\simeq\langle 3^7,\mathbf{306}\rangle\),
\item
\(\tau^{(1)}{H_i}=\lbrack 21;2^21,(\mathbf{3}1)^3\rbrack\) for \(i\in\lbrace 1,3,4\rbrace\)\\
if and only if \(G\simeq\langle 3^8,\mathbf{622}\rangle\),
resp. \(G\simeq\langle 3^8,\mathbf{620}\rangle\) or \(G\simeq\langle 3^8,\mathbf{624}\rangle\),
\end{enumerate}
\noindent
whereas the component \(\tau^{(1)}{H_2}=\lbrack 32;2^21,(31^2)^3\rbrack\) is fixed
and does not admit a distinction.
\end{theorem}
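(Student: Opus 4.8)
The plan is to mirror the proof of Theorem~\ref{thm:E6E14GS}, replacing the coclass tree $\mathcal{T}^2\langle 3^5,6\rangle$ by $\mathcal{T}^2\langle 3^5,8\rangle$ and the non-CF group $Q=\langle 3^6,49\rangle$ by $U=\langle 3^6,54\rangle$. First I would search, inside the descendant tree $\mathcal{T}(R)$ of the abelian root $R=\langle 3^2,2\rangle\simeq C_3\times C_3$, for all vertices carrying the first-order Artin pattern $\mathrm{AP}_1=(\tau_1,\varkappa_1)$ with $\tau_1\sim(21,32,(21)^2)$ and $\varkappa_1=(2234)$ for type $\mathrm{E}.8$, resp. $\varkappa_1=(2334)\sim(2434)$ for type $\mathrm{E}.9$. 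Along the admissible path $R\leftarrow\langle 3^3,3\rangle\leftarrow\langle 3^5,8\rangle\leftarrow U$, the polarized component $32=\mathrm{A}(3,5)$ forces the nilpotency class to be $c=5$, and one is led to the metabelian vertex $\langle 3^7,304\rangle$ for $\mathrm{E}.8$, resp. the two metabelian vertices $\langle 3^7,302/306\rangle$ for $\mathrm{E}.9$. Since $U$ has nuclear rank two, the bifurcation at $U$ additionally produces the non-metabelian, derived-length-three vertex $\langle 3^8,622\rangle$ for $\mathrm{E}.8$, resp. $\langle 3^8,620/624\rangle$ for $\mathrm{E}.9$; by Theorem~\ref{thm:Contestants} the batch of metabelian contestants is finite, and by the termination criterion \cite[Thm. 5.1]{Ma9}, which is sharper than the earlier \cite[Cor. 3.0.1, p. 771]{BuMa}, the descendant search above these vertices can be cut off, so that the cover $\mathrm{cov}(\mathfrak{M})$ of the metabelian candidate $\mathfrak{M}$ contains exactly one metabelian and one non-metabelian group, as shown in Figure~\ref{fig:TreeUSecE}.

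Once the candidate list is fixed, the distinction is read directly off Table~\ref{tbl:3GroupsTreeU}: the maximal subgroup $H_2$ whose abelianization is the polarized component $32$ carries the same component $\tau^{(1)}{H_2}=\lbrack 32;2^21,(31^2)^3\rbrack$ on all four vertices and does not separate them, whereas for the three remaining maximal subgroups $H_i$, $i\in\lbrace 1,3,4\rbrace$, one finds $\tau^{(1)}{H_i}=\lbrack 21;2^21,(\mathbf{2}1)^3\rbrack$ precisely on the branch $\mathcal{B}(7)$ carrying the $3^7$ vertices $\langle 3^7,302\ldots 306\rangle$, and $\tau^{(1)}{H_i}=\lbrack 21;2^21,(\mathbf{3}1)^3\rbrack$ precisely on the branch $\mathcal{B}(8)$ carrying the $3^8$ vertices $\langle 3^8,619\ldots 624\rangle$. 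As the IPOD $\varkappa_1$ already isolates, within each of these two branches, the single $\mathrm{E}.8$ vertex from the pair of $\mathrm{E}.9$ vertices, the pair $\bigl(\varkappa_1{G},\tau^{(1)}{H_1}\bigr)$ pins down the isomorphism type exactly as asserted. It then remains to transport the group-theoretic statement to $\mathrm{G}_3^\infty{K}$ via the Artin reciprocity law \cite{Ar1,Ar2}, which identifies the Artin pattern of $K$ with that of $\mathrm{G}_3^2{K}$ and underlies the coincidence of the iterated IPADs of second order used above.

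The genuinely delicate point is the \emph{completeness} of the candidate list rather than the tabular comparison: one has to ensure that no finite $3$-group of derived length $\ge 2$, whether of class $5$ or of higher class, shares the prescribed first-order IPAD and IPOD of $G$ while being minimal, i.e.\ that $\mathrm{cov}(\mathfrak{M})$ indeed terminates after the two vertices found. This is where Theorem~\ref{thm:Contestants} (finiteness via the polarization principle together with the coclass theorems) and the realizability and termination results \cite[Thm. 5.1--5.2]{Ma9} carry the weight; in practice the whole step reduces to the MAGMA computation \cite{MAGMA} underlying Table~\ref{tbl:3GroupsTreeU}, exactly as for Theorem~\ref{thm:E6E14GS}, and everything downstream of it is a routine inspection of the $\tau_1{H}$ columns for the rows with $\mathrm{lo}\in\lbrace 7,8\rbrace$.
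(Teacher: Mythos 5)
Your proposal is correct and follows essentially the same route as the paper: the paper's proof simply cites Table~\ref{tbl:3GroupsTreeU} (computed with MAGMA) together with the termination criterion of \cite[Thm. 5.1]{Ma9} in place of \cite[Cor. 3.0.1, p. 771]{BuMa}, which is exactly the tabular comparison and cover-completeness argument you spell out. Your additional tree-search narrative along $R\leftarrow\langle 3^3,3\rangle\leftarrow\langle 3^5,8\rangle\leftarrow U$ with the bifurcation at $U$ is just the expanded version of this that the paper itself gives later in the proof of Theorem~\ref{thm:NFE8E9}, so nothing is missing.
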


\begin{proof}
This is essentially
\cite[Thm. 6.3, pp. 298--299]{Ma7}.
It is also an immediate consequence of Table
\ref{tbl:3GroupsTreeU},
which has been computed with MAGMA
\cite{MAGMA}.
As a termination criterion we can now use the more precise
\cite[Thm. 5.1]{Ma9}
instead of
\cite[Cor. 3.0.1, p. 771]{BuMa}.
\end{proof}



\noindent
Figure
\ref{fig:TreeUSecE}
visualizes \(3\)-groups which arise as second \(3\)-class groups \(\mathfrak{M}=\mathrm{G}_3^2{K}\)
of real quadratic fields \(K=\mathbb{Q}(\sqrt{d})\), \(d>0\),
with \(3\)-principalization types \(\mathrm{E}.8\) and \(\mathrm{E}.9\)
in section \S\
\ref{ss:TreeU}
and the corresponding minimal discriminants.



{\tiny

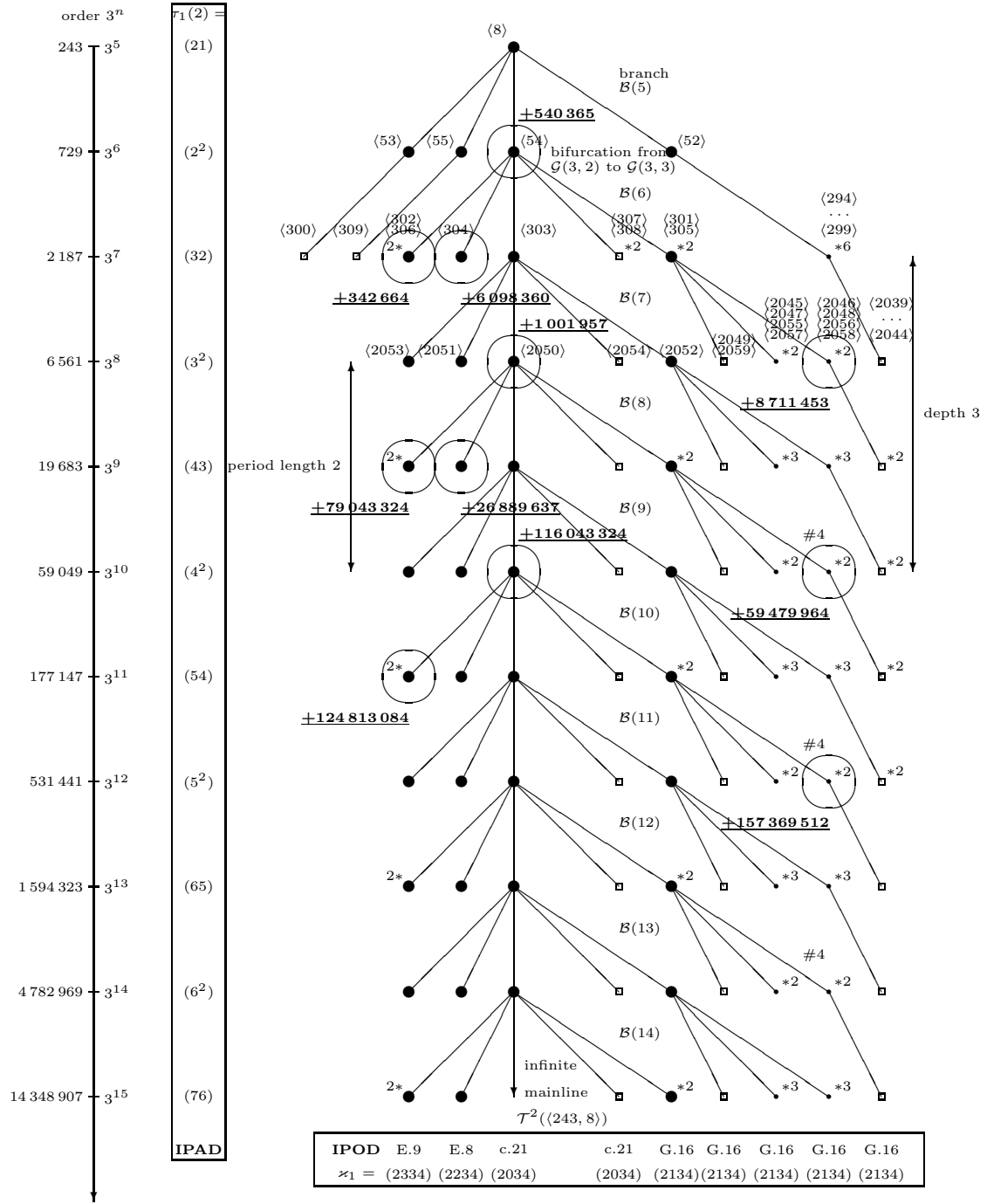
\begin{figure}[hb]
\caption{Distribution of minimal discriminants for \(\mathrm{G}_3^2{K}\) on the coclass tree \(\mathcal{T}^2\langle 243,8\rangle\)}
\label{fig:TreeUSecE}


\setlength{\unitlength}{0.8cm}
\begin{picture}(17,22.5)(-9,-21.5)

\put(-8,0.5){\makebox(0,0)[cb]{order \(3^n\)}}
\put(-8,0){\line(0,-1){20}}
\multiput(-8.1,0)(0,-2){11}{\line(1,0){0.2}}
\put(-8.2,0){\makebox(0,0)[rc]{\(243\)}}
\put(-7.8,0){\makebox(0,0)[lc]{\(3^5\)}}
\put(-8.2,-2){\makebox(0,0)[rc]{\(729\)}}
\put(-7.8,-2){\makebox(0,0)[lc]{\(3^6\)}}
\put(-8.2,-4){\makebox(0,0)[rc]{\(2\,187\)}}
\put(-7.8,-4){\makebox(0,0)[lc]{\(3^7\)}}
\put(-8.2,-6){\makebox(0,0)[rc]{\(6\,561\)}}
\put(-7.8,-6){\makebox(0,0)[lc]{\(3^8\)}}
\put(-8.2,-8){\makebox(0,0)[rc]{\(19\,683\)}}
\put(-7.8,-8){\makebox(0,0)[lc]{\(3^9\)}}
\put(-8.2,-10){\makebox(0,0)[rc]{\(59\,049\)}}
\put(-7.8,-10){\makebox(0,0)[lc]{\(3^{10}\)}}
\put(-8.2,-12){\makebox(0,0)[rc]{\(177\,147\)}}
\put(-7.8,-12){\makebox(0,0)[lc]{\(3^{11}\)}}
\put(-8.2,-14){\makebox(0,0)[rc]{\(531\,441\)}}
\put(-7.8,-14){\makebox(0,0)[lc]{\(3^{12}\)}}
\put(-8.2,-16){\makebox(0,0)[rc]{\(1\,594\,323\)}}
\put(-7.8,-16){\makebox(0,0)[lc]{\(3^{13}\)}}
\put(-8.2,-18){\makebox(0,0)[rc]{\(4\,782\,969\)}}
\put(-7.8,-18){\makebox(0,0)[lc]{\(3^{14}\)}}
\put(-8.2,-20){\makebox(0,0)[rc]{\(14\,348\,907\)}}
\put(-7.8,-20){\makebox(0,0)[lc]{\(3^{15}\)}}
\put(-8,-20){\vector(0,-1){2}}

\put(-6,0.5){\makebox(0,0)[cb]{\(\tau_1(2)=\)}}
\put(-6,0){\makebox(0,0)[cc]{\((21)\)}}
\put(-6,-2){\makebox(0,0)[cc]{\((2^2)\)}}
\put(-6,-4){\makebox(0,0)[cc]{\((32)\)}}
\put(-6,-6){\makebox(0,0)[cc]{\((3^2)\)}}
\put(-6,-8){\makebox(0,0)[cc]{\((43)\)}}
\put(-6,-10){\makebox(0,0)[cc]{\((4^2)\)}}
\put(-6,-12){\makebox(0,0)[cc]{\((54)\)}}
\put(-6,-14){\makebox(0,0)[cc]{\((5^2)\)}}
\put(-6,-16){\makebox(0,0)[cc]{\((65)\)}}
\put(-6,-18){\makebox(0,0)[cc]{\((6^2)\)}}
\put(-6,-20){\makebox(0,0)[cc]{\((76)\)}}
\put(-6,-21){\makebox(0,0)[cc]{\textbf{IPAD}}}
\put(-6.5,-21.2){\framebox(1,22){}}

\put(7.6,-7){\vector(0,1){3}}
\put(7.8,-7){\makebox(0,0)[lc]{depth \(3\)}}
\put(7.6,-7){\vector(0,-1){3}}

\put(-3.1,-8){\vector(0,1){2}}
\put(-3.3,-8){\makebox(0,0)[rc]{period length \(2\)}}
\put(-3.1,-8){\vector(0,-1){2}}

\put(0.7,-2){\makebox(0,0)[lc]{bifurcation from}}
\put(0.7,-2.3){\makebox(0,0)[lc]{\(\mathcal{G}(3,2)\) to \(\mathcal{G}(3,3)\)}}

\multiput(0,0)(0,-2){10}{\circle*{0.2}}
\multiput(0,0)(0,-2){9}{\line(0,-1){2}}
\multiput(-1,-2)(0,-2){10}{\circle*{0.2}}
\multiput(-2,-2)(0,-2){10}{\circle*{0.2}}
\multiput(1.95,-4.05)(0,-2){9}{\framebox(0.1,0.1){}}
\multiput(3,-2)(0,-2){10}{\circle*{0.2}}
\multiput(0,0)(0,-2){10}{\line(-1,-2){1}}
\multiput(0,0)(0,-2){10}{\line(-1,-1){2}}
\multiput(0,-2)(0,-2){9}{\line(1,-1){2}}
\multiput(0,0)(0,-2){10}{\line(3,-2){3}}
\multiput(-3.05,-4.05)(-1,0){2}{\framebox(0.1,0.1){}}
\multiput(3.95,-6.05)(0,-2){8}{\framebox(0.1,0.1){}}
\multiput(5,-6)(0,-2){8}{\circle*{0.1}}
\multiput(6,-4)(0,-2){9}{\circle*{0.1}}
\multiput(-1,-2)(-1,0){2}{\line(-1,-1){2}}
\multiput(3,-4)(0,-2){8}{\line(1,-2){1}}
\multiput(3,-4)(0,-2){8}{\line(1,-1){2}}
\multiput(3,-2)(0,-2){9}{\line(3,-2){3}}
\multiput(6.95,-6.05)(0,-2){8}{\framebox(0.1,0.1){}}
\multiput(6,-4)(0,-2){8}{\line(1,-2){1}}

\put(2,-0.5){\makebox(0,0)[lc]{branch}}
\put(2,-0.8){\makebox(0,0)[lc]{\(\mathcal{B}(5)\)}}
\put(2,-2.8){\makebox(0,0)[lc]{\(\mathcal{B}(6)\)}}
\put(2,-4.8){\makebox(0,0)[lc]{\(\mathcal{B}(7)\)}}
\put(2,-6.8){\makebox(0,0)[lc]{\(\mathcal{B}(8)\)}}
\put(2,-8.8){\makebox(0,0)[lc]{\(\mathcal{B}(9)\)}}
\put(2,-10.8){\makebox(0,0)[lc]{\(\mathcal{B}(10)\)}}
\put(2,-12.8){\makebox(0,0)[lc]{\(\mathcal{B}(11)\)}}
\put(2,-14.8){\makebox(0,0)[lc]{\(\mathcal{B}(12)\)}}
\put(2,-16.8){\makebox(0,0)[lc]{\(\mathcal{B}(13)\)}}
\put(2,-18.8){\makebox(0,0)[lc]{\(\mathcal{B}(14)\)}}

\put(-0.1,0.3){\makebox(0,0)[rc]{\(\langle 8\rangle\)}}

\put(-2.1,-1.8){\makebox(0,0)[rc]{\(\langle 53\rangle\)}}
\put(-1.1,-1.8){\makebox(0,0)[rc]{\(\langle 55\rangle\)}}
\put(0.1,-1.8){\makebox(0,0)[lc]{\(\langle 54\rangle\)}}
\put(3.1,-1.8){\makebox(0,0)[lc]{\(\langle 52\rangle\)}}

\put(-4.1,-3.5){\makebox(0,0)[cc]{\(\langle 300\rangle\)}}
\put(-3.1,-3.5){\makebox(0,0)[cc]{\(\langle 309\rangle\)}}
\put(-2.1,-3.3){\makebox(0,0)[cc]{\(\langle 302\rangle\)}}
\put(-2.1,-3.5){\makebox(0,0)[cc]{\(\langle 306\rangle\)}}
\put(-1.1,-3.5){\makebox(0,0)[cc]{\(\langle 304\rangle\)}}
\put(0.1,-3.5){\makebox(0,0)[lc]{\(\langle 303\rangle\)}}
\put(2.2,-3.3){\makebox(0,0)[cc]{\(\langle 307\rangle\)}}
\put(2.2,-3.5){\makebox(0,0)[cc]{\(\langle 308\rangle\)}}
\put(3.2,-3.3){\makebox(0,0)[cc]{\(\langle 301\rangle\)}}
\put(3.2,-3.5){\makebox(0,0)[cc]{\(\langle 305\rangle\)}}
\put(6.2,-2.9){\makebox(0,0)[cc]{\(\langle 294\rangle\)}}
\put(6.2,-3.2){\makebox(0,0)[cc]{\(\cdots\)}}
\put(6.2,-3.5){\makebox(0,0)[cc]{\(\langle 299\rangle\)}}

\put(-2.4,-5.8){\makebox(0,0)[cc]{\(\langle 2053\rangle\)}}
\put(-1.4,-5.8){\makebox(0,0)[cc]{\(\langle 2051\rangle\)}}
\put(0.1,-5.8){\makebox(0,0)[lc]{\(\langle 2050\rangle\)}}
\put(2.2,-5.8){\makebox(0,0)[cc]{\(\langle 2054\rangle\)}}
\put(3.2,-5.8){\makebox(0,0)[cc]{\(\langle 2052\rangle\)}}
\put(4.2,-5.6){\makebox(0,0)[cc]{\(\langle 2049\rangle\)}}
\put(4.2,-5.8){\makebox(0,0)[cc]{\(\langle 2059\rangle\)}}
\put(5.2,-4.9){\makebox(0,0)[cc]{\(\langle 2045\rangle\)}}
\put(5.2,-5.1){\makebox(0,0)[cc]{\(\langle 2047\rangle\)}}
\put(5.2,-5.3){\makebox(0,0)[cc]{\(\langle 2055\rangle\)}}
\put(5.2,-5.5){\makebox(0,0)[cc]{\(\langle 2057\rangle\)}}
\put(6.2,-4.9){\makebox(0,0)[cc]{\(\langle 2046\rangle\)}}
\put(6.2,-5.1){\makebox(0,0)[cc]{\(\langle 2048\rangle\)}}
\put(6.2,-5.3){\makebox(0,0)[cc]{\(\langle 2056\rangle\)}}
\put(6.2,-5.5){\makebox(0,0)[cc]{\(\langle 2058\rangle\)}}
\put(7.2,-4.9){\makebox(0,0)[cc]{\(\langle 2039\rangle\)}}
\put(7.2,-5.2){\makebox(0,0)[cc]{\(\cdots\)}}
\put(7.2,-5.5){\makebox(0,0)[cc]{\(\langle 2044\rangle\)}}

\put(2.1,-3.8){\makebox(0,0)[lc]{\(*2\)}}
\multiput(-2.1,-3.8)(0,-4){5}{\makebox(0,0)[rc]{\(2*\)}}
\multiput(3.1,-3.8)(0,-4){5}{\makebox(0,0)[lc]{\(*2\)}}
\put(6.1,-3.8){\makebox(0,0)[lc]{\(*6\)}}
\multiput(5.1,-5.8)(0,-4){4}{\makebox(0,0)[lc]{\(*2\)}}
\multiput(5.5,-9.3)(0,-4){3}{\makebox(0,0)[lc]{\(\#4\)}}
\multiput(6.1,-5.8)(0,-4){4}{\makebox(0,0)[lc]{\(*2\)}}
\multiput(5.1,-7.8)(0,-4){4}{\makebox(0,0)[lc]{\(*3\)}}
\multiput(6.1,-7.8)(0,-4){4}{\makebox(0,0)[lc]{\(*3\)}}
\multiput(7.1,-7.8)(0,-2){4}{\makebox(0,0)[lc]{\(*2\)}}

\put(-3,-21){\makebox(0,0)[cc]{\textbf{IPOD}}}
\put(-2,-21){\makebox(0,0)[cc]{E.9}}
\put(-1,-21){\makebox(0,0)[cc]{E.8}}
\put(0,-21){\makebox(0,0)[cc]{c.21}}
\put(2,-21){\makebox(0,0)[cc]{c.21}}
\put(3.1,-21){\makebox(0,0)[cc]{G.16}}
\put(4,-21){\makebox(0,0)[cc]{G.16}}
\put(5,-21){\makebox(0,0)[cc]{G.16}}
\put(6,-21){\makebox(0,0)[cc]{G.16}}
\put(7,-21){\makebox(0,0)[cc]{G.16}}
\put(-3,-21.5){\makebox(0,0)[cc]{\(\varkappa_1=\)}}
\put(-2,-21.5){\makebox(0,0)[cc]{\((2334)\)}}
\put(-1,-21.5){\makebox(0,0)[cc]{\((2234)\)}}
\put(0,-21.5){\makebox(0,0)[cc]{\((2034)\)}}
\put(2,-21.5){\makebox(0,0)[cc]{\((2034)\)}}
\put(3.1,-21.5){\makebox(0,0)[cc]{\((2134)\)}}
\put(4,-21.5){\makebox(0,0)[cc]{\((2134)\)}}
\put(5,-21.5){\makebox(0,0)[cc]{\((2134)\)}}
\put(6,-21.5){\makebox(0,0)[cc]{\((2134)\)}}
\put(7,-21.5){\makebox(0,0)[cc]{\((2134)\)}}
\put(-3.8,-21.7){\framebox(11.6,1){}}

\put(0,-18){\vector(0,-1){2}}
\put(0.2,-19.4){\makebox(0,0)[lc]{infinite}}
\put(0.2,-19.9){\makebox(0,0)[lc]{mainline}}
\put(1.8,-20.4){\makebox(0,0)[rc]{\(\mathcal{T}^2(\langle 243,8\rangle)\)}}


\multiput(0,-2)(0,-4){3}{\oval(1,1)}
\put(0.1,-1.3){\makebox(0,0)[lc]{\underbar{\textbf{+540\,365}}}}
\put(0.1,-5.3){\makebox(0,0)[lc]{\underbar{\textbf{+1\,001\,957}}}}
\put(0.1,-9.3){\makebox(0,0)[lc]{\underbar{\textbf{+116\,043\,324}}}}

\multiput(-1,-4)(0,-4){2}{\oval(1,1)}
\put(-1,-4.8){\makebox(0,0)[lc]{\underbar{\textbf{+6\,098\,360}}}}
\put(-1,-8.8){\makebox(0,0)[lc]{\underbar{\textbf{+26\,889\,637}}}}
\put(-1,-12.8){\makebox(0,0)[lc]{\underbar{\textbf{}}}}
\put(-1,-16.8){\makebox(0,0)[lc]{\underbar{\textbf{}}}}
\multiput(-2,-4)(0,-4){3}{\oval(1,1)}
\put(-2,-4.8){\makebox(0,0)[rc]{\underbar{\textbf{+342\,664}}}}
\put(-2,-8.8){\makebox(0,0)[rc]{\underbar{\textbf{+79\,043\,324}}}}
\put(-2,-12.8){\makebox(0,0)[rc]{\underbar{\textbf{+124\,813\,084}}}}
\put(-2,-16.8){\makebox(0,0)[rc]{\underbar{\textbf{}}}}
\put(-2,-19.3){\makebox(0,0)[rc]{\underbar{\textbf{}}}}
\multiput(6,-6)(0,-4){3}{\oval(1,1)}
\put(6,-6.8){\makebox(0,0)[rc]{\underbar{\textbf{+8\,711\,453}}}}
\put(6,-10.8){\makebox(0,0)[rc]{\underbar{\textbf{+59\,479\,964}}}}
\put(6,-14.8){\makebox(0,0)[rc]{\underbar{\textbf{+157\,369\,512}}}}
\put(6,-18.8){\makebox(0,0)[rc]{\underbar{\textbf{}}}}

\end{picture}

\end{figure}

}



\subsection{Parametrized IPADs of second order for the coclass tree \(\mathcal{T}^2{\langle 3^5,8\rangle}\)}
\label{ss:IPAD2TreeU}
\noindent
Let \(G\in\mathcal{T}^2{\langle 3^5,8\rangle}\) be a descendant of coclass \(\mathrm{cc}(G)=2\) of the root \(\langle 3^5,8\rangle\).
Denote by \(c:=\mathrm{cl}(G)\) the nilpotency class of \(G\),
by \(t:=\mathrm{dl}(G)-2\) the indicator of a three-stage group, and
by \(k:=k(G)\), resp \(k:=k(\pi G)\), the defect of commutativity of \(G\) itself if \(t=0\),
and of the metabelian parent \(\pi{G}\) if \(t=1\).

\begin{theorem}
\label{thm:IPAD2TreeU}
In dependence on the parameters \(c\), \(t\) and \(k\),
the IPAD of second order of \(G\) has the form
\begin{equation}
\label{eqn:IPAD2TreeU}
\begin{aligned}
\tau^{(2)}{G} = \lbrack\ 1^2;\ (\ \mathrm{A}(3,c-k-t);\ & \mathrm{A}(3,c-1-k-t)\times C_3,\ (\mathrm{B}(3,c-1-k-t)\times C_3)^3\ ), \\
                                                (\ 21;\ & \mathrm{A}(3,c-1-k-t)\times C_3,\ (21)^3\ )^3\ \rbrack,
\end{aligned}
\end{equation}
where a \textbf{variant} \(\mathrm{B}(3,n)\) of the \textbf{nearly homocyclic} abelian \(3\)-group \(\mathrm{A}(3,n)\) of order \(n\ge 2\)
is defined as in Formula
\eqref{eqn:VarNrlHom}
of Theorem
\ref{thm:IPAD2TreeQ}.
\end{theorem}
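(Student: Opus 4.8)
The assertion is the counterpart, for the coclass tree \(\mathcal{T}^2\langle 3^5,8\rangle\), of Theorem~\ref{thm:IPAD2TreeQ} for \(\mathcal{T}^2\langle 3^5,6\rangle\), and the plan is to run the same argument with the simpler maximal-subgroup configuration visible in Table~\ref{tbl:3GroupsTreeU}, in which all \emph{three} non-distinguished maximal subgroups of a vertex carry the identical IPAD \(\lbrack 21;\mathrm{A}(3,c-1-k-t)\times C_3,(21)^3\rbrack\); the \((1^3)\)-degeneracy of one of the subgroups that occurs on \(\mathcal{T}^2\langle 3^5,6\rangle\) (the entries with \(\tau_0{H}=1^3\) in Table~\ref{tbl:3GroupsTreeQ}) is absent here. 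First I would fix the shape of the tree: by Nebelung's description of the metabelian skeleton \cite{Ne} and Ascione's computation of the complete descendant tree \cite{AHL,As}, every vertex \(G\) of \(\mathcal{T}^2\langle 3^5,8\rangle\) of class \(c\) is either metabelian (\(t=0\)) or a non-metabelian descendant obtained by bifurcation at the non-CF vertex \(U=\langle 3^6,54\rangle\) or one of its periodic analogues (\(t=1\), with metabelian parent \(\pi G\) of class \(c-1\)), and its branches are periodic of length \(2\), the periodicity setting in with branch \(\mathcal{B}(7)\) whose root has order \(3^7\), as in Figure~\ref{fig:TreeUSecE}.

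Next I would treat the metabelian case \(t=0\). Since \(G/G'\) has type \((3,3)\), we have \(\mathrm{Lyr}_1{G}=\{H_1,\dots,H_4\}\), and the abelianizations \(\tau_0{H_i}=H_i/H_i'\) are exactly the components of \(\tau_1{G}\), which by Theorem~\ref{thm:AllIPADs} (case \(\mathfrak{M}\in\mathcal{T}^2\langle 243,8\rangle\)) equal \(\mathrm{A}(3,c-k)\) for the polarized subgroup \(H_1\) and \(\mathrm{A}(3,3)=21\) for the other three; this accounts for the first entries of the four components of \(\tau^{(2)}{G}\) in \eqref{eqn:IPAD2TreeU}. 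For each \(H_i\) the remaining data of \(\tau^{(1)}{H_i}=\lbrack\tau_0{H_i};\tau_1{H_i}\rbrack\) is computed from the index-\(3\) subgroups of \(H_i\); the component corresponding to the commutator subgroup \(G'\simeq G'/G''\) has abelian type invariants \(\mathrm{A}(3,c-1-k)\times C_3\) by \cite[Thm.~8.8, p.~461]{Ma3}, since in terms of the class \(c\) and coclass \(r=2\) we have \(m-2=c-1\) and \(e-2=r-1\), and the same theorem produces the variant \(\mathrm{B}(3,c-1-k)\) of Formula~\eqref{eqn:VarNrlHom} for the components arising from the non-normal index-\(3\) subgroups of the polarized \(H_1\). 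The base cases \(c\in\{3,4,5,6\}\) (orders \(3^5\) through \(3^8\)) are read off Table~\ref{tbl:3GroupsTreeU}, computed with MAGMA \cite{MAGMA}, and the extension to all \(c\) then follows from the length-\(2\) periodicity of the branches together with the stability of Artin transfer patterns along descendant trees \cite[Thm.~6.1--6.2]{Ma9}, once the pattern has been checked on a full period.

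Finally, for the three-stage case \(t=1\), I would use that such a \(G\) satisfies \(G/G''=\pi G\) metabelian of class \(c-1\), so by \cite[Thm.~5.4]{Ma9} the restricted Artin pattern of \(G\) — in particular \(\tau_1{G}\), hence the tuple of abelianizations \((\tau_0{H_i})_i\) — coincides with that of \(\pi G\), which replaces \(c\) by \(c-1=c-t\) everywhere and explains the uniform \(-t\) shift in \eqref{eqn:IPAD2TreeU}; the components \(\tau_1{H_i}\) are then obtained by the same commutator-subgroup analysis applied inside \(H_i/H_i''\), anchored by the tabulated rows of orders \(3^7\) and \(3^8\). The step I expect to be the main obstacle is the \emph{uniform} justification, for all classes \(c\) rather than merely the tabulated ones, of the parity-dependent shifted second type invariant \(\mathrm{B}(3,n)\) — specifically the even case \(C_{3^{m+2}}\times C_{3^m}\) in place of the nearly homocyclic \(C_{3^{m+1}}\times C_{3^m}\) — which requires combining \cite[Thm.~8.8]{Ma3} with the periodicity argument carefully enough to track how the Artin transfer from the polarized maximal subgroup into the commutator subgroup interacts with the lower central filtration; this is the only place where \(\mathcal{T}^2\langle 3^5,8\rangle\) differs numerically (though not methodologically) from \(\mathcal{T}^2\langle 3^5,6\rangle\).
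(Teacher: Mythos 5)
The paper states Theorem \ref{thm:IPAD2TreeU} (like its companion Theorem \ref{thm:IPAD2TreeQ}) without any proof, so there is no argument of the author's to measure yours against; your proposal has to stand alone. Its architecture --- MAGMA-verified base cases from Table \ref{tbl:3GroupsTreeU}, the commutator-quotient component from \cite[Thm. 8.8]{Ma3}, branch periodicity for general \(c\), and passage to the metabelianization for \(t=1\) --- is exactly the toolkit the paper deploys for the neighbouring results (Theorems \ref{thm:E8E9GS} and \ref{thm:NFE8E9}), so the skeleton is the right one.

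Three steps do not go through as written, and the one you flag as \lq\lq the main obstacle\rq\rq\ is indeed the heart of the matter rather than a technicality. First, \cite[Thm. 8.8]{Ma3} describes only the abelian quotient invariants of \(G^\prime/G^{\prime\prime}\); the three components \((\mathrm{B}(3,c-1-k-t)\times C_3)^3\) of \(\tau_1{H_1}\) come from the index-\(3\) subgroups of the polarized \(H_1\) \emph{other than} \(G^\prime\), which are in general not normal in \(G\), and no cited result computes their abelianizations for arbitrary \(c\) --- so the uniform parity-dependent passage from \(\mathrm{A}\) to \(\mathrm{B}\) is asserted, not proved, outside the tabulated orders. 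Second, for \(t=1\) you invoke \cite[Thm. 5.4]{Ma9}, but that theorem only transfers the first-order Artin pattern from \(G\) to \(G/G^{\prime\prime}\); it cannot produce \(\tau_1{H_i}\), because that second-order data is precisely what distinguishes \(G\) from its metabelianization (compare Theorem \ref{thm:E8E9GS}, where \((\mathbf{2}1)^3\) versus \((\mathbf{3}1)^3\) separates \(\langle 3^7,304\rangle\) from \(\langle 3^8,622\rangle\)). Third, your opening claim that all three non-polarized maximal subgroups carry the identical data \((21;\ \mathrm{A}(3,c-1-k-t)\times C_3,\ (21)^3)\) is contradicted by the very table on which you anchor the induction: for the coclass-\(2\), derived-length-\(3\) vertices \(\langle 3^7,307\rangle\) and \(\langle 3^7,308\rangle\), which lie on \(\mathcal{T}^2\langle 3^5,8\rangle\) and hence inside the theorem's stated scope, one of the three components reads \((21;\ 21^2,(31)^3)\). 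Any correct proof must therefore either restrict the class of vertices to which the parametrized formula applies (for instance to the capitulation types \(\mathrm{E}.8\), \(\mathrm{E}.9\), \(\mathrm{G}.16\) and the mainline) or account for these exceptional components explicitly; as it stands, the base case does not support the statement you then propagate by periodicity.
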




\subsection{Number fields with IPOD of type \(\mathrm{E}.8\) or \(\mathrm{E}.9\)}
\label{ss:NFE8E9}
\noindent
Let \(K\) be a number field
with \(3\)-class group \(\mathrm{Cl}_3{K}\simeq C_3\times C_3\)
and first layer \(\mathrm{Lyr}_1{K}=\lbrace L_1,\ldots,L_4\rbrace\) of unramified abelian extensions.

\begin{theorem}
\label{thm:NFE8E9}
(Criteria for \(\ell_3{K}\in\lbrace 2,3\rbrace\).)
\noindent
Let the IPOD of \(K\) be
of type \(\mathrm{E}.8\), \(\varkappa_1{K}\sim (1231)\),
resp. \(\mathrm{E}.9\), \(\varkappa_1{K}\sim (2231)\).
If \(\tau_1{K}\sim\left(\mathrm{A}(3,c),\ 21,\ 21,\ 21\right)\) with \(c\ge 4\), then
\begin{itemize}
\item
\(\ell_3{K}=2\) \(\iff\) \(\tau_1{L_j}\sim\left(\mathrm{A}(3,c-1)\times C_3,\ \mathbf{(21)^3}\right)\) for \(2\le j\le 4\),
\item
\(\ell_3{K}=3\) \(\iff\) \(\tau_1{L_j}\sim\left(\mathrm{A}(3,c-1)\times C_3,\ \mathbf{(31)^3}\right)\) for \(2\le j\le 4\).
\end{itemize}
\end{theorem}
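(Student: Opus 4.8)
The plan is to follow, step for step, the proof of Theorem~\ref{thm:NFE6E14}, now working on the coclass tree $\mathcal{T}^2\langle 3^5,8\rangle$ in place of $\mathcal{T}^2\langle 3^5,6\rangle$, with the non-CF vertex $U=\langle 3^6,54\rangle$ taking over the role of $Q=\langle 3^6,49\rangle$. As there, I would conduct the argument explicitly for $c=5$, the situation most relevant to our computational applications, and then propagate it to every $c\ge 4$ by invoking the periodicity $\mathcal{B}(j)\simeq\mathcal{B}(j+2)$, $j\ge 7$, of the branches of $\mathcal{T}^2\langle 3^5,8\rangle$ together with the parametrised second-order IPAD of Theorem~\ref{thm:IPAD2TreeU}.

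First I would search, inside the descendant tree $\mathcal{T}(R)$ of the elementary abelian root $R=\langle 3^2,2\rangle\simeq C_3\times C_3$, for the restricted Artin pattern $\mathrm{AP}_1=(\tau_1,\varkappa_1)$ with $\tau_1\sim(32,21,21,21)$, i.e. $\tau_1\sim(\mathrm{A}(3,5),21,21,21)$, and with $\varkappa_1\sim(2234)$ for type $\mathrm{E}.8$, resp. $\varkappa_1\sim(2334)$ for type $\mathrm{E}.9$ (these being equivalent to the forms $(1231)$ and $(2231)$ appearing in the statement). Along the path $R\leftarrow\langle 3^3,3\rangle\leftarrow\langle 3^5,8\rangle\leftarrow U=\langle 3^6,54\rangle$ this unambiguously singles out the unique metabelian descendant $\mathfrak{M}:=\langle 3^7,304\rangle$ for type $\mathrm{E}.8$, resp. the pair $\langle 3^7,302/306\rangle$ for type $\mathrm{E}.9$. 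Since $U$ has nuclear rank two, the bifurcation at $U$ produces in addition a non-metabelian descendant $G:=\langle 3^8,622\rangle$ for type $\mathrm{E}.8$, resp. $\langle 3^8,620/624\rangle$ for type $\mathrm{E}.9$, of derived length three with $G/G''\simeq\mathfrak{M}$.

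The crucial step is to establish that the cover of $\mathfrak{M}$ (with respect to the descendant relation, in the sense of \cite[Dfn.~5.1]{Ma10}) reduces to $\mathrm{cov}(\mathfrak{M})=\lbrace\mathfrak{M},G\rbrace$, so that necessarily $\mathrm{G}_3^\infty{K}\in\lbrace\mathfrak{M},G\rbrace$ and hence \emph{a priori} $\ell_3{K}\in\lbrace 2,3\rbrace$. Exactly as in Theorems~\ref{thm:NFE6E14} and~\ref{thm:E8E9GS}, this rests on the sharp termination criterion \cite[Thm.~5.1]{Ma9}, which cuts off the descent through $\mathcal{T}^2\langle 3^5,8\rangle$, together with the monotonicity of the index-$3$ obstruction data under the parent map \cite[Thm.~5.2]{Ma9}. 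Once the cover is pinned down, the iterated IPAD of second order, as displayed in Table~\ref{tbl:3GroupsTreeU} and restated group-theoretically in Theorem~\ref{thm:E8E9GS}, separates the two possibilities: after matching the polarised maximal subgroup of Theorem~\ref{thm:E8E9GS} with $L_1$ and the three non-polarised ones with $L_2,L_3,L_4$, the components $\tau_1{L_2},\tau_1{L_3},\tau_1{L_4}$ all equal $\bigl(\mathrm{A}(3,c-1)\times C_3,(21)^3\bigr)$ when $\mathrm{G}_3^\infty{K}\simeq\mathfrak{M}$, whence $\ell_3{K}=2$, and all equal $\bigl(\mathrm{A}(3,c-1)\times C_3,(31)^3\bigr)$ when $\mathrm{G}_3^\infty{K}\simeq G$, whence $\ell_3{K}=3$, while the polarised component $\tau_1{L_1}$ is identical for $\mathfrak{M}$ and $G$ and therefore carries no information. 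The constancy of the factor $\mathrm{A}(3,c-1)\times C_3$, which is the abelian type of the common quotient $\mathfrak{M}'\simeq G'/G''$, follows from \cite[Thm.~8.8, p.~461]{Ma3} with $m-2=c-1$ and $e-2=r-1=1$, precisely as in the proof of Theorem~\ref{thm:NFE6E14}; the dependence of the full second-order IPAD on the class $c$ is recorded in the parametrised formula of Theorem~\ref{thm:IPAD2TreeU}. Finally, the Artin reciprocity law yields $\mathrm{AP}(K)=\mathrm{AP}(\mathrm{G}_3^2{K})$, carrying the group-theoretic dichotomy over to the number field $K$.

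I expect the main obstacle to be the same as in Theorem~\ref{thm:NFE6E14}: proving rigorously that no non-metabelian descendant situated deeper in $\mathcal{T}^2\langle 3^5,8\rangle$ can reproduce the prescribed Artin pattern, i.e. that $\mathrm{cov}(\mathfrak{M})$ really consists of nothing beyond $\mathfrak{M}$ and $G$. This is a finite but somewhat delicate verification along the mainline and its immediate descendants, exploiting the interplay of \cite[Thm.~5.1]{Ma9} and \cite[Thm.~5.2]{Ma9}; once it is secured, the remaining bookkeeping --- that the two candidate second-order IPADs are manifestly distinct and that their dependence on $c$ is controlled by Theorem~\ref{thm:IPAD2TreeU} --- is routine.
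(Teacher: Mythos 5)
Your proposal is correct and follows essentially the same route as the paper's own proof: the exemplary computation for \(c=5\), the tree search leading to \(\mathfrak{M}=\langle 3^7,304\rangle\) (resp. \(\langle 3^7,302/306\rangle\)) and, via the bifurcation at \(\langle 3^6,54\rangle\), to \(G=\langle 3^8,622\rangle\) (resp. \(\langle 3^8,620/624\rangle\)), the reduction of the cover to \(\mathrm{cov}(\mathfrak{M})=\lbrace\mathfrak{M},G\rbrace\), the separation by the second-order IPADs of Table \ref{tbl:3GroupsTreeU}, and the appeal to \cite[Thm. 8.8]{Ma3} for the component \(\mathrm{A}(3,c-1)\times C_3\). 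Your explicit remark on propagating from \(c=5\) to all \(c\ge 4\) via branch periodicity and Theorem \ref{thm:IPAD2TreeU} only makes explicit what the paper leaves implicit.
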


\begin{proof}
Exemplarily, we conduct the proof for \(c=5\),
which is the most important situation for our computational applications. \\
Searching for the Artin pattern \(\mathrm{AP}_1=(\tau_1,\varkappa_1)\) with
\(\tau_1\sim\left(32,21,21,21\right)\) and
\(\varkappa_1\sim (1231)\), resp. \((2231)\),
in the descendant tree \(\mathcal{T}(R)\) with abelian root \(R:=\langle 3^2,2\rangle\simeq C_3\times C_3\),
unambiguously leads to the unique metabelian descendant with path
\(R\leftarrow\langle 3^3,3\rangle\leftarrow\langle 3^5,8\rangle\leftarrow\langle 3^6,54\rangle\leftarrow\langle 3^7,304\rangle=:\mathfrak{M}\)
for type \(\mathrm{E}.8\),
resp. two descendants \(\langle 3^7,302/306\rangle\) for type \(\mathrm{E}.9\).
The bifurcation at the vertex \(\langle 3^6,54\rangle\) with nuclear rank two
leads to a unique non-metabelian descendant with path
\(R\leftarrow\langle 3^3,3\rangle\leftarrow\langle 3^5,8\rangle\leftarrow\langle 3^6,54\rangle\leftarrow\langle 3^8,622\rangle=:G\)
for type \(\mathrm{E}.8\),
resp. two descendants \(\langle 3^8,620/624\rangle\) for type \(\mathrm{E}.9\).
The cover of \(\mathfrak{M}=\mathrm{G}_3^2{K}\) is non-trivial but very simple,
since it contains two elements \(\mathrm{cov}(\mathfrak{M})=\lbrace\mathfrak{M},G\rbrace\) only.
The decision whether \(\ell_3{K}=2\) and \(\mathrm{G}_3^3{K}=\mathfrak{M}\)
or \(\ell_3{K}=3\) and \(\mathrm{G}_3^3{K}=G\) requires the iterated IPADs of second order
\(\tau^{(2)}\) of \(\mathfrak{M}\) and \(G\),
which are listed in Table
\ref{tbl:3GroupsTreeU}.
The general form \(\mathrm{A}(3,c-1)\times C_3\) of the component of \(\tau^{(2)}\) which corresponds to the commutator subgroup
\(\mathfrak{M}^\prime\simeq G^\prime/G^{\prime\prime}\) is a consequence of
\cite[Thm. 8.8, p.461]{Ma3},
since in terms of the nilpotency class \(c\) and coclass \(r=2\) of \(\mathfrak{M}\) we have \(m-2=c-1\) and \(e-2=r-1\).
\end{proof}


\noindent
The proof of Theorem
\ref{thm:NFE8E9},
immediately justifies the following conclusions for \(c\le 5\).

\begin{corollary}
\label{cor:NFE8E9}
Under the assumptions of Theorem
\ref{thm:NFE8E9},
the second and third \(3\)-class groups of \(K\) are given by their SmallGroups identifier
\cite{BEO1,BEO2},
if \(c\le 5\). Independently of \(\ell_3{K}\), \\
if \(c=4\), then \(\mathrm{G}_3^2{K}\simeq\langle 3^6,55\rangle\) for type \(\mathrm{E}.8\), resp. \(\langle 3^6,53\rangle\) for type \(\mathrm{E}.9\), and \\
if \(c=5\), then \(\mathrm{G}_3^2{K}\simeq\langle 3^7,304\rangle\) for type \(\mathrm{E}.8\), resp. \(\langle 3^7,302/306\rangle\) for type \(\mathrm{E}.9\). \\
In the case of a \(3\)-class tower \(\mathrm{F}_3^\infty{K}\) of length \(\ell_3{K}=3\), \\
if \(c=4\), then \(\mathrm{G}_3^3{K}\simeq\langle 3^7,309\rangle\) for type \(\mathrm{E}.8\), resp. \(\langle 3^7,300\rangle\) for type \(\mathrm{E}.9\), and \\
if \(c=5\), then \(\mathrm{G}_3^3{K}\simeq\langle 3^8,622\rangle\) for type \(\mathrm{E}.8\), resp. \(\langle 3^8,620/624\rangle\) for type \(\mathrm{E}.9\). \\
\end{corollary}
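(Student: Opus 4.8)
The plan is to reuse, almost word for word, the descendant-tree search carried out in the proof of Theorem~\ref{thm:NFE8E9}, but now keeping track of the \emph{SmallGroups identifiers} of every vertex that is passed, and to run that search for the one remaining small class \(c=4\) alongside the class \(c=5\) already treated there. For \(c=5\) nothing new is required: the proof of Theorem~\ref{thm:NFE8E9} already exhibits the metabelian path \(R\leftarrow\langle 3^3,3\rangle\leftarrow\langle 3^5,8\rangle\leftarrow\langle 3^6,54\rangle\leftarrow\langle 3^7,304\rangle=:\mathfrak{M}\) for type \(\mathrm{E}.8\), the pair \(\langle 3^7,302/306\rangle\) for type \(\mathrm{E}.9\), and, after the bifurcation at the non-CF vertex \(U=\langle 3^6,54\rangle\) of nuclear rank two, the non-metabelian continuation \(G\simeq\langle 3^8,622\rangle\), resp. \(\langle 3^8,620/624\rangle\), with \(G/G^{\prime\prime}\simeq\mathfrak{M}\) and \(\mathrm{cov}(\mathfrak{M})=\lbrace\mathfrak{M},G\rbrace\). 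So the only genuinely new bookkeeping is that for \(c=4\).

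For \(c=4\) the first component \(\mathrm{A}(3,c)\) of \(\tau_1{K}\) is the abelian type \((2,2)\), written \(2^2\) in the notation of Table~\ref{tbl:3GroupsTreeU}, so the hypothesis of Theorem~\ref{thm:NFE8E9} reads \(\tau_1{K}\sim(2^2,21,21,21)\), and the first-order Artin pattern to be located in the descendant tree \(\mathcal{T}(R)\) rooted at \(R=\langle 3^2,2\rangle\simeq C_3\times C_3\) is \((\tau_1,\varkappa_1)\) with \(\tau_1\sim(2^2,21,21,21)\) and \(\varkappa_1\sim(1231)\) for type \(\mathrm{E}.8\), resp. \(\varkappa_1\sim(2231)\) for type \(\mathrm{E}.9\). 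Following the edges \(R\leftarrow\langle 3^3,3\rangle\leftarrow\langle 3^5,8\rangle\) lands, exactly as for \(c=5\), on the coclass tree \(\mathcal{T}^2\langle 3^5,8\rangle\); one class higher, on branch \(\mathcal{B}(6)\), the four vertices \(\langle 3^6,52\ldots 55\rangle\) all share the common first-order IPAD \(\tau_1\sim(2^2,21,21,21)\) and are separated only by their IPOD, so the prescribed capitulation type singles out the unique metabelian vertex \(\langle 3^6,55\rangle\) for \(\mathrm{E}.8\), resp. \(\langle 3^6,53\rangle\) for \(\mathrm{E}.9\) --- precisely the data in the rows \(\mathrm{lo}=6\) of Table~\ref{tbl:3GroupsTreeU}. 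A further run of the \(p\)-group generation algorithm, with the termination criterion \cite[Thm. 5.1]{Ma9} as in the proof of Theorem~\ref{thm:E8E9GS}, shows that the cover of this metabelian vertex again has exactly two elements, the vertex itself and one non-metabelian companion of derived length three, namely \(\langle 3^7,309\rangle\) for \(\mathrm{E}.8\), resp. \(\langle 3^7,300\rangle\) for \(\mathrm{E}.9\), both of which appear in Figure~\ref{fig:TreeUSecE}.

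The passage back to number fields is then the same as in the proof of Corollary~\ref{cor:a1a2a3}. By the Artin reciprocity law the restricted Artin pattern \(\mathrm{AP}(K)\) agrees with \(\mathrm{AP}(\mathrm{G}_3^2{K})\); since by Theorem~\ref{thm:Contestants} only finitely many metabelian \(3\)-groups carry the prescribed \(\tau_0=(1^2)\) and \(\tau_1\), and the tree search just performed has exhibited the unique such vertex on \(\mathcal{T}^2\langle 3^5,8\rangle\) (up to the twin pair in the \(\mathrm{E}.9\) case), the group \(\mathrm{G}_3^2{K}\) must be the metabelian group named for the value of \(c\) in question, \emph{independently} of \(\ell_3{K}\). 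If moreover \(\ell_3{K}=3\), then \(\mathrm{G}_3^\infty{K}\) is a non-metabelian element of \(\mathrm{cov}(\mathrm{G}_3^2{K})=\lbrace\mathfrak{M},G\rbrace\) and hence equals \(G\); since \(G\) has derived length three, \(G^{(3)}=1\) and therefore \(\mathrm{G}_3^3{K}=\mathrm{G}_3^\infty{K}/(\mathrm{G}_3^\infty{K})^{(3)}=G\), which gives the stated third-order identifiers.

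The only step that is not purely formal is the class-\(c=4\) computation: one must verify with the \(p\)-group generation algorithm that, among the finitely many vertices of \(\mathcal{T}(R)\) of orders \(3^6\) and \(3^7\) carrying the relevant first-order data, the prescribed Artin pattern is realized \emph{only} by \(\langle 3^6,55\rangle\), resp. \(\langle 3^6,53\rangle\), and their bifurcation descendants, that no vertex lying off \(\mathcal{T}^2\langle 3^5,8\rangle\) interferes, and that the two listed elements exhaust the relevant cover. A caveat worth stating explicitly is that for type \(\mathrm{E}.9\) the Artin pattern cannot distinguish the twins \(\langle 3^7,302/306\rangle\) and, correspondingly, \(\langle 3^8,620/624\rangle\), so the \(\mathrm{E}.9\) column necessarily records an unresolved pair at class \(c=5\), in exact parallel with Theorems~\ref{thm:E8E9GS} and~\ref{thm:NFE8E9}. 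The entire argument runs mutatis mutandis as the proof of Corollary~\ref{cor:NFE6E14} for the coclass tree \(\mathcal{T}^2\langle 3^5,6\rangle\).
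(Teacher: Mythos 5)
Your proposal follows essentially the same route as the paper, which justifies this corollary as an immediate by-product of the descendant-tree search in the proof of Theorem \ref{thm:NFE8E9}: the \(c=5\) identifiers are read off from that proof, and the \(c=4\) case is the analogous search one level lower on \(\mathcal{T}^2\langle 3^5,8\rangle\), landing on \(\langle 3^6,55\rangle\), resp. \(\langle 3^6,53\rangle\), with non-metabelian cover elements \(\langle 3^7,309\rangle\), resp. \(\langle 3^7,300\rangle\), exactly as displayed in Table \ref{tbl:3GroupsTreeU} and Figure \ref{fig:TreeUSecE}. The only slip is cosmetic: the order-\(3^6\) vertices \(\langle 3^6,52\ldots 55\rangle\) lie on branch \(\mathcal{B}(5)\) (whose root has order \(3^5\)), not \(\mathcal{B}(6)\).
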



\begin{proposition}
\label{prp:RQFE8E9GS}
(Fields \(\mathbb{Q}(\sqrt{d})\) with IPOD of type \(\mathrm{E}.8\) or \(\mathrm{E}.9\) for \(0<d<10^7\)
\cite{Ma1},
\cite{Ma3})
\noindent
In the range \(0<d<10^7\) of fundamental discriminants \(d\)
of real quadratic fields \(K=\mathbb{Q}(\sqrt{d})\),
there exist precisely \(\mathbf{3}\), resp. \(\mathbf{11}\), cases
with \(3\)-principalization type \(\mathrm{E}.8\), \(\varkappa_1{K}\sim (1231)\),
resp. \(\mathrm{E}.9\), \(\varkappa_1{K}\sim (2231)\).
\end{proposition}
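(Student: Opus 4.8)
The plan is to reduce the assertion to the explicit $3$-principalization computations already carried out in \cite{Ma1,Ma3} for the real quadratic fields $K=\mathbb{Q}(\sqrt{d})$ with $0<d<10^7$ and $\mathrm{Cl}_3{K}\simeq C_3\times C_3$, and then to split the pooled frequency reported there for the two types sharing the letter $\mathrm{E}$ with three fixed transfer kernels into the separate counts $\mathbf{3}$ for $\mathrm{E}.8$ and $\mathbf{11}$ for $\mathrm{E}.9$. First I would recall from \cite[Tbl. 6--7]{Ma2} the defining kernel patterns $\varkappa_1{K}\sim(1231)$ for type $\mathrm{E}.8$ and $\varkappa_1{K}\sim(2231)$ for type $\mathrm{E}.9$; since these two types possess the \emph{same} first-order IPAD $\tau^{(1)}{K}=\lbrack 1^2;21,32,(21)^2\rbrack$ (in ordered form), they cannot be distinguished by abelian type invariants alone and must be separated by a genuine capitulation computation, which is precisely why the relevant data reside in \cite{Ma1,Ma3} rather than in an IPAD table.

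The computational core is then, for each of the finitely many fields $K$ in question, to realize the four unramified cyclic cubic extensions $L_1,\ldots,L_4$ inside the Hilbert $3$-class field $\mathrm{F}_3^1{K}$, to compute the $3$-class groups $\mathrm{Cl}_3{L_i}$, and to determine each transfer kernel $\ker T_{K,L_i}$ by means of an explicit $3$-principalization algorithm of the type described in \cite[\S\ 5, pp. 446--450]{Ma3}. Tabulating the resulting type symbol for every such $K$ and extracting those equal to $\mathrm{E}.8$, resp. $\mathrm{E}.9$, yields the claimed numbers; this would be cross-checked against \cite[Tbl. 4, p. 498]{Ma1} and \cite[Tbl. 6.5, p. 452]{Ma3}, correcting any off-by-one slips in the published frequency columns exactly as was necessary for Proposition \ref{prp:RQFE6E14GS}.

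The main obstacle is the reliable determination of the $3$-principalization type, which requires computing $3$-class groups of the sextic fields $L_i$ and evaluating capitulation inside the degree-$18$ field $\mathrm{F}_3^1{K}$, a task that was historically beyond the reach of earlier implementations; compare Remark \ref{rmk:9433849}. It is only the improved class-group and Artin-transfer routines of recent versions of MAGMA \cite{MAGMA}, combined with the earlier PARI/GP computation \cite{PARI,Ma3}, that permit the pooled frequency to be resolved unambiguously as $3+11$.
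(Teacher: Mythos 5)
Your proposal is correct and follows essentially the same route as the paper: cite the PARI/GP principalization computations of \cite{Ma1,Ma3} for the finitely many fields with $0<d<10^7$ and $\mathrm{Cl}_3{K}\simeq(3,3)$, and split the accumulated frequency $14$ for the E-types sharing this IPAD into $3+11$ by a renewed MAGMA capitulation computation. The only slip is the table reference: the relevant data for types $\mathrm{E}.8$ and $\mathrm{E}.9$ are in \cite[Tbl. 6.7, p. 453]{Ma3}, not Tbl. 6.5, which concerns $\mathrm{E}.6$ and $\mathrm{E}.14$.
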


\begin{proof}
The results of
\cite[Tbl. 6.7, p. 453]{Ma3}
were computed in \(2010\) by means of PARI/GP
\cite{PARI}
using an implementation of our principalization algorithm, as described in
\cite[\S\ 5, pp. 446--450]{Ma3}.
The accumulated frequency \(14\)
in the last column \lq\lq freq.\rq\rq\
for the second and third row
was recently split into \(3\) and \(11\)
with the aid of MAGMA
\cite{MAGMA}.
See also
\cite[Tbl. 4, p. 498]{Ma1}.
\end{proof}

\begin{remark}
\label{rmk:RQFE8E9GS}
The minimal discriminant \(d=6\,098\,360\) of real quadratic fields \(K=\mathbb{Q}(\sqrt{d})\) of type \(\mathrm{E}.8\),
resp. \(d=342\,664\) of type \(\mathrm{E}.9\),
is indicated in boldface font adjacent to an oval surrounding the vertex, resp. batch of two vertices,
which represents the associated second \(3\)-class group \(G_3^2{K}\),
on the branch \(\mathcal{B}(6)\) of the coclass tree \(\mathcal{T}^2\langle 243,8\rangle\)
in Figure
\ref{fig:TreeUSecE}.
\end{remark}



\begin{theorem} 
\label{thm:RQFE8E9GS}
(\(3\)-Class towers \(\mathrm{F}_3^\infty{\mathbb{Q}(\sqrt{d})}\) with IPOD of type \(\mathrm{E}.8\) or \(\mathrm{E}.9\) for \(0<d<10^7\))
\noindent
Among the \(3\) real quadratic fields \(K=\mathbb{Q}(\sqrt{d})\) with IPOD of type \(\mathrm{E}.8\) in Proposition
\ref{prp:RQFE8E9GS},
\begin{itemize}
\item
the \(\mathbf{2}\) fields (\(\mathbf{67}\%\)) with discriminants
\[d\in\lbrace 6\,098\,360,\ 7\,100\,889\rbrace\] 
have the unique \(3\)-class tower group 
\(G\simeq\langle 3^8,\mathbf{622}\rangle\)
and \(3\)-tower length \(\ell_3{K}=\mathbf{3}\),
\item
the \textbf{single} field (\(\mathbf{33}\%\)) with discriminant
\[d=8\,632\,716\]
has the unique \(3\)-class tower group 
\(G\simeq\langle 3^7,\mathbf{304}\rangle\)
and \(3\)-tower length \(\ell_3{K}=\mathbf{2}\).
\end{itemize}
\noindent
Among the \(11\) real quadratic fields \(K=\mathbb{Q}(\sqrt{d})\) with IPOD of type \(\mathrm{E}.9\) in Proposition
\ref{prp:RQFE8E9GS},
\begin{itemize}
\item
the \(\mathbf{7}\) fields (\(\mathbf{64}\%\)) with discriminants
\[d\in\lbrace 342\,664,\ 1\,452\,185,\ 1\,787\,945,\ 4\,861\,720,\ 5\,976\,988,\ 8\,079\,101,\ 9\,674\,841\rbrace\]
have \(3\)-class tower group
\(G\simeq\langle 3^8,\mathbf{620}\rangle\)
or \(G\simeq\langle 3^8,\mathbf{624}\rangle\)
and \(3\)-tower length \(\ell_3{K}=\mathbf{3}\),
\item
the \(\mathbf{4}\) fields (\(\mathbf{36}\%\)) with discriminants
\[d\in\lbrace 4\,760\,877,\ 6\,652\,929,\ 7\,358\,937,\ 9\,129\,480\rbrace\]
have \(3\)-class tower group 
\(G\simeq\langle 3^7,\mathbf{302}\rangle\)
or \(G\simeq\langle 3^7,\mathbf{306}\rangle\)
and \(3\)-tower length \(\ell_3{K}=\mathbf{2}\).
\end{itemize}
\end{theorem}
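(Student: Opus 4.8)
The plan is to proceed exactly as in the proof of Theorem~\ref{thm:RQFE6E14GS}, with the coclass tree \(\mathcal{T}^2\langle 3^5,6\rangle\) replaced by \(\mathcal{T}^2\langle 3^5,8\rangle\) and with Theorem~\ref{thm:E8E9GS} (equivalently, Theorem~\ref{thm:NFE8E9} specialized to \(c=5\)) serving as the decision criterion. First I would recompute, with MAGMA~\cite{MAGMA}, the first-order IPAD of each of the \(3+11\) real quadratic fields singled out in Proposition~\ref{prp:RQFE8E9GS}, checking in every case that \(\tau^{(1)}{K}=\lbrack 1^2;\mathbf{32},(21)^3\rbrack\) (the ordered form of the hypothesis \(\tau_1{K}\sim(\mathrm{A}(3,c),21,21,21)\) for the ground state \(c=5\)) and that the \(3\)-principalization type is indeed \(\mathrm{E}.8\), \(\varkappa_1{K}\sim(1231)\), resp.\ \(\mathrm{E}.9\), \(\varkappa_1{K}\sim(2231)\). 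By Theorem~\ref{thm:E8E9GS} together with Corollary~\ref{cor:NFE8E9}, these data already force the second \(3\)-class group to be \(\mathfrak{M}=\mathrm{G}_3^2{K}\simeq\langle 3^7,304\rangle\) for type \(\mathrm{E}.8\), resp.\ \(\mathfrak{M}\simeq\langle 3^7,302\rangle\) or \(\langle 3^7,306\rangle\) for type \(\mathrm{E}.9\), located on branch \(\mathcal{B}(6)\) of the coclass tree \(\mathcal{T}^2\langle 243,8\rangle\) of Figure~\ref{fig:TreeUSecE}.

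The core of the proof is the computation of the iterated IPAD of second order. For each field I would exhibit the four unramified cyclic cubic extensions \(L_1,\ldots,L_4\) of \(K\), labelled so that \(\tau_0{L_1}=32\) and \(\tau_0{L_j}=21\) for \(j\in\lbrace 2,3,4\rbrace\) in accordance with Table~\ref{tbl:3GroupsTreeU}, and then compute the distinguishing components \(\tau_1{L_j}\) for \(j\in\lbrace 2,3,4\rbrace\); the polarized component \(\tau_1{L_1}=(2^21,(31^2)^3)\) is fixed and carries no information, as already recorded in Theorem~\ref{thm:E8E9GS}. The dichotomy of Theorem~\ref{thm:NFE8E9} now reads off the tower: if \(\tau_1{L_j}\sim(2^21,(\mathbf{2}1)^3)\) for all \(j\in\lbrace 2,3,4\rbrace\), then \(\ell_3{K}=2\) and \(G=\mathrm{G}_3^\infty{K}=\mathfrak{M}\); if instead \(\tau_1{L_j}\sim(2^21,(\mathbf{3}1)^3)\) for all \(j\in\lbrace 2,3,4\rbrace\), then \(\ell_3{K}=3\) and, by Corollary~\ref{cor:NFE8E9}, \(G\simeq\langle 3^8,622\rangle\) for type \(\mathrm{E}.8\), resp.\ \(G\simeq\langle 3^8,620\rangle\) or \(\langle 3^8,624\rangle\) for type \(\mathrm{E}.9\). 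Sorting the \(3\) discriminants of type \(\mathrm{E}.8\) and the \(11\) of type \(\mathrm{E}.9\) into these two classes produces exactly the two bulleted sublists in each half of the statement; that for type \(\mathrm{E}.8\) a single group (rather than a sibling pair) occurs in each class is part of Theorem~\ref{thm:E8E9GS}. Finally, that no group of derived length \(\ge 4\) can intervene, so that \(\ell_3{K}\in\lbrace 2,3\rbrace\) for each field in the list, follows from the fact, established in the proof of Theorem~\ref{thm:NFE8E9}, that the cover \(\mathrm{cov}(\mathfrak{M})=\lbrace\mathfrak{M},G\rbrace\) has only these two elements, combined with the termination criterion \cite[Thm. 5.1]{Ma9}; an abelian \(3\)-class tower is ruled out by \cite[Thm. 4.1.(1)]{Ma1}.

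The hard part is purely computational. Determining \(\tau_1{L_j}\) requires the \(3\)-class groups of the four, resp.\ thirteen, unramified cyclic cubic extensions of each \(L_j\), hence class group computations in number fields of absolute degree \(18\) over \(\mathbb{Q}\); for the larger discriminants in the type-\(\mathrm{E}.9\) list these are feasible only with the recent improvements to class group arithmetic and to the relative-to-absolute conversion of number fields in MAGMA, in the same spirit as the type-\(\mathrm{E}.14\) example discussed in Remark~\ref{rmk:9433849}. A PARI/GP~\cite{PARI} cross-check of the first-order IPADs and of the \(3\)-principalization types would supply an independent confirmation. No new theory beyond Theorems~\ref{thm:E8E9GS} and~\ref{thm:NFE8E9} and Corollary~\ref{cor:NFE8E9} is needed; the substance of the argument is the verification that each of the fourteen fields realizes the predicted second-order Artin pattern, so that all four groups \(\langle 3^7,304\rangle\), \(\langle 3^7,302/306\rangle\), \(\langle 3^8,622\rangle\), \(\langle 3^8,620/624\rangle\) are genuinely realized as \(3\)-class tower groups of real quadratic fields.
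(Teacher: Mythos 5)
Your proposal is correct and follows essentially the same route as the paper: verify the first-order IPAD \(\lbrack 1^2;32,(21)^3\rbrack\) and the capitulation type, compute the second-order components \(\tau_1{L_j}\) for the fourteen fields, and sort them into the \((21)^3\) (length \(2\)) and \((31)^3\) (length \(3\)) classes, concluding via Theorem~\ref{thm:E8E9GS}. The only quibble is that for types \(\mathrm{E}.8\)/\(\mathrm{E}.9\) every \(L_j\) has \(3\)-class rank two, so each has exactly four (never thirteen) unramified cyclic cubic extensions; this does not affect the argument.
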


\begin{proof}
Since all these real quadratic fields \(K=\mathbb{Q}(\sqrt{d})\) have
\(3\)-capitulation type \(\varkappa_1{K}=(2234)\) or \((2334)\) and \(1^{\mathrm{st}}\) IPAD  
\(\tau^{(1)}{K}=\lbrack 1^2;\mathbf{32},(21)^3\rbrack\),
and the \(5\) fields with \(d\in\lbrace 4\,760\,877,\ 6\,652\,929,\ 7\,358\,937,\)
\(\ 8\,632\,716,\ 9\,129\,480\rbrace\)
have \(2^{\mathrm{nd}}\) IPAD
\[\tau_1{L_1}=(2^21,(31^2)^3),\ \tau_1{L_2}=(2^21,\mathbf{(21)^3}),\ \tau_1{L_3}=(2^21,\mathbf{(21)^3}),\ \tau_1{L_4}=(2^21,\mathbf{(21)^3}),\]
whereas the \(9\) fields with
\(d\in\lbrace 342\,664,\ 1\,452\,185,\ 1\,787\,945,\ 4\,861\,720,\ 5\,976\,988,\ 6\,098\,360,\ 7\,100\,889,\)
\(\ 8\,079\,101,\ 9\,674\,841\rbrace\)
have \(2^{\mathrm{nd}}\) IPAD
\[\tau_1{L_1}=(2^21,(31^2)^3),\ \tau_1{L_2}=(2^21,\mathbf{(31)^3}),\ \tau_1{L_3}=(2^21,\mathbf{(31)^3}),\ \tau_1{L_4}=(2^21,\mathbf{(31)^3}),\]
the claim is a consequence of Theorem
\ref{thm:E8E9GS}.
\end{proof}

\begin{remark}
\label{rmk9674841}
The \(3\)-principalization type \(\mathrm{E}.9\) of the field with \(d=9\,674\,841\)
could not be computed with MAGMA versions up to V2.21-7.
Finally, we succeeded to figure it out by means of V2.21-8
\cite{MAGMA}.
\end{remark}



Figure
\ref{fig:SporCc2}
visualizes sporadic \(3\)-groups of section \S\
\ref{ss:SporadicCoclass2}
which arise as second \(3\)-class groups \(\mathfrak{M}=\mathrm{G}_3^2{K}\)
of real quadratic fields \(K=\mathbb{Q}(\sqrt{d})\), \(d>0\),
with \(3\)-principalization types \(\mathrm{D}.10\), \(\mathrm{D}.5\), \(\mathrm{G}.19\) and \(\mathrm{H}.4\)
and the corresponding minimal discriminants, resp. absolute frequencies, which are given in section \S\
\ref{ss:RQFH4Spor}
and \S\
\ref{ss:RQFG19Spor}.



{\tiny

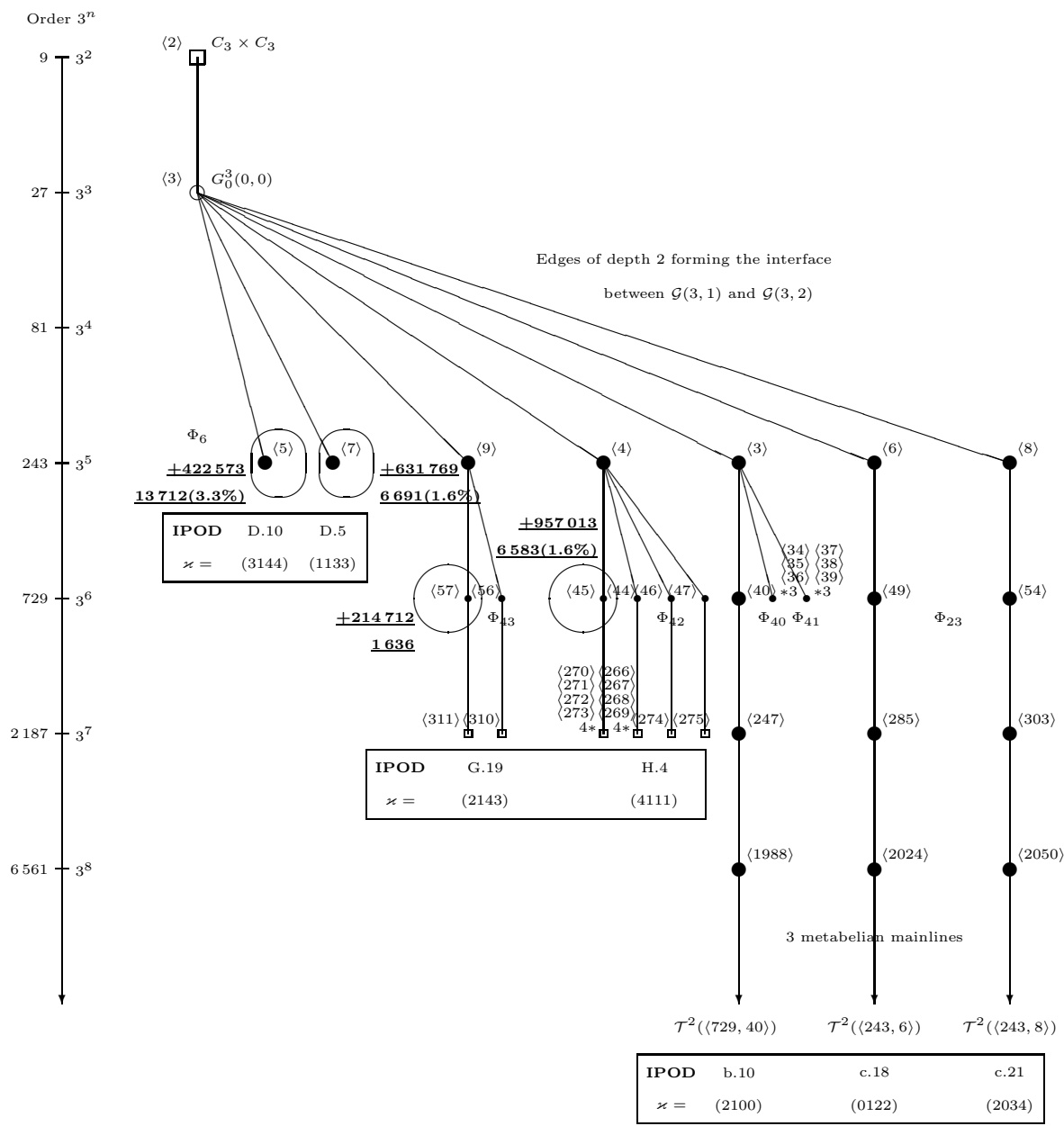
\begin{figure}[ht]
\caption{Distribution of second \(3\)-class groups \(\mathrm{G}_3^2{K}\) on the sporadic graph \(\mathcal{G}_0(3,2)\)}
\label{fig:SporCc2}


\setlength{\unitlength}{1.0cm}
\begin{picture}(16,17)(0,-14)

\put(0,2.5){\makebox(0,0)[cb]{Order \(3^n\)}}
\put(0,2){\line(0,-1){12}}
\multiput(-0.1,2)(0,-2){7}{\line(1,0){0.2}}
\put(-0.2,2){\makebox(0,0)[rc]{\(9\)}}
\put(0.2,2){\makebox(0,0)[lc]{\(3^2\)}}
\put(-0.2,0){\makebox(0,0)[rc]{\(27\)}}
\put(0.2,0){\makebox(0,0)[lc]{\(3^3\)}}
\put(-0.2,-2){\makebox(0,0)[rc]{\(81\)}}
\put(0.2,-2){\makebox(0,0)[lc]{\(3^4\)}}
\put(-0.2,-4){\makebox(0,0)[rc]{\(243\)}}
\put(0.2,-4){\makebox(0,0)[lc]{\(3^5\)}}
\put(-0.2,-6){\makebox(0,0)[rc]{\(729\)}}
\put(0.2,-6){\makebox(0,0)[lc]{\(3^6\)}}
\put(-0.2,-8){\makebox(0,0)[rc]{\(2\,187\)}}
\put(0.2,-8){\makebox(0,0)[lc]{\(3^7\)}}
\put(-0.2,-10){\makebox(0,0)[rc]{\(6\,561\)}}
\put(0.2,-10){\makebox(0,0)[lc]{\(3^8\)}}
\put(0,-10){\vector(0,-1){2}}

\put(2.2,2.2){\makebox(0,0)[lc]{\(C_3\times C_3\)}}
\put(1.8,2.2){\makebox(0,0)[rc]{\(\langle 2\rangle\)}}
\put(1.9,1.9){\framebox(0.2,0.2){}}
\put(2,2){\line(0,-1){2}}
\put(2,0){\circle{0.2}}
\put(2.2,0.2){\makebox(0,0)[lc]{\(G^3_0(0,0)\)}}
\put(1.8,0.2){\makebox(0,0)[rc]{\(\langle 3\rangle\)}}

\put(2,0){\line(1,-4){1}}
\put(2,0){\line(1,-2){2}}
\put(2,0){\line(1,-1){4}}
\put(2,0){\line(3,-2){6}}
\put(2,0){\line(2,-1){8}}
\put(2,0){\line(5,-2){10}}
\put(2,0){\line(3,-1){12}}
\put(7,-1){\makebox(0,0)[lc]{Edges of depth \(2\) forming the interface}}
\put(8,-1.5){\makebox(0,0)[lc]{between \(\mathcal{G}(3,1)\) and \(\mathcal{G}(3,2)\)}}

\put(2,-3.6){\makebox(0,0)[cc]{\(\Phi_6\)}}
\multiput(3,-4)(1,0){2}{\circle*{0.2}}
\put(3.1,-3.9){\makebox(0,0)[lb]{\(\langle 5\rangle\)}}
\put(4.1,-3.9){\makebox(0,0)[lb]{\(\langle 7\rangle\)}}
\multiput(6,-4)(2,0){5}{\circle*{0.2}}
\put(6.1,-3.9){\makebox(0,0)[lb]{\(\langle 9\rangle\)}}
\put(8.1,-3.9){\makebox(0,0)[lb]{\(\langle 4\rangle\)}}
\put(10.1,-3.9){\makebox(0,0)[lb]{\(\langle 3\rangle\)}}
\put(12.1,-3.9){\makebox(0,0)[lb]{\(\langle 6\rangle\)}}
\put(14.1,-3.9){\makebox(0,0)[lb]{\(\langle 8\rangle\)}}

\put(3.2,-4){\oval(0.8,1)}
\put(2.7,-4.1){\makebox(0,0)[rc]{\underbar{\textbf{+422\,573}}}}
\put(2.7,-4.5){\makebox(0,0)[rc]{\underbar{\textbf{13\,712(3.3\%)}}}}
\put(4.2,-4){\oval(0.8,1)}
\put(4.7,-4.1){\makebox(0,0)[lc]{\underbar{\textbf{+631\,769}}}}
\put(4.7,-4.5){\makebox(0,0)[lc]{\underbar{\textbf{6\,691(1.6\%)}}}}

\put(2,-5){\makebox(0,0)[cc]{\textbf{IPOD}}}
\put(3,-5){\makebox(0,0)[cc]{D.10}}
\put(4,-5){\makebox(0,0)[cc]{D.5}}
\put(2,-5.5){\makebox(0,0)[cc]{\(\varkappa=\)}}
\put(3,-5.5){\makebox(0,0)[cc]{\((3144)\)}}
\put(4,-5.5){\makebox(0,0)[cc]{\((1133)\)}}
\put(1.5,-5.75){\framebox(3,1){}}

\put(6.5,-6.3){\makebox(0,0)[cc]{\(\Phi_{43}\)}}
\put(6,-4){\line(0,-1){2}}
\put(5.9,-5.9){\makebox(0,0)[rc]{\(\langle 57\rangle\)}}
\put(6.5,-5.9){\makebox(0,0)[rc]{\(\langle 56\rangle\)}}
\put(5.9,-7.8){\makebox(0,0)[rc]{\(\langle 311\rangle\)}}
\put(6.5,-7.8){\makebox(0,0)[rc]{\(\langle 310\rangle\)}}
\put(6,-4){\line(1,-4){0.5}}
\multiput(6,-6)(0.5,0){2}{\circle*{0.1}}
\multiput(6,-6)(0.5,0){2}{\line(0,-1){2}}
\multiput(5.95,-8.05)(0.5,0){2}{\framebox(0.1,0.1){}}

\put(9,-6.3){\makebox(0,0)[cc]{\(\Phi_{42}\)}}
\put(8,-4){\line(0,-1){2}}
\put(7.9,-5.9){\makebox(0,0)[rc]{\(\langle 45\rangle\)}}
\put(8.5,-5.9){\makebox(0,0)[rc]{\(\langle 44\rangle\)}}
\put(8.9,-5.9){\makebox(0,0)[rc]{\(\langle 46\rangle\)}}
\put(9.4,-5.9){\makebox(0,0)[rc]{\(\langle 47\rangle\)}}
\put(7.9,-7.1){\makebox(0,0)[rc]{\(\langle 270\rangle\)}}
\put(7.9,-7.3){\makebox(0,0)[rc]{\(\langle 271\rangle\)}}
\put(7.9,-7.5){\makebox(0,0)[rc]{\(\langle 272\rangle\)}}
\put(7.9,-7.7){\makebox(0,0)[rc]{\(\langle 273\rangle\)}}
\put(8.5,-7.1){\makebox(0,0)[rc]{\(\langle 266\rangle\)}}
\put(8.5,-7.3){\makebox(0,0)[rc]{\(\langle 267\rangle\)}}
\put(8.5,-7.5){\makebox(0,0)[rc]{\(\langle 268\rangle\)}}
\put(8.5,-7.7){\makebox(0,0)[rc]{\(\langle 269\rangle\)}}
\put(9.0,-7.8){\makebox(0,0)[rc]{\(\langle 274\rangle\)}}
\put(9.6,-7.8){\makebox(0,0)[rc]{\(\langle 275\rangle\)}}
\put(8,-4){\line(1,-4){0.5}}
\put(8,-4){\line(1,-2){1}}
\put(8,-4){\line(3,-4){1.5}}
\multiput(8,-6)(0.5,0){4}{\circle*{0.1}}
\multiput(8,-6)(0.5,0){4}{\line(0,-1){2}}
\multiput(7.95,-8.05)(0.5,0){4}{\framebox(0.1,0.1){}}
\multiput(7.9,-7.9)(0.5,0){2}{\makebox(0,0)[rc]{\(4*\)}}

\put(5.7,-6){\oval(1,1)}
\put(5.2,-6.3){\makebox(0,0)[rc]{\underbar{\textbf{+214\,712}}}}
\put(5.2,-6.7){\makebox(0,0)[rc]{\underbar{\textbf{1\,636}}}}
\put(7.7,-6){\oval(1,1)}
\put(7.9,-4.9){\makebox(0,0)[rc]{\underbar{\textbf{+957\,013}}}}
\put(7.9,-5.3){\makebox(0,0)[rc]{\underbar{\textbf{6\,583(1.6\%)}}}}

\put(5,-8.5){\makebox(0,0)[cc]{\textbf{IPOD}}}
\put(6.25,-8.5){\makebox(0,0)[cc]{G.19}}
\put(8.75,-8.5){\makebox(0,0)[cc]{H.4}}
\put(5,-9){\makebox(0,0)[cc]{\(\varkappa=\)}}
\put(6.25,-9){\makebox(0,0)[cc]{\((2143)\)}}
\put(8.75,-9){\makebox(0,0)[cc]{\((4111)\)}}
\put(4.5,-9.25){\framebox(5,1){}}


\put(10.5,-6.3){\makebox(0,0)[cc]{\(\Phi_{40}\)}}
\put(11,-6.3){\makebox(0,0)[cc]{\(\Phi_{41}\)}}
\multiput(10,-4)(0,-2){3}{\line(0,-1){2}}
\multiput(10,-6)(0,-2){3}{\circle*{0.2}}
\put(10.1,-5.9){\makebox(0,0)[lc]{\(\langle 40\rangle\)}}
\put(10.6,-5.3){\makebox(0,0)[lc]{\(\langle 34\rangle\)}}
\put(10.6,-5.5){\makebox(0,0)[lc]{\(\langle 35\rangle\)}}
\put(10.6,-5.7){\makebox(0,0)[lc]{\(\langle 36\rangle\)}}
\put(11.1,-5.3){\makebox(0,0)[lc]{\(\langle 37\rangle\)}}
\put(11.1,-5.5){\makebox(0,0)[lc]{\(\langle 38\rangle\)}}
\put(11.1,-5.7){\makebox(0,0)[lc]{\(\langle 39\rangle\)}}
\put(10.1,-7.8){\makebox(0,0)[lc]{\(\langle 247\rangle\)}}
\put(10.1,-9.8){\makebox(0,0)[lc]{\(\langle 1988\rangle\)}}
\put(10,-4){\line(1,-4){0.5}}
\put(10,-4){\line(1,-2){1}}
\multiput(10.5,-6)(0.5,0){2}{\circle*{0.1}}
\multiput(10.6,-5.9)(0.5,0){2}{\makebox(0,0)[lc]{\(*3\)}}
\put(10,-10){\vector(0,-1){2}}
\put(9.8,-12.2){\makebox(0,0)[ct]{\(\mathcal{T}^2(\langle 729,40\rangle)\)}}


\multiput(12,-4)(0,-2){3}{\line(0,-1){2}}
\multiput(12,-6)(0,-2){3}{\circle*{0.2}}
\put(12.1,-5.9){\makebox(0,0)[lc]{\(\langle 49\rangle\)}}
\put(12.1,-7.8){\makebox(0,0)[lc]{\(\langle 285\rangle\)}}
\put(12.1,-9.8){\makebox(0,0)[lc]{\(\langle 2024\rangle\)}}
\put(12,-10){\vector(0,-1){2}}
\put(12,-12.2){\makebox(0,0)[ct]{\(\mathcal{T}^2(\langle 243,6\rangle)\)}}

\put(13.1,-6.3){\makebox(0,0)[cc]{\(\Phi_{23}\)}}
\put(12,-11){\makebox(0,0)[cc]{\(3\) metabelian mainlines}}

\multiput(14,-4)(0,-2){3}{\line(0,-1){2}}
\multiput(14,-6)(0,-2){3}{\circle*{0.2}}
\put(14.1,-5.9){\makebox(0,0)[lc]{\(\langle 54\rangle\)}}
\put(14.1,-7.8){\makebox(0,0)[lc]{\(\langle 303\rangle\)}}
\put(14.1,-9.8){\makebox(0,0)[lc]{\(\langle 2050\rangle\)}}
\put(14,-10){\vector(0,-1){2}}
\put(14,-12.2){\makebox(0,0)[ct]{\(\mathcal{T}^2(\langle 243,8\rangle)\)}}

\put(9,-13){\makebox(0,0)[cc]{\textbf{IPOD}}}
\put(10,-13){\makebox(0,0)[cc]{b.10}}
\put(12,-13){\makebox(0,0)[cc]{c.18}}
\put(14,-13){\makebox(0,0)[cc]{c.21}}
\put(9,-13.5){\makebox(0,0)[cc]{\(\varkappa=\)}}
\put(10,-13.5){\makebox(0,0)[cc]{\((2100)\)}}
\put(12,-13.5){\makebox(0,0)[cc]{\((0122)\)}}
\put(14,-13.5){\makebox(0,0)[cc]{\((2034)\)}}
\put(8.5,-13.75){\framebox(6,1){}}

\end{picture}

\end{figure}

}

















\subsection{Sporadic \(3\)-groups \(G\) of coclass \(\mathrm{cc}(G)=2\)}
\label{ss:SporadicCoclass2}

\renewcommand{\arraystretch}{1.1}

\begin{table}[ht]
\caption{IPOD \(\varkappa_1{G}\) and iterated IPAD \(\tau^{(2)}_\ast{G}\) of sporadic \(3\)-groups \(G\) of type H.4}
\label{tbl:3GroupsH4Spor}
\begin{center}
\begin{tabular}{|c|c||cc||c|ccccc|}
\hline
    lo &               id          & type & \(\varkappa_1{G}\) & \(\tau_0{G}\) &       & \(\tau_0{H}\) &            \(\tau_1{H}\) &          \(\tau_2{H}\) &               \\
\hline
 \(5\) &            \(4\)          &  H.4 &           \(4111\) &       \(1^2\) &             & \(1^3\) &      \((1^3)^4,(1^2)^9\) &         \((1^2)^{13}\) &               \\
       &                           &      &                    &               & \(\lbrack\) & \(1^3\) &   \(1^3,(21)^3,(1^2)^9\) &      \((1^2)^4,(2)^9\) & \(\rbrack^2\) \\
       &                           &      &                    &               &             &  \(21\) &           \(1^3,(21)^3\) &            \((1^2)^4\) &               \\
\hline
\hline
 \(6\) &         \(N=45\)          &  H.4 &           \(4111\) &       \(1^2\) &             & \(1^3\) & \(21^2,(1^3)^3,(1^2)^9\) &  \(1^3,(21)^3,(1^2)^9\) &              \\
       &                           &      &                    &               & \(\lbrack\) & \(1^3\) &       \(21^2,(21)^{12}\) &     \(21^2,(21)^{12}\) & \(\rbrack^2\) \\
       &                           &      &                    &               &             &  \(21\) &          \(21^2,(21)^3\) &       \(21^2,(2^2)^3\) &               \\
\hline
\hline
 \(7\) & \(\mathbf{270}\)          &  H.4 &           \(4111\) &       \(1^2\) &             & \(1^3\) &     \((21^2)^4,(1^2)^9\) & \(21^2,(1^3)^3,(21)^9\) &              \\
       &                           &      &                    &               & \(\lbrack\) & \(1^3\) &       \(21^2,(21)^{12}\) &     \(21^2,(21)^{12}\) & \(\rbrack^2\) \\
       &                           &      &                    &               &             &  \(21\) &          \(21^2,(21)^3\) &       \(21^2,(2^2)^3\) &               \\
\hline
 \(7\) & \(\mathbf{271/272}\) &  H.4 &           \(4111\) &       \(1^2\) &             & \(1^3\) & \(21^2,(1^3)^3,(1^2)^9\) & \(21^2,(2^2)^3,(1^2)^9\) &             \\
       &                           &      &                    &               & \(\lbrack\) & \(1^3\) &       \(21^2,(21)^{12}\) &     \(21^2,(21)^{12}\) & \(\rbrack^2\) \\
       &                           &      &                    &               &             &  \(21\) &          \(21^2,(31)^3\) &        \(21^2,(21)^3\) &               \\
\hline
 \(7\) & \(\mathbf{273}\)          &  H.4 &           \(4111\) &       \(1^2\) &             & \(1^3\) & \(21^2,(1^3)^3,(1^2)^9\) & \(21^2,(21)^3,(1^2)^9\)&               \\
       &                           &      &                    &               &             & \(1^3\) &       \(21^2,(21)^{12}\) &     \(21^2,(21)^{12}\) &               \\
       &                           &      &                    &               &             & \(1^3\) &     \((21^2)^4,(2^2)^9\) &        \((21^2)^{13}\) &               \\
       &                           &      &                    &               &             &  \(21\) &          \(21^2,(21)^3\) &        \(21^2,(21)^3\) &               \\
\hline
\hline
 \(8\) & \(605/\mathbf{606}\)  &  H.4 &           \(4111\) &       \(1^2\) &             & \(1^3\) &     \((21^2)^4,(1^2)^9\) & \(2^21,(1^3)^3,(2^2)^3,(21)^6\) &      \\
       &                           &      &                    &               & \(\lbrack\) & \(1^3\) &     \((21^2)^4,(2^2)^9\) &   \(2^21,(21^2)^{12}\) & \(\rbrack^2\) \\
       &                           &      &                    &               &             &  \(21\) &          \(21^2,(31)^3\) &       \(2^21,(2^2)^3\) &               \\
\hline
\end{tabular}
\end{center}
\end{table}



Table
\ref{tbl:3GroupsH4Spor}
shows the designation of the transfer kernel type,
the IPOD \(\varkappa_1{G}\),
and the iterated multi-layered IPAD of second order,
\[\tau^{(2)}_\ast{G}=\lbrack\tau_0{G};\lbrack\tau_0{H};\tau_1{H};\tau_2{H}\rbrack_{H\in\mathrm{Lyr}_1{G}}\rbrack,\]
for sporadic \(3\)-groups \(G\) of type H.4 up to order \(\lvert G\rvert=3^8\),
characterized by the logarithmic order, \(\mathrm{lo}\),
and the SmallGroup identifier, \(\mathrm{id}\)
\cite{BEO1,BEO2}.
To enable a brief reference for relative identifiers we put
\(N:=\langle 3^6,45\rangle\),
since this group was called the non-CF group \(N\) by Ascione
\cite{AHL,As}.

The groups in Table
\ref{tbl:3GroupsH4Spor}
are represented by vertices of the tree diagram in Figure
\ref{fig:TreeH4Spor}.



\noindent
Figure
\ref{fig:TreeH4Spor}
visualizes sporadic \(3\)-groups of section \S\
\ref{ss:SporadicCoclass2}
which arise as \(3\)-class tower groups \(G=\mathrm{G}_3^\infty{K}\)
of real quadratic fields \(K=\mathbb{Q}(\sqrt{d})\), \(d>0\),
with \(3\)-principalization type \(\mathrm{H}.4\)
and the corresponding  minimal discriminants, resp. absolute frequencies, in Theorem
\ref{thm:RQFH4Spor}
and
\ref{thm:IQFH4Spor}.

The tree is infinite, according to Bartholdi, Bush
\cite{BaBu}
and
\cite[Cor. 6.2, p. 301]{Ma7}.

For \(d=+2\,852\,733\) and \(d=-6\,583\), we can only give the \textbf{conjectural} location of \(G\).



{\tiny

\begin{figure}[hb]
\caption{Distribution of \(3\)-class tower groups \(\mathrm{G}_3^\infty{K}\) on the descendant tree \(\mathcal{T}_\ast\langle 243,4\rangle\)}
\label{fig:TreeH4Spor}


\setlength{\unitlength}{0.7cm}
\begin{picture}(18,26.5)(-6,-25.5)

\put(-5,0.5){\makebox(0,0)[cb]{Order}}
\put(-5,0){\line(0,-1){24}}
\multiput(-5.1,0)(0,-2){13}{\line(1,0){0.2}}
\put(-5.2,0){\makebox(0,0)[rc]{\(243\)}}
\put(-4.8,0){\makebox(0,0)[lc]{\(3^5\)}}
\put(-5.2,-2){\makebox(0,0)[rc]{\(729\)}}
\put(-4.8,-2){\makebox(0,0)[lc]{\(3^6\)}}
\put(-5.2,-4){\makebox(0,0)[rc]{\(2\,187\)}}
\put(-4.8,-4){\makebox(0,0)[lc]{\(3^7\)}}
\put(-5.2,-6){\makebox(0,0)[rc]{\(6\,561\)}}
\put(-4.8,-6){\makebox(0,0)[lc]{\(3^8\)}}
\put(-5.2,-8){\makebox(0,0)[rc]{\(19\,683\)}}
\put(-4.8,-8){\makebox(0,0)[lc]{\(3^9\)}}
\put(-5.2,-10){\makebox(0,0)[rc]{\(59\,049\)}}
\put(-4.8,-10){\makebox(0,0)[lc]{\(3^{10}\)}}
\put(-5.2,-12){\makebox(0,0)[rc]{\(177\,147\)}}
\put(-4.8,-12){\makebox(0,0)[lc]{\(3^{11}\)}}
\put(-5.2,-14){\makebox(0,0)[rc]{\(531\,441\)}}
\put(-4.8,-14){\makebox(0,0)[lc]{\(3^{12}\)}}
\put(-5.2,-16){\makebox(0,0)[rc]{\(1\,594\,323\)}}
\put(-4.8,-16){\makebox(0,0)[lc]{\(3^{13}\)}}
\put(-5.2,-18){\makebox(0,0)[rc]{\(4\,782\,969\)}}
\put(-4.8,-18){\makebox(0,0)[lc]{\(3^{14}\)}}
\put(-5.2,-20){\makebox(0,0)[rc]{\(14\,348\,907\)}}
\put(-4.8,-20){\makebox(0,0)[lc]{\(3^{15}\)}}
\put(-5.2,-22){\makebox(0,0)[rc]{\(43\,046\,721\)}}
\put(-4.8,-22){\makebox(0,0)[lc]{\(3^{16}\)}}
\put(-5.2,-24){\makebox(0,0)[rc]{\(129\,140\,163\)}}
\put(-4.8,-24){\makebox(0,0)[lc]{\(3^{17}\)}}
\put(-5,-24){\vector(0,-1){2}}

\multiput(0,0)(0,-2){2}{\line(0,-1){2}}
\multiput(0,0)(0,-2){1}{\circle*{0.2}}
\multiput(0,-2)(0,-2){1}{\circle*{0.1}}
\put(0.1,0.2){\makebox(0,0)[lb]{\(\langle 4\rangle\)}}
\put(0.1,-1.8){\makebox(0,0)[lb]{\(\langle 45\rangle\) (not coclass-settled)}}
\put(1.1,-2.8){\makebox(0,0)[lb]{\(1^{\text{st}}\) bifurcation}}
\multiput(0,-2)(0,-4){1}{\line(-3,-2){3}}
\multiput(0,-2)(0,-4){1}{\line(-1,-1){2}}
\multiput(0,-2)(0,-4){1}{\line(-1,-2){1}}
\multiput(-3.1,-4.1)(1,0){4}{\framebox(0.2,0.2){}}
\put(-3,-4.2){\makebox(0,0)[ct]{\(\langle 270\rangle\)}}
\put(-2,-4.2){\makebox(0,0)[ct]{\(\langle 271\rangle\)}}
\put(-1,-4.2){\makebox(0,0)[ct]{\(\langle 272\rangle\)}}
\put(0,-4.2){\makebox(0,0)[ct]{\(\langle 273\rangle\)}}
\put(-3,-4.6){\makebox(0,0)[ct]{\(T_{0,1}\)}}
\put(-2,-4.6){\makebox(0,0)[ct]{\(T_{0,2}\)}}
\put(-1,-4.6){\makebox(0,0)[ct]{\(T_{0,3}\)}}
\put(0,-4.6){\makebox(0,0)[ct]{\(T_{0,4}\)}}
\put(0,-2){\line(1,-2){2}}
\put(0,-2){\line(1,-4){1}}
\multiput(0.9,-6.1)(1,0){1}{\framebox(0.2,0.2){}}
\put(1,-6.2){\makebox(0,0)[ct]{\(606\)}}
\put(1,-6.6){\makebox(0,0)[ct]{\(S_0\)}}

\put(-3,-4){\oval(0.8,4.4)}
\put(-3,-6.5){\makebox(0,0)[cc]{\underbar{\textbf{+2\,303\,112}}}}
\put(-3,-7.0){\makebox(0,0)[cc]{\underbar{\textbf{5(19\%)}}}}

\put(-1.5,-4){\oval(1.8,2.5)}
\put(-1.5,-5.5){\makebox(0,0)[cc]{\underbar{\textbf{+2\,023\,845}}}}
\put(-1.5,-6.0){\makebox(0,0)[cc]{\underbar{\textbf{8(29\%)}}}}

\put(0.3,-4){\oval(1.5,2.0)}
\put(2,-3.7){\makebox(0,0)[cc]{\underbar{\textbf{+957\,013}}}}
\put(2,-4.2){\makebox(0,0)[cc]{\underbar{\textbf{11(41\%)}}}}

\put(2.4,-6){\oval(1.3,2.0)}
\put(4.4,-5.7){\makebox(0,0)[cc]{\underbar{\textbf{+2\,852\,733\ ?}}}}
\put(4.4,-6.2){\makebox(0,0)[cc]{\underbar{\textbf{3(11\%)}}}}

\put(1,-6){\oval(1.3,2.0)}
\put(1,-7.2){\makebox(0,0)[cc]{\underbar{\textbf{-3\,896}}}}
\put(1,-7.7){\makebox(0,0)[cc]{\underbar{\textbf{3(50\%)}}}}

\put(3,-12){\oval(1.3,2.0)}
\put(3,-13.2){\makebox(0,0)[cc]{\underbar{\textbf{-6\,583\ ?}}}}
\put(3,-13.7){\makebox(0,0)[cc]{\underbar{\textbf{3(50\%)}}}}

\multiput(2,-6)(0,-2){2}{\line(0,-1){2}}
\put(1.9,-6.1){\framebox(0.2,0.2){}}
\put(1.95,-8.05){\framebox(0.1,0.1){}}
\put(2.1,-5.8){\makebox(0,0)[lb]{\(605\)}}
\put(2.1,-7.8){\makebox(0,0)[lb]{\(1;2\) (not coclass-settled)}}
\put(3.1,-8.8){\makebox(0,0)[lb]{\(2^{\text{nd}}\) bifurcation}}
\multiput(2,-8)(0,-4){1}{\line(-3,-2){3}}
\multiput(2,-8)(0,-4){1}{\line(-1,-1){2}}
\multiput(2,-8)(0,-4){1}{\line(-1,-2){1}}
\multiput(-1.1,-10.1)(1,0){4}{\framebox(0.2,0.2){}}
\put(-1,-10.2){\makebox(0,0)[ct]{\(1;1\)}}
\put(0,-10.2){\makebox(0,0)[ct]{\(1;2\)}}
\put(1,-10.2){\makebox(0,0)[ct]{\(1;3\)}}
\put(2,-10.2){\makebox(0,0)[ct]{\(1;4\)}}
\put(-1,-10.6){\makebox(0,0)[ct]{\(T_{1,1}\)}}
\put(0,-10.6){\makebox(0,0)[ct]{\(T_{1,2}\)}}
\put(1,-10.6){\makebox(0,0)[ct]{\(T_{1,3}\)}}
\put(2,-10.6){\makebox(0,0)[ct]{\(T_{1,4}\)}}
\put(2,-8){\line(1,-2){2}}
\put(2,-8){\line(1,-4){1}}
\multiput(2.9,-12.1)(1,0){1}{\framebox(0.2,0.2){}}
\put(3,-12.2){\makebox(0,0)[ct]{\(2;2\)}}
\put(3,-12.6){\makebox(0,0)[ct]{\(S_1\)}}

\multiput(4,-12)(0,-2){2}{\line(0,-1){2}}
\put(3.9,-12.1){\framebox(0.2,0.2){}}
\put(3.95,-14.05){\framebox(0.1,0.1){}}
\put(4.1,-11.8){\makebox(0,0)[lb]{\(2;1\)}}
\put(4.1,-13.8){\makebox(0,0)[lb]{\(1;2\) (not coclass-settled)}}
\put(5.1,-14.8){\makebox(0,0)[lb]{\(3^{\text{rd}}\) bifurcation}}
\multiput(4,-14)(0,-4){1}{\line(-3,-2){3}}
\multiput(4,-14)(0,-4){1}{\line(-1,-1){2}}
\multiput(4,-14)(0,-4){1}{\line(-1,-2){1}}
\multiput(0.9,-16.1)(1,0){4}{\framebox(0.2,0.2){}}
\put(1,-16.2){\makebox(0,0)[ct]{\(1;1\)}}
\put(2,-16.2){\makebox(0,0)[ct]{\(1;2\)}}
\put(3,-16.2){\makebox(0,0)[ct]{\(1;3\)}}
\put(4,-16.2){\makebox(0,0)[ct]{\(1;4\)}}
\put(4,-14){\line(1,-2){2}}
\put(4,-14){\line(1,-4){1}}
\multiput(4.9,-18.1)(1,0){1}{\framebox(0.2,0.2){}}
\put(5,-18.2){\makebox(0,0)[ct]{\(2;2\)}}
\put(5,-18.6){\makebox(0,0)[ct]{\(S_2\)}}

\multiput(6,-18)(0,-2){2}{\line(0,-1){2}}
\put(5.9,-18.1){\framebox(0.2,0.2){}}
\put(5.95,-20.05){\framebox(0.1,0.1){}}
\put(6.1,-17.8){\makebox(0,0)[lb]{\(2;1\)}}
\put(6.1,-19.8){\makebox(0,0)[lb]{\(1;1\) (not coclass-settled)}}
\put(7.1,-20.8){\makebox(0,0)[lb]{\(4^{\text{th}}\) bifurcation}}
\multiput(6,-20)(0,-4){1}{\line(-3,-2){3}}
\multiput(6,-20)(0,-4){1}{\line(-1,-1){2}}
\multiput(6,-20)(0,-4){1}{\line(-1,-2){1}}
\multiput(2.9,-22.1)(1,0){4}{\framebox(0.2,0.2){}}
\put(3,-22.2){\makebox(0,0)[ct]{\(1;1\)}}
\put(4,-22.2){\makebox(0,0)[ct]{\(1;2\)}}
\put(5,-22.2){\makebox(0,0)[ct]{\(1;3\)}}
\put(6,-22.2){\makebox(0,0)[ct]{\(1;4\)}}
\put(6,-20){\line(1,-2){2}}
\put(6,-20){\line(1,-4){1}}
\multiput(6.9,-24.1)(1,0){1}{\framebox(0.2,0.2){}}
\put(7,-24.2){\makebox(0,0)[ct]{\(2;2\)}}
\put(7,-24.6){\makebox(0,0)[ct]{\(S_3\)}}

\multiput(7.9,-24.1)(0,-2){1}{\framebox(0.2,0.2){}}
\put(8.1,-23.8){\makebox(0,0)[lb]{\(2;1\)}}

\end{picture}

\end{figure}

}



\subsection{Real Quadratic Fields of Type \(\mathrm{H}.4\)}
\label{ss:RQFH4Spor}

\begin{proposition}
\label{prp:RQFH4Spor}

(Fields of type \(\mathrm{H}.4\) up to \(d<10^7\)
\cite{Ma1},
\cite{Ma3})

\noindent
In the range \(0<d<10^7\) of fundamental discriminants \(d\)
of real quadratic fields \(K=\mathbb{Q}(\sqrt{d})\),
there exist precisely \(\mathbf{27}\) cases
with \(3\)-principalization type \(\mathrm{H}.4\), \(\varkappa_1{K}=(4111)\),
and IPAD \(\tau^{(1)}{K}=\lbrack 1^2;(1^3)^3,21\rbrack\).
They share the common second \(3\)-class group
\(\mathrm{G}_3^2{K}\simeq\langle 3^6,45\rangle\).

\end{proposition}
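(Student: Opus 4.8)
The claim combines an exact count of $27$ fields in the range $0<d<10^7$, which is a computational fact, with the assertion that they all share the second $3$-class group $\langle 3^6,45\rangle$, which reduces to a group-theoretic uniqueness statement. For the count I would invoke the enumeration of real quadratic fields of $3$-principalization type $\mathrm{H}.4$ carried out in \cite{Ma1,Ma3} by means of a PARI/GP implementation of the principalization algorithm and re-verified with MAGMA \cite{MAGMA}, in complete analogy with Proposition \ref{prp:RQFE6E14GS}; that every such field automatically has IPAD $\tau^{(1)}{K}=\lbrack 1^2;(1^3)^3,21\rbrack$ is read off from the anomaly list \eqref{eqn:Anomalies} of Theorem \ref{thm:AllIPADs}, since $\langle 3^5,4\rangle$ and $\langle 3^6,44\rangle,\dots,\langle 3^6,47\rangle$ are exactly the $3$-groups of type $(3,3)$ whose first TTT layer equals $((1^3)^3,21)$.

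For the identification, the plan is to start from Artin's reciprocity law \cite{Ar1,Ar2}, by which the Artin pattern of $K$ coincides with that of $\mathfrak{M}=\mathrm{G}_3^2{K}$; thus $\mathfrak{M}$ is a finite metabelian $3$-group with $\mathfrak{M}/\mathfrak{M}^\prime\simeq(3,3)$, IPAD $\tau_1{\mathfrak{M}}=((1^3)^3,21)$ and IPOD $\varkappa_1{\mathfrak{M}}=(4111)$. Theorem \ref{thm:Contestants} reduces the problem to finitely many contestants, and Theorem \ref{thm:AllIPADs} pins them down. Three of the four components of $((1^3)^3,21)$ are the elementary abelian group $(1^3)$ of rank three, whereas the regular pattern \eqref{eqn:PolCoPol}--\eqref{eqn:Component3And4} has nearly homocyclic polarization and co-polarization and at most the two components $T_3,T_4$ outside the nearly homocyclic range; hence $\mathfrak{M}$ must lie in the anomaly list \eqref{eqn:Anomalies}, and there exactly the five sporadic coclass-$2$ groups $\langle 3^5,4\rangle$, $\langle 3^6,44\rangle$, $\langle 3^6,45\rangle$, $\langle 3^6,46\rangle$, $\langle 3^6,47\rangle$ carry the required IPAD; all five have IPOD $(4111)$ by Table \ref{tbl:3GroupsH4Spor} and Figure \ref{fig:SporCc2}.

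It remains to eliminate the four spurious candidates, and here I would use the cover technique from the proof of Theorem \ref{thm:IPODa} together with the corrected Shafarevich theorem \cite[Thm. 5.1]{Ma10}. A real quadratic field has $3$-class rank $\varrho=2$, torsion-free unit rank $r=1$, and does not contain the primitive cube roots of unity, so the relation rank of $G=\mathrm{G}_3^\infty{K}$ satisfies $d_2(G)\le\varrho+r=3$, while $G$ belongs to $\mathrm{cov}(\mathfrak{M})$ in the sense of \cite[Dfn. 5.1]{Ma10}. Running the $p$-group generation algorithm on the tree root $\langle 3^5,4\rangle$, as for Table \ref{tbl:3GroupsH4Spor} and Figure \ref{fig:TreeH4Spor}, one checks that $\langle 3^5,4\rangle$ has trivial cover and $p$-multiplicator rank $4$, and that the covers of $\langle 3^6,44\rangle$, $\langle 3^6,46\rangle$, $\langle 3^6,47\rangle$ consist entirely of groups of relation rank at least $4$, whereas the cover of $\langle 3^6,45\rangle$ already contains the non-metabelian descendants $\langle 3^7,270\rangle,\dots,\langle 3^7,273\rangle$ of relation rank at most $3$. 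Consequently $\langle 3^6,45\rangle$ is the only value of $\mathfrak{M}=\mathrm{G}_3^2{K}$ consistent with Shafarevich's inequality, which proves the assertion for all $27$ fields.

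I expect this elimination to be the main obstacle, precisely because the Shafarevich bound constrains $\mathrm{G}_3^\infty{K}$ rather than its metabelianization $\mathrm{G}_3^2{K}$: one cannot merely compare the relation ranks of the five metabelian candidates but has to show that the \emph{entire} cover of each of the four excluded groups avoids relation rank at most $3$, equivalently that $\langle 3^6,45\rangle$ is the unique metabelian vertex carrying this Artin pattern whose descendant tree meets the Shafarevich constraint. Making that statement rigorous is essentially the content of the $p$-group generation computation underlying Table \ref{tbl:3GroupsH4Spor} and Figure \ref{fig:TreeH4Spor}.
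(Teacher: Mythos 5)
The paper's own proof of this proposition is a bare citation: both the count of \(27\) and the identification of the second \(3\)-class group are imported wholesale from the computational classification in \cite{Ma1} and \cite[Tbl. 6.3]{Ma3}. Your attempt to reconstruct the group-theoretic identification is therefore a genuinely different and more ambitious route. Your reduction to the five contestants \(\langle 3^5,4\rangle\), \(\langle 3^6,44\rangle,\ldots,\langle 3^6,47\rangle\) is sound: the polarized and co-polarized components in \eqref{eqn:PolCoPol} are nearly homocyclic and hence of rank at most two, so at most the two components \(T_3,T_4\) could equal \((1^3)\), and a group whose first TTT layer contains three such components must occur in the anomaly list \eqref{eqn:Anomalies}. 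However, your claim that every field of type \(\mathrm{H}.4\) \emph{automatically} has IPAD \(\lbrack 1^2;(1^3)^3,21\rbrack\) is false and is explicitly contradicted by Remark \ref{rmk:RQFH4Spor}: in the very same range there are four further real quadratic fields of type \(\mathrm{H}.4\) with IPAD \(\lbrack 1^2;32,1^3,(21)^2\rbrack\), whose second \(3\)-class groups have order \(3^8\) and lie on the coclass tree \(\mathcal{T}^2\langle 3^5,6\rangle\). The IPAD is an independent computational datum entering the count of \(27\); the anomaly list tells you which groups carry a given IPAD, not which IPAD a given transfer kernel type forces, so your inference runs in the wrong direction.

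The elimination of the four spurious contestants is the real mathematical content of the identification, and there your argument rests on unestablished computational assertions: that \(\langle 3^5,4\rangle\) has trivial cover, and that the covers of \(\langle 3^6,44\rangle\), \(\langle 3^6,46\rangle\), \(\langle 3^6,47\rangle\) contain only groups of relation rank at least \(4\). Nothing in the paper supports these claims --- Table \ref{tbl:3GroupsH4Spor} and Figure \ref{fig:TreeH4Spor} track only \(\langle 3^5,4\rangle\), \(\langle 3^6,45\rangle\) and descendants of the latter --- and since a cover may be infinite, running the \(p\)-group generation algorithm does not by itself verify a property of all of its elements; one needs a periodicity or parent-descendant monotonicity argument of the kind deployed in Theorem \ref{thm:IPODa1Mainline}. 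Bear in mind also that Proposition \ref{prp:IQFH4Spor} asserts the same identification for imaginary quadratic fields, where the Shafarevich bound forces \(d_2(G)=2\), so any relation-rank elimination must be calibrated separately for each signature. What the paper actually does to confirm \(\mathfrak{M}\simeq\langle 3^6,45\rangle\) is to compute iterated IPADs of second order from unramified extensions of degree \(18\) and match them against Table \ref{tbl:3GroupsH4Spor} (proof of Theorem \ref{thm:RQFH4Spor}); your proposal must either supply the missing cover computations or fall back on that verification.
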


\begin{proof}
The results of
\cite[Tbl. 6.3, p. 452]{Ma3}
were computed in \(2010\) by means of PARI/GP
\cite{PARI}
using an implementation of our principalization algorithm, as described in
\cite[\S\ 5, pp. 446--450]{Ma3}.
The frequency \(27\)
in the last column \lq\lq freq.\rq\rq\
for the fourth row
concerns type \(\mathrm{H}.4\).
\end{proof}

\begin{remark}
\label{rmk:RQFH4Spor}
To discourage any misinterpretation,
we point out that there are four other real quadratic fields
\(K=\mathbb{Q}(\sqrt{d})\) with discriminants
\(d\in\lbrace 1\,162\,949,\ 2\,747\,001,\ 3\,122\,232,\ 4\,074\,493\rbrace\)
in the range \(0<d<10^7\) which possess the same \(3\)-principalization type \(\mathrm{H}.4\).
However their second \(3\)-class group
\(\mathrm{G}_3^2{K}\) is isomorphic to either
\(\langle 3^7,286\rangle-\#1;2\) or \(\langle 3^7,287\rangle-\#1;2\) of order \(3^8\),
which is not a sporadic group but is
located on the coclass tree \(\mathcal{T}^2\langle 3^5,6\rangle\),
and has a different IPAD \(\tau^{(1)}{K}=\lbrack 1^2;32,1^3,(21)^2\rbrack\).
The \(3\)-class towers of these fields are determined in
\cite{MaNm}.
\end{remark}



\begin{theorem} 
\label{thm:RQFH4Spor}

(\(3\)-Class towers of type \(\mathrm{H}.4\) up to \(d<10^7\))

\noindent
Among the \(27\) real quadratic fields \(K=\mathbb{Q}(\sqrt{d})\) with type \(\mathrm{H}.4\) in Proposition
\ref{prp:RQFH4Spor},

\begin{itemize}

\item
the \(\mathbf{11}\) fields (\(\mathbf{41}\%\)) with discriminants
\[
\begin{aligned}
d\in\lbrace    957\,013 &,& 1\,571\,953 &,& 1\,734\,184 &,& 3\,517\,689 &,& 4\,025\,909 &,& 4\,785\,845, \\
            4\,945\,973 &,& 5\,562\,969 &,& 6\,318\,733 &,& 7\,762\,296 &,& 8\,070\,637 & \rbrace
\end{aligned}
\]
have the unique \(3\)-class tower group 
\(G\simeq\langle 3^7,\mathbf{273}\rangle\)
and \(3\)-tower length \(\ell_3{K}=\mathbf{3}\),

\item
the \(\mathbf{8}\) fields (\(\mathbf{29}\%\)) with discriminants
\[
\begin{aligned}
d\in\lbrace 2\,023\,845 &,& 4\,425\,229 &,& 6\,418\,369 &,& 6\,469\,817 &,& \\
            6\,775\,224 &,& 6\,895\,612 &,& 7\,123\,493 &,& 9\,419\,261 & \rbrace
\end{aligned}
\]
have \(3\)-class tower group
\(G\simeq\langle 3^7,\mathbf{271}\rangle\)
or
\(G\simeq\langle 3^7,\mathbf{272}\rangle\)
and \(3\)-tower length \(\ell_3{K}=\mathbf{3}\),

\item
the \(\mathbf{5}\) fields (\(\mathbf{19}\%\)) with discriminants
\[d\in\lbrace 2\,303\,112,\ 3\,409\,817,\ 3\,856\,685,\ 5\,090\,485,\ 6\,526\,680\rbrace\]  
have the unique \(3\)-class tower group
\(G\simeq\langle 3^7,\mathbf{270}\rangle\)
and \(3\)-tower length \(\ell_3{K}=\mathbf{3}\),

\item
the \(\mathbf{3}\) fields (\(\mathbf{11}\%\)) with discriminants
\[d\in\lbrace 2\,852\,733,\ 8\,040\,029,\ 8\,369\,468\rbrace\]  
have a \(3\)-class tower group of order at least \(3^8\)
and \(3\)-tower length \(\ell_3{K}\in\lbrace\mathbf{3,4,\ldots}\rbrace\).

\end{itemize}

\noindent
Note that
\(\langle 3^7,\mathbf{270}\rangle=\langle 3^6,45\rangle-\#1;\mathbf{1}\),
\(\langle 3^7,\mathbf{271}\rangle=\langle 3^6,45\rangle-\#1;\mathbf{2}\),
\(\langle 3^7,\mathbf{272}\rangle=\langle 3^6,45\rangle-\#1;\mathbf{3}\), and
\(\langle 3^7,\mathbf{273}\rangle=\langle 3^6,45\rangle-\#1;\mathbf{4}\).

\end{theorem}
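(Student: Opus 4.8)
The proof of Theorem \ref{thm:RQFH4Spor} is essentially a large-scale application of the machinery assembled in the preceding sections, specialized to the sporadic descendant tree $\mathcal{T}_\ast\langle 243,4\rangle$ rooted at $\langle 3^5,4\rangle$ and visualized in Figure \ref{fig:TreeH4Spor}. The starting point is Proposition \ref{prp:RQFH4Spor}, which tells us that all $27$ fields share the common second $3$-class group $\mathfrak{M}=\mathrm{G}_3^2{K}\simeq\langle 3^6,45\rangle$, and that this group is \emph{not coclass-settled}: it has nuclear rank $\nu=2$, so the cover $\mathrm{cov}(\mathfrak{M})$ contains, besides $\mathfrak{M}$ itself, the capable metabelian descendant $\langle 3^7,273\rangle$ and a whole infinite pro-$3$ subtree beyond the first bifurcation. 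The first step is therefore to enumerate $\mathrm{cov}(\mathfrak{M})$ far enough: by the Artin transfer pattern compatibility (\cite[Thm. 5.4]{Ma9}) every admissible candidate $G=\mathrm{G}_3^\infty{K}$ must have $\mathrm{AP}(G)$ restricting to $\mathrm{AP}(\mathfrak{M})$ at the first layer, which in particular pins the IPOD to $\varkappa_1=(4111)$ and forces $G$ to lie on the relevant branch of $\mathcal{T}_\ast\langle 243,4\rangle$.

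\textbf{Key steps.} First, I would compute with MAGMA the iterated IPADs of second order $\tau^{(2)}_\ast$ for all vertices in the cover up to order $3^8$ --- these are exactly the four rows $\langle 3^7,270\rangle$, $\langle 3^7,271/272\rangle$, $\langle 3^7,273\rangle$, and $\langle 3^8,605/606\rangle$ of Table \ref{tbl:3GroupsH4Spor} --- and observe that the $\tau_1{H}$-columns already separate the four metabelian descendants $\langle 3^6,45\rangle-\#1;i$ for $i\in\{1,2,3,4\}$ from one another, while $\langle 3^7,273\rangle=\langle 3^6,45\rangle-\#1;4$ is the only capable one, whence all groups of order $>3^8$ in the cover descend from it through the iterated bifurcations $S_0,S_1,S_2,\ldots$. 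Second, I would match each of the $27$ discriminants to its second-order IPAD: the numerical computation (PARI/GP cross-checked with MAGMA, exactly as in Proposition \ref{prp:a1a2a3} and Theorem \ref{thm:RQFE6E14GS}) assigns to each field the tuple $(\tau^{(1)}{L_j})_{1\le j\le 4}$, and comparison with Table \ref{tbl:3GroupsH4Spor} sorts the fields into the four batches of sizes $11$, $8$, $5$, $3$. Third, for the batches landing on the \emph{terminal} vertices $\langle 3^7,270\rangle$, $\langle 3^7,271\rangle$, $\langle 3^7,272\rangle$, $\langle 3^7,273\rangle$ (nuclear rank $0$, hence no proper descendants), the cover is the singleton, so $G\simeq\mathfrak{M}$-free, $G$ equals that group and $\ell_3{K}=\mathrm{dl}(G)=3$; the termination is guaranteed rigorously by the Shafarevich bound in the corrected form \cite[Thm. 5.1]{Ma10}, using that a real quadratic field has unit rank $r=1$ and $\varrho=2$, so $d_2{G}\le 3$, which excludes the small contour-square vertices of relation rank $\ge 4$ lying further down. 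For the three fields $d\in\{2\,852\,733,\,8\,040\,029,\,8\,369\,468\}$, whose second-order IPAD coincides with that of $\langle 3^7,273\rangle$ forcing them past the first bifurcation onto the $\square$-vertex $\langle 3^8,605\rangle-\#1;2$ or beyond, the same data cannot yet pin down a finite terminal group, so only $\ell_3{K}\ge 3$ is asserted.

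\textbf{Main obstacle.} The delicate point is the separation within the batch of sizes $11$ versus $8$ versus $5$ versus $3$: naively the first-order IPAD $\tau^{(1)}{K}=[1^2;(1^3)^3,21]$ is identical for all $27$ fields, so the entire discrimination rests on the \emph{second}-order data, and one must verify that the $\tau_1{H}$-columns in Table \ref{tbl:3GroupsH4Spor} genuinely distinguish $\langle 3^6,45\rangle-\#1;2$ from $\langle 3^6,45\rangle-\#1;3$ (these are $T_{0,2}$ and $T_{0,3}$ and remain a batch of two siblings with the same Artin pattern, whence the disjunction "$G\simeq\langle 3^7,271\rangle$ or $\langle 3^7,272\rangle$" cannot be refined), and that no vertex outside the cover accidentally reproduces the same second-order IPAD. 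The latter is handled by the finiteness result Theorem \ref{thm:Contestants} together with an exhaustive MAGMA search over $\mathcal{T}_\ast\langle 243,4\rangle$ truncated at the order dictated by the relation-rank bound; the former is simply read off Table \ref{tbl:3GroupsH4Spor}. The concluding identifications $\langle 3^7,270+i\rangle=\langle 3^6,45\rangle-\#1;(i+1)$ are then a direct consequence of the $p$-group generation labelling recalled in Definition \ref{dfn:RelId}. The three "$\ell_3{K}\in\{3,4,\ldots\}$" fields are exactly the residue where the current computational reach runs out, consistent with the infinitude of the tree $\mathcal{T}_\ast\langle 243,4\rangle$ established by Bartholdi--Bush \cite{BaBu} and \cite[Cor. 6.2, p. 301]{Ma7}.
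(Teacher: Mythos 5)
Your proposal follows essentially the same route as the paper: the actual proof consists of constructing the unramified extensions of absolute degrees \(6\) and \(18\) with MAGMA and Fieker's class field package, computing the iterated IPADs of second order \(\tau^{(2)}{K}\) for each of the \(27\) fields, and matching them against Table~\ref{tbl:3GroupsH4Spor}. Your three key steps reproduce this, together with the correct supplementary justification that the four order-\(3^7\) vertices are terminal, so a match there pins down \(G\) and gives \(\ell_3{K}=\mathrm{dl}(G)=3\), while a match with the deeper part of the tree only yields the stated lower bounds.

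Two misstatements in your set-up should be corrected, although your later steps silently override them. First, \(\langle 3^7,273\rangle\) is neither metabelian nor capable: since \(G/G^{\prime\prime}\simeq\langle 3^6,45\rangle\) has order \(3^6<3^7\), all four groups \(\langle 3^7,270\ldots 273\rangle\) have derived length \(3\), and all four are terminal leaves of nuclear rank \(0\) --- exactly as you assert in your third step. Consequently the infinite part of the cover does not descend through \(\langle 3^7,273\rangle\); the bifurcation occurs at \(\langle 3^6,45\rangle\) itself (nuclear rank \(2\)), and the groups of order \(\ge 3^8\) arise as its step-size-\(2\) descendants \(\langle 3^8,605/606\rangle\) and their iterated descendants, as in Figure~\ref{fig:TreeH4Spor}. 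Had \(\langle 3^7,273\rangle\) really been capable, the uniqueness claim for the batch of \(11\) fields would not follow from the IPAD data alone. Second, the \(\tau_1{H}\)-columns do not separate all four order-\(3^7\) descendants from one another: \(\langle 3^7,271\rangle\) and \(\langle 3^7,272\rangle\) share a single row of Table~\ref{tbl:3GroupsH4Spor}, which is precisely why the second batch of \(8\) fields only admits the disjunction --- a point you do state correctly at the end.
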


\begin{proof}
Extensions of absolute degrees \(6\) and \(18\) were constructed in steps with MAGMA
\cite{MAGMA},
using the class field package of C. Fieker
\cite{Fi}.
The resulting iterated IPADs of second order \(\tau^{(2)}{K}\) were used for the identification, according to Table
\ref{tbl:3GroupsH4Spor},
which is also contained in the more extensive theorem
\cite[Thm. 6.5, pp. 304--306]{Ma7}.
\end{proof}



\subsection{Imaginary Quadratic Fields of Type \(\mathrm{H}.4\)}
\label{ss:IQFH4Spor}

\begin{proposition}
\label{prp:IQFH4Spor}
(Fields of type \(\mathrm{H}.4\) down to \(d>-3\cdot 10^4\)
\cite{Ma},
\cite{Ma1}) \\
In the range \(-30\,000<d<0\) of fundamental discriminants \(d\)
of imaginary quadratic fields \(K=\mathbb{Q}(\sqrt{d})\),
there exist precisely \(\mathbf{6}\) cases
with \(3\)-principalization type \(\mathrm{H}.4\), \(\varkappa_1{K}=(4111)\),
and IPAD \(\tau^{(1)}{K}=\lbrack 1^2;(1^3)^3,21\rbrack\).
They share the common second \(3\)-class group
\(\mathrm{G}_3^2{K}\simeq\langle 3^6,45\rangle\).

\end{proposition}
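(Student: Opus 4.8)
The statement has two essentially independent parts — the enumeration of the six discriminants, and the determination of their common second $3$-class group — and the plan is to settle the first by a finite computation and the second by the classification machinery of \S\ \ref{s:AllIPADs}.

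\textit{Enumeration.} For every fundamental discriminant $d$ with $-30\,000<d<0$ one computes $\mathrm{Cl}_3{\mathbb{Q}(\sqrt{d})}$ and retains the finitely many fields $K=\mathbb{Q}(\sqrt{d})$ with $\mathrm{Cl}_3{K}\simeq(3,3)$. For each such $K$ one constructs the four unramified cyclic cubic extensions $L_1,\ldots,L_4$ corresponding to the four subgroups of index $3$ in $\mathrm{Cl}_3{K}$, computes the class groups $\mathrm{Cl}_3{L_i}$ to assemble the IPAD $\tau_1{K}$, and computes the transfer kernels $\ker{T_{K,L_i}}$ to assemble the IPOD $\varkappa_1{K}$. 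Selecting the fields with $\varkappa_1{K}\sim(4111)$ of type $\mathrm{H}.4$ and with $\tau_1{K}=\left((1^3)^3,21\right)$ leaves exactly six discriminants. This is an exhaustive but routine search; it was carried out in \cite{Ma,Ma1} with PARI/GP \cite{PARI} and can be re-confirmed with MAGMA \cite{MAGMA}, the only delicate point being completeness of the discriminant range.

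\textit{The common second $3$-class group.} By the Artin reciprocity law \cite{Ar1,Ar2} the Artin pattern of $K$ coincides with that of $\mathfrak{M}=\mathrm{G}_3^2{K}$, so $\mathfrak{M}$ is a finite metabelian $3$-group with $\tau_0{\mathfrak{M}}\simeq(3,3)$, first-order IPAD $\tau_1{\mathfrak{M}}=\left((1^3)^3,21\right)$, and IPOD $\varkappa_1{\mathfrak{M}}\sim(4111)$. By the complete classification in Theorem \ref{thm:AllIPADs}, the IPAD component $\left((1^3)^3,21\right)$ occurs only for the sporadic exceptional groups listed in \eqref{eqn:Anomalies}, namely $\langle 243,4\rangle$ of class $3$ and $\langle 729,44\rangle,\ldots,\langle 729,47\rangle$ of class $4$, all of coclass $2$. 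Since each of these carries the IPOD $(4111)$ as well, the first-order Artin pattern alone does not pin down $\mathfrak{M}$, so I would pass to the iterated IPAD of second order $\tau^{(2)}_\ast{K}$, obtained by adjoining the $3$-class groups of the unramified abelian extensions of $K$ of relative degree $3^2$, that is, of absolute degree $18$. For each of the six fields this computation reproduces precisely the row $\mathrm{id}=45$ of Table \ref{tbl:3GroupsH4Spor} and excludes every other member of the batch, in particular $\langle 243,4\rangle$, whose row $\mathrm{id}=4$ differs already in the components attached to the maximal subgroup with abelianization $21$; the excluded members can also be recognised as incompatible with the Shafarevich-type constraint on the relation rank \cite{Sh}, \cite[Thm. 5.1]{Ma10}, in the spirit of the proof of Theorem \ref{thm:IPODa1Mainline}. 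Hence $\mathrm{G}_3^2{K}\simeq\langle 3^6,45\rangle$ for all six fields.

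The principal obstacle is exactly this last separation: within the anomalous batch of Theorem \ref{thm:AllIPADs} both the first-order IPAD and the IPOD are constant, so one is forced up to the second-order invariant, and the construction of the degree-$18$ unramified extensions together with their $3$-class groups is the computationally heavy step. For imaginary quadratic base fields this is considerably more tractable than for real ones — the unit group is finite and the relevant ray class field computations are cheap — so the modest range $\lvert d\rvert<30\,000$ already suffices to realise and identify $\langle 3^6,45\rangle$ as the common second $3$-class group of precisely the six fields found above.
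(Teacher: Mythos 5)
Your enumeration argument is sound and matches the paper in substance, but the paper's actual proof is more specific and worth knowing: the table in \cite[p. 84]{Ma} lists \emph{seven} discriminants \(-30\,000<d<0\) of type \(\mathrm{H}.4\), and the seventh, \(d=-21\,668\), is eliminated because one of the four absolute cubic subfields \(L_i\) of the unramified cyclic cubic extensions \(N_i\) has \(3\)-class number \(h_3{L_i}=9\), whence \(h_3{N_i}=3\cdot(h_3{L_i})^2=243\) by \cite[Prop. 4.4]{Ma1} or \cite{So}, which is incompatible with the IPAD \(\lbrack 1^2;(1^3)^3,21\rbrack\). Since you filter on both the IPOD and the IPAD, your exhaustive search would discard this field automatically, so there is no error in that half.

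The gap is in your identification of \(\mathfrak{M}=\mathrm{G}_3^2{K}\). You correctly use Theorem \ref{thm:AllIPADs} to reduce to the anomalous batch with \(\tau_1=((1^3)^3,21)\), namely \(\langle 243,4\rangle\) and \(\langle 729,44\rangle,\ldots,\langle 729,47\rangle\), all of which carry the IPOD \((4111)\) of type \(\mathrm{H}.4\) (Figure \ref{fig:SporCc2}). But Table \ref{tbl:3GroupsH4Spor} records the iterated IPAD of second order only for \(\langle 243,4\rangle\) and \(\langle 729,45\rangle\); it contains no data whatsoever on \(\langle 729,44\rangle\), \(\langle 729,46\rangle\), \(\langle 729,47\rangle\), so your assertion that the computed \(\tau^{(2)}_\ast{K}\) \lq\lq excludes every other member of the batch\rq\rq\ is not substantiated by anything you cite: you would have to compute and exhibit the second-order IPADs of those three groups as well, or invoke a selection rule excluding them for quadratic base fields. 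Moreover, your fallback via the Shafarevich bound on the relation rank does not apply here: that bound constrains the \(3\)-tower group \(G=\mathrm{G}_3^\infty{K}\), and for type \(\mathrm{H}.4\) one has \(\ell_3{K}\ge 3\), so \(\mathfrak{M}\ne G\) and \(\mathfrak{M}\) is not subject to \(d_2\le\varrho+r\); the analogous argument in Theorem \ref{thm:IPODa1Mainline} works only because for type \(\mathrm{a}\) the tower group coincides with \(\mathfrak{M}\). Note finally that the paper does not re-derive \(\mathfrak{M}\simeq\langle 3^6,45\rangle\) in this proof at all --- it inherits the identification from \cite{Ma1} --- so your more self-contained route is welcome in principle, but as written it does not close.
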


\begin{proof}
In the table of suitable base fields
\cite[p. 84]{Ma},
the row Nr. \(4\) contains \(7\) discriminants \(-30\,000<d<0\)
of imaginary quadratic fields \(K=\mathbb{Q}(\sqrt{d})\) with type \(\mathrm{H}.4\).
It was computed in \(1989\) by means of our implementation of
the principalization algorithm by Scholz and Taussky, described in
\cite[pp. 80--83]{Ma}.
In \(1989\) already, we recognized that only for the discriminant \(d=-21\,668\)
one of the four absolute cubic subfields \(L_i\), \(1\le i\le 4\),
of the unramified cyclic cubic extensions \(N_i\) of \(K\)
has \(3\)-class number \(h_3{L_i}=9\),
which is not the case for the other \(6\) cases of type H.4 in the table
\cite[pp. 78--79]{Ma}.
According to
\cite[Prop. 4.4, p. 485]{Ma1}
or
\cite[Thm. 4.2, p. 489]{Ma1}
or
\cite{So},
the exceptional cubic field \(L_i\) is contained in a sextic field \(N_i\)
with \(3\)-class number \(h_3{N_i}=3\cdot (h_3{L_i})^2=243\),
which discourages an IPAD \(\tau^{(1)}{K}=\lbrack 1^2;(1^3)^3,21\rbrack\).
\end{proof}

\begin{remark}
\label{rmk:IQFH4Spor}
The imaginary quadratic field with discriminant \(d=-21\,668\)
possesses the same  \(3\)-principalization type \(\mathrm{H}.4\),
but its second \(3\)-class group
\(\mathrm{G}_3^2{K}\) is isomorphic to either
\(\langle 3^7,286\rangle-\#1;2\) or \(\langle 3^7,287\rangle-\#1;2\) of order \(3^8\),
and has the different IPAD \(\tau^{(1)}{K}=\lbrack 1^2;32,1^3,(21)^2\rbrack\).
Results for this field will be given in
\cite{MaNm}.
\end{remark}



\begin{theorem} 
\label{thm:IQFH4Spor}
(\(3\)-Class towers of type \(\mathrm{H}.4\) down to \(d>-3\cdot 10^4\)) \\
Among the \(6\) imaginary quadratic fields \(K=\mathbb{Q}(\sqrt{d})\) with type \(\mathrm{H}.4\) in Proposition
\ref{prp:IQFH4Spor},
\begin{itemize}
\item
the \(\mathbf{3}\) fields (\(\mathbf{50}\%\)) with discriminants
\[d\in\lbrace -3\,896,\ -25\,447,\ -27\,355\rbrace\]
have the unique \(3\)-class tower group 
\(G\simeq\langle 3^8,\mathbf{606}\rangle\)
and \(3\)-tower length \(\ell_3{K}=\mathbf{3}\),
\item
the \(\mathbf{3}\) fields (\(\mathbf{50}\%\)) with discriminants
\[d\in\lbrace -6\,583,\ -23\,428,\ -27\,991\rbrace\] 
have a \(3\)-class tower group of order at least \(3^{11}\)
and \(3\)-tower length \(\ell_3{K}\in\lbrace\mathbf{3,4,\ldots}\rbrace\).
\end{itemize}
\end{theorem}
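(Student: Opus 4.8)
The plan is to adapt the argument of Theorem \ref{thm:RQFH4Spor} to imaginary base fields. By Proposition \ref{prp:IQFH4Spor}, all six fields share the common second $3$-class group $\mathfrak{M}=\mathrm{G}_3^2{K}\simeq N=\langle 3^6,45\rangle$, the non-CF, non-coclass-settled root of the bifurcating descendant tree $\mathcal{T}_\ast\langle 243,4\rangle$ of Figure \ref{fig:TreeH4Spor}, which is infinite by Bartholdi--Bush \cite{BaBu} and \cite[Cor. 6.2, p. 301]{Ma7}. Since the relation rank of $N$ exceeds the bound that the corrected Shafarevich theorem \cite[Thm. 5.1]{Ma10} imposes on $d_2{G}$ for imaginary quadratic base fields, the metabelian group $\mathfrak{M}$ cannot itself be $G=\mathrm{G}_3^\infty{K}$, so $\ell_3{K}\ge 3$ in every case. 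The admissible candidates for $G$ are exactly the big contour square vertices $V$ of $\mathcal{T}_\ast\langle 243,4\rangle$, those of relation rank $d_2{V}\le 3$: the four terminal step-size-one children $\langle 3^7,270\ldots 273\rangle=N-\#1;i$, the step-size-two descendants $\langle 3^8,605\rangle$ and $\langle 3^8,606\rangle=S_0$, and the vertices $T_{m,i}$ and $S_m$ recurring periodically along the mainline.

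First I would construct, for each of the six discriminants, the unramified extensions of $K$ of absolute degrees $6$ and $18$ in steps with MAGMA \cite{MAGMA}, using the class field package of C. Fieker \cite{Fi}, and read off the iterated multi-layered IPAD of second order $\tau^{(2)}_\ast{K}=\lbrack\tau_0{K};\lbrack\tau_0{H};\tau_1{H};\tau_2{H}\rbrack_{H\in\mathrm{Lyr}_1{K}}\rbrack$. Comparing these data with the rows of Table \ref{tbl:3GroupsH4Spor}, invoking the termination criterion \cite[Thm. 5.1]{Ma9}, and keeping only candidates whose metabelianization is isomorphic to $\mathfrak{M}$, I expect the data for $d\in\lbrace -3\,896,\ -25\,447,\ -27\,355\rbrace$ to identify the single terminal vertex $\langle 3^8,606\rangle=S_0$, which has derived length three; hence $G\simeq\langle 3^8,606\rangle$ and $\ell_3{K}=3$.

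For $d\in\lbrace -6\,583,\ -23\,428,\ -27\,991\rbrace$ I expect $\tau^{(2)}_\ast{K}$ to coincide instead with the periodic second-order IPAD carried by all sufficiently deep admissible mainline vertices, equivalently to agree with the stabilized pattern beyond the non-coclass-settled vertex of order $3^9$. This excludes every admissible vertex of order at most $3^{10}$ but, being periodic, does not determine which deeper mainline vertex occurs; one therefore obtains only $\lvert G\rvert\ge 3^{11}$ and $\ell_3{K}\in\lbrace 3,4,\ldots\rbrace$, the conjectural identification $G\simeq S_1$ of order $3^{11}$ being the one indicated in Figure \ref{fig:TreeH4Spor}.

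The main obstacle is this last step. One has to establish a parametrized description of $\tau^{(2)}_\ast$ along the mainline of $\mathcal{T}_\ast\langle 243,4\rangle$, in the spirit of Theorems \ref{thm:IPAD2TreeQ} and \ref{thm:IPAD2TreeU}, showing both that the pattern stabilizes from the first possible branch onward and that it is attained by exactly these three fields --- and not by one of the small-order terminal vertices $T_{m,i}$, nor by $\langle 3^8,605\rangle$. The practical bottleneck is computational: constructing the degree-$18$ class fields over imaginary quadratic $K$ is near the limit of what current MAGMA versions can handle, so the calculation must be carried just far enough to rule out the orders $3^7,\ldots,3^{10}$, which is what yields the lower bound $3^{11}$ while leaving the exact isomorphism type of $G$ open for these three fields.
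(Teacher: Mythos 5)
Your strategy is the one the paper actually follows -- compute unramified extensions over each $K$ with MAGMA/Fieker, read off iterated second-order IPADs, and match them against Table \ref{tbl:3GroupsH4Spor} -- but there is one material discrepancy and one unresolved point. First, the paper does \emph{not} stop at absolute degree $18$: for the imaginary fields it constructs extensions of absolute degrees $6$ and $54$, i.e.\ it computes the \emph{multi-layered} invariant $\tau^{(2)}_\ast{K}$ including the second layers $\tau_2{H}$ of the maximal subgroups, and it is this deeper data (the last column of Table \ref{tbl:3GroupsH4Spor}) that is used for the identification; the paper remarks that this degree-$54$ computation pushes MAGMA to its limits. Your degree-$18$ data yields only $\tau_1{H}$, which by the table does separate $\langle 3^8,605/606\rangle$ from the order-$3^7$ candidates $\langle 3^7,270\ldots 273\rangle$, but it is not the invariant the paper certifies, and for the second batch of fields the exclusion of all admissible vertices up to order $3^{10}$ (in particular the $T_{1,i}$, which do not appear in the table) is exactly where the extra layer is needed. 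Second, even the full $\tau^{(2)}_\ast$ of Table \ref{tbl:3GroupsH4Spor} lists $\langle 3^8,605\rangle$ and $\langle 3^8,606\rangle$ in a single row with identical data, so the asserted \emph{uniqueness} of $\langle 3^8,606\rangle$ cannot come from the IPAD comparison alone; the paper delegates this to the more detailed \cite[Thm.\ 6.5]{Ma7} (implicitly, the sharper relation-rank constraint $d_2{G}=2$ for imaginary quadratic base fields, as opposed to $d_2{G}\le 3$ in the real case). You correctly flag this as the main obstacle but do not close it. On the positive side, your opening observation that $\ell_3{K}\ge 3$ already follows from the relation rank of $\mathfrak{M}\simeq\langle 3^6,45\rangle$ violating the Shafarevich bound is a clean supplement that the paper's own proof leaves implicit.
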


\begin{proof}
Using the technique of Fieker
\cite{Fi},
extensions of absolute degrees \(6\) and \(54\) were constructed in two steps,
squeezing MAGMA
\cite{MAGMA}
close to its limits.
The resulting multi-layered iterated IPADs of second order \(\tau^{(2)}_\ast{K}\) were used for the identification,
according to Table
\ref{tbl:3GroupsH4Spor},
resp. the more detailed theorem
\cite[Thm. 6.5, pp. 304--306]{Ma7}.
\end{proof}



\renewcommand{\arraystretch}{1.2}

\begin{table}[ht]
\caption{IPOD \(\varkappa_1{G}\) and iterated IPAD \(\tau^{(2)}_\ast{G}\) of sporadic \(3\)-groups \(G\) of type G.19}
\label{tbl:3GroupsG19Spor}
\begin{center}
\begin{tabular}{|c|cc||cc||c|ccccc|}
\hline
    lo &                 id &  w.r.t. & type & \(\varkappa_1{G}\) & \(\tau_0{G}\) &      & \(\tau_0{H}\) &    \(\tau_1{H}\) &         \(\tau_2{H}\) &               \\
\hline
 \(5\) &              \(9\) &         & G.19 &           \(2143\) &       \(1^2\) & \(\lbrack\) & \(21\) &   \(1^3,(21)^3\) &           \((1^2)^4\) & \(\rbrack^4\) \\
\hline
\hline
 \(6\) &           \(W=57\) &         & G.19 &           \(2143\) &       \(1^2\) & \(\lbrack\) & \(21\) &   \(1^4,(21)^3\) &           \((1^3)^4\) & \(\rbrack^4\) \\
\hline
\hline
 \(7\) &   \(\mathbf{311}\) &         & G.19 &           \(2143\) &       \(1^2\) &             & \(21\) & \(1^4,(\mathbf{2}1^2)^3\) &       \(1^4,(1^3)^3\) &               \\
       &                    &         &      &                    &               &             & \(21\) &   \(1^4,(\mathbf{21})^3\) &           \((1^4)^4\) &               \\
       &                    &         &      &                    &               & \(\lbrack\) & \(21\) &   \(1^4,(\mathbf{21})^3\) &       \(1^4,(1^3)^3\) & \(\rbrack^2\) \\
\hline
\hline
 \(8\) &  \(\mathbf{625\ldots 630}\) &         & G.19 &           \(2143\) &       \(1^2\) & \(\lbrack\) & \(21\) & \(1^4,(\mathbf{21^2})^3\) &       \(1^5,(1^4)^3\) & \(\rbrack^4\) \\
\hline
\hline
 \(9\) &          \(\#1;2\) & \(\Phi\)& G.19 &           \(2143\) &       \(1^2\) & \(\lbrack\) & \(21\) & \(1^4,(21^2)^3\) &           \((1^5)^4\) & \(\rbrack^4\) \\
\hline
 \(9\) &          \(\#1;2\) & \(\Psi\)& G.19 &           \(2143\) &       \(1^2\) & \(\lbrack\) & \(21\) & \(1^4,(21^2)^3\) &      \(1^5,(21^3)^3\) & \(\rbrack^4\) \\
\hline
 \(9\) &          \(\#1;2\) &   \(Y\) & G.19 &           \(2143\) &       \(1^2\) & \(\lbrack\) & \(21\) & \(1^4,(21^2)^3\) &       \(1^6,(1^4)^3\) & \(\rbrack^4\) \\
\hline
 \(9\) &          \(\#1;2\) &   \(Z\) & G.19 &           \(2143\) &                                                  \multicolumn{6}{|c|}{IPAD like id \(Y-\#1;2\)} \\
\hline
 \(9\) &          \(\#1;3\) &   \(Z\) & G.19 &           \(2143\) &                                               \multicolumn{6}{|c|}{IPAD like id \(\Psi-\#1;2\)} \\
\hline
\hline
\(10\) & \(\#\mathbf{1;1}\) & \(Y_1\) & G.19 &           \(2143\) &       \(1^2\) &             & \(21\) & \(1^4,(\mathbf{3}1^2)^3\) &      \(21^5,(1^4)^3\) &               \\
       &                    &         &      &                    &               & \(\lbrack\) & \(21\) & \(1^4,(\mathbf{2}1^2)^3\) &      \(21^5,(1^4)^3\) & \(\rbrack^3\) \\
\hline
\(10\) &          \(\#1;1\) & \(Z_1\) & G.19 &           \(2143\) &       \(1^2\) &             & \(21\) & \(1^4,(2^21)^3\) &      \(21^5,(1^4)^3\) &               \\
       &                    &         &      &                    &               & \(\lbrack\) & \(21\) & \(1^4,(21^2)^3\) &      \(21^5,(1^4)^3\) & \(\rbrack^3\) \\
\hline
\(10\) &          \(\#2;7\) &   \(Z\) & G.19 &           \(2143\) &       \(1^2\) & \(\lbrack\) & \(21\) & \(1^4,(21^2)^3\) &      \(1^6,(21^3)^3\) & \(\rbrack^4\) \\
\hline
\hline
\(11\) & \(\#\mathbf{2;1/2}\) & \(Y_1\) & G.19 &    \(2143\) &       \(1^2\) & \(\lbrack\) & \(21\) & \(1^4,(\mathbf{3}1^2)^3\) &    \(2^21^4,(1^4)^3\) & \(\rbrack^4\) \\
\hline
\(11\) &  \(\#2;1\ldots 4\) & \(Z_1\) & G.19 &           \(2143\) &       \(1^2\) & \(\lbrack\) & \(21\) & \(1^4,(\mathbf{2^21})^3\) &    \(2^21^4,(1^4)^3\) & \(\rbrack^4\) \\
\hline
\(11\) &          \(\#1;1\) & \(Z_2\) & G.19 &           \(2143\) &       \(1^2\) &             & \(21\) & \(1^4,(2^21)^3\) &     \(21^5,(21^3)^3\) &               \\
       &                    &         &      &                    &               & \(\lbrack\) & \(21\) & \(1^4,(21^2)^3\) &     \(21^5,(21^3)^3\) & \(\rbrack^3\) \\
\hline
\(11\) &          \(\#1;5\) & \(Z_2\) & G.19 &           \(2143\) &       \(1^2\) &             & \(21\) & \(1^4,(21^2)^3\) &    \(1^6,(2^21^2)^3\) &               \\
       &                    &         &      &                    &               & \(\lbrack\) & \(21\) & \(1^4,(21^2)^3\) &      \(1^6,(21^3)^3\) & \(\rbrack^3\) \\
\hline
\hline
\(12\) &          \(\#2;1\) & \(Z_2\) & G.19 &           \(2143\) &       \(1^2\) & \(\lbrack\) & \(21\) & \(1^4,(\mathbf{2^21})^3\) &   \(2^21^4,(21^3)^3\) & \(\rbrack^4\) \\
\hline
\(12\) &         \(\#2;62\) & \(Z_2\) & G.19 &           \(2143\) &       \(1^2\) &             & \(21\) & \(1^4,(2^21)^3\) &   \(21^5,(2^21^2)^3\) &               \\
       &                    &         &      &                    &               & \(\lbrack\) & \(21\) & \(1^4,(21^2)^3\) &     \(21^5,(21^3)^3\) & \(\rbrack^3\) \\
\hline
\(12\) &         \(\#2;87\) & \(Z_2\) & G.19 &           \(2143\) &       \(1^2\) &             & \(21\) & \(1^4,(2^21)^3\) &     \(21^5,(21^3)^3\) &               \\
       &                    &         &      &                    &               &             & \(21\) & \(1^4,(21^2)^3\) &   \(21^5,(2^21^2)^3\) &               \\
       &                    &         &      &                    &               & \(\lbrack\) & \(21\) & \(1^4,(21^2)^3\) &     \(21^5,(21^3)^3\) & \(\rbrack^2\) \\
\hline
\hline
\(14\) & \(\#4;1\ldots 43\) & \(Z_2\) & G.19 &           \(2143\) &       \(1^2\) & \(\lbrack\) & \(21\) & \(1^4,(\mathbf{2^21})^3\) & \(2^21^4,(2^21^2)^3\) & \(\rbrack^4\) \\
\hline
\end{tabular}
\end{center}
\end{table}



Table
\ref{tbl:3GroupsG19Spor}
shows the designation of the transfer kernel type,
the IPOD \(\varkappa_1{G}\),
and the iterated multi-layered IPAD of second order,
\[\tau^{(2)}_\ast{G}=\lbrack\tau_0{G};\lbrack\tau_0{H};\tau_1{H};\tau_2{H}\rbrack_{H\in\mathrm{Lyr}_1{G}}\rbrack,\]
for sporadic \(3\)-groups \(G\) of type G.19 up to order \(\lvert G\rvert=3^{14}\),
characterized by the logarithmic order, \(\mathrm{lo}\),
and the SmallGroup identifier, \(\mathrm{id}\)
\cite{BEO1,BEO2},
resp. the relative identifier for \(\mathrm{lo}\ge 9\).
To enable a brief reference for relative identifiers we put\\
\(W:=\langle 3^6,57\rangle\),
since this group was called the non-CF group \(W\) by Ascione
\cite{As,AHL},\\
\(\Phi:=\langle 3^8,626\rangle\),
\(\Psi:=\langle 3^8,628\rangle\), and further\\
\(Y:=\langle 3^8,629\rangle\), 
\(Y_1:=Y-\#1;2\), and\\
\(Z:=\langle 3^8,630\rangle\),
\(Z_1:=Z-\#1;2\),
\(Z_2:=Z-\#2;7\).

The groups in Table
\ref{tbl:3GroupsG19Spor}
are represented by vertices of the tree diagram in Figure
\ref{fig:TreeG19Spor}.



\noindent
Figure
\ref{fig:TreeG19Spor}
visualizes sporadic \(3\)-groups
of Table
\ref{tbl:3GroupsG19Spor}
which arise as \(3\)-class tower groups \(G=\mathrm{G}_3^\infty{K}\)
of real quadratic fields \(K=\mathbb{Q}(\sqrt{d})\), \(d>0\),
with \(3\)-principalization type \(\mathrm{G}.19\)
and the corresponding minimal discriminants, resp. absolute frequencies in Theorem
\ref{thm:RQFG19SporExt}
and
\ref{thm:IQFG19Spor}.

The subtrees \(\mathcal{T}(W-\#2;i)\)
are finite and drawn completely for \(i\in\lbrace 1,3,5\rbrace\),
but they are omitted in the complicated cases \(i\in\lbrace 2,4,6\rbrace\),
where they reach beyond order \(3^{20}\).

For \(d=+24\,126\,593\), \(-12\,067\) and \(-54\,195\), we can only give the \textbf{conjectural} location of \(G\).



{\tiny

\begin{figure}[hb]
\caption{Distribution of \(3\)-class tower groups \(\mathrm{G}_3^\infty{K}\) on the descendant tree \(\mathcal{T}_\ast\langle 243,9\rangle\)}
\label{fig:TreeG19Spor}


\setlength{\unitlength}{0.8cm}
\begin{picture}(15,21)(0,-20)

\put(0,0.5){\makebox(0,0)[cb]{Order \(3^n\)}}

\put(0,0){\line(0,-1){18}}
\multiput(-0.1,0)(0,-2){10}{\line(1,0){0.2}}

\put(-0.2,0){\makebox(0,0)[rc]{\(243\)}}
\put(0.2,0){\makebox(0,0)[lc]{\(3^5\)}}
\put(-0.2,-2){\makebox(0,0)[rc]{\(729\)}}
\put(0.2,-2){\makebox(0,0)[lc]{\(3^6\)}}
\put(-0.2,-4){\makebox(0,0)[rc]{\(2\,187\)}}
\put(0.2,-4){\makebox(0,0)[lc]{\(3^7\)}}
\put(-0.2,-6){\makebox(0,0)[rc]{\(6\,561\)}}
\put(0.2,-6){\makebox(0,0)[lc]{\(3^8\)}}
\put(-0.2,-8){\makebox(0,0)[rc]{\(19\,683\)}}
\put(0.2,-8){\makebox(0,0)[lc]{\(3^9\)}}
\put(-0.2,-10){\makebox(0,0)[rc]{\(59\,049\)}}
\put(0.2,-10){\makebox(0,0)[lc]{\(3^{10}\)}}
\put(-0.2,-12){\makebox(0,0)[rc]{\(177\,147\)}}
\put(0.2,-12){\makebox(0,0)[lc]{\(3^{11}\)}}
\put(-0.2,-14){\makebox(0,0)[rc]{\(531\,441\)}}
\put(0.2,-14){\makebox(0,0)[lc]{\(3^{12}\)}}
\put(-0.2,-16){\makebox(0,0)[rc]{\(1\,594\,323\)}}
\put(0.2,-16){\makebox(0,0)[lc]{\(3^{13}\)}}
\put(-0.2,-18){\makebox(0,0)[rc]{\(4\,782\,969\)}}
\put(0.2,-18){\makebox(0,0)[lc]{\(3^{14}\)}}

\put(0,-18){\vector(0,-1){2}}


\put(1.8,0.2){\makebox(0,0)[rc]{\(\langle 9\rangle\)}}
\put(2.2,0.2){\makebox(0,0)[lc]{root}}
\put(2,0){\circle*{0.2}}

\put(2,0){\line(0,-1){2}}

\put(1.8,-1.3){\makebox(0,0)[rc]{\(W=\)}}
\put(1.8,-1.8){\makebox(0,0)[rc]{\(\langle 57\rangle\)}}
\put(2.2,-1.8){\makebox(0,0)[lc]{\(1^{\text{st}}\) bifurcation}}
\put(2,-2){\circle*{0.1}}

\put(2,-2){\line(0,-1){2}}

\put(1.8,-3.8){\makebox(0,0)[rc]{\(\langle 311\rangle\)}}
\put(1.9,-4.1){\framebox(0.2,0.2){}}

\put(1.8,-4){\oval(1.6,1.3)}
\put(2,-4.9){\makebox(0,0)[cc]{\underbar{\textbf{+214\,712}}}}
\put(2,-5.3){\makebox(0,0)[cc]{\underbar{\textbf{55(86\%)}}}}



\put(2,-2){\line(1,-2){2}}

\put(4.2,-5.8){\makebox(0,0)[lc]{\(\langle 625\rangle\)}}
\put(3.9,-6.1){\framebox(0.2,0.2){}}

\put(9.4,-5.8){\oval(12,1.3)}
\put(6,-6.8){\makebox(0,0)[cc]{\underbar{\textbf{+24\,126\,593\ ?}}}}
\put(6,-7.2){\makebox(0,0)[cc]{\underbar{\textbf{6(9\%)}}}}

\put(4,-6){\line(0,-1){2}}

\put(4.2,-7.8){\makebox(0,0)[lc]{\(1;2\)}}
\put(3.8,-7.8){\makebox(0,0)[rc]{\(2^{\text{nd}}\) bifurcations}}
\put(3.95,-8.05){\framebox(0.1,0.1){}}

\put(4,-8){\line(1,-2){1}}

\put(5.2,-9.8){\makebox(0,0)[lc]{\(1;1\)}}
\put(4.9,-10.1){\framebox(0.2,0.2){}}

\put(4,-8){\line(0,-1){4}}

\put(4.2,-11.8){\makebox(0,0)[lc]{\(2;1..2\)}}
\put(3.8,-11.8){\makebox(0,0)[rc]{\(2\ast\)}}
\put(3.9,-12.1){\framebox(0.2,0.2){}}

\put(4.2,-12){\oval(2.6,1.3)}
\put(4,-12.9){\makebox(0,0)[cc]{\underbar{\textbf{-12\,067\ ?}}}}
\put(4,-13.3){\makebox(0,0)[cc]{\underbar{\textbf{30(65\%)}}}}


\put(2,-2){\line(1,-1){4}}

\put(6.2,-5.8){\makebox(0,0)[lc]{\(\langle 626\rangle\)}}
\put(6.2,-6.1){\makebox(0,0)[lc]{\(=\Phi\)}}
\put(5.9,-6.1){\framebox(0.2,0.2){}}


\put(2,-2){\line(3,-2){6}}

\put(8.2,-5.8){\makebox(0,0)[lc]{\(\langle 627\rangle\)}}
\put(7.9,-6.1){\framebox(0.2,0.2){}}

\put(8,-6){\line(0,-1){2}}

\put(8.2,-7.8){\makebox(0,0)[lc]{\(1;2\)}}
\put(7.95,-8.05){\framebox(0.1,0.1){}}

\put(8,-8){\line(1,-1){2}}

\put(10.2,-9.8){\makebox(0,0)[lc]{\(1;1\)}}
\put(9.9,-10.1){\framebox(0.2,0.2){}}

\put(8,-8){\line(0,-1){4}}

\put(8.2,-11.8){\makebox(0,0)[lc]{\(2;1\)}}
\put(7.9,-12.1){\framebox(0.2,0.2){}}

\put(8,-12){\line(0,-1){2}}

\put(8.2,-13.8){\makebox(0,0)[lc]{\(1;3\)}}
\put(7.8,-13.8){\makebox(0,0)[rc]{\(3^{\text{rd}}\) bifurcations}}
\put(7.95,-14.05){\framebox(0.1,0.1){}}

\put(8,-14){\line(1,-2){1}}

\put(9.1,-15.8){\makebox(0,0)[lc]{\(1;1..2\)}}
\put(8.8,-15.8){\makebox(0,0)[rc]{\(2\ast\)}}
\put(8.9,-16.1){\framebox(0.2,0.2){}}

\put(8,-14){\line(0,-1){4}}

\put(8.2,-17.8){\makebox(0,0)[lc]{\(2;1..3\)}}
\put(7.8,-17.8){\makebox(0,0)[rc]{\(3\ast\)}}
\put(7.9,-18.1){\framebox(0.2,0.2){}}

\put(8,-8){\line(1,-2){2}}

\put(10.2,-11.8){\makebox(0,0)[lc]{\(2;2\)}}
\put(9.9,-12.1){\framebox(0.2,0.2){}}

\put(10,-12){\line(0,-1){2}}

\put(10.2,-13.8){\makebox(0,0)[lc]{\(1;2\)}}
\put(9.95,-14.05){\framebox(0.1,0.1){}}

\put(10,-14){\line(1,-2){1}}

\put(11.2,-15.8){\makebox(0,0)[lc]{\(1;1\)}}
\put(10.9,-16.1){\framebox(0.2,0.2){}}

\put(10,-14){\line(0,-1){4}}

\put(10.2,-17.8){\makebox(0,0)[lc]{\(2;1..3\)}}
\put(9.8,-17.8){\makebox(0,0)[rc]{\(3\ast\)}}
\put(9.9,-18.1){\framebox(0.2,0.2){}}


\put(2,-2){\line(2,-1){8}}

\put(10.2,-5.8){\makebox(0,0)[lc]{\(\langle 628\rangle\)}}
\put(10.2,-6.1){\makebox(0,0)[lc]{\(=\Psi\)}}
\put(9.9,-6.1){\framebox(0.2,0.2){}}

\put(8,-6){\line(0,-1){2}}

\put(8.2,-7.8){\makebox(0,0)[lc]{\(1;2\)}}
\put(7.95,-8.05){\framebox(0.1,0.1){}}


\put(2,-2){\line(5,-2){10}}

\put(12.2,-5.8){\makebox(0,0)[lc]{\(\langle 629\rangle\)}}
\put(12.2,-6.1){\makebox(0,0)[lc]{\(=Y\)}}
\put(11.9,-6.1){\framebox(0.2,0.2){}}

\put(12,-6){\line(0,-1){2}}

\put(12.2,-7.8){\makebox(0,0)[lc]{\(1;2\)}}
\put(12.2,-8.1){\makebox(0,0)[lc]{\(=Y_1\)}}
\put(11.95,-8.05){\framebox(0.1,0.1){}}

\put(12,-8){\line(1,-2){1}}

\put(13.2,-9.8){\makebox(0,0)[lc]{\(1;1\)}}
\put(12.9,-10.1){\framebox(0.2,0.2){}}

\put(12,-8){\line(0,-1){4}}

\put(12.2,-11.8){\makebox(0,0)[lc]{\(2;1..2\)}}
\put(11.8,-11.8){\makebox(0,0)[rc]{\(2\ast\)}}
\put(11.9,-12.1){\framebox(0.2,0.2){}}

\put(13,-9.8){\oval(1.6,1.3)}
\put(13,-10.7){\makebox(0,0)[cc]{\underbar{\textbf{+21\,974\,161}}}}
\put(13,-11.1){\makebox(0,0)[cc]{\underbar{\textbf{3(5\%)}}}}

\put(12.2,-12){\oval(2.6,1.3)}
\put(12,-12.9){\makebox(0,0)[cc]{\underbar{\textbf{-114\,936}}}}
\put(12,-13.3){\makebox(0,0)[cc]{\underbar{\textbf{7(15\%)}}}}


\put(2,-2){\line(3,-1){12}}

\put(14.2,-5.8){\makebox(0,0)[lc]{\(\langle 630\rangle\)}}
\put(14.2,-6.1){\makebox(0,0)[lc]{\(=Z\)}}
\put(13.95,-6.05){\framebox(0.1,0.1){}}

\put(14,-6.0){\vector(0,-1){2}}
\put(14.4,-7.6){\makebox(0,0)[lc]{\textbf{?}}}

\put(14.4,-7.8){\oval(1.6,1.3)}
\put(14.4,-8.7){\makebox(0,0)[cc]{\underbar{\textbf{-54\,195\ ?}}}}
\put(14.4,-9.1){\makebox(0,0)[cc]{\underbar{\textbf{7(15\%)}}}}

\end{picture}

\end{figure}
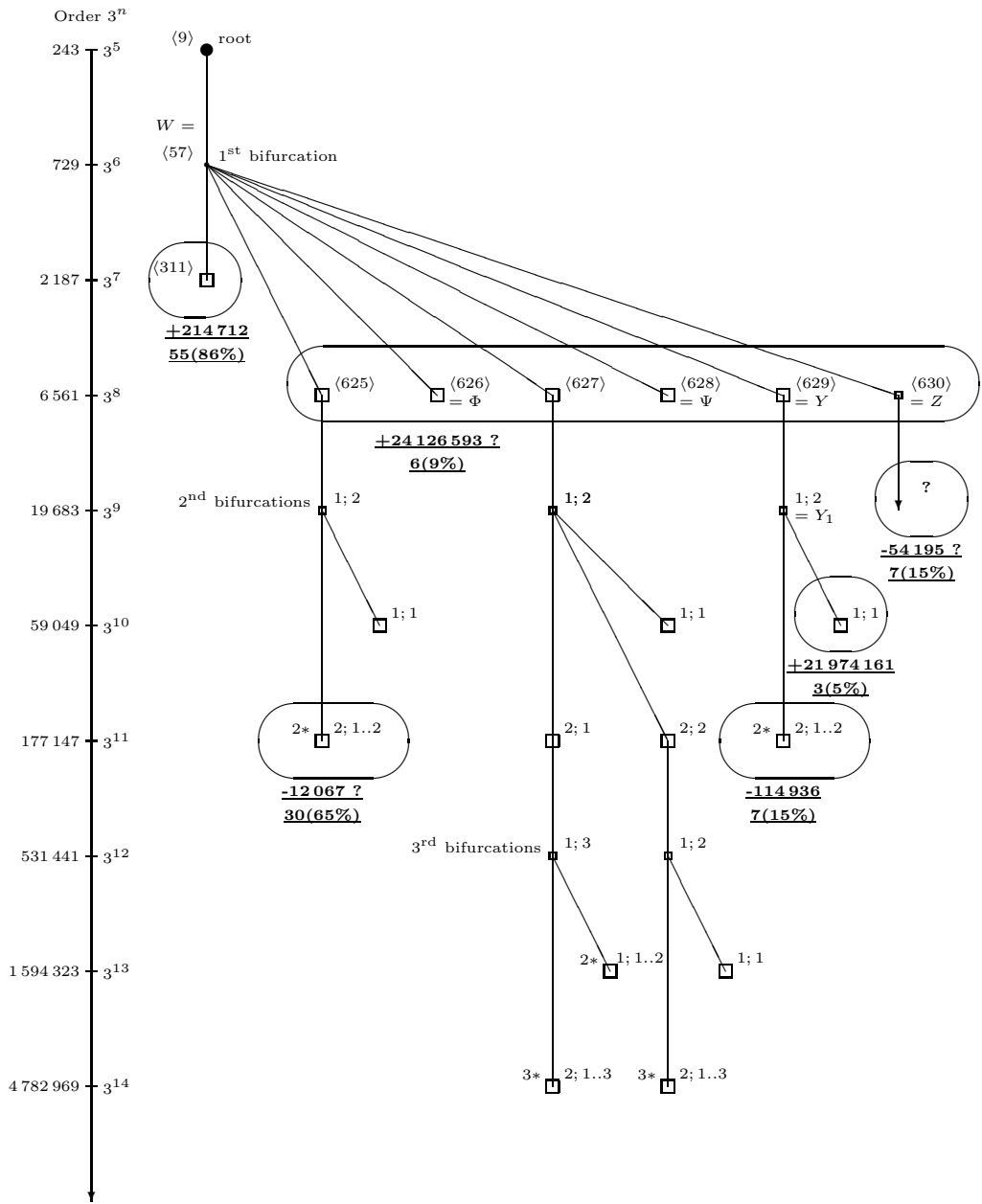

}



\subsection{Real Quadratic Fields of Type \(\mathrm{G}.19\)}
\label{ss:RQFG19Spor}

\begin{proposition}
\label{prp:RQFG19Spor}
(Fields of type \(\mathrm{G}.19\) up to \(d<10^7\)
\cite{Ma1},
\cite{Ma3}) \\
In the range \(0<d<10^7\) of fundamental discriminants \(d\)
of real quadratic fields \(K=\mathbb{Q}(\sqrt{d})\),
there exist precisely \(\mathbf{11}\) cases
with \(3\)-principalization type \(\mathrm{G}.19\), \(\varkappa_1{K}=(2143)\),
consisting of two disjoint \(2\)-cycles.
Their IPAD is uniformly given by \(\tau^{(1)}{K}=\lbrack 1^2;(21)^4\rbrack\),
in this range.
\end{proposition}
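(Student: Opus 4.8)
The plan is to reduce the statement to the exhaustive tabulation of real quadratic fields already carried out in \cite{Ma1,Ma3}. First I would recall the setup used there: for every fundamental discriminant $0<d<10^7$ with $3$-class group $\mathrm{Cl}_3{K}\simeq(3,3)$, the four unramified cyclic cubic extensions $L_1,\dots,L_4$ of $K=\mathbb{Q}(\sqrt{d})$ are constructed, the $3$-class numbers $h_3{L_i}$ are computed, and the transfer kernels $\ker T_{K,L_i}$ are determined by the principalization algorithm implemented in PARI/GP \cite[\S\ 5]{Ma3}; the resulting $3$-principalization type $\varkappa_1{K}$ is recorded for each field, and the aggregated frequencies appear in the tables of \cite{Ma3}, in the same format that underlies Propositions \ref{prp:RQFE6E14GS}, \ref{prp:RQFE8E9GS} and \ref{prp:RQFH4Spor}. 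Reading off the row corresponding to type $\mathrm{G}.19$, $\varkappa_1{K}=(2143)$ --- the permutation $(1\,2)(3\,4)$, hence two disjoint $2$-cycles --- yields the count $11$.

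For the assertion on the IPAD I would argue as follows. By Artin's reciprocity law the Artin pattern of $K$ coincides with that of its second $3$-class group $\mathfrak{M}=\mathrm{G}_3^2{K}$, so in particular $\tau^{(1)}{K}=\tau^{(1)}{\mathfrak{M}}$. A field with TKT $\mathrm{G}.19$ whose IPAD differs from $\lbrack 1^2;(21)^4\rbrack$ would have $\mathfrak{M}$ an \emph{excited state} on the descendant tree of Table \ref{tbl:3GroupsG19Spor}, that is, a group of order $\ge 3^7$ whose polarized IPAD component is a nearly homocyclic $3$-group $\mathrm{A}(3,c)$ of type $(32)$ or larger (compare Theorem \ref{thm:AllIPADs}); whereas the ground state $\langle 3^5,9\rangle$ and the first groups above it ($W=\langle 3^6,57\rangle$, $\langle 3^7,311\rangle$, $\langle 3^8,625\ldots 630\rangle$) all have their four maximal subgroups of abelianized type $\tau_0{H}=21$, so that $\tau_1{\mathfrak{M}}=(21)^4$. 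Since such larger second $3$-class groups are realized by quadratic fields only for discriminants well beyond $10^7$ --- precisely the phenomenon signalled by the qualifier ``in this range'' --- every one of the $11$ fields found above has $\tau^{(1)}{K}=\lbrack 1^2;(21)^4\rbrack$, and the direct computation confirms this.

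I expect the hard part to be not conceptual but one of verification. Two points need care. First, completeness: one must be sure that the enumeration of the type-$\mathrm{G}.19$ fields is exhaustive over $0<d<10^7$ and that none was misclassified --- in particular that type $\mathrm{G}.19$, $\varkappa_1{K}=(2143)$, was not confused with the closely related type $\mathrm{G}.16$, $\varkappa_1{K}=(2134)$, which differs only in carrying two fixed points in place of the second transposition. Second, the principalization computation for sextic fields is historically fragile --- older MAGMA versions failed on some of the type-$\mathrm{E}$ fields, and an accumulated frequency had to be split into individual types only with a newer release (cf. Propositions \ref{prp:RQFE6E14GS} and \ref{prp:RQFE8E9GS}). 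I would therefore re-run the transfer-kernel computation for the candidate discriminants with a current MAGMA release, checking the IPAD $\lbrack 1^2;(21)^4\rbrack$ simultaneously; this single re-computation settles both assertions of the proposition at once, and the explicit list of the eleven discriminants can then be recorded, as in Proposition \ref{prp:RQFH4Spor}, for use in the subsequent theorems determining the $3$-class towers of these fields.
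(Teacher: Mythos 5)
Your proposal is correct and follows essentially the same route as the paper: the count of $11$ is read off from the exhaustive PARI/GP tabulation of $3$-principalization types in \cite[Tbl.\ 6.3, p.\ 452]{Ma3} (first row, column \lq\lq freq.\rq\rq), and the uniform IPAD $\lbrack 1^2;(21)^4\rbrack$ is part of the same computed data, consistent with the fact that all candidate groups of type $\mathrm{G}.19$ occurring in this range (cf.\ Table \ref{tbl:3GroupsG19Spor}) have $\tau_1=(21)^4$. Your additional caveats about re-verification and the distinction from type $\mathrm{G}.16$ are sensible but not part of the paper's (purely citational) argument.
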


\begin{proof}
The results of
\cite[Tbl. 6.3, p. 452]{Ma3}
were computed in \(2010\) by means of PARI/GP
\cite{PARI}
using an implementation of our principalization algorithm, as described in
\cite[\S\ 5, pp. 446--450]{Ma3}.
The frequency \(11\)
in the last column \lq\lq freq.\rq\rq\
of the first row
concerns type \(\mathrm{G}.19\).
\end{proof}



\begin{theorem} 
\label{thm:RQFG19Spor}
(\(3\)-Class towers of type \(\mathrm{G}.19\) up to \(d<10^7\)) \\
The \(\mathbf{11}\) real quadratic fields \(K=\mathbb{Q}(\sqrt{d})\) in Proposition
\ref{prp:RQFG19Spor}
with discriminants
\[
\begin{aligned}
d\in\lbrace    214\,712 &,&    943\,077 &,& 1\,618\,493 &,& 2\,374\,077 &,& 3\,472\,653 &,& 4\,026\,680, \\
            4\,628\,117 &,& 5\,858\,753 &,& 6\,405\,317 &,& 7\,176\,477 &,& 7\,582\,988 &\rbrace
\end{aligned}
\]
have the unique \(3\)-class tower group 
\(G\simeq\langle 3^7,\mathbf{311}\rangle=\langle 3^6,57\rangle-\#1;1\)
and \(3\)-tower length \(\ell_3{K}=\mathbf{3}\).
\end{theorem}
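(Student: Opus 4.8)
The plan is to follow the same two-step pattern as in the proofs of Theorems \ref{thm:RQFE6E14GS} and \ref{thm:RQFH4Spor}: first locate the second \(3\)-class group \(\mathfrak{M}=\mathrm{G}_3^2{K}\) inside a descendant tree by means of its Artin pattern, then use an iterated IPAD of second order to single out the \(3\)-class tower group \(G=\mathrm{G}_3^\infty{K}\) and read off the exact tower length. The starting data come from Proposition \ref{prp:RQFG19Spor}: each of the \(11\) fields has IPOD \(\varkappa_1{K}=(2143)\) of type \(\mathrm{G}.19\) and first order IPAD \(\tau^{(1)}{K}=\lbrack 1^2;(21)^4\rbrack\), and by the Artin reciprocity law \cite{Ar1,Ar2} the restricted Artin pattern \(\mathrm{AP}_1(K)\) equals \(\mathrm{AP}_1(\mathfrak{M})\), while the iterated IPAD of second order of \(K\) equals that of \(G\).

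First I would carry out the group-theoretic search with the \(p\)-group generation algorithm in MAGMA \cite{MAGMA}, looking for the vertices of the descendant tree \(\mathcal{T}(\langle 3^2,2\rangle)\) whose restricted Artin pattern is \((\,(21)^4,\,(2143)\,)\). The metabelian such vertices lie on the short path through the sporadic group \(\langle 3^5,9\rangle\) up to the non-CF group \(W=\langle 3^6,57\rangle\); since \(W\) has nuclear rank two, a bifurcation occurs there, and the unique non-metabelian descendant still carrying the pattern is \(\langle 3^7,311\rangle=W-\#1;1\) (compare Table \ref{tbl:3GroupsG19Spor} and Figure \ref{fig:TreeG19Spor}). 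Hence \(\mathfrak{M}\simeq W\), the cover of \(\mathfrak{M}\) in the sense of \cite[Dfn. 5.1]{Ma10} reduces essentially to \(\lbrace W,\langle 3^7,311\rangle\rbrace\), and the only remaining question is whether the tower stops at \(\mathfrak{M}\) or climbs to \(\langle 3^7,311\rangle\) (or, a priori, beyond).

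Next I would perform the arithmetic computations. For each of the \(11\) discriminants one constructs, with MAGMA and the class field package of Fieker \cite{Fi}, the four unramified cyclic cubic extensions \(L_i/K\) of absolute degree \(6\) together with their unramified abelian \(3\)-extensions of absolute degree \(18\) (and \(54\) for the bottom layer), and thereby the iterated multi-layered IPAD of second order \(\tau^{(2)}_\ast{K}\) of Definition \ref{dfn:IteratedIPAD}. Since \(\tau^{(2)}_\ast{K}\) is an invariant of \(G\) and not merely of \(\mathfrak{M}\), comparison with Table \ref{tbl:3GroupsG19Spor} should be decisive: I expect every subgroup layer of the \(L_i\) to realize the type invariants recorded in the row \(\mathrm{id}=311\), in particular the component \((1^4,(21^2)^3)\), which matches \(\langle 3^7,311\rangle\) and excludes both \(W\) itself — whose maximal subgroups would contribute \((1^4,(21)^3)\), forcing \(\ell_3{K}=2\) — and the proper descendants \(\langle 3^8,625\ldots 630\rangle\) and their successors, which already differ in the first, resp. second, layer of the second order IPAD.

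The hard part will be the termination argument, i.e. excluding that the tower continues beyond \(\langle 3^7,311\rangle\). For this I would invoke the sharp termination criterion \cite[Thm. 5.1]{Ma9} for the Artin transfer pattern distributed over \(\mathcal{T}_\ast\langle 243,9\rangle\); alternatively, and more elementarily, the corrected Shafarevich theorem \cite[Thm. 5.1]{Ma10}, \cite{Sh}: a real quadratic field has torsion free Dirichlet unit rank \(r=1\) and contains no primitive cube root of unity, so \(2=\varrho\le d_2{G}\le\varrho+r=3\), whereas MAGMA shows that \(\langle 3^7,311\rangle\) has relation rank \(d_2{G}\le 3\) while all of its proper descendants on that tree have \(d_2\ge 4\) (the small contour squares in Figure \ref{fig:TreeG19Spor}), so none of them can occur as \(G\). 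A secondary, purely computational obstacle is the feasibility of the degree \(18\) (and \(54\)) class field computations for all \(11\) fields, the kind of task pushed through with recent MAGMA versions already in Theorems \ref{thm:RQFE6E14GS} and \ref{thm:RQFH4Spor}. Granting both points, \(G\simeq\langle 3^7,311\rangle\), a non-metabelian group of derived length \(3\), so that \(\ell_3{K}=\mathrm{dl}(G)=3\) for every one of the \(11\) fields.
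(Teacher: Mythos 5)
Your proposal matches the paper's proof in essence: the paper likewise constructs the unramified extensions of absolute degrees \(6\) and \(18\) with MAGMA and Fieker's class field package, obtains the uniform iterated IPAD of second order \(\tau^{(2)}{K}=\lbrack 1^2;(21;1^4,(\mathbf{2}1^2)^3),(21;1^4,(\mathbf{21})^3)^3\rbrack\), and identifies \(G\simeq\langle 3^7,311\rangle\) by comparison with Table \ref{tbl:3GroupsG19Spor}, whence \(\ell_3{K}=\mathrm{dl}(G)=3\). Your additional remarks on the cover of \(\mathfrak{M}\) and on termination via the relation-rank bound are consistent with the machinery used elsewhere in the paper, though the paper's own proof of this theorem leaves them implicit in the table comparison.
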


\begin{proof}
Extensions of absolute degrees \(6\) and \(18\) were constructed with MAGMA
\cite{MAGMA},
using Fieker's class field package 
\cite{Fi}.
The resulting uniform iterated IPAD of second order
\(\tau^{(2)}{K}=\) \\
\(\lbrack 1^2; (21;1^4,(\mathbf{2}1^2)^3), (21;1^4,(\mathbf{21})^3)^3\rbrack\)
was used for the identification of \(G\), according to Table
\ref{tbl:3GroupsG19Spor}.
\end{proof}

Since real quadratic fields of type \(\mathrm{G}.19\) seemed to have a very rigid behaviour
with respect to their \(3\)-class field tower, admitting no variation at all,
we were curious about the continuation of these discriminants beyond the range \(d<10^7\).
Fortunately, M. R. Bush granted access to his extended numerical results for \(d<10^9\)
\cite{Bu},
and so we are able to state the following unexpected answer to our question
\lq\lq Is the \(3\)-class tower group \(G\) of real quadratic fields with
type \(\mathrm{G}.19\) and IPAD \(\tau^{(1)}(K)=\lbrack 1^2;(21)^4\rbrack\)
always isomorphic to \(\langle 3^7,\mathbf{311}\rangle\) in the SmallGroups Library?\rq\rq



\begin{proposition}
\label{prp:RQFG19SporExt}
(Fields of type \(\mathrm{G}.19\) up to \(d<5\cdot 10^7\)
\cite{Bu})
\noindent
In the range \(0<d<5\cdot 10^7\) of fundamental discriminants \(d\)
of real quadratic fields \(K=\mathbb{Q}(\sqrt{d})\),
there exist precisely \(\mathbf{64}\) cases
with \(3\)-principalization type \(\mathrm{G}.19\), \(\varkappa_1{K}=(2143)\),
and with IPAD \(\tau^{(1)}{K}=\lbrack 1^2;(21)^4\rbrack\).
\end{proposition}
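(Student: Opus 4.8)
The plan is to prove the count by an exhaustive computation over fundamental discriminants, mirroring the proof of Proposition~\ref{prp:RQFG19Spor} but carried out up to the larger bound \(5\cdot 10^7\). First I would run through all fundamental discriminants \(0<d<5\cdot 10^7\) and, using the analytic class number formula together with the genus-theoretic formula for the \(3\)-class rank, single out the real quadratic fields \(K=\mathbb{Q}(\sqrt d)\) with \(\mathrm{Cl}_3 K\) of type \((3,3)\). This first sieve is inexpensive and reduces the problem to a manageable list of fields with \(\varrho_3 K=2\).

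Second, for each such \(K\) I would construct the four unramified cyclic cubic extensions \(L_1,\dots,L_4\) inside the first Hilbert \(3\)-class field — number fields of absolute degree \(6\) — compute their \(3\)-class groups \(\mathrm{Cl}_3 L_i\), and read off the IPAD \(\tau^{(1)}K=\lbrack\tau_0 K;\tau_1 K\rbrack\); retaining only the fields with \(\tau_1 K=(21,21,21,21)\) isolates the candidates relevant to the statement. Third, for each surviving candidate I would determine the \(3\)-principalization type \(\varkappa_1 K\) by evaluating the class extension homomorphisms \(T_{K,L_i}\colon\mathrm{Cl}_3 K\to\mathrm{Cl}_3 L_i\) and recording the resulting kernel pattern, keeping exactly those fields for which it equals \((2143)\), i.e.\ a product of two disjoint transpositions, which is type \(\mathrm{G}.19\). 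Counting the resulting list yields the asserted value \(\mathbf{64}\).

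The crux is the third step: unlike the IPAD, the principalization type is not determined by abelian invariants and genuinely requires an explicit model of the relevant layer of the Hilbert \(3\)-class field, so one must realise the degree-\(6\) fields \(L_i\) and compute class groups in them, which strains the available software near the top of the range — the same kind of difficulty already noted in Remark~\ref{rmk:9433849} and Remark~\ref{rmk9674841}. Accordingly, the enumeration of \((3,3)\)-fields with type \(\mathrm{G}.19\) and IPAD \(\lbrack 1^2;(21)^4\rbrack\) up to \(5\cdot 10^7\) was carried out by M.~R.~Bush with PARI/GP \cite{Bu}; the verification I propose consists of re-running an independent MAGMA computation of \(\varkappa_1 K\) on the finitely many \((3,3)\)-fields flagged by the IPAD sieve, confirming agreement field by field, and checking that the sieve discards nothing, so that the count is complete and equals \(64\).
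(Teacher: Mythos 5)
Your proposal takes essentially the same route as the paper: the paper's entire proof of this proposition is the single line \lq\lq Private communication by M.~R.~Bush\rq\rq\ \cite{Bu}, so the count of \(64\) is an externally supplied computational fact, and your three-stage sieve (\(3\)-class rank, IPAD via the degree-\(6\) fields \(L_i\), capitulation kernels) is precisely the methodology the author describes for the analogous enumerations in the range \(d<10^7\). The only point worth flagging is that \cite{Bu} is described as a database of IPADs, so strictly one also needs the (easy) observation that for a real quadratic field the IPAD \(\lbrack 1^2;(21)^4\rbrack\) already forces type \(\mathrm{G}.19\) --- the only competing group with this IPAD, the tree root \(\langle 243,8\rangle\) of type \(\mathrm{c}.21\), does not occur as \(\mathrm{G}_3^2{K}\) for such fields --- which your proposed independent computation of \(\varkappa_1{K}\) would in any case confirm directly.
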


\begin{proof}
Private communication by M. R. Bush
\cite{Bu}.
\end{proof}



\begin{theorem} 
\label{thm:RQFG19SporExt}
(\(3\)-Class towers of type \(\mathrm{G}.19\) up to \(d<5\cdot 10^7\)) \\
Among the \(64\) real quadratic fields \(K=\mathbb{Q}(\sqrt{d})\) with type \(\mathrm{G}.19\) in Proposition
\ref{prp:RQFG19SporExt},
\begin{itemize}
\item
the \(\mathbf{11}\) fields with discriminants \(d\) in Theorem
\ref{thm:RQFG19Spor}
and the \(\mathbf{44}\) fields with discriminants
\[
\begin{aligned}
d\in\lbrace 10\,169\,729 &,& 11\,986\,573 &,& 14\,698\,056 &,& 14\,836\,573 &,& 16\,270\,305 &,& 16\,288\,424, \\
            18\,195\,889 &,& 19\,159\,368 &,& 21\,519\,660 &,& 21\,555\,097 &,& 22\,296\,941 &,& 22\,431\,068, \\
            24\,229\,337 &,& 25\,139\,461 &,& 26\,977\,089 &,& 27\,696\,973 &,& 29\,171\,832 &,& 29\,523\,765, \\
            30\,019\,333 &,& 31\,921\,420 &,& 32\,057\,249 &,& 33\,551\,305 &,& 35\,154\,857 &,& 35\,846\,545, \\
            36\,125\,177 &,& 36\,409\,821 &,& 37\,344\,053 &,& 37\,526\,493 &,& 37\,796\,984 &,& 38\,691\,433, \\
            39\,693\,865 &,& 40\,875\,944 &,& 42\,182\,968 &,& 42\,452\,445 &,& 42\,563\,029 &,& 43\,165\,432, \\
            43\,934\,584 &,& 44\,839\,889 &,& 44\,965\,813 &,& 45\,049\,001 &,& 46\,180\,124 &,& 46\,804\,541, \\
            46\,971\,381 &,& 48\,628\,533 &\rbrace
\end{aligned}
\]
(that is, together \(\mathbf{55}\) fields or \(\mathbf{86}\%\))
have
\(\tau^{(2)}{K}=\lbrack 1^2; (21;1^4,(\mathbf{2}1^2)^3), (21;1^4,(\mathbf{21})^3)^3\rbrack\),
the unique \(3\)-class tower group 
\(G\simeq\langle 3^7,\mathbf{311}\rangle\),
and \(3\)-tower length \(\ell_3{K}=\mathbf{3}\),
\item
the \(\mathbf{3}\) fields (\(\mathbf{5}\%\)) with discriminants
\[d\in\lbrace 21\,974\,161,\ 22\,759\,557,\ 35\,327\,365\rbrace\] 
have 
IPAD of second order
\(\tau^{(2)}{K}=\lbrack 1^2; (21;1^4,(\mathbf{3}1^2)^3), (21;1^4,(\mathbf{21^2})^3)^3\rbrack\),
the unique \(3\)-tower group
\(G\simeq\langle 3^8,\mathbf{629}\rangle-\mathbf{\#1;2-\#1;1}\) of order \(3^{10}\),
and \(3\)-tower length \(\ell_3{K}=\mathbf{3}\),
\item
the \(\mathbf{6}\) fields (\(\mathbf{9}\%\)) with discriminants
\[d\in\lbrace 24\,126\,593,\ 29\,739\,477,\ 31\,353\,229,\ 35\,071\,865,\ 40\,234\,205,\ 40\,706\,677\rbrace\]  
have iterated IPAD of second order
\(\tau^{(2)}{K}=\lbrack 1^2; (21;1^4,(\mathbf{21^2})^3)^4\rbrack\),
a \(3\)-class tower group of order at least \(3^8\),
and \(3\)-tower length \(\ell_3{K}\in\lbrace\mathbf{3,4,\ldots}\rbrace\).
\end{itemize}
\end{theorem}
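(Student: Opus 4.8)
The plan is to combine the group-theoretic classification collected in Table~\ref{tbl:3GroupsG19Spor} with an explicit computation of the iterated IPAD of second order $\tau^{(2)}{K}$ for each of the $64$ discriminants listed in Proposition~\ref{prp:RQFG19SporExt}. First I would, for every such field $K=\mathbb{Q}(\sqrt{d})$, construct the four unramified cyclic cubic extensions $L_1,\ldots,L_4$ as absolute number fields of degree $6$, and then --- using Fieker's class field package \cite{Fi} inside MAGMA \cite{MAGMA} --- the unramified abelian extensions of the $L_i$ of relative degree $3$, i.e. absolute fields of degree $18$, in order to read off $\tau_0{L_i}$ and $\tau_1{L_i}$ and hence the full datum $\tau^{(2)}{K}=\lbrack\tau_0{K};(\tau_0{L_i};\tau_1{L_i})_{1\le i\le 4}\rbrack$. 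Since every field in the list has IPOD of type $\mathrm{G}.19$ with $\varkappa_1{K}=(2143)$ and first-order IPAD $\tau^{(1)}{K}=\lbrack 1^2;(21)^4\rbrack$, the Artin reciprocity law forces the second $3$-class group $\mathfrak{M}=\mathrm{G}_3^2{K}$ onto the descendant tree $\mathcal{T}_\ast\langle 243,9\rangle$ rooted at $W=\langle 3^6,57\rangle$, so the identification problem is confined to the vertices displayed in Figure~\ref{fig:TreeG19Spor}.

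Next I would match the computed values of $\tau^{(2)}{K}$ against the rows of Table~\ref{tbl:3GroupsG19Spor}. The discriminating invariant is the bold-faced leading exponent pattern in the components $\tau_1{L_i}$, which jumps from $(\mathbf{2}1^2)$ to $(\mathbf{3}1^2)$ to $(\mathbf{2^2}1)$ as one descends through successive branches, together with the companion change in the $\tau_2$-components. Concretely, $\tau^{(2)}{K}=\lbrack 1^2;(21;1^4,(\mathbf{2}1^2)^3),(21;1^4,(\mathbf{21})^3)^3\rbrack$ pins $\mathfrak{M}$ down to $\langle 3^7,311\rangle$; the pattern with $(\mathbf{3}1^2)$ in the first slot and $(\mathbf{21^2})$ in the remaining three singles out the vertex $\langle 3^8,629\rangle-\#1;2-\#1;1$ of order $3^{10}$; and the fully uniform pattern $(21;1^4,(\mathbf{21^2})^3)^4$ is shared by several vertices of order $\ge 3^8$ below the second bifurcation, so it does not yet isolate a single group. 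To promote these identifications of $\mathfrak{M}$ to identifications of the whole tower group $G=\mathrm{G}_3^\infty{K}$, I would invoke the termination criterion \cite[Thm. 5.1]{Ma9}: in the first two cases the relevant vertex is coclass-settled, so $G=\mathfrak{M}$ in the first case, $G$ is the unique non-metabelian descendant carrying the observed Artin pattern in the second, and $\mathrm{dl}(G)=3$ in both, i.e. $\ell_3{K}=3$. In the third case the criterion only yields a non-trivial cover, whence $\lvert G\rvert\ge 3^8$ and $\ell_3{K}\ge 3$, with no finite upper bound extractable from the IPAD data alone. The percentages $86\%$, $5\%$, $9\%$ then follow by tallying the three outcomes over the $64$-element list.

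The main obstacle is computational rather than conceptual: for the larger discriminants near $5\cdot 10^7$ the degree-$18$ class field computations strain MAGMA to its limits --- the same difficulty recorded for the $\mathrm{E}$-type fields in Remarks~\ref{rmk:9433849} and \ref{rmk9674841} --- so each $\tau^{(2)}{K}$ must be extracted with care, and the $3$-principalization type itself must be re-verified for every $d$ to be sure the field really belongs to type $\mathrm{G}.19$ rather than to a neighbouring capitulation type sharing the same first-order IPAD. A secondary subtlety is that the six exceptional fields genuinely cannot be resolved at this level: below the second bifurcation on $\mathcal{T}_\ast\langle 243,9\rangle$ the descendant subtrees $\mathcal{T}(W-\#2;i)$ for $i\in\lbrace 2,4,6\rbrace$ extend beyond order $3^{20}$, so pinning them down would require either the multi-layered iterated IPAD $\tau^{(2)}_\ast$ of higher order or the recursive constructions announced in \cite{MaNm}; the honest conclusion for those fields is therefore only $\ell_3{K}\in\lbrace 3,4,\ldots\rbrace$.
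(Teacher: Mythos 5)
Your proposal is correct and follows essentially the same route as the paper: construct the degree-$6$ and degree-$18$ unramified extensions with MAGMA and Fieker's class field package, compute the iterated IPAD of second order $\tau^{(2)}{K}$ for each of the $64$ discriminants, and match the resulting patterns against Table~\ref{tbl:3GroupsG19Spor} to identify $\langle 3^7,311\rangle$, $\langle 3^8,629\rangle-\#1;2-\#1;1$, or only a lower bound $\ge 3^8$ in the ambiguous third case. Your added remarks on the termination criterion and on the unresolvability of the six exceptional fields are consistent elaborations of what the paper leaves implicit.
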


\begin{proof}
Similar to the proof of Theorem
\ref{thm:RQFG19Spor},
using Table
\ref{tbl:3GroupsG19Spor},
but now applied to the more extensive range of discriminants
and various iterated IPADs of second order.
\end{proof}



\subsection{Imaginary Quadratic Fields of Type \(\mathrm{G}.19\)}
\label{ss:IQFG19Spor}

\begin{proposition}
\label{prp:IQFG19Spor}
(Fields of type \(\mathrm{G}.19\) down to \(d>-5\cdot 10^5\)
\cite{Ma1},
\cite{Ma3}) \\
In the range \(-5\cdot 10^5<d<0\) of fundamental discriminants \(d\)
of imaginary quadratic fields \(K=\mathbb{Q}(\sqrt{d})\),
there exist precisely \(\mathbf{46}\) cases
with \(3\)-principalization type \(\mathrm{G}.19\), \(\varkappa_1{K}=(2143)\),
consisting of two disjoint \(2\)-cycles,
and with IPAD \(\tau^{(1)}{K}=\lbrack 1^2;(21)^4\rbrack\).
\end{proposition}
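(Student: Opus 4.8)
The plan is to read off the count from the numerical classification of imaginary quadratic fields with $3$-class group of type $(3,3)$ already assembled in \cite{Ma,Ma1,Ma3}, in exactly the same manner as was done for the real quadratic case in Proposition \ref{prp:RQFG19Spor} and for type $\mathrm{H}.4$ in Proposition \ref{prp:IQFH4Spor}. First I would recall that the $3$-principalization type of $K=\mathbb{Q}(\sqrt{d})$ with $\mathrm{Cl}_3{K}\simeq C_3\times C_3$ is computed from the capitulation kernels $\ker T_{K,L_i}$ in the four unramified cyclic cubic extensions $L_1,\dots,L_4$; this was carried out down to $d>-5\cdot 10^5$ by means of an implementation of the Scholz--Taussky principalization algorithm \cite{SoTa} in PARI/GP \cite{PARI}, described in detail in \cite[\S\ 5]{Ma3}, with the relevant tables appearing in \cite[Tbl. 6.x]{Ma3} and cross-checked against \cite[Tbl. 4, p. 498]{Ma1}. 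The row designated $\mathrm{G}.19$, $\varkappa_1{K}\sim(2143)$ — the type consisting of two disjoint $2$-cycles — records the frequency $46$ in that discriminant range.

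Second, I would verify the companion assertion on the IPAD, which is the step that genuinely needs checking rather than mere bookkeeping. For each of the $46$ fields one computes $\tau_1{K}=(\mathrm{Cl}_3{L_i})_{1\le i\le 4}$, and the claim is that this equals $(21)^4$ in every case, i.e.\ all four cubic relatives have $3$-class group $C_3\times C_3$. As the cautionary Remark \ref{rmk:IQFH4Spor} for type $\mathrm{H}.4$ shows — where $d=-21\,668$ breaks the pattern because one cubic subfield $L_i$ has $h_3{L_i}=9$ — a field of a fixed capitulation type may have an ``excited'' IPAD with an enlarged component, which would move its second $3$-class group off the sporadic part of $\mathcal{G}(3,2)$ and onto a coclass tree. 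One therefore re-runs the $3$-class-number computation for the $46$ candidates, e.g.\ with MAGMA \cite{MAGMA}, and confirms that no cubic subfield $L_i$ has $h_3{L_i}=9$; by \cite[Prop. 4.4, p. 485]{Ma1} (or \cite[Thm. 4.2, p. 489]{Ma1}) such a coincidence would be exactly what forces the IPAD away from $(21)^4$.

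The main obstacle is not conceptual but one of computational completeness and reliability: one must be confident that the $3$-class-rank filter has correctly isolated \emph{all} fields with $\mathrm{Cl}_3{K}\simeq(3,3)$ in the stated range, that the capitulation routine has not misclassified any borderline discriminant, and that the IPAD check has been performed for every one of the $46$ fields rather than merely sampled. Once this data is secured the proposition follows immediately, and it then feeds, together with Table \ref{tbl:3GroupsG19Spor} and Figure \ref{fig:TreeG19Spor}, into the identification of the $3$-class tower groups $\mathrm{G}_3^\infty{K}$ of these fields in the subsequent theorem.
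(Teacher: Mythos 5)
Your proposal is correct and follows essentially the same route as the paper: the count is read off from the PARI/GP principalization tables of \cite[Tbl.\ 6.4, p.\ 452]{Ma3} (cross-checked against \cite{Ma1}), the only bookkeeping detail being that the cited table records the frequency \(94\) for the full range \(-10^6<d<0\), which reduces to \(46\) on the first half \(-5\cdot 10^5<d<0\). Your additional caution about verifying the IPAD \(\lbrack 1^2;(21)^4\rbrack\) for each field (to exclude excited components, as happens for type \(\mathrm{H}.4\) at \(d=-21\,668\)) is a sensible refinement that the paper's one-line proof leaves implicit in the tabulated data.
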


\begin{proof}
The results of
\cite[Tbl. 6.4, p. 452]{Ma3}
were computed in \(2010\) by means of PARI/GP
\cite{PARI}
using an implementation of our principalization algorithm, as described in
\cite[\S\ 5, pp. 446--450]{Ma3}.
The frequency \(94\)
in the last column \lq\lq freq.\rq\rq\
of the first row
concerns type \(\mathrm{G}.19\)
in the bigger range \(-10^6<d<0\).
Reduced to the first half of this range, we have \(46\) occurrences.
\end{proof}



\begin{theorem} 
\label{thm:IQFG19Spor}
(\(3\)-Class towers of type \(\mathrm{G}.19\) down to \(d>-5\cdot 10^5\)) \\
Among the \(46\) imaginary quadratic fields \(K=\mathbb{Q}(\sqrt{d})\) with type \(\mathrm{G}.19\) in Proposition
\ref{prp:IQFG19Spor},
\begin{itemize}
\item
the \(\mathbf{30}\) fields \(\mathbf{65}\%\)) with discriminants
\[
\begin{aligned}
d\in\lbrace  -12\,067 &,&  -49\,924 &,&  -60\,099 &,&  -83\,395 &,&  -86\,551 &,&  -93\,067, \\
            -152\,355 &,& -153\,771 &,& -161\,751 &,& -168\,267 &,& -195\,080 &,& -235\,491, \\
            -243\,896 &,& -251\,723 &,& -283\,523 &,& -310\,376 &,& -316\,259 &,& -337\,816, \\
            -339\,459 &,& -344\,823 &,& -350\,483 &,& -407\,983 &,& -421\,483 &,& -431\,247, \\
            -433\,732 &,& -442\,367 &,& -444\,543 &,& -453\,463 &,& -458\,724 &,& -471\,423 \rbrace \\
\end{aligned}
\] 
have iterated IPAD of second order
\(\tau^{(2)}{K}=\lbrack 1^2; (21;1^4,(\mathbf{21^2})^3)^4\rbrack\).
Conjecturally, most of them have \(3\)-class tower group 
\(G\simeq\langle 3^8,\mathbf{625}\rangle-\mathbf{\#1;2-\#2;1\vert 2}\) of order \(3^{11}\),
and \(3\)-tower length \(\ell_3{K}=\mathbf{3}\),
but \(\lvert G\rvert\ge 3^{14}\) and \(\ell_3{K}\ge 4\) cannot be excluded.
\item
The \(\mathbf{7}\) fields (\(\mathbf{15}\%\)) with discriminants
\[d\in\lbrace -54\,195,\ -96\,551,\ -104\,659,\ -133\,139,\ -222\,392,\ -313\,207,\ -420\,244\rbrace\] 
have iterated IPAD of second order
\(\tau^{(2)}{K}=\lbrack 1^2; (21;1^4,(\mathbf{2^21})^3)^4\rbrack\),
a \(3\)-class tower group of order at least \(3^{11}\),
and \(3\)-tower length \(\ell_3{K}\in\lbrace\mathbf{3,4,\ldots}\rbrace\),
\item
the \(\mathbf{7}\) fields (\(\mathbf{15}\%\)) with discriminants
\[d\in\lbrace -114\,936,\ -118\,276,\ -272\,659,\ -317\,327,\ -328\,308,\ -339\,563,\ -485\,411\rbrace\]  
have iterated IPAD of second order
\(\tau^{(2)}{K}=\lbrack 1^2; (21;1^4,(\mathbf{31^2})^3)^4\rbrack\),
a proven \(3\)-tower group
\(G\simeq\langle 3^8,\mathbf{629}\rangle-\mathbf{\#1;2-\#2;1\vert 2}\) of order \(3^{11}\),
and \(3\)-tower length \(\ell_3{K}=\mathbf{3}\),
\item
the \textbf{unique} field with discriminant \(d=-91\,643\)
has iterated IPAD of second order
\(\tau^{(2)}{K}=\lbrack 1^2; (21;1^4,(\mathbf{2^3})^3)^2, (21;1^4,(\mathbf{321})^3)^2\rbrack\),
unknown \(3\)-tower group and \(3\)-tower length \(\ell_3{K}\ge 3\),
\item
the \textbf{unique} field with discriminant \(d=-221\,944\)
has iterated IPAD of second order
\(\tau^{(2)}{K}=\lbrack 1^2; (21;1^4,(\mathbf{3^21})^3)^4\rbrack\),
but unknown \(3\)-tower group and \(3\)-tower length \(\ell_3{K}\ge 3\).
\end{itemize}
\end{theorem}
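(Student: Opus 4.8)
The plan is to imitate the proof of Theorem~\ref{thm:RQFG19Spor}, but to push the ramification-theoretic part one layer deeper and then to read off the answer from the classification encoded in Table~\ref{tbl:3GroupsG19Spor} and Figure~\ref{fig:TreeG19Spor}. First I would take the $46$ discriminants furnished by Proposition~\ref{prp:IQFG19Spor} and, for each, use MAGMA \cite{MAGMA} together with Fieker's class field package \cite{Fi} to construct the four unramified cyclic cubic extensions $N_1,\ldots,N_4$ of $K$ (absolute degree $6$) and then, over each $N_i$, the unramified abelian extensions of relative degree up to $9$ (absolute degree $54$). Computing the $3$-class groups that arise and reinterpreting them, via the Artin reciprocity law, as abelian quotient invariants of the index-$3$ subgroups of $\mathrm{G}_3^2{K}$ and of their own index-$3$ subgroups, this yields the iterated IPAD of second order $\tau^{(2)}{K}=\lbrack\tau_0{K};(\tau_0{H};\tau_1{H})_{H\in\mathrm{Lyr}_1{K}}\rbrack$ for every field on the list.

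The second step is group-theoretic bookkeeping. By Proposition~\ref{prp:IQFG19Spor} the first-order IPAD is uniformly $\lbrack 1^2;(21)^4\rbrack$, which, as in Theorem~\ref{thm:RQFG19Spor}, already confines $\mathrm{G}_3^2{K}$ to the path $\langle 3^2,2\rangle\leftarrow\langle 3^3,3\rangle\leftarrow\langle 3^5,9\rangle\leftarrow W=\langle 3^6,57\rangle$ and identifies $W$ as the metabelianization of the (necessarily non-metabelian) tower group $G=\mathrm{G}_3^\infty{K}$. One then matches the computed $\tau^{(2)}{K}$ against the rows of Table~\ref{tbl:3GroupsG19Spor}: the distinguished component $\tau_1{H}$ of the maximal subgroups --- whether it equals $1^4,(\mathbf{2}1^2)^3$, or $1^4,(\mathbf{2^21})^3$, or $1^4,(\mathbf{3}1^2)^3$, or $1^4,(\mathbf{3^21})^3$, or splits as $1^4,(\mathbf{2^3})^3$ together with $1^4,(\mathbf{321})^3$ --- pins down the branch of $\mathcal{T}_\ast\langle 243,9\rangle$ on which $G$ sits, namely the subtree below $\langle 3^8,625\rangle$, below $\langle 3^8,629\rangle=Y$, or another descendant of $W$. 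Whenever that branch is finite and a MAGMA computation confirms the candidate vertex has nuclear rank $0$, the termination criterion \cite[Thm. 5.1]{Ma9} makes the identification of $G$ and of $\ell_3{K}$ rigorous; this settles the case $G\simeq\langle 3^8,629\rangle-\#1;2-\#2;1\vert 2$ of order $3^{11}$ with $\ell_3{K}=3$. The lower bound $\ell_3{K}\ge 3$ for all $46$ fields is then automatic, since both $\langle 3^6,57\rangle$ and $\langle 3^7,311\rangle$ carry a strictly smaller $\tau^{(2)}$ than any value observed, so $G$ must be a proper descendant of $\langle 3^7,311\rangle$ and hence of derived length at least $3$.

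I expect the main obstacle to be the fields whose second-order IPAD is $\lbrack 1^2;(21;1^4,(\mathbf{21^2})^3)^4\rbrack$ (the $30$ fields) or $\lbrack 1^2;(21;1^4,(\mathbf{2^21})^3)^4\rbrack$ (the $7$ fields): these lie on the complicated subtrees $\mathcal{T}(W-\#2;i)$ with $i\in\lbrace 2,4,6\rbrace$, which, following Bartholdi--Bush \cite{BaBu} and \cite[Cor. 6.2]{Ma7}, are infinite or at least extend beyond order $3^{20}$ and are not explicitly known, so that $\tau^{(2)}{K}$ does not isolate a single element of the cover $\mathrm{cov}(\mathrm{G}_3^2{K})$. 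Here the best available statement is the conjectural location $G\simeq\langle 3^8,625\rangle-\#1;2-\#2;1\vert 2$ of order $3^{11}$ with $\ell_3{K}=3$, while $\lvert G\rvert\ge 3^{14}$ and $\ell_3{K}\ge 4$ cannot be excluded without a third layer of class field computations that is presently infeasible. The two singleton discriminants $d=-91\,643$ and $d=-221\,944$ raise the same difficulty: their patterns, involving $(\mathbf{2^3})^3$ with $(\mathbf{321})^3$, respectively $(\mathbf{3^21})^3$, do not occur among the vertices tabulated up to order $3^{14}$, so only $\ell_3{K}\ge 3$ is provable and the isomorphism type of $G$ is left open. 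A subsidiary practical hurdle is that the degree-$54$ extensions required for the second-order data push MAGMA close to its limits, so for some discriminants the computation must be staged (degree $6$, then degree $54$) and may need the most recent absolute-field machinery, exactly as noted in Remark~\ref{rmk9674841}.
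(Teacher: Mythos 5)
Your proposal is correct and follows essentially the same route as the paper: construct the unramified extensions with MAGMA and Fieker's class field package, compute the resulting iterated IPADs of second order, match them against Table \ref{tbl:3GroupsG19Spor} and Figure \ref{fig:TreeG19Spor}, and accept that the identification is only conjectural (or left open) on the infinite, un-tabulated subtrees while the \((31^2)^3\)-pattern pins down a terminal vertex rigorously. The only cosmetic difference is that the theorem's \(\tau^{(2)}{K}\) involves just \(\tau_1{H}\), so absolute degree \(18\) (not \(54\)) suffices for the class group computations, exactly as in the proof of Theorem \ref{thm:RQFG19Spor} that you imitate.
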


\begin{proof}
Similar to the proof of Theorem
\ref{thm:RQFG19Spor},
using Table
\ref{tbl:3GroupsG19Spor},
but now applied to the different range of discriminants
and various iterated IPADs of second order.
\end{proof}



\section{Imaginary quadratic fields of type \((3,3,3)\) and multi-layered IPADs}
\label{s:IQF3x3x3}
\noindent
In the final section \S\ 7 of
\cite{Ma7},
we proved that the second \(3\)-class groups \(\mathfrak{M}=\mathrm{G}_3^2{K}\)
of the \(14\) imaginary quadratic fields \(K=\mathbb{Q}(\sqrt{d})\)
with fundamental discriminants \(-10^7<d<0\)
and \(3\)-class group \(\mathrm{Cl}_3(K)\) of type \((3,3,3)\)
are pairwise non-isomorphic
\cite[Thm. 7.1, p. 307]{Ma7}.
For the proof of this theorem in 
\cite[\S\ 7.3, p. 311]{Ma7},
the IPADs of the \(14\) fields were not sufficient,
since the three fields with discriminants
\[d\in\lbrace -4\,447\,704, -5\,067\,967, -8\,992\,363\rbrace\]
share the common accumulated (unordered) IPAD
\[\tau^{(1)}{K}=\lbrack\tau_0{K};\tau_1{K}\rbrack=\lbrack 1^3;32^21;(21^4)^5,(2^21^2)^7\rbrack.\]
To complete the proof
we had to use information on the occupation numbers of the accumulated (unordered) IPODs,\\
\(\varkappa_1{K}=\lbrack 1,2,6,(8)^6,9,(10)^2,13\rbrack\) with maximal occupation number \(6\) for \(d=-4\,447\,704\),\\
\(\varkappa_1{K}=\lbrack 1,2,(3)^2,(4)^2,6,(7)^2,8,(9)^2,12\rbrack\) with maximal occupation number \(2\) for \(d=-5\,067\,967\),\\
\(\varkappa_1{K}=\lbrack (2)^2,5,6,7,(9)^2,(10)^3,(12)^3\rbrack\) with maximal occupation number \(3\) for \(d=-8\,992\,363\).

Meanwhile we succeeded in computing the \textit{second layer} of the transfer target type, \(\tau_2{K}\),
for the three critical fields with the aid of the computational algebra system MAGMA
\cite{MAGMA}
by determining the structure of the \(3\)-class groups \(\mathrm{Cl}_3{L}\)
of the \(13\) unramified bicyclic bicubic extensions \(L/K\) with relative degree \(\lbrack L:K\rbrack=3^2\)
and absolute degree \(18\).
In accumulated (unordered) form the second layer of the TTT is given by\\
\(\tau_2{K}=\lbrack 32^51^2;4321^5;2^51^3,(3^221^5)^2;2^41^4,32^21^5;(2^21^7)^3,(2^31^5)^3\rbrack\) for \(d=-4\,447\,704\),\\
\(\tau_2{K}=\lbrack 3^22^21^4;(3^221^5)^3;32^21^5;(2^31^5)^8\rbrack\) for \(d=-5\,067\,967\), and\\
\(\tau_2{K}=\lbrack 32^21^6,(3^221^5)^3;2^41^4,32^21^5;2^21^7,(2^31^5)^6\rbrack\) for \(d=-8\,992\,363\).

These results admit incredibly powerful conclusions,
which bring us closer to the ultimate goal to determine the precise isomorphism type of \(\mathrm{G}_3^2{K}\).
Firstly, they clearly show that the second \(3\)-class groups of the three critical fields are pairwise non-isomorphic
without using the IPODs.
Secondly, the component with the biggest order establishes an impressively sharpened estimate
for the order of \(\mathrm{G}_3^2{K}\) from below.
The background is explained by the following lemma.



\begin{lemma}
\label{lem:Estimates}
Let \(G\) be a finite \(p\)-group with abelianization \(G/G^\prime\) of type \((p,p,p)\)
and denote by \(\mathrm{lo}_p(G):=\log_p(\mathrm{ord}(G))\) the logarithmic order of \(G\)
with respect to the prime number \(p\).
Then the abelianizations \(H/H^\prime\) of subgroups \(H<G\) in various layers of \(G\)
admit lower bounds for \(\mathrm{lo}_p(G)\):
\begin{enumerate}
\item
\(\mathrm{lo}_p(G)\ge 1+\max\lbrace\mathrm{lo}_p(H/H^\prime)\mid H\in\mathrm{Lyr}_1{G}\rbrace\).
\item
\(\mathrm{lo}_p(G)\ge 2+\max\lbrace\mathrm{lo}_p(H/H^\prime)\mid H\in\mathrm{Lyr}_2{G}\rbrace\).
\item
\(\mathrm{lo}_p(G)\ge 3+\mathrm{lo}_p(G^\prime/G^{\prime\prime})\), and in particular we have an equation\\
\(\mathrm{lo}_p(G)=3+\mathrm{lo}_p(G^\prime)\) if \(G\) is metabelian.
\end{enumerate}
\end{lemma}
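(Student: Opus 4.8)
The plan is to obtain all three inequalities from a single elementary fact: a homomorphic image of a finite group has order dividing the order of the group, combined with Lagrange's theorem applied to the members of the layers $\mathrm{Lyr}_n{G}$. So the proof will be short and essentially computational, and the real work is just to organize the bookkeeping of indices correctly.

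First I would recall that, since $G/G^\prime$ is elementary abelian of rank three, we have $(G:G^\prime)=p^3$, and for $0\le n\le 3$ the layer $\mathrm{Lyr}_n{G}$ consists exactly of the subgroups $H$ with $G^\prime\le H\le G$ and $(G:H)=p^n$; in particular $\mathrm{Lyr}_3{G}=\lbrace G^\prime\rbrace$. For any such $H$, Lagrange gives $\lvert H\rvert=\lvert G\rvert/p^n$, hence $\mathrm{lo}_p(H)=\mathrm{lo}_p(G)-n$. Since $H/H^\prime$ is a quotient of $H$, its order divides $\lvert H\rvert$, so $\mathrm{lo}_p(H/H^\prime)\le\mathrm{lo}_p(H)=\mathrm{lo}_p(G)-n$.

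Next I would specialize. Taking $n=1$ and letting $H$ range over $\mathrm{Lyr}_1{G}$, then passing to the maximum, yields $(1)$; taking $n=2$ and letting $H$ range over $\mathrm{Lyr}_2{G}$ yields $(2)$. For $(3)$ I take $H=G^\prime\in\mathrm{Lyr}_3{G}$, so that $\mathrm{lo}_p(G^\prime)=\mathrm{lo}_p(G)-3$ and therefore $\mathrm{lo}_p(G^\prime/G^{\prime\prime})\le\mathrm{lo}_p(G^\prime)=\mathrm{lo}_p(G)-3$, which is the asserted bound. If in addition $G$ is metabelian, then $G^{\prime\prime}=1$, so $G^\prime/G^{\prime\prime}=G^\prime$ and the inequality collapses to the equality $\mathrm{lo}_p(G)=3+\mathrm{lo}_p(G^\prime)$.

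I do not expect any genuine obstacle here: the statement reduces entirely to Lagrange's theorem together with the observation that $\lvert H/H^\prime\rvert$ divides $\lvert H\rvert$. The only point demanding a moment's care is the index identification $(G:G^\prime)=p^3$, i.e.\ verifying that $G^\prime$ really sits in the bottom layer $\mathrm{Lyr}_3{G}$; this is immediate from the hypothesis on $G/G^\prime$. For the intended application I would also remark that each bound is attained most sharply by choosing, within the relevant layer, the subgroup $H$ whose abelianization $H/H^\prime$ has the largest order among all components recorded in $\tau_1{K}$, resp.\ $\tau_2{K}$ — which is precisely how the multi-layered IPAD feeds into an estimate for $\mathrm{lo}_p\!\left(\mathrm{G}_p^2{K}\right)$ from below.
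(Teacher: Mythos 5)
Your proposal is correct and follows essentially the same route as the paper's own proof: Lagrange's theorem applied to the index $(G:H)=p^n$ for $H\in\mathrm{Lyr}_n{G}$, combined with the observation that $\lvert H/H^\prime\rvert$ divides $\lvert H\rvert$ (with equality precisely when $H$ is abelian), and the identification of the metabelian case via $G^{\prime\prime}=1$. No gaps.
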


\begin{proof}
The Lagrange formula for the order of \(G\) in terms of the index of a subgroup \(H\le G\) reads
\[\mathrm{ord}(G)=(G:H)\cdot\mathrm{ord}(H),\]
and taking the \(p\)-logarithm yields
\[\mathrm{lo}_p(G)=\log_p((G:H))+\mathrm{lo}_p(H).\]
In particular, we have \(\log_p((G:H))=\log_p(p^n)=n\) for \(H\in\mathrm{Lyr}_n{G}\), \(0\le n\le 3\),
and again by the Lagrange formula
\[\mathrm{ord}(H)=(H:H^\prime)\cdot\mathrm{ord}(H^\prime)\ge (H:H^\prime),\]
respectively
\[\mathrm{lo}_p(H)=\log_p((H:H^\prime))+\mathrm{lo}_p(H^\prime)\ge\mathrm{lo}_p(H/H^\prime),\]
with equality if and only if \(H^\prime=1\), that is, \(H\) is abelian.\\
Finally, \(G\) is metabelian if and only if \(G^\prime\) is abelian.
\end{proof}



Let us first draw weak conclusions from the first layer of the TTT, i.e. the IPAD,
with the aid of Lemma
\ref{lem:Estimates}.

\begin{theorem}
\label{thm:WeakEstimates}
(Coarse estimate \cite{Ma7}) \\
The order of \(\mathfrak{M}=\mathrm{G}_3^2{K}\) for the three critical fields \(K\)
is bounded from below by \(\mathrm{ord}(\mathfrak{M})\ge 3^9\).\\
If the maximal subgroup \(H<\mathfrak{M}\) with the biggest order of \(H/H^\prime\) is abelian,
i.e. \(H^\prime=1\),
then the precise logarithmic order of \(\mathfrak{M}\) is given by \(\mathrm{lo}_3(\mathfrak{M})=9\).
\end{theorem}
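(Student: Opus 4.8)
The plan is to feed the IPAD of the three critical fields straight into Lemma~\ref{lem:Estimates}(1). First I would invoke the Artin reciprocity law, exactly as at the end of the proof of Corollary~\ref{cor:a1a2a3}: the Artin pattern of $K$ coincides with the Artin pattern of $\mathfrak{M}=\mathrm{G}_3^2{K}$, so the family $\tau_1{\mathfrak{M}}=(H/H^\prime)_{H\in\mathrm{Lyr}_1{\mathfrak{M}}}$ of abelianizations of the $13$ maximal subgroups of $\mathfrak{M}$ equals the component $\tau_1{K}$ of the IPAD $\tau^{(1)}{K}=\lbrack 1^3;32^21;(21^4)^5,(2^21^2)^7\rbrack$ displayed above (in accumulated, unordered form; note $1+5+7=13=\frac{3^3-1}{3-1}$). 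In particular there is a maximal subgroup $H_0<\mathfrak{M}$ with $H_0/H_0^\prime$ of abelian type $32^21$, i.e. $C_{3^3}\times C_{3^2}\times C_{3^2}\times C_3$.

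Next I would compare logarithmic orders of the invariants occurring in $\tau_1{\mathfrak{M}}$: the types $21^4$ and $2^21^2$ both have $\mathrm{lo}_3=6$, while $32^21$ has $\mathrm{lo}_3=3+2+2+1=8$. Hence $\max\lbrace\mathrm{lo}_3(H/H^\prime)\mid H\in\mathrm{Lyr}_1{\mathfrak{M}}\rbrace=8$, attained only at $H_0$, and Lemma~\ref{lem:Estimates}(1) gives $\mathrm{lo}_3(\mathfrak{M})\ge 1+8=9$, that is $\mathrm{ord}(\mathfrak{M})\ge 3^9$. This settles the first assertion.

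For the refinement I would unwind the proof of Lemma~\ref{lem:Estimates}(1): the inequality there is the chain $\mathrm{lo}_3(\mathfrak{M})=\log_3((\mathfrak{M}:H_0))+\mathrm{lo}_3(H_0)=1+\mathrm{lo}_3(H_0)$ together with $\mathrm{lo}_3(H_0)\ge\mathrm{lo}_3(H_0/H_0^\prime)$, and the latter step is an equality precisely when $H_0^\prime=1$. So if the maximal subgroup realizing the biggest abelianization is abelian, then $\mathrm{lo}_3(H_0)=\mathrm{lo}_3(H_0/H_0^\prime)=8$ and therefore $\mathrm{lo}_3(\mathfrak{M})=1+8=9$ exactly, as claimed.

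There is no genuine obstacle in the argument; the only care needed is the bookkeeping of matching the unordered accumulated IPAD against the thirteen maximal subgroups and verifying that $32^21$ is unambiguously the component of largest logarithmic order, so the bound is not weakened by a tie with a bigger invariant. The real difficulty — signalled by the word \textit{coarse} in the name of the theorem — lies outside this proof: it is the question whether the hypothesis $H_0^\prime=1$ actually holds for these three fields, which is exactly what the finer analysis using the second TTT layer $\tau_2{K}$ is designed to decide.
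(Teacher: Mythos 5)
Your proposal is correct and follows essentially the same route as the paper: both feed the component $32^21$ of the first TTT layer, of logarithmic order $3+2\cdot 2+1=8$, into Lemma~\ref{lem:Estimates}(1) to get $\mathrm{lo}_3(\mathfrak{M})\ge 9$, and both obtain the exact value $9$ from the Lagrange formula $\mathrm{lo}_3(\mathfrak{M})=1+\mathrm{lo}_3(H_0)$ once $H_0^\prime=1$ forces $\mathrm{lo}_3(H_0)=\mathrm{lo}_3(H_0/H_0^\prime)$. The only cosmetic difference is that for the second assertion you prove the stated implication directly, whereas the paper records the complementary observation that a maximal subgroup \emph{not} realizing the biggest abelianization can never be abelian; the substance is identical.
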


\begin{proof}
The three critical fields with discriminants
\(d\in\lbrace -4\,447\,704, -5\,067\,967, -8\,992\,363\rbrace\)
share the common accumulated IPAD
\(\tau^{(1)}{K}=\lbrack\tau_0{K};\tau_1{K}\rbrack=\lbrack 1^3;32^21;(21^4)^5,(2^21^2)^7\rbrack\).

Consequently, Lemma
\ref{lem:Estimates}
yields a uniform lower bound for each of the three fields:
\[\mathrm{lo}_3(\mathfrak{M})\ge 1+\max\lbrace\mathrm{lo}_3(H/H^\prime)\mid H\in\mathrm{Lyr}_1{\mathfrak{M}}\rbrace
=1+\mathrm{lo}_3(32^21)=1+3+2\cdot 2+1=9.\]

The assumption that a maximal subgroup \(U<\mathfrak{M}\) having not the biggest order of \(U/U^\prime\)
were abelian (with \(U/U^\prime\simeq U\))
immediately yields the contradiction that
\[\mathrm{lo}_3(\mathfrak{M})=\log_3((\mathfrak{M}:U))+\mathrm{lo}_3(U)=1+\mathrm{lo}_3(U/U^\prime)
<1+\max\lbrace\mathrm{lo}_3(H/H^\prime)\mid H\in\mathrm{Lyr}_1{\mathfrak{M}}\rbrace\le\mathrm{lo}_3(\mathfrak{M}).\]
\end{proof}



\noindent
It is illuminating that much stronger estimates and conclusions are possible
by applying Lemma
\ref{lem:Estimates}
to the second layer of the TTT.

\begin{theorem}
\label{thm:StrongEstimates}
(Finer estimates) \\
None of the maximal subgroups of \(\mathfrak{M}=\mathrm{G}_3^2{K}\)
for the three critical fields \(K\) can be abelian.\\
The logarithmic order of \(\mathfrak{M}\) is bounded from below by\\
\(\mathrm{lo}_3(\mathfrak{M})\ge 17\) for \(d=-4\,447\,704\),\\
\(\mathrm{lo}_3(\mathfrak{M})\ge 16\) for \(d=-5\,067\,967\),\\
\(\mathrm{lo}_3(\mathfrak{M})\ge 15\) for \(d=-8\,992\,363\).
\end{theorem}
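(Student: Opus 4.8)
The plan is to apply Lemma \ref{lem:Estimates}, part (2), to the second layer $\tau_2{K}$ of each of the three critical fields, which has just been computed with MAGMA \cite{MAGMA}. First I would single out, among the accumulated abelian type invariants listed for $\tau_2{K}$, the component $A$ of maximal logarithmic order; since $\tau_2{K}$ records the abelianizations $H/H^\prime$ for all $H\in\mathrm{Lyr}_2{\mathfrak{M}}$, Lemma \ref{lem:Estimates}(2) gives $\mathrm{lo}_3(\mathfrak{M})\ge 2+\mathrm{lo}_3(A)$. Translating the partition notation into logarithmic orders: for $d=-4\,447\,704$ the dominant component is $32^51^2$ with $\mathrm{lo}_3=3+2\cdot 5+2=15$, hence $\mathrm{lo}_3(\mathfrak{M})\ge 17$; for $d=-5\,067\,967$ it is $3^22^21^4$ with $\mathrm{lo}_3=6+4+4=14$, hence $\mathrm{lo}_3(\mathfrak{M})\ge 16$; and for $d=-8\,992\,363$ it is $32^21^6$ or $3^221^5$, both with $\mathrm{lo}_3=13$, hence $\mathrm{lo}_3(\mathfrak{M})\ge 15$. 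A consistency check built into this step is that the multiplicities of the $\tau_2{K}$-components sum to $(3^3-1)/(3-1)=13$, the number of subgroups of index $3^2$ containing $\mathfrak{M}^\prime$.

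The first assertion then follows by contradiction from the lower bounds just obtained. Suppose some maximal subgroup $H<\mathfrak{M}$ were abelian, so $H^\prime=1$. By the equality case in the proof of Lemma \ref{lem:Estimates} we have $\mathrm{lo}_3(H)=\mathrm{lo}_3(H/H^\prime)$, and by the Lagrange formula $\mathrm{lo}_3(\mathfrak{M})=1+\mathrm{lo}_3(H)=1+\mathrm{lo}_3(H/H^\prime)$. But $H/H^\prime$ occurs among the components of the first layer $\tau_1{K}=(32^21,(21^4)^5,(2^21^2)^7)$, whose member of largest logarithmic order is $32^21$ with $\mathrm{lo}_3=8$; therefore $\mathrm{lo}_3(\mathfrak{M})\le 9$, contradicting $\mathrm{lo}_3(\mathfrak{M})\ge 15$. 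Hence no maximal subgroup of $\mathfrak{M}$ is abelian.

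Finally I would recall that, by the Artin reciprocity law, the Artin pattern of $K$ — in particular both layers $\tau_1$ and $\tau_2$ of the TTT — coincides with that of $\mathfrak{M}=\mathrm{G}_3^2{K}$, so the class groups $\mathrm{Cl}_3{L}$ computed for the unramified extensions $L/K$ of relative degrees $3$ and $3^2$ are legitimate inputs to the purely group-theoretic Lemma \ref{lem:Estimates}. I do not expect a genuine obstacle here: the conceptual content is entirely encapsulated in Lemma \ref{lem:Estimates}(2), and the only care required is the correct conversion of the partition notation for the $\tau_2$-components into logarithmic orders and the identification of the dominant component in each of the three lists — essentially bookkeeping, since the computationally hard part (determining the thirteen bicyclic bicubic class groups) has already been carried out.
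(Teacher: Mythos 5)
Your proposal is correct and follows essentially the same route as the paper: Lemma \ref{lem:Estimates}(2) applied to the dominant component of the computed \(\tau_2{K}\) (logarithmic orders \(15\), \(14\), \(13\), hence the bounds \(17\), \(16\), \(15\)), and a contradiction with the first-layer bound \(\mathrm{lo}_3(\mathfrak{M})\le 1+\mathrm{lo}_3(32^21)=9\) to exclude abelian maximal subgroups. The only cosmetic difference is that the paper routes the latter contradiction through Theorem \ref{thm:WeakEstimates} rather than arguing it directly from the Lagrange formula as you do; the content is identical.
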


\begin{proof}
As mentioned earlier already,
computations with MAGMA
\cite{MAGMA}
have shown that the accumulated second layer of the TTT is given by\\
\(\tau_2{K}=\lbrack 32^51^2;4321^5;2^51^3,(3^221^5)^2;2^41^4,32^21^5;(2^21^7)^3,(2^31^5)^3\rbrack\) for \(d=-4\,447\,704\),\\
\(\tau_2{K}=\lbrack 3^22^21^4;(3^221^5)^3;32^21^5;(2^31^5)^8\rbrack\) for \(d=-5\,067\,967\), and\\
\(\tau_2{K}=\lbrack 32^21^6,(3^221^5)^3;2^41^4,32^21^5;2^21^7,(2^31^5)^6\rbrack\) for \(d=-8\,992\,363\).

Consequently the maximal logarithmic order \(M:=\max\lbrace\mathrm{lo}_3(H/H^\prime)\mid H\in\mathrm{Lyr}_2{\mathfrak{M}}\rbrace\) is\\
\(M=\mathrm{lo}_3(32^51^2)=3+5\cdot 2+2\cdot 1=15\) for \(d=-4\,447\,704\),\\
\(M=\mathrm{lo}_3(3^22^21^4)=2\cdot 3+2\cdot 2+4\cdot 1=14\) for \(d=-5\,067\,967\),\\
\(M=\mathrm{lo}_3(32^21^6)=3+2\cdot 2+6\cdot 1=13\) for \(d=-8\,992\,363\).\\
According to Lemma
\ref{lem:Estimates},
we have
\(\mathrm{lo}_3(\mathfrak{M})\ge 2+\max\lbrace\mathrm{lo}_3(H/H^\prime)\mid H\in\mathrm{Lyr}_2{\mathfrak{M}}\rbrace=2+M\).

Finally, if one of the maximal subgroups of \(\mathfrak{M}\) were abelian,
then Theorem
\ref{thm:WeakEstimates}
would give the contradiction that \(\mathrm{lo}_3(\mathfrak{M})=9\).
\end{proof}



Unfortunately, it was impossible for any of the three critical fields \(K\)
to compute the third layer of the TTT, \(\tau_3{K}\),
that is the structure of the \(3\)-class group of the Hilbert \(3\)-class field \(\mathrm{F}_3^1{K}\) of \(K\),
which is of absolute degree \(54\).
This would have given the precise order of the metabelian group
\(\mathfrak{M}=\mathrm{G}_3^2{K}=\mathrm{Gal}(\mathrm{F}_3^2{K}/K)\),
according to Lemma
\ref{lem:Estimates},
since \(\mathfrak{M}^\prime=\mathrm{Gal}(\mathrm{F}_3^2{K}/\mathrm{F}_3^1{K})\simeq\mathrm{Cl}_3(\mathrm{F}_3^1{K})\).

We also investigated
whether the complete iterated IPAD of second order, \(\tau^{(2)}{\mathfrak{M}}\),
is able to improve the lower bounds in Theorem
\ref{thm:StrongEstimates}
further.
It turned out that, 
firstly none of the additional non-normal components of \((\tau_1{H})_{H\in\mathrm{Lyr}_1{\mathfrak{M}}}\)
seems to have bigger order than the normal components of \(\tau_2{\mathfrak{M}}\),
and secondly,
due to the huge \(3\)-ranks of the involved groups,
the number of required class group computations enters astronomic regions.

To give an impression, we show the results for five of the \(13\) maximal subgroups
in the case of \(d=-4\,447\,704\):\\
\(\tau^{(1)}{H_1}=\lbrack 2^21^2;\ 32^51^2;(2^31^5)^3;(3^221^2)^3;(321^4)^9,(32^21^2)^{24}\rbrack\), with \(40\) components,\\
\(\tau^{(1)}{H_2}=\lbrack 21^4;\ 32^51^2;2^51^3;2^41^4;2^21^7;(31^6)^3,(321^4)^{33};(321^2)^{81}\rbrack\), with \(121\) components,\\
\(\tau^{(1)}{H_3}=\lbrack 2^21^2;\ 32^51^2;32^21^5;(2^21^7)^2;(321^5)^3,(32^21^3)^6,(3^221^2)^3,(32^21^2)^{24}\rbrack\), with \(40\) comp.,\\
\(\tau^{(1)}{H_4}=\lbrack 32^21;\ 32^51^2;4321^5;(3^221^5)^2;(4321^3)^6;(431^4)^6,(3^221^3)^6,(4321^2)^9,(3^31^2)^9\rbrack\), \(40\) comp.,\\
\(\tau^{(1)}{H_5}=\lbrack 2^21^2;\ 3^221^5;32^21^5,2^41^4;2^31^5;(321^3)^{36}\rbrack\), with \(40\) components.



\section{Acknowledgements}
\label{s:Acknowledgements}
\noindent
We gratefully acknowledge that our research is supported by
the Austrian Science Fund (FWF): P 26008-N25.

Sincere thanks are given to Michael R. Bush
(Washington and Lee University, Lexington, VA)
for making available
numerical results on IPADs of real quadratic fields \(K=\mathbb{Q}(\sqrt{d})\),
and the distribution of discriminants \(d<10^9\) over these IPADs
\cite{Bu}.

We are indebted to Nigel Boston, Michael R. Bush and Farshid Hajir
for kindly making available an unpublished database containing
numerical results of their paper
\cite{BBH}
and a related paper on real quadratic fields, which is still in preparation.


A succinct version of the present article has been delivered on July \(09\), \(2015\),
within the frame of the
\(29\)i\`emes Journ\'ees Arithme\'etiques at the University of Debrecen, Hungary
\cite{Ma7b}.



\bigskip
\noindent
\textbf{Appendix: Corrigenda in
\cite{Ma1,Ma3,Ma4}.}
\begin{enumerate}
\item
The restriction of the numerical results in Proposition
\ref{prp:a1a2a3}
to the range \(0<d<10^7\)
is in perfect accordance with our machine calculations
by means of PARI/GP
\cite{PARI}
in \(2010\),
thus providing the first independent verification of data in
\cite{Ma1,Ma3,Ma4}.\\
However,
in the manual evaluation of this extensive data material
for the ground state of the types \(\mathrm{a}.1\), \(\mathrm{a}.2\), \(\mathrm{a}.3\), and \(\mathrm{a}.3^\ast\),
a few errors crept in,
which must be corrected at three locations:
in the tables
\cite[Tbl. 2, p. 496]{Ma1}
and
\cite[Tbl. 6.1, p. 451]{Ma3},
and in the tree diagram
\cite[Fig. 3.2, p. 422]{Ma4}.

The absolute frequency of the ground state is actually given by\\
\(1\,382\) instead of the incorrect \(1\,386\) for the union of types \(\mathrm{a}.2\) and \(\mathrm{a}.3\),\\
\(698\) instead of the incorrect \(697\) for type \(\mathrm{a}.3^\ast\),\\
\(2\,080\) instead of the incorrect \(2\,083\) for the union of types \(\mathrm{a}.2\), \(\mathrm{a}.3\), and \(\mathrm{a}.3^\ast\), and\\
\(150\) instead of the incorrect \(147\) for type \(\mathrm{a}.1\).\\
(The three discriminants \(d\in\lbrace 7\,643\,993, 7\,683\,308, 8\,501\,541\rbrace\)
were erroneously classified as type \(\mathrm{a}.2\) or \(\mathrm{a}.3\) instead of \(\mathrm{a}.1\).)

In the second table,
two relative frequencies (percentages) should be updated:\\
\(\frac{1382}{2303}\approx 60.0\%\) instead of \(\frac{1386}{2303}\approx 60.2\%\)
and\\
\(\frac{698}{2303}\approx 30.3\%\) instead of \(\frac{697}{2303}\approx 30.3\%\).
\item
Incidentally, although it does not concern the section \(\mathrm{a}\) of IPODs,
the single field with discriminant \(d=2\,747\,001\) was erroneously classified
as type \(\mathrm{c}.18\), \(\varkappa_1=(0313)\), instead of \(\mathrm{H}.4\), \(\varkappa_1=(3313)\).
This has consequences at four locations:
in the tables
\cite[Tbl. 4--5, pp. 498--499]{Ma1}
and
\cite[Tbl. 6.5, p. 452]{Ma3},
and in the tree diagram
\cite[Fig. 3.6, p. 442]{Ma4}.

The absolute frequency of these types is actually given by\\
\(28\) instead of the incorrect \(29\) for type \(\mathrm{c}.18\) (see also
\cite[Prop. 7.2]{Ma10}),\\
\(4\) instead of the incorrect \(3\) for type \(\mathrm{H}.4\).

In the first two tables,
the total frequencies should be updated, correspondingly:\\
\(207\) instead of the incorrect \(206\) in
\cite[Tbl. 4, p. 498]{Ma1},\\
\(66\) instead of the incorrect \(67\) in
\cite[Tbl. 5, p. 499]{Ma1}.
\end{enumerate}




\end{document}